\newcommand{\F}{\mathbb{F} }
\newcommand{\R}{\mathbb{R} }
\newcommand{\Q}{\mathbb{Q} }
\newcommand{\N}{\mathbb{N} }
\newcommand{\Z}{\mathbb{Z} }
\newcommand{\C}{\mathbb{C} }
\newcommand{\D}{\mathbb{D} }
\newcommand{\Qrc}{\overline{\Q}^{\operatorname{rc}}}
\newcommand{\K}{\mathbb{K}}
\newcommand{\frakg}{\mathfrak{g}}
\newcommand{\frakp}{\mathfrak{p}}
\newcommand{\frakk}{\mathfrak{k}}
\newcommand{\fraka}{\mathfrak{a}}
\newcommand{\frakap}{\overline{\mathfrak{a}}^+}
\newcommand{\Ap}{A^+}
\newcommand{\ApF}{A_\F^+}
\newcommand{\ApR}{A_\R^+}
\newcommand{\frakn}{\mathfrak{n}}
\newcommand{\frakh}{\mathfrak{h}}
\newcommand{\frakt}{\mathfrak{t}}
\newcommand{\frakl}{\mathfrak{l}}
\newcommand{\eL}{\mathcal{L}}
\newcommand{\SLnR}{\operatorname{SL}_n(\R) }
\newcommand{\GLnB}{\operatorname{GL}_n(\mathbb{B}) }
\newcommand{\GLnD}{\operatorname{GL}_n(\D) }
\newcommand{\SL}{\operatorname{SL} }
\newcommand{\Id}{\operatorname{Id} }
\newcommand{\bG}{\mathbf{G}}
\newcommand{\bH}{\mathbf{H}}
\newcommand{\bT}{\mathbf{T}}
\newcommand{\bS}{\mathbf{S}}
\newcommand{\bA}{\mathbf{A}}
\newcommand{\bB}{\mathbf{B}}
\newcommand{\bP}{\mathbf{P}}
\newcommand{\bK}{\mathbf{K}}
\newcommand{\bU}{\mathbf{U}}
\newcommand{\bL}{\mathbf{L}}
\newcommand{\tran}{{^{\mathsf{T}}\!}}
\newcommand{\KPhi}{\vphantom{\Phi}_{\K}\Phi}
\newcommand{\RPhi}{\vphantom{\Phi}_{\R}\Phi}
\newcommand{\FPhi}{\vphantom{\Phi}_{\F}\Phi}
\newcommand{\KW}{\vphantom{W}_{\K}W}
\newcommand{\RW}{\vphantom{W}_{\R}W}
\newcommand{\FW}{\vphantom{W}_{\F}W}
\newcommand{\KDelta}{\vphantom{\Delta}_{\K}\Delta}
\newcommand{\FDelta}{\vphantom{\Delta}_{\F}\Delta}
\numberwithin{equation}{section}
\newtheorem{theorem}{Theorem}[section]
\newtheorem{lemma}[theorem]{Lemma}
\newtheorem{proposition}[theorem]{Proposition}
\newtheorem{corollary}[theorem]{Corollary}
\newtheorem{question}[theorem]{Question}
\newtheorem*{rep@theorem}{\rep@title}
\newcommand{\newreptheorem}[2]{%
	\newenvironment{rep#1}[1]{%
		\def\rep@title{#2 \ref{##1}}%
		\begin{rep@theorem}}%
		{\end{rep@theorem}}}
\theoremstyle{remark} % Sets the style of the remark to be distinct from other theorem-like environments
\newtheorem{remark}{Remark}[section]
\title[Semisimple algebraic groups over real closed fields]{Semisimple algebraic groups \\ over real closed fields}
\author{Raphael Appenzeller}
\address{Institute for Mathematics, Heidelberg University, Germany}
\email{rappenzeller@mathi.uni-heidelberg.de}
\date{\today}      % Activate to display a given date or no date
\begin{document}

\def\subjclassname{\textup{2020} Mathematics Subject Classification}
\expandafter\let\csname subjclassname@1991\endcsname=\subjclassname
\subjclass{
	20G07,  %	Structure theory for linear algebraic groups
	14P10,  % 	Semialgebraic sets and related spaces
	12J15% 	Ordered fields
}
\keywords{algebraic groups, real algebraic geometry, real closed fields, Lie~groups}
\date{\today}

\begin{abstract}
	We give a self-contained introduction to linear algebraic and semialgebraic groups over real closed fields, and we generalize several key results about semisimple Lie groups to algebraic and semialgebraic groups over real closed fields. We prove that a torus in a semisimple algebraic group is maximal $\R$-split if and only if it is maximal $\F$-split for real closed fields $\F$. For the $\F$-points we formulate and prove the Iwasawa-decomposition $KAU$, the Cartan-decomposition $KAK$ and the Bruhat-decomposition $BWB$. For unipotent subgroups we prove the Baker-Campbell-Hausdorff formula, facilitating the analysis of root groups. We give a proof of the Jacobson-Morozov Lemma about subgroups whose Lie algebra is isomorphic to $\mathfrak{sl}_2$ for algebraic groups and a version for the $\F$-points, when the root system is reduced. We describe the rank 1 subgroups which are the semisimple parts of Levi-subgroups. We prove a semialgebraic version of Kostant's convexity theorem. The main tool used is a model theoretic transfer principle that follows from the Tarski-Seidenberg theorem.
\end{abstract}

% MSC-codes:
% 12J15   	Ordered fields
% 	12J25   	Non-Archimedean valued fields [See also 30G06, 46S10]
%14L40   	Other algebraic groups (geometric aspects)
%  	17B45   	Lie algebras of linear algebraic groups [See also 14Lxx and 20Gxx]
% 20G07   	Structure theory for linear algebraic groups
%  	20G25   	Linear algebraic groups over local fields and their integers
% 		20G99   	None of the above, but in this section
%53C35   	Differential geometry of symmetric spaces [See also 32M15, 57T15]
%20E42   	Groups with a $BN$-pair; buildings [See also 51E24]
% 	51E24   	Buildings and the geometry of diagrams
%14P10   	Semialgebraic sets and related spaces

% For the thesis proper I took:
% 	12J25   	Non-Archimedean valued fields [See also 30G06, 46S10]
%14P10   	Semialgebraic sets and related spaces
%  	20G25   	Linear algebraic groups over local fields and their integers
%20E42   	Groups with a $BN$-pair; buildings [See also 51E24]
% 12J25 14P10 20G25 20E42 
\maketitle

\renewcommand{\baselinestretch}{0.6}\normalsize

	\setcounter{tocdepth}{2}
	\tableofcontents
	
	\renewcommand{\baselinestretch}{1.0}\normalsize
	
\section{Introduction and main results}
	
	Algebraic groups were first studied for algebraically closed fields, but significant progress has since been made in understanding them over arbitrary fields \cite{Bor66, Bor, Hum1, Zim}. The $\R$-points of algebraic groups are real Lie groups, and there are many more tools available from that point of view. In this paper, we extend several key results from the theory of Lie groups to algebraic and semialgebraic groups over real closed fields. In real algebraic geometry, real closed fields serve a role analogous to that of algebraically closed fields in classical algebraic geometry.
	
	Let $\F$ and $\K$ be real closed fields such that $\K \subseteq \F \cap \R$. Let $\bG$ be a semisimple, self-adjoint (if $g\in \bG$ then $g\tran \in \bG$) linear algebraic $\K$-group. An algebraic subgroup $\bS < \bG$ is a \emph{torus} if all its elements are simultaneously diagonalizable. If the elements are simultaneously diagonalizable over $\F$, then $\bS$ is called $\F$-split. Many decompositions rely on the choice of a torus. The following first result shows that all real closed fields define the same split tori.
	\begin{reptheorem}{thm:split_tori}
		A torus $\bS < \bG$ is maximal $\K$-split if and only if it is maximal $\F$-split. Moreover, there is a self-adjoint maximal $\F$-split torus.
	\end{reptheorem} 
	
	A subgroup $G < \operatorname{GL}_n(\K)$ that is a semialgebraic subset is called a \emph{linear semialgebraic group}. The $\K$-points $\bG(\K)$ of $\bG$ form such a linear semialgebraic group. 
	Let $G \subseteq \bG(\K)$ be a semialgebraic subgroup of $\bG(\K)$ that contains the semialgebraic connected component of the identity. Its semialgebraic $\R$-extension $G_\R \subseteq \R^{n \times n}$ is then a semisimple Lie group \cite{Pil88}. The main goal of this paper is to study the $\F$-extension $G_\F$ of $G$. Let $\bS(\K)$ be the $\K$-points of a self-adjoint maximal $\K$-split torus of $\bG$. Let $A$ be the semialgebraic connected component of $\bS(\K)$ and $A_\F$ its semialgebraic extension. Let $K := G \cap \operatorname{SO}_n$ and $K_\F$ the semialgebraic extension. Over the reals, $K_\R$ is a maximal compact subgroup of $G_\R$. We also extend the semialgebraic groups $N = \operatorname{Nor}_{K}(A)$ and $M = \operatorname{Cen}_{K}(A)$ to $N_\F$ and $M_\F$. An order on the root system $\Sigma$ associated to $A_\R$ allows us to define $U$, $U_\R$ and $U_\F$ as the exponentials of the sum of root spaces $(\frakg_\alpha)_\K$, $(\frakg_\alpha)_\R$ and $(\frakg_\alpha)_\F$ corresponding to positive roots. We prove the following versions of the Iwasawa ($KAU$), Cartan ($KAK$) and Bruhat ($BWB$) decompositions for $G_\F$. 
	 % Iwasawa
	 \begin{reptheorem}{thm:KAU}[$G=KAU$]
	 	For every $g \in G_\F$, there are $k \in K_\F$, $a \in A_\F$, $u \in U_\F$ such that $g = kau$. This decomposition is unique.
	 \end{reptheorem} 
	 % Cartan
	 \begin{reptheorem}{thm:KAK}[$G=KAK$]
	 	For every $g \in G_\F$, there are $k_1,k_2 \in K_\F$, $a \in A_\F$ such that $g = k_1 a k_2$.
	 	In this decomposition $a$ is uniquely determined up to conjugation by an element of the spherical Weyl group $N_{\F}/M_{\F}$.
	 \end{reptheorem}
	 % Bruhat
	 \begin{reptheorem}{thm:BWB}[$G=BWB$]
	 	For every $g \in G_\F$, there are $b_1,b_2 \in B_\F := M_\F A_\F U_\F$ and $n\in N_\F$ such that $g = b_1 n b_2$. In this decomposition $n$ is unique up to multiplying by an element in $M_{\F}$. For the spherical Weyl group $W_s := N_\F/M_\F$, we have a disjoint union of double cosets
	 	$$
	 	G_{\F} =  \bigsqcup_{[n] \in W_s} B_{\F}nB_{\F}.
	 	$$ 
	 \end{reptheorem}
	 A version of the Bruhat-decomposition is known for algebraic groups \cite[Theorem 14.11]{Bor} over arbitrary fields, the other two decompositions come from the theory of Lie groups.
	 For the Iwasawa-decomposition, a related result has been obtained by Conversano \cite[Theorem 2.1]{Con14} in a more general (not neccessarily linear) setting of definable groups.

	 There are various definitions of root systems and Weyl groups in the literature. We use Theorem \ref{thm:split_tori} to verify how these objects defined via the theories of algebraic groups, real Lie groups, Lie algebras and in the semialgebraic setting all coincide. 
	 \begin{repproposition}{prop:weylgroups}
	 	The algebraic root system $\KPhi$ is isomorphic to the root system $\Sigma$ from the real setting. The spherical Weyl groups $\KW, \FW, N_\R/M_\R, N_\F/M_\F$ and the group generated by reflections in roots of the root system $\Sigma$ are all isomorphic. 
	 \end{repproposition}

	In contrast to the setting of Lie groups, the exponential map may not be defined for $G_\F$, it is however still defined for $U_\F$ as the elements of $U_\F$ are unipotent. We observe that the Baker-Campbell-Hausdorff formula holds for elements in $U_\F$, which is useful in the study of the structure of $U_\F$, see Section \ref{sec:U}. 
	
	\begin{repproposition}{prop:BCH}
		Let $u,v \in U_\F, X := \log(u), Y:= \log(v)$ and $Z = \log(uv)$. Then $\exp(X)\exp(Y) = \exp(Z)$ and the element $Z$ is given by a finite sum of iterated commutators in $X$ and $Y$, the first terms of which are given by
		$$
		Z = X + Y + \frac{1}{2}[X,Y] + \frac{1}{12}\left(\left[X,\left[X,Y\right]\right]- \left[Y, \left[Y,X\right]\right]\right) - \frac{1}{24} \left[Y,\left[X,\left[X,Y\right]\right]\right] + \ldots
		$$ 
	\end{repproposition}

	We give a proof of the folklore result that the Jacobson-Morozov Lemma holds for algebraic groups. A semialgebraic version is given in Proposition \ref{prop:Jacobson_Morozov_real_closed}.
	\begin{repproposition}{prop:Jacobson_Morozov_group}
		Let $g\in \bG$ be any unipotent element in a semisimple linear algebraic group $G$ over an algebraically closed field $\D$ of characteristic $0$. Then there is an algebraic subgroup $\operatorname{SL}_g < \bG$ with Lie algebra $\operatorname{Lie}(\operatorname{SL}_g) \cong \mathfrak{sl}_2$ and $g \in \operatorname{SL}_g$. The element $\log(g) \in \operatorname{Lie}(\bG)$ corresponds to % a multiple of
		$$
		\begin{pmatrix}
			0 & 1 \\ 0 & 0
		\end{pmatrix}\in \mathfrak{sl}_2.
		$$
		Moreover, if $g\in \bG(\F)$ for a field $\F \subseteq \D$, then $\operatorname{SL}_g$ is defined over $\F$. 
	\end{repproposition}

	The Jacobson-Morozov Lemma produces subgroups with Lie algebras isomorphic to $\mathfrak{sl}_2$. The following theorem produces potentially larger rank one subgroups $\bL_{\pm \alpha}$ associated to a root $\alpha \in \Sigma$ in the algebraic setting. These subgroups are the semisimple parts of Levi subgroups. %As $L_{\pm \alpha}$ are examples of semisimple groups of the type we consider, the above decompositions apply to their $\F$-points.
	\begin{reptheorem}{thm:levi_group}
		Let $\alpha \in \Sigma$. Then there is a semisimple self-adjoint linear algebraic group $\bL_{\pm \alpha}$ defined over $\K$ such that
		\begin{enumerate}
			\item [(i)] $\operatorname{Lie}(\bL_{\pm \alpha}) = ( \frakg_\alpha \oplus \frakg_{2\alpha} ) \oplus (\frakg_{-\alpha} \oplus \frakg_{-2\alpha} ) \oplus ( [\frakg_\alpha , \frakg_{-\alpha}] + [\frakg_{2\alpha}, \frakg_{-2\alpha}] )$, and
			\item [(ii)] $\operatorname{Rank}_\R(\bL_{\pm \alpha}) = \operatorname{Rank}_\F(\bL_{\pm \alpha}) = 1$.
		\end{enumerate}
	\end{reptheorem}

	Using the Iwasawa decomposition $G_\F=U_\F A_\F K_\F$ (Theorem \ref{thm:KAU_R}) we associate to every $g=uak\in G_\F$ its $A$-component $a_\F(g) = a \in A_\F$. Over the reals, Kostant's convexity theorem describes the $A$-components of the $K_\R$-orbit (under left multiplication) of an element in $A_\F$ as a convex set. We prove the following semialgebraic version of Kostant's convexity theorem over $\F$, restricted to the multiplicative closed Weyl chamber
	$$
	\Ap_\F : = \left\{ a \in A_{\F} \colon \chi_\delta(a) \geq 1 \text{ for all } \delta \in \Delta \right\} ,
	$$
	where the $\chi_\delta$ are the multiplicative characters associated to the elements of some basis $\Delta$ of the root system $\Sigma$. 
	
	\begin{reptheorem}{thm:kostant_F}
		For all $b \in \ApF$, we have
		$$
		\left\{ a \in \ApF \colon \exists k \in K_\F , a_\F(kb) = a \right\}
		= \left\{ a\in \ApF \colon \chi_i (a) \leq \chi_i (b) \text{ for all } i \right\}
		$$
		for certain algebraic characters $\chi_i \colon A_\F \to \F_{\geq 0}$.
	\end{reptheorem}
	
	\subsection{Applications}
	
	This work is part of the author's doctoral dissertation \cite{App24arxiv}, where the results of this paper are applied to construct a quotient $B$ of the nonstandard symmetric space $G_\F / K_\F$, following ideas from \cite{KrTe} and to show that $B$ is an affine $\Lambda$-building, see also \cite{appenzeller2026buildings}. In \cite{BIPP21,BIPP23arxiv}, Burger, Iozzi, Parreau and Pozzetti use Theorem \ref{thm:split_tori}, Corollary \ref{cor:split_tori} and Theorem \ref{thm:KAK}, as well as the building $B$ to interpret boundary points of the real spectrum compactification of character varieties. The real spectrum compactification is a promising new approach to studying character varieties and has some advantages over other compactifications, for instance it preserves connected components. We believe that the results in this paper can widely be applied in any context where semisimple algebraic groups over real closed fields appear, just as the corresponding results about real Lie groups were in the past. 
	
	\subsection{Semialgebraic groups and o-minimality.}
	
	In model theory, a structure $(M,<,\ldots)$ is an \emph{o-minimal structure} if $<$ is a dense linear order and every definable subset of $M$ is a finite union of points and open intervals.  Starting with \cite{Pil88}, groups definable in o-minimal structures have been studied extensively \cite{OPP96, PPS00a, PPS00b, PPS02, CoPi13, Con14, BJO14, BBO19, COP22}, also see the surveys \cite{Ote08, Con21}. Real closed fields are mayor examples of o-minimal structures. A group $G$ is called \emph{semialgebraic over a real closed field $\F$} if the set $G$ and the graph of the multiplication are definable in $\F$ with parameters in $\F$. A semialgebraic group $G$ is called \emph{linear semialgebraic} if $G < \operatorname{GL}_n(\F) \subseteq \F^{n \times n}$. The groups $G$ considered in this article are examples of linear semialgebraic grops.
	
	If $G$ is a semisimple group definable over a o-minimal expansion of a real closed field $\F$, then $G$ is actually semialgebraic over $\F$ \cite{PPS02}. If $G$ is a centerless semisimple semialgebraic group, then $G$ is definably isomorphic to a semisimple linear semialgebraic group \cite[Cor. 3.3]{OPP96}. Let $\bG$ be the Zariski-closure of a linear semialgebraic group $G$, then $G$ is a semialgebraic subgroup of $\bG(\F)$ and contains the semialgebraic connected component of $\bG(\F)$. In the setting of this paper we additionally assume that $G$ is self-adjoint, see Remark \ref{rem:self-adjoint} for a suggestion of how to get rid of that assumption. It would be interesting to find out exactly how far the present results can be generalized to groups definable in o-minimal structures.

	\subsection{Outline}
	We start by giving a short introduction to the theory of real closed fields, the transfer priciple and its application to extensions of semialgebraic sets in Section \ref{sec:real_closed}. In Section \ref{sec:algebraic_groups}, we give a self-contained account of the theory of linear algebraic groups, including examples. In Section \ref{sec:Lie_algebra_results}, we recall the theory of real Lie algebras before extending some of the results to real closed fields and proving Theorem \ref{thm:split_tori} about maximal split tori. In Section \ref{sec:decompositions} we finally introduce the slightly more general notion of semialgebraic groups and give proofs for group decompositions and the other results mentioned above.

	\subsection{Acknowledgements}
	
	I would like to thank Marc Burger for continued support and specifically for help with Theorems \ref{thm:split_tori} and \ref{thm:kostant_F}. I would like to thank Luca de Rosa and Xenia Flamm for discussing early versions of Proposition \ref{prop:Jacobson_Morozov_group}, and Victor Jaeck for feedback on the sections on algebraic groups. I am very thankful for the comments of two anonymous referees, one of them pointed out the connection to groups definable in o-minimal structures.

	\section{Real closed fields}\label{sec:real_closed}
	A great reference for real algebraic geometry is \cite{BCR}. An \emph{ordered field} is a field together with a total order such that the sum and the product of positive elements are positive. Note that every ordered field is of characteristic $0$.
	A field $\F$ is called \emph{real closed} if it satisfies one of the following equivalent conditions.
	\begin{enumerate}
		\item [(1)] There is a total order on $\F$ turning $\F$ into an ordered field such that every positive element has a square root and every polynomial of odd degree has a solution.
		\item [(2)] There is an order on $\F$ that does not extend to any proper algebraic field extension of $\F$.
		\item [(3)] $\F$ is not algebraically closed but every finite extension is algebraically closed.
		\item [(4)] $\F$ is not algebraically closed but $\F[\sqrt{-1}]$ is algebraically closed.
	\end{enumerate}
	An ordered field is called \emph{Archimedean} if every element is bounded by a natural number. The real numbers $\R$ and the subset of real algebraic numbers are examples of Archimedean real closed fields.
	A major tool when working with real closed fields is the following transfer principle from model theory.

	\subsection{The transfer principle}
	
	Let $\K$ be an ordered field. Recall that a \emph{first-order formula of ordered fields with parameters in $\K$} is a formula that contains a finite number of conjunctions $\wedge$, disjunctions $\lor$, negations $\lnot$, and universal $\forall$ or existential $\exists$ quantifiers on variables, starting from atomic formulas which are formulas of the kind $f(x_1, \ldots, x_n) = 0$ or $g(x_1, \ldots , x_n) \leq 0$, where $f$ and $g$ are polynomials with coefficients in $\K$. A first-order formula without free variables is called a \emph{sentence}. By the Tarski-Seidenberg theorem, any sentence is equivalent to a sentence without quantifiers, from which can be deduced that the theory of real closed fields in the language of ordered fields is complete. In practice this means the following. 
	\begin{theorem}\label{thm:logic} (Transfer principle, \cite{BCR})
		Let $\F$ and $\F'$ be real closed fields. Let $\varphi$ be a sentence with parameters in $\F \cap \F'$. Then $\varphi$ is true for $\F$ if and only if $\varphi$ is true for $\F'$, formally $\F \models \varphi \iff \F' \models \varphi$.
	\end{theorem}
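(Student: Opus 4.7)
The plan is to derive Theorem~\ref{thm:logic} as a direct corollary of the Tarski--Seidenberg theorem on quantifier elimination, which is stated (and to be used as a black box) in the paragraph immediately preceding the statement. The strategy has three steps: reduce to the quantifier-free case, observe that quantifier-free sentences with parameters in $\F \cap \F'$ are evaluated purely arithmetically inside the common subfield, and then conclude.

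First, I would fix a sentence $\varphi$ in the language of ordered fields with parameters in $\F \cap \F'$. By the Tarski--Seidenberg theorem, there is a quantifier-free sentence $\psi$ in the same language, with the same parameters, such that the equivalence $\varphi \leftrightarrow \psi$ holds in every real closed field. In particular, $\F \models \varphi$ iff $\F \models \psi$, and similarly for $\F'$. This reduces the problem to showing $\F \models \psi \iff \F' \models \psi$ for a quantifier-free sentence.

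Next, I would analyse $\psi$. Being quantifier-free and without free variables, $\psi$ is a Boolean combination (using $\wedge$, $\vee$, $\lnot$) of atomic sentences of the form $f = 0$ or $g \leq 0$, where $f$ and $g$ are polynomials with coefficients in $\F \cap \F'$ evaluated at no variables, i.e.\ elements of $\F \cap \F'$. The truth value of each such atomic sentence depends only on the ring and order structure of $\F \cap \F'$: since $\F \cap \F'$ is a subfield of both $\F$ and $\F'$ and the order on each of $\F,\F'$ restricts to the same order on $\F \cap \F'$ (real closed fields carry a unique order, namely the one whose non-negative cone consists of squares), the statement $f = 0$ holds in $\F$ iff it holds in $\F \cap \F'$ iff it holds in $\F'$, and similarly for $g \leq 0$. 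Taking Boolean combinations preserves this, so $\F \models \psi \iff \F' \models \psi$.

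Combining the two steps yields $\F \models \varphi \iff \F' \models \varphi$, proving the theorem. The main conceptual obstacle is of course Tarski--Seidenberg itself, whose proof (by an effective algorithm eliminating one existential quantifier at a time via Sturm-style sign analysis) is nontrivial; but since this is imported from \cite{BCR}, the remainder of the argument is a short unwinding. A minor technical point worth highlighting is the uniqueness of the order on a real closed field (the non-negative elements are exactly the squares); this is what guarantees that the orders on $\F$ and $\F'$ agree on the common subfield $\F \cap \F'$, so that there is no ambiguity in how parameters are interpreted.
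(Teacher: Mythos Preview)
Your approach is exactly the one the paper indicates (the paper does not give a proof beyond citing \cite{BCR} and remarking that Tarski--Seidenberg quantifier elimination yields completeness). The reduction to a quantifier-free sentence and the observation that such a sentence is a Boolean combination of closed atomic statements about parameters are both correct.

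There is, however, a genuine gap in your second step. The claim that the orders on $\F$ and $\F'$ restrict to the same order on $\F\cap\F'$ does \emph{not} follow from the uniqueness of the order on a real closed field. That uniqueness only says that the order on $\F$ is determined by which elements are squares \emph{in $\F$}; an element of $\F\cap\F'$ can be a square in $\F$ without being one in $\F'$. Concretely, take $\F=\overline{\Q}\cap\R\subset\overline{\Q}$, choose $\sigma\in\operatorname{Aut}(\overline{\Q})$ with $\sigma(\sqrt{2})=-\sqrt{2}$, and set $\F'=\sigma(\F)$. Then $\sqrt{2}\in\F\cap\F'$, but $\sqrt{2}$ is a square in $\F$ (hence positive there) while $\sigma^{-1}(\sqrt{2})=-\sqrt{2}$ is negative in $\F$, so $\sqrt{2}$ is negative in $\F'$. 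The sentence ``$\sqrt{2}>0$'' with parameter in $\F\cap\F'$ then holds in $\F$ and fails in $\F'$.

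This reflects an imprecision in the paper's statement rather than a flaw in your strategy: throughout the paper the transfer principle is only applied with parameters in a common \emph{ordered} subfield $\K\subseteq\F\cap\R$ (so the inclusions into $\F$ and $\F'$ are order-preserving by hypothesis). Under that reading your argument is complete; you should just replace the appeal to uniqueness of the order by the assumption that the parameters lie in a common ordered subfield.
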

	
	Let $\varphi$ be a first-order formula with parameters in some field $\K \subseteq \F \cap \R$ with $n$ free variables. Let $X$ be a subset of $\K^n$ which can be described as $X = \{x \in \K^n \colon \F \models \varphi(x)\}$. It follows from the transfer principle, that the \emph{semialgebraic extension $X_\F = \{x \in \F^n \colon \F \models \varphi(x)\}$ of $X$} depends only on $X$ and not on $\varphi$ and is thus well defined. Sets of the form $\{x \in k \colon k \models \varphi(x)\}$ for any ordered field $k \supseteq \K$ are called \emph{semialgebraic} sets.

	\subsection{Examples of real closed fields}\label{sec:real_closed_fields_examples}
	The field of \emph{Puiseux series} over the real algebraic numbers $\Qrc$
	$$
	\F  := \left\{  \sum_{k=-\infty}^{k_0} c_k X^{\frac{k}{m}} \, \colon \, k_0, m \in \mathbb{Z}, \, m > 0 , \, c_k \in \Qrc, \, c_{k_0}\neq 0\right\},
	$$ 
	is a non-Archimedean real closed field, where the usual order on $\Qrc$ is extended by $X>r$ for all $r \in \Qrc$ \cite{BCR}. 
	
	A \emph{non-principal ultrafilter on $\Z$} is a function $\omega \colon \mathcal{P}(\Z) \to \{0,1\}$ that satisfies
	\begin{enumerate}
		\item [(1)] $\omega(\emptyset) = 0$, $\omega(\Z) = 1$
		\item [(2)] If $A,B \subseteq \Z$ satisfy $A\cap B = \emptyset$, then $\omega(A \cup B) = \omega(A) + \omega(B)$.
		\item [(3)] All finite subsets $A \subseteq \Z$ satisfy $\omega(A) = 0$.
	\end{enumerate}
	Ultrafilters can be thought of as finitely-additive probability measures that only take values in $0$ and $1$. The existence of non-principal ultrafilters is equivalent to the axiom of choice, \cite{Hal}. For a given ultrafilter $\omega$, we define the \emph{hyperreal numbers} $\R_{\omega}$ to be the equivalence classes of infinite sequences $\R_{\omega} = \R^{\N}/ \!\! \sim$, where $x = (x_i)_{i \in \N} \sim y = (y_i)_{i \in \N}$ if $\omega(\{ i \in \N \colon x_i \neq y_i\}) = 0$ or $\omega(\{ i \in \N \colon x_i = y_i\}) = 1$. We define addition and multiplication componentwise, the multiplicative inverse is obtained by taking the inverses of all non-zero entries, turning $\F_\omega$ into a field. Considering constant sequences, the real numbers are a subfield of $\R_\omega$. The hyperreals are an ordered field with respect to the order defined by $[(x_i)_{i\in \N}] \leq [(y_i)_{i \in \N} ]$ if and only if $\omega(\{i \in \N \colon x_i \leq y_i\}) = 1$. The hyperreals are real closed, since $\R$ is. The hyperreals are non-Archimedean, since the equivalence class containing $(1, 2, 3, \ldots)$ is an \emph{infinite} element, meaning it is larger than any natural number. 
	
	Let $b \in \R_{\omega}$ be an infinite element. Then
	$$
	O_b := \{ x \in \R_{\omega} \colon |x| < b^m \text{ for some } m \in \Z \}
	$$
	is an order convex subring of $\R_\omega$ with maximal ideal
	$$
	J_b := \{ x \in \R_\omega \colon |x| <  b^m \text{ for all } m \in \Z \}.
	$$
	The \emph{Robinson field} associated to the non-principal ultrafilter $\omega$ and the infinite element $b$ is the quotient $\R_{\omega,b} := O_b/J_b$ \cite{Rob96}. The Robinson field is a non-Archimedean real closed field. Note that $[b] \in \R_{\omega,b} $ is a \emph{big} element, meaning that for all $a \in \R_{\omega,b}$ there is an $n \in \N$ such that $a < b^n$. 
	
	By definition (1) of real closed fields, the sentence
	$$
	\varphi \colon \quad \forall a \colon a > 0 \to  \exists b \colon b\cdot b = a 
	$$
	in the language of ordered fields with parameters in $\Q$ holds over any real closed field $\mathbb{F}$, formally $\mathbb{F} \models \varphi$. In fact, being real closed can be described by sentences in the language of ordered fields. On the other hand, being Archimedean can not be described as a sentence in the language of ordered fields. The attempt of writing the Archimedean condition as a sentence
	$$
	\psi \colon \quad \forall a \colon  \exists n \in \mathbb{N} \colon a < n
	$$
	fails, since the use of the symbols $\in$ and $\mathbb{N}$ is not allowed in the language of ordered fields. This is compatible with Theorem \ref{thm:logic}, as there are some real closed fields that are Archimedean and others that are not. We note that by Gödel's incompleteness theorem, the natural numbers $\mathbb{N}$ can not be described in any theory that satisfies the transfer principle.

\section{Linear algebraic groups}\label{sec:algebraic_groups}
	 \subsection{Definitions}
	 
	 \begin{wrapfigure}{r}{0.1\textwidth}
	 	\centering
	 	\includegraphics[width=0.11\textwidth]{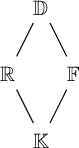}
	 \end{wrapfigure}
	 
	 Let $\K \subseteq \R$ be a subfield (usually $\K = \Q$, $\K= \overline{\Q}^{\operatorname{rc}}$ or $\K = \R$), $\F$ a real closed field containing $\K$ (usually a non-Archimedean field such as the Puiseux series) and $\D$ an algebraically closed field that contains both $\R$ and $\F$. We follow a naive approach to algebraic groups, viewing them as matrix groups. Since we are working with fields of characteristic 0, the algebraic geometry can be kept at a minimum. For an extensive introduction to algebraic groups, we refer to \cite{Bor66}, \cite{Bor}, \cite{Hum1} or \cite{Zim}.
	 
	 The general linear group $\GLnD$ can be realized as an affine algebraic variety
	 $$
	 \GLnD \cong \left\{ \begin{pmatrix}
	 	A & \\ & t
	 \end{pmatrix} \in \D^{(n+1)\times(n+1)} \colon \det (A) \cdot t = 1 \right\}.
	 $$
	 A subgroup $\bG < \GLnD$ is called a \emph{linear algebraic group defined over $\K$}, if it is a set of common zeros for a set of polynomials in the coordinate ring $\K[\operatorname{GL}_n(\D)]:= \K[(x_{ij}), \det(x_{ij})^{-1}]$ of $\operatorname{GL}_n(\D)$. 
	 We will not consider more general algebraic groups and hence also call $\bG$ an \emph{algebraic group} or a $\K$-\emph{group}.
	 For any commutative $\K$-subalgebra $\mathbb{B} \subseteq \D$ containing $\K$, we let 
	 $$
	 \GLnB = \left\{ (a_{ij}) \in \GLnD \colon a_{ij} \in \mathbb{B} \text{ and } \det(a_{ij})^{-1} \in \mathbb{B} \right\}.
	 $$ 
	 The \emph{group of $\mathbb{B}$-points of a linear algebraic group $G$}
	 is $\bG(\mathbb{B}) := \bG \cap \GLnB$. The $\R$-points of a linear algebraic group $\bG$ form a real Lie group \cite[Theorem 2.1]{Mil13}. 
	 
	 Viewing $\bG \subseteq \GLnD $ as an algebraic subset of $\D^{n+1\times n+1}$, we endow $\bG$ with the Zariski-topology, whose closed sets are given by common zeros of sets of polynomials in $\K[\operatorname{GL}_n(\D)]$. A linear algebraic group $\bG$ is \emph{semisimple} if it is connected and every closed connected normal abelian subgroup is trivial. Similarly, a Lie group is \emph{semisimple} if every closed connected normal abelian subgroup is trivial, but now in the Lie group topology. Using Zariski-closures of subgroups, one sees that $\bG$ is semisimple if and only if $\bG(\R)$ is semisimple.

 \subsubsection{Examples}
 
 The multiplicative group $\bG_m = \operatorname{GL}_1(\D)$ is a linear algebraic group defined over $\Q$. Note that $\bG_m$ is connected in the Zariski-topology. Its $\C$-points $(\bG_m)_\C = \C \setminus \{0\}$ are connected in the Euclidean topology but its $\R$-points $(\bG_m)_\R = \R \setminus \{0\}$ are not.
 
   If $V$ is a $\D$-vector space, then the group $\operatorname{GL}(V)$ of automorphisms of $V$ is an algebraic group. As $\operatorname{GL}(V)$ contains the closed connected normal abelian subgroup of scalar multiplication, $\operatorname{GL}(V)$ is not semisimple.  %needed for def of Ad
  
  The real Lie groups $\SLnR$, $\operatorname{SO}_n(\R)$ and $\operatorname{Sp}_{2n}(\R)$ are groups of $\R$-points of the linear algebraic groups $\SL_n(\D)$, $\operatorname{SO}_n(\D)$ and $\operatorname{Sp}_{2n}(\D)$ defined over $\Q$.

 \subsection{Morphisms and tori }

For a linear algebraic group $\bG$ we consider the ideal 
$$
I(\bG) = \left\{p \in \D\left[\left(x_{ij}\right),z\right] \colon  p\left(g,\det\!\left(g\right)^{-1}\right) = 0 \text{ for all } g \in \bG   \right\},
$$
which is finitely generated as a consequence of Hilbert's Basis theorem. The \emph{coordinate ring} of $\bG$ is 
$$
\D [\bG] = \D\left[\left(x_{ij}\right),d\right] / I(\bG)
$$
and its elements are called \emph{regular functions} on $\bG$. Given a map $\varphi\colon \bG \to \bH$ between algebraic groups $\bG$ and $\bH$, we can define the \emph{transposed map} $\varphi^\circ \colon \D[\bH] \to \D[\bG]$ by $\varphi^\circ (f) = f \circ \varphi$. A \emph{morphism} of linear algebraic groups $\bG$ and $\bH$ is a group homomorphism $\varphi \colon \bG \to \bH$ whose transposed map $\varphi^\circ$ is a ring homomorphism. If $\bG$ and $\bH$ are defined over $\K$ and $\varphi^\circ$ maps $\K[\bH]$ into $\K[\bG]$, then $\varphi$ is \emph{defined over} $\K$. An important example of a morphism $\bG \to \bG$ is the conjugation $\operatorname{Int}(g) \colon h \mapsto ghg^{-1}$ by an element $g \in \bG$, and if $g \in \bG(\K)$, then $\operatorname{Int}(g)$ is defined over $\K$.

\begin{lemma}\label{lem:basic_algebraic_geometry}
	Let $\varphi \colon \bG \to \bH$ be a morphism of algebraic $\K$-groups $\bG < \operatorname{GL}(n,\D)$, $\bH < \operatorname{GL}(m,\D)$ defined over $\K$. If we write componentwise
	\begin{align*}
		\varphi_\F \colon \bG(\F) & \to \bH(\F) \\
		 \begin{pmatrix}
			x_{11} & \ldots & \\ \vdots & \ddots & \\ & & x_{nn}
		\end{pmatrix} & \mapsto \begin{pmatrix}
		\varphi_{11}(x_{11}, \ldots , x_{nn}) & \ldots & \\ \vdots & \ddots & \\ & & \varphi_{mm}(x_{11}, \ldots, x_{nn})
		\end{pmatrix} 
	\end{align*}
	then all $\varphi_{ij} \colon \bG(\F) \to \F $ are polynomials with coefficients in $\K$.
\end{lemma}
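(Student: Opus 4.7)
The proof is essentially an unpacking of the definition of a morphism defined over $\K$. The plan is to identify each component function $\varphi_{ij}$ with the pullback under $\varphi^\circ$ of the standard coordinate function on $\bH$, and then use the hypothesis to conclude that this pullback lies in $\K[\bG]$.

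First I would fix notation: let $y_{ij} \in \D[\operatorname{GL}_m(\D)]$ denote the standard coordinate function sending a matrix $h \in \operatorname{GL}_m(\D)$ to its $(i,j)$-entry. These functions lie in $\K[\operatorname{GL}_m(\D)]$ and restrict to elements of $\K[\bH]$. Then by definition of the transposed map, $\varphi^\circ(y_{ij}) = y_{ij} \circ \varphi$ is the function on $\bG$ that sends $g$ to the $(i,j)$-entry of $\varphi(g)$, which is exactly $\varphi_{ij}(g)$. Since $\varphi$ is defined over $\K$, the transposed map $\varphi^\circ$ carries $\K[\bH]$ into $\K[\bG]$, so $\varphi_{ij} \in \K[\bG]$.

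By the definition of the coordinate ring, this means that $\varphi_{ij}$ is represented by an element of $\K[(x_{kl}),d]/I(\bG)$, that is, by a polynomial in the matrix entries $x_{kl}$ and the extra coordinate $d = \det(x_{kl})^{-1}$, with coefficients in $\K$. Under the realization of $\operatorname{GL}_n(\D)$ as the affine variety in $\D^{(n+1)\times(n+1)}$ recalled at the start of the section, this is a genuine polynomial in the $(n+1)^2$ ambient coordinates, so it is in particular a polynomial function. Finally, evaluating at a point of $\bG(\F)$ gives the asserted formula, with values in $\F$ because the coefficients lie in $\K \subseteq \F$.

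The only mild subtlety is the role of the determinantal coordinate $d$: one has to be comfortable identifying $\bG$ with its image in $\D^{(n+1)\times(n+1)}$ so that ``polynomial in the entries'' is literally correct rather than merely ``regular function.'' The argument is otherwise a direct translation of the definition of ``defined over $\K$,'' so I do not expect any genuine obstacle; the only temptation to avoid is trying to prove the statement on the $\F$-points directly, which would obscure the fact that the polynomial is the same for all extension fields and in particular independent of $\F$.
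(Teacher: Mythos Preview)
Your proof is correct and follows essentially the same approach as the paper's: both identify $\varphi_{ij}$ with $\varphi^\circ$ applied to the $(i,j)$-th coordinate function on $\bH$, and then use that $\varphi$ is defined over $\K$ to conclude $\varphi_{ij}\in\K[\bG]$. Your version is in fact slightly more careful in discussing the determinantal coordinate $d$, which the paper leaves implicit.
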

\begin{proof}
	Consider the polynomial representing $X_{ij} \in \K[\bH]$. Then 
	\begin{align*}
		\varphi^\circ (X_{ij})(x_{11}, \ldots , x_{nn}) &= X_{ij} (\varphi(x_{11}, \ldots, x_{mm})) \\
		&= X_{ij}(\varphi_{11}(x_{11}, \ldots , x_{nn}), \ldots , \varphi_{mm}(x_{11}, \ldots , x_{nn}))\\
		& = \varphi_{ij}(x_{11}, \ldots , x_{nn})
	\end{align*} 
	and thus $\varphi_{ij} \in \K[\bG]$ is a polynomial.
\end{proof}

 A linear algebraic group $\bT$ that is isomorphic to $(\bG_m)^d$ is called a \emph{$d$-dimensional torus}. An element $g \in \operatorname{GL}_n(\D)$ is \emph{semisimple} if $\D^n$ is spanned by eigenvectors of $g$, or equivalently, if $g$ is diagonalizable. 
 
 \begin{theorem}[8.5 in \cite{Bor}] % plus commutative of diagonalizable is simultaneously diagonalizable.
 	\label{thm:bor_torus}
 	For a Zariski-connected algebraic group $\bT$, the following conditions are equivalent.
 	\begin{enumerate}
 		\item $\bT$ is a torus.
 		\item $\bT$ consists only of semisimple elements.
 		\item The whole group $\bT$ is simultaneously diagonalizable.
 	\end{enumerate}
 \end{theorem}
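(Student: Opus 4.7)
The plan is to establish the cycle $(1)\Rightarrow(3)\Rightarrow(2)\Rightarrow(1)$, with the final implication being the main content.

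For $(1)\Rightarrow(3)$, the inclusion $\bT\hookrightarrow\GLnD$ is a faithful rational representation of $(\bG_m)^d$. I would invoke the standard weight-space decomposition for split tori: any such representation decomposes as a direct sum of characters $(t_1,\ldots,t_d)\mapsto\prod t_i^{n_i}$. Choosing a basis of $\D^n$ adapted to this decomposition puts all of $\bT$ simultaneously into diagonal form. The implication $(3)\Rightarrow(2)$ is immediate, since a simultaneously diagonalizable family consists of semisimple elements.

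For $(2)\Rightarrow(1)$ I would proceed in two steps: first show that $\bT$ is commutative, then deduce that it is a torus. For commutativity, fix $t\in\bT$ and consider the conjugation morphism $\mu_t\colon\bT\to\bT$, $s\mapsto sts^{-1}$, whose image is the conjugacy class $C(t)$ of $t$ in $\bT$. Because $t$ is semisimple, its similarity class in $\GLnD$ is Zariski-closed, so $C(t)$ is closed in $\bT$. Since $\bT$ is connected, $C(t)$ is connected, and a positive-dimensional orbit of semisimple elements through $t$ would, by an orbit-closure and tangent-space argument, produce non-semisimple limit points contradicting $(2)$. Hence $\mu_t$ is constant for every $t\in\bT$ and $\bT$ is commutative. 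Once commutativity is known, the commuting family of diagonalizable matrices $\bT\subseteq\GLnD$ is simultaneously diagonalizable by standard linear algebra, so $\bT$ lies inside the diagonal subgroup $D\cong(\bG_m)^n$ of $\GLnD$. A closed connected subgroup of $D$ is cut out by the vanishing of a sublattice of characters; since $\bT$ is connected, the quotient character lattice $X^\ast(\bT)$ is torsion-free, giving $X^\ast(\bT)\cong\Z^d$ and hence $\bT\cong(\bG_m)^d$.

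The hardest part will be the commutativity step. An abstract collection of diagonalizable matrices need not commute, so the argument must use essentially that $\bT$ is \emph{Zariski-connected} and \emph{algebraic}: one requires a rigidity statement about conjugacy classes of semisimple elements (closedness and correct dimension), which in turn rests on the Jordan decomposition in algebraic groups. A cleaner alternative, following the standard treatment in \cite{Bor}, is to invoke this Jordan decomposition at the group level to write $\bT$ as a product of its semisimple and unipotent parts and use $(2)$ to eliminate the unipotent factor, reducing to the well-understood structure of connected commutative groups consisting entirely of semisimple elements.
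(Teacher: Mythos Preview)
The paper does not give its own proof of this statement; it is quoted verbatim from Borel and used as a black box. So there is nothing to compare your argument against. Let me therefore just assess your proposal on its own merits.

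Your implications $(1)\Rightarrow(3)\Rightarrow(2)$ are fine, and once commutativity of $\bT$ is established your endgame (simultaneous diagonalization, then character-lattice argument) is correct. The gap is the commutativity step in $(2)\Rightarrow(1)$. You argue that the conjugacy class $C(t)$ is closed and connected, and then assert that a positive-dimensional such orbit ``would, by an orbit-closure and tangent-space argument, produce non-semisimple limit points''. But you have just shown $C(t)$ is \emph{closed}, so it has no limit points outside itself, and every point of $C(t)\subseteq\bT$ is semisimple by hypothesis; there is nowhere for a non-semisimple element to appear. The tangent space to $C(t)$ at $t$, translated to $e$, is $(\operatorname{Ad}(t^{-1})-1)(\frakg)$, and while an eigenvector of $\operatorname{Ad}(t)$ with eigenvalue $\lambda\neq 1$ is often nilpotent, it need not be when $\lambda$ is a root of unity, so this line does not close without substantial extra work. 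Your fallback suggestion, the Jordan decomposition $\bT=\bT_s\times\bT_u$, is a theorem about \emph{commutative} algebraic groups, so invoking it here is circular.

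Since the paper works in characteristic $0$, the cleanest repair is to pass to the Lie algebra. If some $X\in\frakg$ had nonzero nilpotent part $X_n$, then $X_n\in\frakg$ (Jordan decomposition in $\frakg$) and $t\mapsto\exp(tX_n)$ is a polynomial morphism $\bG_a\to\bT$ whose image consists of nontrivial unipotent elements, contradicting $(2)$. Hence every element of $\frakg\subseteq\mathfrak{gl}_n(\D)$ is semisimple. Now use the standard ``toral subalgebras are abelian'' argument: if $\operatorname{ad}(X)$ had a nonzero eigenvalue $\lambda$ on $\frakg$ with eigenvector $Y$, then $Y$ shifts the $X$-eigenspaces of $\D^n$ by $\lambda$, forcing $Y$ to be nilpotent, hence $Y=0$ since $Y$ is also semisimple. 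Thus $\frakg$ is abelian, so the connected group $\bT$ is abelian, and your remaining steps go through.
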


Since Zariski-connectedness of algebraic groups and semisimplicity of elements is preserved under morphisms \cite[4.4(4)]{Bor}, the image of a torus under a morphism is a torus. %Bor 4.4(4)
 A morphism $\chi \colon \bG \mapsto \bG_m$ from an algebraic group $\bG$ to $\bG_m$ is called a \emph{character}. The set of characters of $\bG$ is an abelian group, denoted by $\hat{\bG}$. If $\bT \cong (\bG_m)^d$ is a $d$-dimensional torus and an element $x \in \bT$ corresponds to $(x_1, \ldots , x_d)  \in (\bG_m)^d$, then every character $\chi$ of $\bT$ is of the form 
 $$
 \chi(x) = x_1^{n_1} \cdots x_d^{n_d}
 $$
 for some $n_i \in \Z$ and hence $\hat{\bT} = \Z^d$. If there is an isomorphism $\bT \to (\bG_m)^d$ defined over $\K$, $\bT$ is said to be \emph{$\K$-split}.
  
   \begin{theorem}[8.4 in \cite{Bor}]
  	For a torus $\bT$ defined over $\K$, the following conditions are equivalent.
  	\begin{enumerate}
 	\item $\bT$ is $\K$-split.
 	\item All characters of $\bT$ are defined over $\K$, 
  	\end{enumerate}
  \end{theorem}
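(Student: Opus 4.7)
The plan is to exploit the correspondence between tori and their character lattices together with the ring-theoretic definition of ``defined over $\K$''. Both implications are obtained by taking a $\Z$-basis of $\hat{\bT} \cong \Z^d$ and tracking $\K$-rationality through compositions with an isomorphism $\bT \cong (\bG_m)^d$.

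For $(1) \Rightarrow (2)$, I would fix a $\K$-isomorphism $\varphi \colon \bT \to (\bG_m)^d$. The characters of $(\bG_m)^d$ are the monomials $\pi_1^{n_1}\cdots\pi_d^{n_d}$ in the coordinate projections $\pi_i$, and these are obviously defined over $\Q \subseteq \K$. Since the transposed map is contravariantly functorial, compositions and pointwise products of $\K$-rational morphisms stay $\K$-rational. Hence every character $\chi$ of $\bT$, which factors as $\chi = (\chi \circ \varphi^{-1}) \circ \varphi = (\pi_1 \circ \varphi)^{n_1}\cdots(\pi_d\circ \varphi)^{n_d}$ for suitable $n_i \in \Z$, is itself defined over $\K$.

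For $(2) \Rightarrow (1)$, I would pick a $\Z$-basis $\chi_1,\dots,\chi_d$ of $\hat{\bT}$; by assumption each $\chi_i$ is defined over $\K$, so the product morphism $\varphi = (\chi_1,\dots,\chi_d)\colon \bT \to (\bG_m)^d$ is defined over $\K$. Its effect on character groups is the map $\Z^d \to \hat{\bT}$ sending $\pi_i \mapsto \chi_i$, which is a $\Z$-linear isomorphism. By the standard duality between tori and their character lattices over the algebraically closed field $\D$, this implies $\varphi$ is already an isomorphism of algebraic groups. What remains is to check that $\varphi^{-1}$ is also defined over $\K$, i.e.\ that $\varphi^{\circ}$ restricts to a $\K$-algebra isomorphism $\K[(\bG_m)^d] \to \K[\bT]$.

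The main obstacle is this last descent step: I have to show $\K[\bT]$ is generated, as a $\K$-algebra, by $\chi_1^{\pm 1},\dots,\chi_d^{\pm 1}$. For this I would invoke that characters form a $\D$-basis of $\D[\bT]$ (a consequence of simultaneous diagonalizability in Theorem \ref{thm:bor_torus}), so that $\D[\bT] = \D[\chi_1^{\pm 1},\dots,\chi_d^{\pm 1}]$. The inclusion $\K[\chi_1^{\pm 1},\dots,\chi_d^{\pm 1}] \subseteq \K[\bT]$ becomes an equality after tensoring with $\D$, and comparing the weight-$\chi$ graded pieces (each already one-dimensional on the left by construction, and no larger on the right since this remains so after scalar extension) forces equality over $\K$. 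Together with the forward direction, this yields the equivalence.
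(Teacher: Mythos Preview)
The paper does not give its own proof of this statement; it is simply quoted from Borel \cite[8.4]{Bor} as background. So there is no in-paper argument to compare against, and the relevant question is only whether your proof is correct. It is: both directions follow the standard line in Borel.

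Your $(1)\Rightarrow(2)$ is fine. For $(2)\Rightarrow(1)$, the substantive step is the descent of $\varphi^{-1}$, and your graded-pieces argument works. A slightly cleaner way to phrase it: the characters $\{\chi\}_{\chi\in\hat{\bT}}$ all lie in $\K[\bT]$ by hypothesis, and they form a $\D$-basis of $\D[\bT]=\K[\bT]\otimes_\K\D$; hence by faithful flatness of $\K\hookrightarrow\D$ they already form a $\K$-basis of $\K[\bT]$, so $\K[\bT]=\bigoplus_{\chi}\K\chi=\K[\chi_1^{\pm1},\dots,\chi_d^{\pm1}]$ and $\varphi^\circ$ is a $\K$-algebra isomorphism. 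One small remark: the fact you cite as a consequence of Theorem~\ref{thm:bor_torus} (that characters span $\D[\bT]$) is really the description of the coordinate ring of a diagonalizable group (Borel~8.2--8.3), which is logically prior to 8.5; you may want to point there instead.
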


A \emph{maximal torus} $\bT \subseteq \bG$ is a subgroup that is a torus and is not properly contained in any other torus in $\bG$. A \emph{maximal $\K$-split torus} is a $\K$-split torus that is maximal among $\K$-split tori.

\begin{theorem}[11.3 and 20.9 in \cite{Bor}]
	In a connected algebraic group $\bG$, all maximal tori of $\bG$ are conjugate. In a semisimple 
	$\K$-group $\bG$, the maximal $\K$-split tori of $\bG$ are conjugate under $\bG(\K)$.
\end{theorem}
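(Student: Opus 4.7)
The plan is to treat the two statements in parallel, since both are conjugacy theorems and both are instances of the philosophy that maximal objects of a given type are controlled by minimal parabolic-type subgroups. For the first statement, I would fix a \emph{Borel subgroup} $\bB \subset \bG$, that is, a maximal Zariski-connected solvable algebraic subgroup (which exists by a dimension argument, since $\bG$ is finite-dimensional). The key external input is that the quotient variety $\bG/\bB$ is projective, hence complete. Given any maximal torus $\bT' \subset \bG$, the group $\bT'$ is connected and solvable and acts on the complete variety $\bG/\bB$; by the Borel fixed-point theorem it has a fixed point, which translates to $\bT' \subset g\bB g^{-1}$ for some $g \in \bG$. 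So every maximal torus lies inside \emph{some} Borel subgroup.

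It then remains to show that maximal tori inside one fixed Borel subgroup $\bB$ are already conjugate within $\bB$. This is the structure theorem for connected solvable groups: such a $\bB$ splits (as a variety) as $\bT \ltimes \bU$ where $\bU$ is the unipotent radical and $\bT$ is a maximal torus, and any other maximal torus of $\bB$ is of the form $u \bT u^{-1}$ for some $u \in \bU$. Combining with the previous paragraph yields the first statement.

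For the second statement over $\K$, the naive transcription fails because the Borel fixed-point theorem produces fixed points over $\D$, not over $\K$. The remedy is to replace Borel subgroups by \emph{minimal parabolic $\K$-subgroups}, i.e.\ parabolic subgroups that are defined over $\K$ and minimal with this property. The analogue of the argument above, due to Borel and Tits, shows that all minimal parabolic $\K$-subgroups are conjugate under $\bG(\K)$: one uses that $\bG/\bP$ is projective and \emph{defined over $\K$}, and that a maximal $\K$-split torus $\bS$ acting on $(\bG/\bP)(\K)$ has a $\K$-rational fixed point because its action on $\bG/\bP$ is given by $\K$-morphisms with $\K$-rational weight space decomposition. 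This places $\bS$ inside a conjugate of $\bP$ by an element of $\bG(\K)$.

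Finally, inside a fixed minimal parabolic $\K$-subgroup $\bP = \bM \bA \bU$ (with $\bA$ a maximal $\K$-split torus), one shows that two maximal $\K$-split tori are conjugate under $\bP(\K)$, using that the unipotent radical $\bU$ is $\K$-split unipotent (exponential gives a $\K$-defined bijection with its Lie algebra) and that $\bA$ is the unique maximal $\K$-split torus in the center of the Levi $\bM \bA$ up to $\bM(\K)$-conjugation. The main obstacle, and the step where real-closedness of $\K$ would be crucial if one wanted to avoid invoking the general Borel--Tits machinery, is precisely this rationality of fixed points: one must verify that the Borel-type fixed-point arguments can be descended from $\D$ to $\K$. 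In the present paper this is handled uniformly by the transfer principle once the result over $\R$ is available, which is why it suffices to cite \cite{Bor}.
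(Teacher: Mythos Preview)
The paper does not prove this theorem at all: it is stated as a background result and simply cited from \cite{Bor} (11.3 and 20.9). So there is no ``paper's own proof'' to compare against; the paper treats this as an imported fact from the general theory of linear algebraic groups.

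Your sketch is a reasonable outline of the standard argument as it appears in Borel's book: Borel fixed-point theorem to place a maximal torus inside a Borel subgroup, then the structure of connected solvable groups for conjugacy within a Borel; and for the relative version, replace Borel subgroups by minimal parabolic $\K$-subgroups and run the Borel--Tits rationality arguments. That is essentially how 11.3 and 20.9 in \cite{Bor} go, so in that sense your proposal is correct and aligned with the cited source.

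Your final paragraph, however, is a misreading of the paper. The conjugacy of maximal $\K$-split tori under $\bG(\K)$ in \cite{Bor} holds over an \emph{arbitrary} field $\K$; it does not require $\K$ to be real closed, and the paper does not invoke the transfer principle for this statement. The transfer principle enters the paper later, for genuinely new results such as Theorem~\ref{thm:split_tori} (that maximal $\K$-split equals maximal $\F$-split for real closed $\K \subseteq \F$), not for this classical conjugacy theorem. So the sentence ``in the present paper this is handled uniformly by the transfer principle once the result over $\R$ is available'' should be dropped: here the paper is simply quoting a general theorem, with no special role for real-closedness.
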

The \emph{rank} of $\bG$ is the common dimension of the maximal tori and if $\bG$ is semisimple 
and defined over $\K$, the $\K$-\emph{rank} of $\bG$ is the common dimension of the maximal $\K$-split tori of $\bG$. A morphism $\bG_m \to \bG$ is a \emph{multiplicative one-parameter subgroup of $\bG$}. For a torus $\bT$ the one-parameter subgroups of $\bT$ are denoted by $X_{\star}(\bT)$. For $\chi \in \hat{\bT}$ and $\lambda \in X_{\star}(\bT) $, the composition $(\chi\circ\lambda) \colon \bG_m \to \bG_m$ is a character of the torus $\bG_m$ and hence sends $x \in \bG_m$ to $x^m$ for some $m \in \Z$. This defines a map $b \colon \hat{\bT} \times X_{\star}(\bT) \to \Z, (\chi,\lambda) \mapsto m$. 
\begin{proposition}{\cite[Proposition 8.7]{Bor}}\label{prop:char_cochar}
	For any torus $\bT$, the map 
	$$
	b\colon \hat{\bT} \times X_{\star}(\bT) \to \Z
	$$
	is a nondegenerate bilinear form.
\end{proposition}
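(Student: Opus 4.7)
The plan is to reduce everything to the split case $\bT = (\bG_m)^d$ and then compute $b$ explicitly, where the pairing becomes the standard dot product on $\Z^d \times \Z^d$.

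First, I would fix an isomorphism $\bT \cong (\bG_m)^d$ (over $\D$, which always exists since every torus is $\D$-split as $\D$ is algebraically closed). Under this isomorphism, by the description of characters given just before the statement, any $\chi \in \hat{\bT}$ has the form $\chi(x_1, \ldots, x_d) = x_1^{n_1}\cdots x_d^{n_d}$ for a unique tuple $(n_1, \ldots, n_d) \in \Z^d$, giving an isomorphism $\hat{\bT} \cong \Z^d$. The key preparatory step is the analogous description of cocharacters: every morphism $\lambda \colon \bG_m \to (\bG_m)^d$ is of the form $\lambda(x) = (x^{m_1}, \ldots, x^{m_d})$ for a unique $(m_1, \ldots, m_d) \in \Z^d$. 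This follows componentwise from the fact that every morphism $\bG_m \to \bG_m$ is a character of $\bG_m$, hence of the form $x \mapsto x^m$ for some $m \in \Z$ (this uses that the ring $\D[\bG_m] = \D[x, x^{-1}]$ has only the Laurent monomials as units up to scalars, so a group morphism is forced into this shape). This yields $X_\star(\bT) \cong \Z^d$.

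Next, I would compute the pairing directly. For $\chi \leftrightarrow (n_1, \ldots, n_d)$ and $\lambda \leftrightarrow (m_1, \ldots, m_d)$, the composition
$$
(\chi \circ \lambda)(x) = (x^{m_1})^{n_1}\cdots (x^{m_d})^{n_d} = x^{n_1 m_1 + \cdots + n_d m_d}
$$
shows $b(\chi,\lambda) = \sum_{i=1}^d n_i m_i$. Bilinearity is then immediate from pointwise multiplication of characters and the group structure on $X_\star(\bT)$ (where cocharacters are multiplied coordinatewise, corresponding to addition in $\Z^d$).

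For nondegeneracy, given a nonzero $\chi$ with coordinates $(n_1, \ldots, n_d) \neq 0$, pick an index $j$ with $n_j \neq 0$ and let $\lambda_j \in X_\star(\bT)$ be the standard coordinate embedding $x \mapsto (1, \ldots, x, \ldots, 1)$; then $b(\chi, \lambda_j) = n_j \neq 0$. The same argument with the roles reversed handles nondegeneracy in the second variable. The main (only) subtle point is the classification of morphisms $\bG_m \to \bG_m$, which I would either cite as a standard fact or justify by noting that a group morphism $\bG_m \to \bG_m$ is itself a character of $\bG_m$ and so lies in $\hat{\bG_m} \cong \Z$; beyond this the proof is essentially a formal identification with the standard pairing on $\Z^d$.
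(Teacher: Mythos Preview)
Your argument is correct and is the standard proof of this fact. However, the paper does not give its own proof of this proposition; it is simply quoted from \cite[Proposition 8.7]{Bor} without proof, so there is nothing to compare against beyond noting that your write-up is precisely the argument one finds in the cited reference.
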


 \subsubsection{Examples}\label{sec:ex_split}
 The $\R$-group $\operatorname{SO}_2(\mathbb{C}) = \{ g \in \operatorname{SL}_2(\C) \colon gg\tran = \Id \}$ is a torus. An isomorphism with $\bG_m$ is given by
 \begin{align*}
 	\bG_m & \to \operatorname{SO}_2(\mathbb{C}) \\
   a & \mapsto \frac{1}{2}\begin{pmatrix}
 	a+a^{-1} & i (a^{-1}-a) \\
 	i(a - a^{-1}) & a+a^{-1}
 \end{pmatrix}  \\
	 a+ib & \mapsfrom \phantom{\frac{1}{2}}  \begin{pmatrix}
	a & b \\
	c & d
\end{pmatrix}
 \end{align*}
 but $\operatorname{SO}_2(\mathbb{C})$ is not $\R$-split. %(the appearance of $i \in \C$ in the above isomorphism only implies that this particular isomorphism is not defined over $\R$). 
 On the other hand, the $\R$-group $\bA(\mathbb{C}) = \{ g \in \operatorname{SL}_2(\C) \colon g\text{ is diagonal}\}$ is an $\R$-split torus. Since $\C$ is algebraically closed, both tori are $\C$-split. Both $\operatorname{SO}_2(\mathbb{C})$ and $\bA(\mathbb{C})$ are maximal tori of $\operatorname{SL}_2(\mathbb{C})$ and they are conjugate under $\operatorname{SL}_2(\mathbb{C})$, but not under $\operatorname{SL}_2(\R)$. The rank and the $\R$-rank of $\operatorname{SL}_2(\mathbb{C})$ is $1$.

\subsection{The Lie Algebra and the adjoint representation}

Let $e\in \bG$ be the neutral element of an algebraic $\K$-group $\bG$. There are multiple ways to define the Lie Algebra of $\bG$, a few of which can be found for instance in Chapters 5 and 9 of \cite{Hum1}. A first explicit definition uses the description of $\bG$ as a matrix group
$$
\bG = \left\{ (a_{ij}) \in \operatorname{GL}_n(\D) \colon f(a_{ij}) = 0 \text{ for all } f \in I \right\}
$$
where $I \subset \K[(x_{ij}),\det(x_{ij})^{-1}]$ is a finite subset. For the following definition we restrict ourselves to subgroups of $\operatorname{SL}_n(\D)$, so that we can ignore the dependence on $\operatorname{det}(x_{ij})^{-1}$. For $f \in I$ we define the \emph{differential of $f$ at $e:=\operatorname{Id}$} to be the linear polynomial
$$
\operatorname{d}_{e}\!f = \sum_{ij}  \frac{\partial f}{\partial x_{ij}} (e) \cdot x_{ij} \in \K[(x_{ij})]. 
$$
and the \emph{Zariski tangent space}  
$$
T_e\bG := \left\{ (a_{ij}) \in \D^{n\times n} \colon  \operatorname{d}_e\!f(a_{ij}) = 0 \text{ for all } f \in I\right\}.
$$
We now give another useful description of the tangent space in terms of derivations. A \emph{point derivation} is a $\D$-linear map $\delta \colon \D[\bG] \to \D$ that satisfies the Leibnitz rule $\delta(f f') = \delta(f) f' + f \delta(f')$ for $f,f' \in \D[\bG]$. For $a = (a_{ij}) \in T_e\bG$ the function $\delta_a = f \mapsto \operatorname{d}_e\!f(a_{ij})$ is a point derivation and every point derivation is of this form \cite[Section 5.1]{Hum1}. 

From a more algebraic point of view, a \emph{derivation} of the coordinate ring $\D[\bG]$ (viewed as a $\D$-algebra) of a linear algebraic group $\bG$ is a linear map
$$
X \colon \D[\bG] \to \D[\bG]
$$
that satisfies the Leibnitz rule $X(ff') = X(f)f' + f X(f')$ for $f,f' \in \D[\bG]$. Algebraic groups act by left (and right) translation on their coordinate rings $\D[\bG]$. For $g \in \bG$, the left translation is given by $\lambda_g (f) (h) = f(g^{-1}h)$ for $f \in \D[\bG]$, $h \in \bG$ (and the right translation by $\rho_g (f) (h) = f(hg)$). The $\D$-vector space of derivations of $\D[\bG]$ which commute with right translations
\begin{align*} 
	\frakg &:= \left\{ X \colon \D[\bG] \to \D[\bG] \colon \begin{matrix}
		X \text{ is a derivation of } \D[\bG] \text{ with }\\
		X\circ \rho_g =\rho_g \circ X \text{ for all }  g \in \bG
	\end{matrix} \right\},
\end{align*}
is isomorphic to the space of point derivations (a derivation $X$ corresponds to the point derivation $\delta \coloneq  \operatorname{ev}_e \circ X \colon \D[\bG] \to \D$, so $\delta(f) = (Xf)(e)$ for $f \in \D[\bG]$) \cite[Theorem 9.1]{Hum1}, which in turn is identified with the Zariski tangent space $T_e\bG$ of $\bG$ as before.
 Endowed with the Lie algebra structure defined by the bracket operation on derivations, $\frakg$ is called the \emph{Lie algebra of} $\bG$. For a commutative $\K$-subalgebra $\mathbb{B} \subseteq \D$ containing $\K$, we can use the identification $\frakg = T_e\bG$, to define the \emph{$\mathbb{B}$-points of the Lie algebra of $\bG$}
 $$
 \frakg(\mathbb{B}) = T_e\bG \cap \mathbb{B}^{n\times n}.
 $$
 We note that the real Lie algebra $\frakg(\R)$ is the Lie algebra of the real Lie group $\bG(\R)$ \cite[Theorem 2.1]{Mil13}.
 
Let $\varphi  \colon \bG \to \bH$ be a morphism of algebraic groups $\bG,\bH$. The transposed map $\varphi^\circ \colon \D[\bH] \to \D[\bG]$ gives rise to a linear map $\operatorname{d}_e\!\varphi$ which is called the \emph{differential of $\varphi$ at $e\in \bG$}. In terms of point derivations, the differential is defined as
\begin{align*}
	\operatorname{d}_e\!\varphi \colon  \frakg  &\to \frakh \\
	\delta &\mapsto \delta \circ \varphi^{\circ} .
\end{align*}

The group $\bG$ acts on itself by conjugation $\operatorname{Int}(g) \colon \bG \to \bG, h \mapsto ghg^{-1}$. The differential of $\operatorname{Int}(g)$ at $e$ is denoted $\operatorname{Ad} (g) $. The morphism $\operatorname{Ad} \colon \bG \to \operatorname{GL}(\frakg)$ is called the \emph{adjoint representation of} $\bG$ and is given by
$$
\operatorname{Ad}(g)(X) = gXg^{-1} \in T_e\bG
$$
when $X \in \frakg$ is viewed as an element of $T_e\bG$. The differential of $\operatorname{Ad}$ at $e$ is called the \emph{adjoint representation} $\operatorname{ad} \colon \frakg \to \mathfrak{gl}(\frakg) $. Identifying $\frakg \cong T_e\bG$, $\operatorname{ad}$ is given by
$
 \operatorname{ad}(X)(Y) = \left[ X , Y \right]
$
for $X,Y \in \frakg$, where $[\cdot \,  , \cdot]$ is the matrix bracket.

\subsection{Root systems and the spherical Weyl group}\label{sec:alg_root_system}

Let $\bG$ now be a semisimple
algebraic group and $\bS$ a torus of $\bG$. Since $\operatorname{Ad} (\bS)$ is also a torus, its elements are simultaneously diagonalizable and the Lie algebra $\frakg$ decomposes into eigenspaces
$$
\frakg = \frakg_0^{(\bS)} \oplus\bigoplus_{\alpha \neq 0} \frakg_\alpha^{(\bS)}
$$
where 
$$
\frakg_\alpha^{(\bS)} = \{ X \in \frakg \colon \operatorname{Ad} (s) (X) = \alpha(s) \cdot X \ \text{ for all } s\in \bS \}
$$
for some $\alpha \in \hat{\bS}$. Elements $\alpha \neq 0$ with $\frakg_\alpha^{(\bS)} \neq 0$, are called the \emph{roots (relative to $\bS$)} and $\frakg_\alpha^{(\bS)}$ the \emph{root spaces}. The set of all roots is denoted by $\Phi(\bG,\bS)$. If $\bG$ is defined over $\K$ and $\bS$ is a maximal $\K$-split torus of $\bG$, then $\KPhi := \Phi(\bG,\bS)$ is called the set of $\K$-\emph{roots} of $\bG$. Since all maximal $\K$-split tori of $\bG$ are conjugate over $\K$ \cite[Theorem 20.9(ii)]{Bor}, $\KPhi$ only depends on $\K$ and not on the choice of maximal $\K$-split torus $\bS$.

\begin{theorem}[21.6 \cite{Bor}]\label{thm:alg_rootsystem}
	Let $\bG$ be a semisimple $\K$-group and $\bS$ a maximal $\K$-split torus of $\bG$. Recall that $\hat{\bS}\cong \Z^d$. Let $\Phi = \Phi(\bG,\bS)$. Then there is an admissible scalar product on the $\R$-vector space $V = \hat{\bS} \otimes_\Z \R$ such that ($V$,$\Phi$) is a crystallographic root system, that is:
	\begin{enumerate}
		\item $\Phi$ is a finite, symmetric ($\Phi = -\Phi$) subset of $V$, which spans $V$ and does not contain $0$.
		\item For every $\alpha \in \Phi$ there is a reflection $r_\alpha \colon V \to V$ with respect to $\alpha$ which leaves $\Phi$ stable.
		\item If $\alpha, \beta \in \Phi$, then $2\langle \alpha, \beta \rangle / \langle \alpha, \alpha \rangle \in \Z$.
	\end{enumerate} 
\end{theorem}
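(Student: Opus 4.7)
The plan is to follow Borel's template: equip $V = \hat{\bS}\otimes_\Z \R$ with a Weyl-group-invariant inner product and then verify the three axioms in turn. Write $\mathbf{C} := \operatorname{Cen}_\bG(\bS)$ and $\mathbf{N} := \operatorname{Nor}_\bG(\bS)$, and let $W := \mathbf{N}/\mathbf{C}$ denote the (finite) algebraic spherical Weyl group. By conjugation $W$ acts $\Z$-linearly on $\hat{\bS}$ and hence $\R$-linearly on $V$; averaging any positive definite inner product on $V$ over $W$ produces a $W$-invariant inner product $\langle\cdot,\cdot\rangle$, which I take as the admissible scalar product.

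Axiom (1) is largely bookkeeping. Finiteness of $\Phi$ is immediate from $\dim_\D \frakg < \infty$ together with the simultaneous diagonalizability of $\operatorname{Ad}(\bS)$ guaranteed by Theorem~\ref{thm:bor_torus}, and $0 \notin \Phi$ is built into the definition. For the spanning claim I argue by contradiction: if some $0 \neq \lambda \in X_\star(\bS)$ were orthogonal to every root under the pairing of Proposition~\ref{prop:char_cochar}, then $\operatorname{Ad}(\lambda(\bG_m))$ would act trivially on each root space and on $\frakg_0^{(\bS)}$, so $\lambda(\bG_m)$ would lie in the kernel of $\operatorname{Ad}$, a normal subgroup of $\bG$ with abelian identity component, contradicting semisimplicity.

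The heart of the proof is axiom (2). For each $\alpha \in \Phi$ I need a reflection $r_\alpha$ of $V$ stabilizing $\Phi$, and I realize it as an element of $W$. The standard device, carried out in this paper by Theorem~\ref{thm:levi_group}, is to associate to $\alpha$ a rank-1 semisimple $\K$-subgroup $\bL_{\pm\alpha} < \bG$ whose maximal $\K$-split torus is a one-parameter subtorus of $\bS$ on which $\alpha$ restricts non-trivially. The Weyl group of $\bL_{\pm\alpha}$ has order two, and its non-trivial element lifts to some $n_\alpha \in \mathbf{N}(\K)$; the induced involution on $V$ negates $\alpha$ and fixes $(\R\alpha)^\perp$ pointwise. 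By $W$-invariance of $\langle\cdot,\cdot\rangle$ this involution is exactly the orthogonal reflection $r_\alpha$. Since $W$ permutes the weights of $\operatorname{Ad}(\bS)$ on $\frakg$, $r_\alpha$ stabilizes $\Phi$; in particular $-\alpha = r_\alpha(\alpha) \in \Phi$, giving $\Phi = -\Phi$.

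Axiom (3) drops out once (2) is available: the vector $r_\alpha(\beta) = \beta - \frac{2\langle\alpha,\beta\rangle}{\langle\alpha,\alpha\rangle}\alpha$ lies in the lattice $\hat{\bS} \subseteq V$ for every $\beta \in \Phi$, forcing the Cartan integer to be in $\Z$. A cleaner alternative reads the Cartan integers off $\mathfrak{sl}_2$-representation theory applied to the Jacobson--Morozov triple inside $\operatorname{Lie}(\bL_{\pm\alpha})$ supplied by Proposition~\ref{prop:Jacobson_Morozov_group}. The hard part is step (2): constructing $\bL_{\pm\alpha}$ together with its Weyl-group representative is where the genuine algebraic-group input lives, and it is essentially a rank-1 reduction that has to be carried out before the root-system formalism gets off the ground. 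Everything else is verification.
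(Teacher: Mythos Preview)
The paper does not prove this theorem at all: it is stated with the citation to Borel and no proof follows, so there is no ``paper's own proof'' to compare against. What you have written is a reasonable sketch of how the standard argument in \cite{Bor} proceeds, but there are two genuine problems with the version you give.

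First, citing Theorem~\ref{thm:levi_group} here is logically problematic. That theorem lives in Section~\ref{sec:rank1}, where $\K$ is assumed real closed and $\bG$ self-adjoint, whereas Theorem~\ref{thm:alg_rootsystem} is stated (as in Borel) for an arbitrary ground field $\K$. Moreover, Theorem~\ref{thm:levi_group} is formulated in terms of the Lie-theoretic root spaces $\frakg_\alpha$ coming from the restricted root system $\Sigma$, not the algebraic characters in $\KPhi$; the identification $\Sigma\cong\KPhi$ is only made later in Proposition~\ref{prop:rootsystem_compatible}, whose proof already uses the root-system structure. So invoking Theorem~\ref{thm:levi_group} to construct the reflections for Theorem~\ref{thm:alg_rootsystem} is, within this paper's logical order, circular. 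Borel's own argument builds the rank-one subgroup directly from the parabolic structure in the algebraic category, without passing through the real Lie algebra.

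Second, your lattice argument for axiom~(3) does not close. Knowing that $r_\alpha(\beta)-\beta = -\tfrac{2\langle\alpha,\beta\rangle}{\langle\alpha,\alpha\rangle}\alpha$ lies in $\hat{\bS}$ only tells you that this \emph{multiple} of $\alpha$ is a lattice point; since $\alpha$ need not be primitive in $\hat{\bS}$ (think of $2\alpha\in\Phi$ in the non-reduced case), this does not force the coefficient to be an integer. You really do need the $\mathfrak{sl}_2$-representation theory alternative you mention, or the coroot construction, to pin down integrality.
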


If $\bS$ is a maximal $\K$-split torus, the \emph{spherical Weyl group relative to $\K$} is
$$
\KW =\KW (\bS,\bG) = \operatorname{Nor}_{\bG}(\bS)/\operatorname{Cen}_{\bG}(\bS)
$$
and acts faithfully on $\bS$, $\hat{\bS}$ and $\KPhi$.

\subsection{Borel subgroups, parabolic subgroups}
Let $\bG$ be a connected algebraic group. %semisimple self-adjoint...
A subgroup is \emph{solvable} if its derived series consisting of iterated commutator groups terminates. A subgroup $\bB < \bG$ is a \emph{Borel subgroup} of $\bG$ if it is maximal among the connected solvable subgroups. % Borel 11.1
A closed subgroup $\bP < \bG$ is \emph{parabolic} if and only if it contains a Borel subgroup. % Borel 11.2
The minimal parabolic %$\D$-
subgroups are exactly the Borel subgroups. In general, a Borel subgroup may not be defined over $\K$ and in this case the minimal parabolic $\K$-subgroups may not be Borel subgroups.

\begin{theorem}(Bruhat decomposition, \cite[Theorem 21.15]{Bor})
Let $\bS$ be a maximal $\K$-split torus and $\bP$ a minimal parabolic $\K$-subgroup containing $\bS$.  Denote by $\pi \colon \operatorname{Nor}_{\bG}(\bS) \to \KW$ the Weyl group projection. Then $\bG(\K) = \bP(\K) \operatorname{Nor}_{\bG(\K)}(\bS(\K))\bP(\K)$, in fact there is a disjoint union of double classes
$$
\bG(\K) = \bigsqcup_{w \in  \KW } \bP(\K) \pi^{-1}(w) \bP(\K).
$$
\end{theorem}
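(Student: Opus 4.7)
The plan is to follow the classical strategy (as in \cite[Chapter IV]{Bor}): combine the Levi decomposition of the minimal parabolic $\bP$ with an induction on Weyl group length, keeping every step at the level of $\K$-points by virtue of the $\K$-split hypothesis on $\bS$. First I would fix the structural data. The minimal parabolic admits a Levi decomposition $\bP = \operatorname{Cen}_\bG(\bS) \ltimes \bU$, where $\bU$ is the unipotent radical of $\bP$; writing $\KPhi^+ \subset \KPhi$ for the positive roots determined by $\bP$, the group $\bU$ factors, in some fixed order, as a product of the root subgroups $\bU_\alpha$ over $\alpha \in \KPhi^+$, each defined over $\K$. Since $\operatorname{Nor}_\bG(\bS) \cap \bP = \operatorname{Cen}_\bG(\bS)$, the class $\pi(n) \in \KW$ depends only on the double coset $\bP n \bP$. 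Fix a basis $\Delta \subset \KPhi^+$ of simple roots and, for each $\alpha \in \Delta$, a representative $n_\alpha \in \operatorname{Nor}_\bG(\bS)$ of the simple reflection $s_\alpha$.

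For \emph{surjectivity} of $\bigsqcup_{w \in \KW} \bP(\K) n_w \bP(\K) \to \bG(\K)$, the key input is the exchange relation
$$
\bP(\K) n_{s_\alpha} \bP(\K) \cdot \bP(\K) n_w \bP(\K) \;\subseteq\; \bP(\K) n_{s_\alpha w} \bP(\K) \,\cup\, \bP(\K) n_w \bP(\K),
$$
for each simple $\alpha \in \Delta$ and $w \in \KW$. This comes from an $\operatorname{SL}_2$-calculation inside the rank-one $\K$-subgroup $\bL_{\pm\alpha}$ provided by Theorem \ref{thm:levi_group}: for $t \in \K^\times$ one rewrites an element of $\bU_{-\alpha}(\K)$ as a product of an element of $\bU_\alpha(\K)$, the representative $n_\alpha$, and an element of $\operatorname{Cen}_\bG(\bS)(\K)$. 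Thus the union of Bruhat cells is closed under right multiplication by each $\bU_{-\alpha}(\K)$, and since $\bG(\K)$ is generated by $\bP(\K)$ together with $\bigcup_{\alpha \in \Delta} \bU_{-\alpha}(\K)$, iteration exhausts $\bG(\K)$.

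For \emph{disjointness}, if $p_1 n_w p_2 = q_1 n_{w'} q_2$ with $p_i, q_i \in \bP(\K)$ then $n_w^{-1} \bP(\K) n_{w'} \cap \bP(\K)$ is nonempty. Conjugation by $n_w$ sends $\bU_\alpha$ to $\bU_{w\alpha}$, and a direct computation identifies $\bP \cap n_w^{-1} \bP n_w$ with the subgroup generated by $\operatorname{Cen}_\bG(\bS)$ together with the $\bU_\beta$ for those $\beta \in \KPhi^+$ satisfying $w\beta \in \KPhi^+$. Comparing these intersections for $w$ and $w'$ forces the positive systems $w^{-1}(\KPhi^+)$ and ${w'}^{-1}(\KPhi^+)$ to coincide; simple transitivity of $\KW$ on positive systems of $\KPhi$ then gives $w = w'$.

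The main obstacle is executing each of these classical steps over $\K$ itself rather than over an algebraic closure: the exchange relation, the description of $\bP \cap n_w^{-1} \bP n_w$, and the generation of $\bG(\K)$ by $\bP(\K)$ together with the opposite simple-root subgroups all require $\K$-rational versions of the standard arguments. This is precisely where the $\K$-split hypothesis on $\bS$ enters, ensuring that each $\bU_\alpha$ is defined over $\K$ with sufficiently many $\K$-points and that the rank-one groups $\bL_{\pm\alpha}$ of Theorem \ref{thm:levi_group} are $\K$-isogenous to $\operatorname{SL}_2$ or $\operatorname{PGL}_2$, so that the classical $\operatorname{SL}_2$-manipulation underlying the exchange relation takes place entirely over $\K$.
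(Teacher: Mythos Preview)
The paper does not prove this theorem at all: it is stated with a citation to \cite[Theorem~21.15]{Bor} and then used as a black box. So there is no ``paper's own proof'' to compare against; the relevant benchmark is Borel's argument itself, which is the classical BN-pair approach. Your outline is broadly in that spirit, but there are two genuine problems.

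First, you invoke Theorem~\ref{thm:levi_group} for the rank-one groups $\bL_{\pm\alpha}$. This is inappropriate here on two counts: that theorem appears much later in the paper (Section~\ref{sec:rank1}) and is proved under the standing hypotheses of Section~\ref{sec:decompositions}, namely that $\K$ is real closed and $\bG$ is self-adjoint. The Bruhat decomposition you are asked to prove sits in Section~\ref{sec:algebraic_groups}, where $\K$ is an arbitrary subfield of $\R$ and no self-adjointness is assumed. You cannot forward-reference a result with strictly stronger hypotheses.

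Second, and more seriously, your final sentence asserts that the rank-one groups $\bL_{\pm\alpha}$ are ``$\K$-isogenous to $\operatorname{SL}_2$ or $\operatorname{PGL}_2$''. This is false in general. For a relative root $\alpha$ of a semisimple $\K$-group, the group $\bL_{\pm\alpha}$ has $\K$-rank~$1$ but its absolute rank may be larger; for instance when $\bG=\operatorname{SU}(2,1)$ over $\K=\R$ the rank-one subgroup is $\operatorname{SU}(2,1)$ itself, which is not $\R$-isogenous to $\operatorname{SL}_2$. The root groups $\bU_\alpha$ need not be one-dimensional, and the ``$\operatorname{SL}_2$-manipulation'' you describe does not literally take place. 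Borel's actual argument handles the exchange relation by working inside the semisimple $\K$-rank-$1$ group generated by $\bU_\alpha$ and $\bU_{-\alpha}$ and appealing to the two-cell Bruhat decomposition of such groups, not to an explicit $\operatorname{SL}_2$ computation. Your disjointness sketch is also garbled: you pass from the nonemptiness of $n_w^{-1}\bP(\K)n_{w'}\cap\bP(\K)$ to a statement about $\bP\cap n_w^{-1}\bP n_w$, which is a different object; the standard argument proceeds by induction on $\ell(w)$.
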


After choosing a minimal parabolic $\K$-subgroup $\bP$ containing a maximal $\K$-split torus, any parabolic $\K$-subgroup containing $\bP$ is called \emph{standard parabolic}.

\subsubsection{Examples}
The group of diagonal matrices in $\operatorname{SL}_3=\operatorname{SL}_3(\D)$ is a maximal $\Q$-split torus $\bT$. There are six minimal parabolic $\Q$-subgroups (which in this case are Borel subgroups) containing $\bT$. They are given by
\begin{align*}
	\left\{ \begin{pmatrix}
		\star & \star & \star  \\
		0 & \star & \star  \\
		0 & 0 & \star  \\
	\end{pmatrix} \in \operatorname{SL}_3 \right\} , \quad \left\{ \begin{pmatrix}
	\star & 0 & \star  \\
	\star & \star & \star  \\
	0 & 0 & \star  \\
\end{pmatrix} \in \operatorname{SL}_3 \right\} , \quad\left\{ \begin{pmatrix}
\star & \star & \star  \\
0 & \star & 0  \\
0 & \star & \star  \\
\end{pmatrix} \in \operatorname{SL}_3 \right\} , \\
\left\{ \begin{pmatrix}
	\star & 0 & 0  \\
	\star & \star & 0  \\
	\star & \star & \star  \\
\end{pmatrix} \in \operatorname{SL}_3 \right\} , \quad \left\{ \begin{pmatrix}
	\star & 0 & 0  \\
	\star & \star & \star  \\
	\star & 0 & \star  \\
\end{pmatrix} \in \operatorname{SL}_3 \right\} , \quad\left\{ \begin{pmatrix}
	\star & \star & 0  \\
	0 & \star & 0  \\
	\star & \star & \star  \\
\end{pmatrix} \in \operatorname{SL}_3 \right\} 
\end{align*}
and correspond to the Weyl chambers on which the spherical Weyl group acts transitively by conjugation. A corresponding set of representatives of $\KW$ is given by 
	\begin{align*}
		&\begin{pmatrix}
			1 & 0 & 0 \\
			0 & 1 & 0 \\
			0 & 0 & 1 \\
		\end{pmatrix}, \quad
		\begin{pmatrix}
			0 & -1 & 0 \\
			1 & \phantom{-}0 & 0 \\
			0 & \phantom{-}0 & 1 \\
		\end{pmatrix}, \quad
		\begin{pmatrix}
			1 & 0 & \phantom{-}0 \\
			0 & 0 & -1 \\
			0 & 1 & \phantom{-}0 \\
		\end{pmatrix}, \\
		&\begin{pmatrix}
			0 & 0 & -1 \\
			0 & 1 & \phantom{-}0 \\
			1 & 0 & \phantom{-}0 \\
		\end{pmatrix}, \quad
		\begin{pmatrix}
			0 & 0 & 1 \\
			1 & 0 & 0 \\
			0 & 1 & 0 \\
		\end{pmatrix}, \quad
		\begin{pmatrix}
			0 & 1 & 0 \\
			0 & 0 & 1 \\
			1 & 0 & 0 \\
		\end{pmatrix}.
	\end{align*}

\section{Results about Lie algebras and split tori}\label{sec:Lie_algebra_results}
Let $\bG$ be a semisimple algebraic group defined over $\K \subseteq \R$, which is invariant under transposition. The Lie algebra $\frakg = T_e\bG$ is defined by finitely many linear equations with coefficients in $\K$ and we can therefore consider its $\K$-points $\frakg(\K) \subseteq \K^{n\times n}$, which is a $\K$-vector space and an algebraic (hence semialgebraic) set. Let $\frakg_\F$ be the semialgebraic extension of $\frakg(\K)$ for real closed fields $\F \supseteq \K$. We note that $\frakg_{\F} = \frakg(\F)$, but in what follows, we will use the semialgebraic point of view. For consistency we put $\frakg_{\K}:= \frakg(\K)$.

In this section, we recall facts about the real Lie group $\bG(\R)$ and its Lie algebra $\frakg_\R$. We sometimes reference Chapter 3 of Helgason's book \cite{Hel2} and chapter 6 of Knapp's book \cite{Kna}, a compact account of which can be found in \cite{Wis01}.

\subsection{Cartan involutions and Cartan decompositions}\label{sec:killing_involutions_decompositions}

Recall that the \emph{adjoint representation} of the Lie algebra $\frakg_\R$ 
$$
\operatorname{ad} := \operatorname{ad}_{\frakg_\R} := \operatorname{d}_e\!\operatorname{Ad}_{\bG(\R)} \colon \frakg_\R \to \mathfrak{gl}(\frakg_\R)
$$
is given by $X \mapsto [X,\cdot\,]$. The \emph{Killing form} $B$ is the bilinear form on $\frakg_\R$ defined by
$$
B(X,Y) := \operatorname{tr}\left(\operatorname{ad}(X)\circ\operatorname{ad}(Y)\right)
$$
for $X,Y \in \frakg_\R$.
A Lie algebra is called \emph{simple} if it is non-abelian and does not contain any non-zero proper ideals. 
A Lie algebra is called \emph{semisimple} if it is a direct product of simple Lie algebras. By Cartan's criterion, being semisimple is equivalent to having a non-degenerate Killing form.
 Since $\bG$ is semisimple, so is $\bG(\R)$ and $\frakg_\R$.
A Lie algebra automorphism $\theta \colon \frakg_\R \to \frakg_\R$ is called an \emph{involution} if $\theta^2 = \operatorname{Id}$. For any involution $\theta \colon \frakg_\R \to \frakg_\R$ we can define a bilinear form $B_\theta$ by
$$
B_\theta(X,Y) := -B(X,\theta Y)
$$ 
for $X,Y \in \frakg_\R$. If $B_\theta$ is positive-definite, then $\theta$ is called a \emph{Cartan involution}. The following is a technical result on Cartan involutions which is stated as an exercise in \cite{Hel2} and proven in \cite[Theorem 6.16]{Kna}.
\begin{lemma} \label{lem:aligntheta}
	Let $\frakg_\R$ be a real semisimple Lie algebra, $\theta$ a Cartan involution and $\sigma$ any involution on $\frakg_\R$. Then there exists an automorphism
	$
	\varphi \in \operatorname{Ad}(\bG(\R)) 
	$
	such that $\varphi \theta \varphi^{-1}$ commutes with $\sigma$.
\end{lemma}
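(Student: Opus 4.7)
The plan is to construct $\varphi$ as a real power of a positive-definite $B_\theta$-symmetric automorphism built out of $\sigma$ and $\theta$.

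First I record the formula for adjoints with respect to $B_\theta$. The Killing form is invariant under any Lie algebra automorphism $\psi$ of $\frakg_\R$, so combining $B_\theta(X,Y) = -B(X,\theta Y)$ with $\theta^2 = \operatorname{Id}$ yields that the $B_\theta$-adjoint of $\psi$ is $\psi^\ast = \theta \psi^{-1}\theta$. In particular $\sigma^\ast = \theta\sigma\theta$, so $N := \sigma\sigma^\ast = \sigma\theta\sigma\theta$ is invertible, and $B_\theta(NX,X) = B_\theta(\sigma^\ast X, \sigma^\ast X) > 0$ for every $X \neq 0$ shows that $N$ is positive-definite symmetric with respect to $B_\theta$.

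Consequently $N$ is diagonalizable over $\R$ with positive eigenvalues, and for every $t \in \R$ the power $N^t$ is well-defined by functional calculus on the eigenspaces. The crucial point is that each $N^t$ is itself a Lie algebra automorphism: if $X_\lambda, X_\mu$ are $N$-eigenvectors with eigenvalues $\lambda,\mu$, then $N[X_\lambda,X_\mu] = [NX_\lambda, NX_\mu] = \lambda\mu[X_\lambda, X_\mu]$, so $[X_\lambda, X_\mu]$ lies in the $\lambda\mu$-eigenspace of $N$ and hence $N^t[X_\lambda, X_\mu] = (\lambda\mu)^t[X_\lambda,X_\mu] = [N^tX_\lambda, N^tX_\mu]$. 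Since $t \mapsto N^t$ is a continuous path from the identity to $N$ in $\operatorname{Aut}(\frakg_\R)$, each $N^t$ lies in the identity component $\operatorname{Aut}(\frakg_\R)^0$, which for a semisimple Lie algebra equals the group of inner automorphisms $\operatorname{Int}(\frakg_\R) \subseteq \operatorname{Ad}(\bG(\R))$.

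To pin down the correct exponent I compute $\sigma N\sigma = \theta\sigma\theta\sigma = N^{-1}$ and similarly $\theta N\theta = N^{-1}$, which by functional calculus promote to $\sigma N^t \sigma = N^{-t}$ and $\theta N^t \theta = N^{-t}$ for every $t \in \R$. Setting $\varphi = N^t$ and $\theta_1 = \varphi\theta\varphi^{-1}$, these identities give $\sigma\theta_1 = N^{-t}\sigma\theta N^{-t}$ and $\theta_1\sigma = N^t\theta\sigma N^t$, so $\theta_1$ commutes with $\sigma$ if and only if $\sigma\theta = N^{2t}\theta\sigma N^{2t}$. A further application of the same identities to the right-hand side reduces this to $\sigma\theta\sigma\theta = N^{4t}$, i.e. $N = N^{4t}$, so the choice $\varphi := N^{1/4}$ does the job. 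The main obstacle I anticipate is the verification that $N^t$ is a Lie algebra automorphism for every real $t$ (and not only integer $t$) and that it lies in $\operatorname{Ad}(\bG(\R))$ rather than merely in $\operatorname{Aut}(\frakg_\R)$; once this is in place, everything else is routine manipulation of the defining identity $N = \sigma\theta\sigma\theta$.
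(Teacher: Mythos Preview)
Your argument is correct and is precisely the standard proof (due to Knapp, Theorem 6.16) that the paper cites without reproducing. The paper gives no independent argument for this lemma, so there is nothing further to compare.
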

An application of the preceding Lemma is that all Cartan involutions are conjugated, see \cite[Corollary 6.19]{Kna}.
\begin{theorem}\label{thm:cartan_inv_unique}
	Let $\frakg_\R$ be a real semisimple Lie algebra. Any two Cartan involutions of $\frakg_\R$ are conjugate via an element of $\operatorname{Ad}(\bG(\R))$.
\end{theorem}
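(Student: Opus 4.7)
The plan is to follow the standard reduction to the commuting case and then exploit positive-definiteness of the forms $B_{\theta_i}$. Let $\theta_1,\theta_2$ be two Cartan involutions of $\frakg_\R$. By Lemma \ref{lem:aligntheta} applied with the Cartan involution $\theta = \theta_1$ and the involution $\sigma = \theta_2$, we obtain $\varphi \in \operatorname{Ad}(\bG(\R))$ such that $\theta_1' := \varphi\theta_1\varphi^{-1}$ commutes with $\theta_2$. Observe that $\theta_1'$ is again a Cartan involution, since $B_{\theta_1'}(X,Y) = B_{\theta_1}(\varphi^{-1}X,\varphi^{-1}Y)$ (using that $\varphi$ preserves the Killing form, which holds because it is an inner automorphism) and $B_{\theta_1}$ is positive-definite. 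The goal then reduces to showing $\theta_1' = \theta_2$: if so, $\theta_2 = \varphi\theta_1\varphi^{-1}$ and we are done.

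Since $\theta_1'$ and $\theta_2$ commute and each has $\theta_i^2 = \operatorname{Id}$, they are simultaneously diagonalizable with eigenvalues in $\{+1,-1\}$, yielding a decomposition
$$
\frakg_\R = \frakg_{++} \oplus \frakg_{+-} \oplus \frakg_{-+} \oplus \frakg_{--},
$$
where $\frakg_{\epsilon_1\epsilon_2} = \{X \in \frakg_\R : \theta_1'X = \epsilon_1 X,\ \theta_2 X = \epsilon_2 X\}$. It suffices to prove $\frakg_{+-} = \frakg_{-+} = 0$. Pick a nonzero $X \in \frakg_{\epsilon_1\epsilon_2}$. Then
$$
0 < B_{\theta_1'}(X,X) = -B(X,\theta_1' X) = -\epsilon_1\, B(X,X),
$$
$$
0 < B_{\theta_2}(X,X) = -B(X,\theta_2 X) = -\epsilon_2\, B(X,X).
$$
Both inequalities force $B(X,X) \neq 0$ and that $\epsilon_1, \epsilon_2$ are both equal to the sign opposite to $B(X,X)$. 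Therefore $\epsilon_1 = \epsilon_2$, i.e.\ the mixed eigenspaces vanish and $\theta_1' = \theta_2$.

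The main content is hidden in Lemma \ref{lem:aligntheta}, which is assumed; once that is granted, the argument is just linear algebra combined with the sign computation above. The only point worth double-checking is that $\theta_1'$ remains a Cartan involution under conjugation by an element of $\operatorname{Ad}(\bG(\R))$, which follows because such elements are Lie algebra automorphisms and hence preserve the Killing form, so $B_{\theta_1'}$ is just the pullback of the positive-definite form $B_{\theta_1}$ by $\varphi^{-1}$.
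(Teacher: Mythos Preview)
Your proof is correct and follows essentially the same approach as the paper: reduce via Lemma~\ref{lem:aligntheta} to the commuting case, then use positive-definiteness of both $B_{\theta_i}$ to show the mixed $(\pm1)$-eigenspaces vanish. Your version is slightly more explicit in verifying that $\theta_1'$ remains a Cartan involution and in setting up the joint eigenspace decomposition, but the core argument is identical.
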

\begin{proof}
	Let $\theta$ and $\theta'$ be two Cartan involutions of $\frakg_\R$. If $\theta$ and $\theta'$ commute, they have the same eigenspaces. We claim $E_{1}(\theta) = E_1(\theta')$ and $E_{-1}(\theta)=E_{-1}(\theta')$, since otherwise if for instance $\theta(X)=X$ and $\theta'(X)=-X$, then 
	$$
	B_{\theta'}(X,X) = -B(X,\theta'(X)) = - B(X,-X) = B(X,\theta(X)) = - B_{\theta}(X,X),
	$$
	but both $B_{\theta}(X,X)$ and $B_{\theta'}(X,X)$ should be positive. We conclude that if $\theta$ and $\theta'$ commute, then $\theta = \theta'$, since $\theta$ and $\theta'$ take the same values on their eigenspaces. If $\theta$ and $\theta'$ do not commute, then we can apply Lemma \ref{lem:aligntheta} to find $\varphi \in \operatorname{Ad}(\bG(\R))$ such that $\varphi \theta \varphi^{-1}$ commutes with $\theta'$, and therefore $\varphi \theta \varphi^{-1} = \theta'$ by the previous argument.
\end{proof}

A decomposition of $\frakg_\R$ as a direct sum $
\frakg_\R =  \frakk \oplus \frakp.
$
is called a \emph{Cartan decomposition} if
$$
[\frakk, \frakk] \subseteq \frakk, \quad [\frakk, \frakp] \subseteq \frakp , \quad [\frakp, \frakp] \subseteq \frakk
$$
and the Killing form $B$ is negative definite on $\frakk$ and positive definite on $\frakp$. There is a correspondence between Cartan involutions and Cartan decompositions. A Cartan involution $\theta$ defines a decomposition into eigenspaces $\frakk = E_{1}(\theta)$, $\frakp = E_{-1}(\theta)$ of $\frakg_\R$. The bracket relations can be checked using that $\theta$ commutes with the bracket operation. Since then $\operatorname{ad}(\frakk)\operatorname{ad}(\frakp)(\frakk) \subseteq \frakp$ and $\operatorname{ad}(\frakk)\operatorname{ad}(\frakp)(\frakp) \subseteq \frakk$, we have 
$$
B(\frakk, \frakp) = \operatorname{Tr}(\operatorname{ad}(\frakk)\operatorname{ad}(\frakp)) = 0
$$ 
and the decomposition is orthogonal. Since $\theta$ is a Cartan involution, $B_\theta$ is positive definite, and thus $B$ is negative definite on $\frakk$ and positive definite on $\frakp$. Starting from a Cartan decomposition $\frakg_\R = \frakk \oplus \frakp$, we can define the involution
$$
\theta \colon X \mapsto \left\{\begin{matrix}
	X & \text{ if }X \in \frakk \\
	-X & \text{ if }X \in \frakp
\end{matrix} \right.
$$  
which is compatible with the bracket operation and for which $B_\theta$ is positive definite. To refine the decomposition further, we need the following Lemma.
\begin{lemma}
	Let $\frakg_\R = \frakk \oplus \frakp$ be a Cartan decomposition with Cartan involution $\theta$ and let $X \in \frakp$. The map $\operatorname{ad}(X) \colon \frakg_\R \to \frakg_\R$ is symmetric with respect to the scalar product $B_\theta$.
\end{lemma}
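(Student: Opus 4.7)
The plan is to unpack $B_\theta(\operatorname{ad}(X)Y, Z)$ and $B_\theta(Y, \operatorname{ad}(X)Z)$ using three well-known facts, and check that they coincide. The ingredients I would invoke are: (i) the \emph{associativity} (or invariance) of the Killing form, namely $B([X,Y],Z) = -B(Y,[X,Z])$ for all $X,Y,Z \in \frakg_\R$, which is a standard trace identity following from $\operatorname{tr}(AB) = \operatorname{tr}(BA)$ applied to $\operatorname{ad}([X,Y]) = [\operatorname{ad}(X), \operatorname{ad}(Y)]$; (ii) the fact that the Cartan involution $\theta$ is a Lie algebra automorphism, so $\theta[U,V] = [\theta U, \theta V]$; and (iii) that $\theta(X) = -X$ since $X \in \frakp$.

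With these in hand, the computation I would carry out is
\begin{align*}
B_\theta(\operatorname{ad}(X) Y, Z)
&= -B([X,Y], \theta Z) \\
&= B(Y, [X, \theta Z]),
\end{align*}
using definition of $B_\theta$ and then invariance of $B$. On the other side,
\begin{align*}
B_\theta(Y, \operatorname{ad}(X) Z)
&= -B(Y, \theta[X, Z]) \\
&= -B(Y, [\theta X, \theta Z]) \\
&= -B(Y, [-X, \theta Z]) \\
&= B(Y, [X, \theta Z]),
\end{align*}
using definition of $B_\theta$, that $\theta$ commutes with brackets, and $\theta X = -X$. The two expressions agree, which establishes the claimed symmetry.

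There is no real obstacle here beyond keeping track of the signs; the proposition is essentially a bookkeeping exercise combining three standard identities. The only item that deserves a brief justification (if the reader has not met it before) is the invariance identity (i), which I would either cite from the text or recall in one line.
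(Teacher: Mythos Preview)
Your proof is correct and follows essentially the same approach as the paper: both use the invariance of the Killing form, the fact that $\theta$ is a Lie algebra automorphism, and $\theta X = -X$ for $X \in \frakp$ to reduce both sides to $B(Y,[X,\theta Z])$. The paper arranges the computation as a single chain of equalities rather than computing the two sides separately, but the content is identical.
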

\begin{proof}
	We have
	\begin{align*}
		B_\theta(\operatorname{ad}(X)(Y),Z) & =-B([X,Y],\theta(Z)) = B(Y,[X,Z]) = - B(Y,[\theta(X),\theta(Z)])\\
		& = -B(Y,\theta([X,Z])) = B_\theta(Y,\operatorname{ad}(X)Z)
	\end{align*}  
for all $X \in \frakp$ and $Y,Z \in \frakg_\R$. 
\end{proof}
Let now $\frakg_\R = \frakk \oplus \frakp$ be a fixed Cartan decomposition with associated Cartan involution $\theta$. Let $\fraka \subseteq \frakp$ be a maximal abelian subspace. We can define the \emph{real rank of $\bG(\R)$}, $\operatorname{rank}_\R(\bG(\R)) := \operatorname{dim}(\fraka)$, which is independent of $\frakp$ as a consequence of Theorem \ref{thm:cartan_inv_unique} and independent of $\fraka$ since any two maximal abelian subspaces are conjugate to each other \cite[Theorem 6.51]{Kna}. The set $\{\operatorname{ad}(H) \colon H \in\fraka\}$ consists of symmetric, hence diagonalizable linear maps. Since they all commute, they are simultaneously diagonalizable. This results in the decomposition
$$
\frakg_\R = \frakg_0 \oplus\bigoplus_{\alpha \in \Sigma}\frakg_\alpha 
$$  
where for every $\alpha \in \fraka^\star$ in the dual space $\fraka^\star$ of $\fraka$
$$
\frakg_\alpha = \left\{ X \in \frakg_\R \colon [H,X] = \alpha(H) \cdot X \text{ for all } H \in \fraka  \right\}
$$
and where $\Sigma = \{\alpha \in \fraka^\star \colon \alpha \neq 0 \text{ and } \frakg_\alpha \neq 0\}$. The elements of $\Sigma$ are called \emph{restricted roots} and $\frakg_\alpha$ their associated \emph{restricted root spaces}. Note that in contrast to the decomposition of complex Lie algebras, the root spaces may not be one-dimensional\footnote{In the real semisimple Lie algebra $\mathfrak{sl}_2(\C)$ we have $\operatorname{dim}(\frakg_\alpha) = 2$.}. In Section \ref{sec:alg_root_system} we defined the root system of an algebraic group with a maximal split torus. In Section \ref{sec:compatibility} we will show that these two approaches give the same root spaces and root systems.
\begin{proposition}(\cite[Proposition 6.40]{Kna})\label{prop:root_decomp} 
	The \emph{restricted root space decomposition} is an orthogonal direct sum with respect to $B_\theta$ and satisfies for all $\alpha,\beta \in \Sigma$
	\begin{itemize}
		\item [(i)] $
		[\frakg_{\alpha}, \frakg_{\beta}] \subseteq \frakg_{\alpha + \beta}$,
		\item [(ii)]$\theta (\frakg_\alpha) = \frakg_{-\alpha}$ and
		\item [(iii)]$\frakg_0 = \fraka \oplus \mathfrak{z}_\frakk(\fraka)$, where $ \mathfrak{z}_\frakk(\fraka) = \left\{X\in \frakk \colon [H,X] = 0  \text{ for all } H \in \fraka \right\}$.
	\end{itemize}
\end{proposition}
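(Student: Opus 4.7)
The plan is to establish the four claims using the Jacobi identity, the fact that $\theta$ is a Lie algebra automorphism, maximality of $\fraka$ in $\frakp$, and the invariance of the Killing form. The inputs I will use are: (a) $\theta$ acts as $+1$ on $\frakk$ and $-1$ on $\frakp$ (so in particular $\theta|_{\fraka}=-\mathrm{id}$); (b) $\theta([X,Y])=[\theta X,\theta Y]$; (c) the Killing form is $\mathrm{ad}$-invariant: $B([H,X],Y)+B(X,[H,Y])=0$; and (d) $\fraka$ is maximal abelian in $\frakp$.

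For (i), I would apply the Jacobi identity: if $X\in\frakg_\alpha$, $Y\in\frakg_\beta$, $H\in\fraka$, then
\[
[H,[X,Y]] = [[H,X],Y]+[X,[H,Y]] = (\alpha(H)+\beta(H))[X,Y],
\]
so $[X,Y]\in\frakg_{\alpha+\beta}$. For (ii), fix $X\in\frakg_\alpha$ and $H\in\fraka$. Using $\theta H=-H$ and that $\theta$ commutes with brackets,
\[
[H,\theta X] = -[\theta H,\theta X] = -\theta([H,X]) = -\alpha(H)\,\theta X,
\]
so $\theta X\in\frakg_{-\alpha}$; the reverse inclusion follows from $\theta^2=\mathrm{id}$.

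For (iii), $\frakg_0$ is the centralizer $\mathfrak{z}_{\frakg_\R}(\fraka)$, which by (ii) is $\theta$-stable, hence splits as $(\mathfrak{z}_{\frakg_\R}(\fraka)\cap\frakk)\oplus(\mathfrak{z}_{\frakg_\R}(\fraka)\cap\frakp)=\mathfrak{z}_{\frakk}(\fraka)\oplus(\mathfrak{z}_{\frakg_\R}(\fraka)\cap\frakp)$. The step that needs the most care, and which I anticipate as the only subtle point, is showing $\mathfrak{z}_{\frakg_\R}(\fraka)\cap\frakp=\fraka$: any $X\in\frakp$ commuting with $\fraka$ generates together with $\fraka$ an abelian subspace of $\frakp$ (abelianness here uses that $\fraka$ is abelian and $X$ centralizes it), so maximality of $\fraka$ in $\frakp$ forces $X\in\fraka$. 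This gives (iii).

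Finally, for $B_\theta$-orthogonality of the decomposition, it suffices by definition of $B_\theta$ and by (ii) to show that $B(\frakg_\alpha,\frakg_\gamma)=0$ whenever $\alpha+\gamma\neq 0$ (applied with $\gamma=-\beta$ will give $B_\theta(\frakg_\alpha,\frakg_\beta)=0$ for $\alpha\neq\beta$, including the case that one of them is $0$). For $X\in\frakg_\alpha$, $Y\in\frakg_\gamma$, $H\in\fraka$, invariance of $B$ yields
\[
0 = B([H,X],Y)+B(X,[H,Y]) = (\alpha(H)+\gamma(H))\,B(X,Y),
\]
and choosing $H$ with $(\alpha+\gamma)(H)\neq 0$ forces $B(X,Y)=0$. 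Combining this with (ii) gives the orthogonality statement, completing the proof.
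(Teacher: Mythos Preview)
Your proof is correct and is essentially the standard argument (as found in Knapp). The paper itself does not supply a proof of this proposition; it simply cites \cite[Proposition~6.40]{Kna}, so there is nothing to compare against beyond noting that your argument matches the cited source.
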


The inner product $B_{\theta}$ may be restricted to $\fraka$ and used to set up an isomorphism $\fraka^{\star} \cong \fraka$ which turns $\fraka^{\star}$ into a Euclidean vector space.

\begin{theorem}(\cite[Corollary 6.53]{Kna})\label{thm:sigma_root}
	The set of roots $(\Sigma,\fraka^\star)$ is a crystallographic root system\footnote{$\Sigma$ may not be reduced, for example when $G= \operatorname{SU}(2,1)$.}.
\end{theorem}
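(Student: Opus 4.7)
The plan is to verify the three axioms for a crystallographic root system from Theorem \ref{thm:alg_rootsystem} applied to $(\fraka^\star, \Sigma)$, where $\fraka^\star$ carries the inner product $\langle\cdot,\cdot\rangle$ obtained by dualizing the restriction of $B_\theta$ to $\fraka$ (which is positive definite because $\theta = -\operatorname{Id}$ on $\frakp \supseteq \fraka$).

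First, finiteness of $\Sigma$ is immediate from the restricted root space decomposition and the fact that $\frakg_\R$ is finite-dimensional; $0 \notin \Sigma$ by definition; and $\Sigma = -\Sigma$ is part (ii) of Proposition \ref{prop:root_decomp}. For spanning, suppose $H \in \fraka$ satisfies $\alpha(H) = 0$ for every $\alpha \in \Sigma$. Then $\operatorname{ad}(H)$ annihilates each $\frakg_\alpha$, and it annihilates $\frakg_0 = \fraka \oplus \mathfrak{z}_\frakk(\fraka)$ by part (iii), since $H$ commutes with $\fraka$ and with $\mathfrak{z}_\frakk(\fraka)$ by definition of the centralizer. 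Thus $H$ lies in the center of $\frakg_\R$, which is trivial by semisimplicity, so $H = 0$; hence $\Sigma$ separates points of $\fraka$ and therefore spans $\fraka^\star$.

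The core step is to associate to each $\alpha \in \Sigma$ an $\mathfrak{sl}_2$-triple $\{E_\alpha, F_\alpha, H_\alpha\}$ inside $\frakg_\R$. I pick any non-zero $E \in \frakg_\alpha$ and set $F := -\theta(E) \in \frakg_{-\alpha}$, which is non-zero by Proposition \ref{prop:root_decomp}(ii). By part (i), $[E,F] \in \frakg_0$. Using invariance of $B$ and $B(E,F) = -B(E,\theta E) = B_\theta(E,E) > 0$, one computes $B([E,F],H) = \alpha(H) \, B(E,F)$ for $H \in \fraka$, which shows that the $\fraka$-component of $[E,F]$ pairs non-trivially with $\alpha$ and in particular is non-zero. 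After rescaling, we obtain $H_\alpha \in \fraka$ with $[E_\alpha, F_\alpha] = H_\alpha$ and $\alpha(H_\alpha) = 2$, yielding a genuine $\mathfrak{sl}_2$-triple. Standard $\mathfrak{sl}_2$ representation theory applied to the adjoint action on $\frakg_\R$ forces the eigenvalues of $\operatorname{ad}(H_\alpha)$ to be integers; since $\operatorname{ad}(H_\alpha)$ acts on $\frakg_\beta$ by the scalar $\beta(H_\alpha)$, we get $\beta(H_\alpha) \in \Z$ for every $\beta \in \Sigma$. Under the identification $\fraka^\star \cong \fraka$, the element $H_\alpha$ corresponds to $2\alpha / \langle \alpha, \alpha\rangle$, so $\beta(H_\alpha) = 2\langle\beta,\alpha\rangle / \langle\alpha,\alpha\rangle \in \Z$, giving axiom (3). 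For axiom (2), the reflection $r_\alpha$ on $\fraka^\star$ is realized by the adjoint action of a Weyl group representative built from the $\mathfrak{sl}_2$-triple (for instance $\operatorname{Ad}(\exp(E_\alpha - F_\alpha)\exp(F_\alpha - E_\alpha)\exp(E_\alpha - F_\alpha))$ in $\bG(\R)$); this element normalizes $\fraka$, acts on it as $r_\alpha$, and conjugation permutes root spaces $\frakg_\beta \mapsto \frakg_{r_\alpha(\beta)}$, so $r_\alpha(\Sigma) = \Sigma$.

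The main obstacle is precisely the construction and normalization of the $\mathfrak{sl}_2$-triple: verifying that $[E,F]$ has a non-zero $\fraka$-component, and that after rescaling one can force $\alpha(H_\alpha) = 2$. Once this bridge to $\mathfrak{sl}_2$-theory is in place, both integrality and stability under reflection follow from well-known representation-theoretic facts together with Proposition \ref{prop:root_decomp}, and the remaining axioms are essentially bookkeeping on the decomposition.
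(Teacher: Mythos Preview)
The paper does not prove this theorem; it is quoted from \cite[Corollary 6.53]{Kna} without argument. Your sketch is the standard proof and is correct in outline, but two steps are imprecise.

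First, you speak of ``the $\fraka$-component of $[E,F]$'' and then, after rescaling, produce $H_\alpha \in \fraka$ with $[E_\alpha,F_\alpha]=H_\alpha$. Rescaling cannot move $[E,F]$ into $\fraka$ if it was not already there; what is needed is the observation that with $F=-\theta(E)$ one has $\theta([E,\theta E])=[\theta E,E]=-[E,\theta E]$, so $[E,F]\in\frakp\cap\frakg_0=\fraka$. This is exactly the computation carried out in the paper in the proofs of Lemmas~\ref{lem:levi_algebra} and~\ref{lem:JM_basic}, so you can simply cite those.

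Second, your proposed Weyl representative $\exp(E_\alpha-F_\alpha)\exp(F_\alpha-E_\alpha)\exp(E_\alpha-F_\alpha)$ collapses to $\exp(E_\alpha-F_\alpha)$, which in the $\mathfrak{sl}_2$ picture is rotation by angle~$1$ and does not normalize $\fraka$. The correct element is either $\exp\!\bigl(\tfrac{\pi}{2}(E_\alpha-F_\alpha)\bigr)\in K_\R$, or the algebraic $m$-element $\exp(E_\alpha)\exp(-F_\alpha)\exp(E_\alpha)$ as in Lemma~\ref{lem:Jacobson_Morozov_m}; either one normalizes $\fraka$, acts there as $r_\alpha$, and permutes the root spaces, giving axiom~(2). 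With these two fixes your argument goes through.
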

By choosing an ordered basis of the root system $\Sigma$, we can define the set of positive roots $\Sigma_{>0}$. 
Then
$$
\frakn = \bigoplus_{\alpha \in \Sigma_{>0}} \frakg_\alpha
$$
is a nilpotent subalgebra of $\frakg_\R$. We have the following properties. 
\begin{lemma}\label{lem:kan_form}(\cite[Lemma 6.45]{Kna})
	Let $\frakg_\R$ be a real semisimple Lie algebra. There exists a basis of $\frakg_\R$ such that the matrices representing $\operatorname{ad}(\frakg_\R)$ have the following properties
	\begin{enumerate}
		\item The matrices of $\operatorname{ad}(\frakk)$ are skew-symmetric.
		\item The matrices of $\operatorname{ad}(\fraka)$ are diagonal.
		\item The matrices of $\operatorname{ad}(\frakn)$ are upper triangular with $0$ on the diagonal.
	\end{enumerate}
\end{lemma}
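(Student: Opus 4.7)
The plan is to build the desired basis directly from the restricted root space decomposition of $\frakg_\R$ together with the inner product $B_\theta$. By Proposition \ref{prop:root_decomp}, the decomposition $\frakg_\R = \frakg_0 \oplus \bigoplus_{\alpha \in \Sigma}\frakg_\alpha$ is $B_\theta$-orthogonal and $\frakg_0 = \fraka \oplus \mathfrak{z}_\frakk(\fraka)$ is itself $B_\theta$-orthogonal. I choose $B_\theta$-orthonormal bases of each summand, then concatenate them and order the resulting list by decreasing value of a fixed linear functional on $\fraka^\star$ that is strictly positive on $\Sigma_{>0}$. In particular, positive root blocks come first (in decreasing height), then $\frakg_0$, then negative root blocks.

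The three claims then follow by inspecting each block. For (1), the identity $\theta[X,Z]=[\theta X,\theta Z]$ together with the invariance $B([X,Y],\theta Z)=-B(Y,[X,\theta Z])$ and $\theta X=X$ for $X\in \frakk$ yield $B_\theta(\operatorname{ad}(X)Y,Z)=-B_\theta(Y,\operatorname{ad}(X)Z)$, so $\operatorname{ad}(X)$ is skew-symmetric in any $B_\theta$-orthonormal basis. For (2), every $H \in \fraka$ acts on $\frakg_\alpha$ as the scalar $\alpha(H)$ and on $\frakg_0$ as zero, so $\operatorname{ad}(H)$ is diagonal in our basis. For (3), if $X \in \frakg_\alpha$ with $\alpha \in \Sigma_{>0}$ and $Y$ lies in the block of $\frakg_\beta$, part (i) of Proposition \ref{prop:root_decomp} gives $\operatorname{ad}(X)Y \in \frakg_{\alpha+\beta}$; this block strictly precedes the block of $\frakg_\beta$ by the choice of ordering (and the image vanishes when $\alpha+\beta \notin \Sigma \cup \{0\}$), which is precisely the condition for strict upper triangularity.

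The subtlety to coordinate is that a single basis must simultaneously realize skew-symmetry (requiring orthonormality for $B_\theta$), diagonality of $\operatorname{ad}(\fraka)$ (requiring adaptation to the root decomposition), and strict upper-triangularity of $\operatorname{ad}(\frakn)$ (requiring a compatible total ordering of the blocks). All three are made available by Proposition \ref{prop:root_decomp} and the existence of an ordering compatible with $\Sigma_{>0}$, so once the ingredients are chosen as above the verification reduces to routine bookkeeping with the bracket relations.
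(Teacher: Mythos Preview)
The paper does not prove this lemma; it simply quotes \cite[Lemma~6.45]{Kna}. Your argument is correct and is precisely the standard proof given there: pick a $B_\theta$-orthonormal basis adapted to the restricted root space decomposition, ordered so that the height (with respect to a fixed functional positive on $\Sigma_{>0}$) is decreasing, and read off the three properties from the bracket relations in Proposition~\ref{prop:root_decomp}.
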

We also have the Iwasawa decomposition on the level of Lie algebras. 
\begin{theorem}(\cite[Proposition 6.43]{Kna})
	Let $\frakg_\R$ be a semisimple Lie algebra. Then $\frakg_\R = \frakk \oplus \fraka \oplus \frakn$ is a direct sum.
\end{theorem}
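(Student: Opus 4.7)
The plan is to combine the Cartan decomposition $\frakg_\R = \frakk \oplus \frakp$ with the restricted root space decomposition $\frakg_\R = \frakg_0 \oplus \bigoplus_{\alpha \in \Sigma} \frakg_\alpha$ from Proposition \ref{prop:root_decomp}, using that $\theta(\frakg_\alpha) = \frakg_{-\alpha}$ and $\frakg_0 = \fraka \oplus \mathfrak{z}_\frakk(\fraka)$. The guiding idea is that $\frakn$ already covers the positive root spaces, while the negative root spaces can be folded into $\frakn$ and $\frakk$ via the involution $\theta$, and the zero weight space $\frakg_0$ splits cleanly between $\fraka$ and $\frakk$.

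First I would prove that $\frakg_\R \subseteq \frakk + \fraka + \frakn$. Given $X \in \frakg_\R$, write $X = X_0 + \sum_{\alpha \in \Sigma_{>0}} X_\alpha + \sum_{\alpha \in \Sigma_{>0}} X_{-\alpha}$ using the root space decomposition. By Proposition \ref{prop:root_decomp}(iii), decompose $X_0 = H + Z$ with $H \in \fraka$ and $Z \in \mathfrak{z}_\frakk(\fraka) \subseteq \frakk$. For each positive root $\alpha$, the element $\theta(X_{-\alpha}) \in \frakg_\alpha \subseteq \frakn$, and $X_{-\alpha} + \theta(X_{-\alpha})$ is fixed by $\theta$ and hence lies in $\frakk = E_1(\theta)$. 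Therefore
\begin{equation*}
X_{-\alpha} = \bigl(X_{-\alpha} + \theta(X_{-\alpha})\bigr) - \theta(X_{-\alpha}) \in \frakk + \frakn,
\end{equation*}
and summing gives $X \in \frakk + \fraka + \frakn$.

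Next I would verify directness of the sum. Suppose $K + H + N = 0$ with $K \in \frakk$, $H \in \fraka$, $N \in \frakn$. Projecting onto the root space $\frakg_\alpha$ for $\alpha \in \Sigma_{<0}$ kills both $H$ (since $\fraka \subseteq \frakg_0$) and $N$ (since $\frakn$ lives in positive root spaces), so the $\frakg_\alpha$-component of $K$ vanishes for every negative $\alpha$. Because $\theta K = K$ and $\theta$ interchanges $\frakg_\alpha$ and $\frakg_{-\alpha}$, the $\frakg_\alpha$-component of $K$ vanishes for every positive $\alpha$ as well, so $K \in \frakg_0 \cap \frakk = \mathfrak{z}_\frakk(\fraka)$. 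Projecting onto $\frakg_0$ then yields $K + H = 0$, and since $\fraka \subseteq \frakp$ we have $\fraka \cap \frakk = 0$, forcing $H = K = 0$ and consequently $N = 0$.

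The argument is essentially a bookkeeping exercise, and the only point that might appear delicate is the step using $\theta(\frakg_\alpha) = \frakg_{-\alpha}$ to rewrite negative root vectors — but this is exactly Proposition \ref{prop:root_decomp}(ii). I do not anticipate any substantive obstacle; the proof is short and one could alternatively cite \cite[Proposition 6.43]{Kna} directly.
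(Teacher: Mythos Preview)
Your proof is correct and follows the standard argument that one finds in the cited reference \cite[Proposition 6.43]{Kna}. Note that the paper itself does not give a proof of this statement at all; it simply states the result with the citation to Knapp, so there is no in-paper proof to compare against.
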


The following lemma about the Lie subalgebra  
$$
\frakl := ( \frakg_\alpha \oplus \frakg_{2\alpha} ) \oplus (\frakg_{-\alpha} \oplus \frakg_{-2\alpha} ) \oplus ( [\frakg_\alpha , \frakg_{-\alpha}] + [\frakg_{2\alpha}, \frakg_{-2\alpha}] )
$$
for some $\alpha \in \Sigma$ will be useful when we consider certain rank 1 subgroups in Section \ref{sec:rank1}. Note that by Cartan's criterion, $\frakl$ is semisimple, as the Killing form is the restriction of the definite Killing form of $\frakg$. Then $\frakk \cap \frakl \oplus \frakp \cap \frakl$ is a Cartan decomposition of $\frakl$.
\begin{lemma}\label{lem:levi_algebra} 
	Let $\alpha \in \Sigma$ and $X\in \frakg_\alpha  \setminus \{0\}$. Then the real rank $\operatorname{rank}_\R$ of $\frakl$ is one. A maximal abelian subspace of the symmetric part of $\frakl$ is given by $\fraka \cap \frakl = \langle [X,\theta(X)]\rangle$.
\end{lemma}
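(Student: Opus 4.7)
The plan is to pin down $[X, \theta(X)]$ first, then identify $\fraka \cap \frakl$ as its span, and finally verify maximality via the restricted root decomposition.

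First, I would show that $[X,\theta(X)]$ is a nonzero element of $\fraka$. Since $\theta(\frakg_{\alpha}) = \frakg_{-\alpha}$ by Proposition \ref{prop:root_decomp}(ii), the bracket lies in $[\frakg_\alpha,\frakg_{-\alpha}] \subseteq \frakg_0 = \fraka \oplus \mathfrak{z}_\frakk(\fraka)$. Applying $\theta$ and using that it is a Lie algebra automorphism with $\theta^2 = \operatorname{Id}$ gives $\theta([X,\theta(X)]) = -[X,\theta(X)]$, so this element lies in $\frakp$; combined with $\frakg_0 \cap \frakp = \fraka$ this forces $[X,\theta(X)] \in \fraka$. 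To see it is nonzero, I would use invariance of the Killing form: for $H \in \fraka$, $B(H,[X,\theta(X)]) = B([H,X],\theta(X)) = \alpha(H) B(X,\theta(X))$, which shows that $[X,\theta(X)] = B(X,\theta(X))\, H_\alpha$, where $H_\alpha \in \fraka$ is the dual of $\alpha$ under $B|_\fraka$. Finally $B(X,\theta(X)) = -B_\theta(X,X) < 0$ since $B_\theta$ is positive definite and $X \neq 0$.

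Next I would identify $\fraka \cap \frakl$ with $\langle [X,\theta(X)]\rangle$. The only summand of $\frakl$ that meets $\fraka$ is $[\frakg_\alpha,\frakg_{-\alpha}] + [\frakg_{2\alpha},\frakg_{-2\alpha}] \subseteq \frakg_0$. By the same computation as above, the $\fraka$-projection of any bracket $[\frakg_\alpha,\frakg_{-\alpha}]$ is a multiple of $H_\alpha$, and that of $[\frakg_{2\alpha},\frakg_{-2\alpha}]$ is a multiple of $H_{2\alpha} = 2H_\alpha$. Hence every element of $\fraka \cap \frakl$ is a scalar multiple of $H_\alpha$, which equals a nonzero scalar multiple of $[X,\theta(X)]$, giving the claimed equality.

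It remains to verify maximality inside the symmetric part $\frakp \cap \frakl$. Suppose $Y \in \frakp \cap \frakl$ commutes with $H := [X,\theta(X)]$. Decompose $Y$ according to the restricted root space decomposition: its components lie in $\frakg_{\pm\alpha}$, $\frakg_{\pm 2\alpha}$, and $\frakg_0 \cap \frakl$. Each component $Y_\beta \in \frakg_\beta$ satisfies $[H,Y_\beta] = \beta(H)\, Y_\beta$, and $\beta(H) = B(X,\theta(X))\langle\alpha,\beta\rangle$ is nonzero for $\beta \in \{\pm\alpha,\pm 2\alpha\}$. Thus all nonzero root components of $Y$ vanish, leaving $Y \in \frakg_0 \cap \frakl$, and then $Y \in \frakp$ forces $Y \in \fraka \cap \frakl = \langle H\rangle$. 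Hence $\langle [X,\theta(X)]\rangle$ is a maximal abelian subspace of $\frakp \cap \frakl$ and $\operatorname{rank}_\R(\frakl) = 1$.

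No step is genuinely deep; the main point to keep track of is carefully distinguishing the $\fraka$- and $\mathfrak{z}_\frakk(\fraka)$-parts of $\frakg_0 \cap \frakl$, and using $H_{2\alpha} = 2H_\alpha$ to ensure the two contributions to $\fraka \cap \frakl$ collapse onto the same line.
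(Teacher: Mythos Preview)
Your proof is correct and follows essentially the same route as the paper: identify $[X,\theta(X)]$ as a nonzero multiple of $H_\alpha$ via invariance of $B$ and the $\theta$-eigenspace argument, then show $\fraka\cap\frakl\subseteq\langle H_\alpha\rangle$ by computing the $\fraka$-projection of brackets $[\frakg_{\pm\alpha},\frakg_{\mp\alpha}]$ and $[\frakg_{\pm2\alpha},\frakg_{\mp2\alpha}]$. The one minor difference is in the maximality step: the paper asserts that a maximal abelian subspace of $\frakp\cap\frakl$ can be found inside $\fraka$ and hence equals $\fraka\cap\frakl$, while you compute the centralizer of $H$ in $\frakp\cap\frakl$ directly using the root decomposition; your argument is more explicit at this point but the content is the same.
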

\begin{proof}
	We use the fact that $B_\theta$ gives us a scalar product on $\fraka$ allowing us to identify $\fraka \cong \fraka^\star$, sending $\alpha \in \Sigma$ to $H_\alpha$ defined by $\alpha(H)=B_\theta(H_\alpha,H)$ for all $H \in \fraka$. Let $X\in \frakg_\alpha, Y \in \frakg_{-\alpha}$ and $H \in \fraka$. Then
	\begin{align*}
		B_\theta([X,Y],H) & = -B([X,Y],\theta(H)) = B([X,Y], H) = - B(Y,[X,H])  \\
		&= B(Y,[H,X]) = B(Y,\alpha(H)X) = \alpha(H)B(Y,X) \\
		&=  B_\theta(H_\alpha, H)B(X,Y) = B_\theta(B(X,Y) \cdot H_\alpha ,H),
	\end{align*}
	where we used that $B$ is $\operatorname{ad}(X)$-invariant.
	The element 
	$$
	W = [X,Y]- B(X,Y) \cdot H_\alpha  \in \frakg_0
	$$
	satisfies $B_\theta(W,H)=0$ for all $H \in \fraka$, hence lies in $\frakg_0$ perpendicular to $\fraka$, hence $W \in \mathfrak{z}_\frakk(\fraka) \subseteq \frakk$ by Proposition \ref{prop:root_decomp}. Similarly, one can show that for any $X' \in \frakg_{2\alpha}$ and $Y' \in \frakg_{-2\alpha}$, $[X',Y'] =W' + 2B(X',Y') \cdot H_\alpha$ for some $W' \in \mathfrak{z}_\frakk(\fraka)$. Any element $Z\in \fraka \cap \frakl$  lies in $[\frakg_\alpha , \frakg_{-\alpha} ] \oplus [ \frakg_{2\alpha} , \frakg_{-2\alpha}]$ since $\fraka \subseteq \frakg_0$ is orthogonal to $\frakg_\alpha \oplus \frakg_{2\alpha} \oplus \frakg_{-\alpha} \oplus \frakg_{-2\alpha}$. Therefore, $Z$ can be written as $Z=W+W'+ (B(X,Y)+2B(X',Y')) \cdot H_\alpha  \in \mathfrak{z}_\frakk(\fraka) \oplus \fraka$, hence $W+W'=0$ and $Z \in \langle H_\alpha \rangle$.
	
	If $Y=\theta(X)$, then 
	$$
	\theta(W)=\theta([X,\theta(X)]-B(X,Y)\cdot H_\alpha ) = [\theta(X),X]-\theta(H_\alpha) B(X,Y) = - W
	$$ 
	which implies $W \in \frakp \cap \frakk$, hence $W=0$. We have shown that $[X,\theta(X)] \in \fraka \cap \frakl \subseteq \langle H_\alpha \rangle$. Since $B$ is definite, $[X,\theta(X)] \neq 0$ and $\fraka \cap \frakl = \langle [ X, \theta(X)] \rangle$.
	
	The map $\theta|_{\frakl}$ is a Cartan-involution with Cartan decomposition $\frakl = \frakk\cap \frakl \oplus \frakp \cap \frakl$. There is a maximal abelian subspace of $\frakp \cap \frakl$ contained in $\fraka$, which is equal to $\fraka \cap \frakl = \langle [X,\theta(X)] \rangle$ by the above. The dimension of a maximal abelian subspace of $\frakp \cap \frakl$ is exactly the rank of $\frakl$ and it is one. 
\end{proof}

The following is a special case of the Jacobson-Morozov Lemma in the literature. Note that this Lemma does not work for general $X\in \frakg_\alpha \oplus \frakg_{2\alpha}$.

\begin{lemma}\label{lem:JM_basic}
	Let $\alpha \in \Sigma$ and $X \in \frakg_\alpha \setminus \{0\}$. Then there is a $Y \in \frakg_{-\alpha} \setminus \{0\}$ and $H \in \fraka$ such that $(X,Y,H)$ forms a $\mathfrak{sl}(2)$-triplet, i.e.
	$$
	[X,Y] = H, \quad [H,X] = 2X,  \quad [H,Y]=2Y.
	$$
\end{lemma}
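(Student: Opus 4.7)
The plan is to construct $Y$ and $H$ explicitly, exploiting that $X$ is a root vector rather than a general nilpotent element. The natural candidate for $Y$ is a scalar multiple of $\theta(X)$: by Proposition~\ref{prop:root_decomp}(ii), $\theta(\frakg_\alpha) = \frakg_{-\alpha}$, so $\theta(X) \in \frakg_{-\alpha} \setminus \{0\}$. The natural candidate for $H$ is a scalar multiple of the element $H_\alpha \in \fraka$ determined, as in the proof of Lemma~\ref{lem:levi_algebra}, by the $B_\theta$-duality $\fraka^\star \cong \fraka$ via $\alpha(\cdot) = B_\theta(H_\alpha, \cdot)$. Once these candidates are fixed, everything reduces to choosing the two scalars correctly.

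The key computation is that of $[X, \theta(X)]$, and it has essentially already been done inside the proof of Lemma~\ref{lem:levi_algebra}: specializing the formula obtained there to $Y = \theta(X)$ forces the $\mathfrak{z}_\frakk(\fraka)$-part $W$ to vanish (by the $\theta$-antisymmetry argument given there), and what remains is
$$
[X, \theta(X)] = B(X, \theta(X))\, H_\alpha = -B_\theta(X, X)\, H_\alpha.
$$
Since $B_\theta$ is a scalar product and $X \neq 0$, the coefficient $c := -B_\theta(X, X)$ is strictly negative, so $[X, \theta(X)]$ is a nonzero element of $\fraka$ proportional to $H_\alpha$. This is the main technical input; identifying the bracket as a nonzero specific multiple of $H_\alpha$ is the step I expect to be the principal obstacle, but it is already available from the work done for Lemma~\ref{lem:levi_algebra}.

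What remains is normalization. Set $H := (2/\alpha(H_\alpha))\, H_\alpha \in \fraka$, which is well-defined because $\alpha(H_\alpha) = B_\theta(H_\alpha, H_\alpha) > 0$. Then $\alpha(H) = 2$, so the bracket relations $[H, X] = \alpha(H) X = 2X$ and $[H, Z] = -\alpha(H) Z = -2 Z$ for $Z \in \frakg_{-\alpha}$ are automatic from $X \in \frakg_\alpha$ and the definition of restricted root spaces, matching the standard $\mathfrak{sl}_2$-triplet sign convention. Finally, take $Y := (2/(c\, \alpha(H_\alpha)))\, \theta(X)$, a nonzero element of $\frakg_{-\alpha}$, with the scalar chosen precisely so that the computation above yields $[X, Y] = H$. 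The three identities then hold by construction.
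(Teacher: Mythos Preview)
Your proof is correct and follows essentially the same approach as the paper: both take $Y$ to be the scalar multiple $\frac{-2}{B_\theta(X,X)\,\alpha(H_\alpha)}\,\theta(X)$ of $\theta(X)$ and $H = \frac{2}{\alpha(H_\alpha)}H_\alpha$, invoking the computation $[X,\theta(X)] = -B_\theta(X,X)\,H_\alpha$ from the proof of Lemma~\ref{lem:levi_algebra}. The only cosmetic difference is that the paper defines $H := [X,Y]$ and then identifies it, whereas you define $H$ directly and verify $[X,Y]=H$ afterwards.
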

\begin{proof}
	Let $H_\alpha \in \fraka$ be as in the proof of Lemma \ref{lem:levi_algebra} defined by $\alpha(H) = B_{\theta}(H_\alpha,H)$ for all $H \in \fraka$. Given $X \in \frakg_\alpha$, let
	$$
	Y := \frac{-2}{B_{\theta}(X,X) \cdot \alpha(H_\alpha)} \theta(X) \in \frakg_{-\alpha}
	$$
	and $H:= [X,Y]$. In the proof of Lemma \ref{lem:levi_algebra} we saw that
	$$
	[X,\theta(X)] = B(X,\theta(X))\cdot H_\alpha = -B_{\theta}(X,X) \cdot H_\alpha,
	$$ 
	whence $H \in \fraka$ and moreover
	\begin{align*}
		[H,X] &= \left[\left[X,\frac{-2}{B_{\theta}(X,X)\alpha(H_\alpha)} \theta(X)\right],X\right] \\
		&= \frac{2}{\alpha(H_\alpha)} \left[H_\alpha,X\right] = \frac{2}{\alpha(H_\alpha)} \alpha(H_\alpha) X = 2X
	\end{align*}
	and similarly $[H,Y] = -2Y$.
\end{proof}

\subsubsection{Examples}
Since $\bG(\R)$ is invariant under transposition, $\sigma \colon g \mapsto (g^{-1})\tran$ is an involution of the Lie group whose differential $\theta := \operatorname{d}_e\! \sigma \colon X \mapsto -X\tran$ is an involution on the Lie algebra $\frakg_\R$ which decomposes $\frakg_\R= \frakp \oplus \frakk$ into a symmetric part $\frakp$ and an skew-symmetric part $\frakk$. One can check that
$$
[\frakk, \frakk] \subseteq \frakk, \quad [\frakk, \frakp] \subseteq \frakp  \quad \text{ and } \quad [\frakp, \frakp] \subseteq \frakk
$$
and that the Killing form is negative definite on $\frakk$ and positive definite on $\frakp$. Thus $\frakg_\R = \frakk \oplus \frakp$ is a Cartan decomposition and $\theta$ is a Cartan involution.

\subsection{Real Cartan subalgebras and $\R$-split subalgebras}\label{sec:real_cartan_split}

In this section we discuss Cartan subalgebras of a finite dimensional semisimple real Lie algebra $\frakg$. These results apply in particular to the Lie algebra $\frakg_\R$ of the real Lie group $\bG(\R)$. 

An abelian subalgebra $\frakh \subseteq \frakg$ is called a \emph{Cartan subalgebra}\footnote{More generally, a Cartan subalgebra is defined to be a nilpotent self-normalizing subalgebra. In our context, we restrict to semisimple Lie algebras over fields of characteristic $0$, in which case Cartan subalgebras are abelian. } 
 if 
$$
\frakh = \operatorname{Nor}_{\frakg}(\frakh) := \left\{ X \in \frakg \colon [X,Y] \in \frakh \text{ for all } Y \in \frakh \right\}.
$$
An abelian subalgebra $\fraka \subseteq \frakg$ is called \emph{$\R$-split} if $\operatorname{ad}(X)\colon \frakg \to\frakg$ is diagonalizable over $\R$ for every $X \in \fraka$. Let $r_\R(\frakg)$ be the maximal dimension of such an $\R$-split abelian subalgebra and denote
$$
V(\frakg) := \{ \fraka \subseteq \frakg \colon \text{$\fraka$ is an $\R$-split abelian subalgebra with $\dim(\fraka) = r_\R(\frakg)$}\}.
$$ 
A \emph{maximally $\R$-split Cartan subalgebra}
is a Cartan subalgebra containing an element of $V(\frakg_\R)$ as a subset. Let
$$
\mathcal{C} (\frakg) = \left\{  \frakh \subseteq \frakg \colon \frakh \text{ is a maximally $\R$-split Cartan subalgebra } \right\}.
$$

Knapp \cite{Kna} uses slightly different definitions and names for these objects. We will show here that they coincide. We start by showing that our notion of Cartan subalgebra coincides with the notion in \cite{Kna}.

\begin{lemma}\label{lem:defs_Cartan_subalgebra}
	Let $\mathfrak{h}$ be a subalgebra of a finite dimensional real semisimple Lie algebra $\frakg$. Then $\frakh$ is a Cartan subalgebra if and only if it satisfies one of the following conditions
	\begin{enumerate}
		\item [(i)] $\frakh$ is abelian and $\operatorname{Nor}_\frakg (\frakh) = \frakh$.
		\item [(ii)] The complexification $\frakh_\C := \frakh \oplus i \frakh$ is abelian and $\operatorname{Nor}_{\frakg_\C} (\frakh_\C) = \frakh_\C$.
		\item [(iii)] The complexification $\frakh_\C $ is nilpotent and $\operatorname{Nor}_{\frakg_\C} (\frakh_\C) = \frakh_\C$.
		\item [(iv)] The complexification $\frakh_\C $ is nilpotent and 
		$$
		\frakh_\C = \left\{ X \in \frakg_\C \colon \exists n \in \N ,\ \forall H \in \frakh_\C ,\ \operatorname{ad}(H)^n X = 0  \right\}.
		$$
		\item [(v)] The complexification $\frakh_\C$ is maximal abelian and the subset $\operatorname{ad}(\frakh_\C) \subseteq \mathfrak{gl}(\frakg_\C)$ is simultaneously diagonalizable.
	\end{enumerate}
\end{lemma}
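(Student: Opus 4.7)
The plan is to reduce all five conditions to classical statements about Cartan subalgebras of the complex semisimple Lie algebra $\frakg_\C := \frakg \oplus i\frakg$, and then invoke standard structure theory. The key compatibility underlying everything is that complexification respects the relevant notions: separating real and imaginary parts in $[X+iY, H+iH']$ shows
$$
\operatorname{Nor}_{\frakg_\C}(\frakh_\C) = \operatorname{Nor}_{\frakg}(\frakh) \oplus i\operatorname{Nor}_{\frakg}(\frakh),
$$
and $\frakh$ is abelian (respectively nilpotent) if and only if $\frakh_\C$ is, by matching real and imaginary parts of iterated brackets.

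First I would deduce (i) $\Leftrightarrow$ (ii) as an immediate consequence of the compatibilities above. Next, (ii) $\Rightarrow$ (iii) is trivial since abelian implies nilpotent; for the converse (iii) $\Rightarrow$ (ii), I would invoke the classical theorem (Humphreys, \emph{Introduction to Lie Algebras and Representation Theory}, \S 15; Knapp, Ch.~II) that in a complex semisimple Lie algebra every nilpotent self-normalizing subalgebra is automatically abelian and toral. For (iii) $\Leftrightarrow$ (iv) I would use the Fitting decomposition: for any nilpotent subalgebra $\frakh_\C \subseteq \frakg_\C$, the generalized zero-weight space
$$
\frakh_\C^0 = \bigl\{ X \in \frakg_\C \colon \forall H \in \frakh_\C,\ \exists n \in \N,\ \operatorname{ad}(H)^n X = 0\bigr\}
$$
is a subalgebra containing $\frakh_\C$ and coincides with $\operatorname{Nor}_{\frakg_\C}(\frakh_\C)$; hence self-normalization is exactly the condition $\frakh_\C^0 = \frakh_\C$. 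Finally, for (iv) $\Leftrightarrow$ (v), once (iii) has given that $\frakh_\C$ is abelian and consists of ad-semisimple elements, the generalized weight space equals the actual kernel, so $\operatorname{ad}(\frakh_\C)$ is simultaneously diagonalizable; maximality among abelian subalgebras of $\frakg_\C$ follows because any abelian extension centralizes $\frakh_\C$ and therefore lies in the zero-weight space $\frakh_\C^0 = \frakh_\C$. Conversely, from (v), diagonalizability yields a root-space decomposition with $\frakh_\C$ as the zero-weight space, and the maximal-abelian hypothesis then forces self-normalization, giving (iii).

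The main obstacle is not conceptual depth but careful bookkeeping: the argument relies on several non-trivial inputs (Jordan decomposition in $\frakg_\C$, ad-semisimplicity of Cartan elements, the Fitting null component being a subalgebra) that are standard but scattered across references. A secondary subtlety is that the maximal-abelian condition in (v) is taken inside $\frakg_\C$ rather than $\frakg$, so one must be vigilant not to conflate the two when transferring statements back to the real setting via the complexification dictionary.
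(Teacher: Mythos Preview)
Your approach is essentially identical to the paper's: both establish (i)$\Leftrightarrow$(ii) via the real/complex dictionary for brackets and normalizers, note (ii)$\Rightarrow$(iii) is trivial, and defer the remaining equivalences to standard results in Knapp (the paper cites Propositions 2.7, 2.10 and Corollary 2.13 there), while you sketch the underlying mechanisms.

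However, your justification for (iii)$\Leftrightarrow$(iv) contains a false assertion. You claim that for any nilpotent subalgebra the Fitting null component $\frakh_\C^0$ ``coincides with $\operatorname{Nor}_{\frakg_\C}(\frakh_\C)$'', but this fails already for $\frakh_\C$ spanned by $\left(\begin{smallmatrix} 0 & 1 \\ 0 & 0 \end{smallmatrix}\right)$ in $\mathfrak{sl}_2(\C)$: every $\operatorname{ad}(H)$ is nilpotent on the whole algebra, so $\frakh_\C^0 = \mathfrak{sl}_2(\C)$, whereas the normalizer is only the $2$-dimensional Borel. What is true is the inclusion $\operatorname{Nor}_{\frakg_\C}(\frakh_\C) \subseteq \frakh_\C^0$, which gives (iv)$\Rightarrow$(iii) immediately. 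For (iii)$\Rightarrow$(iv) one needs a separate step: if $\frakh_\C \subsetneq \frakh_\C^0$, then $\frakh_\C$ acts nilpotently on the nonzero quotient $\frakh_\C^0/\frakh_\C$, so by Engel there exists $X \in \frakh_\C^0 \setminus \frakh_\C$ with $[\frakh_\C, X] \subseteq \frakh_\C$, contradicting self-normalization. This is a local fix and does not affect the overall structure of your argument.
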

\begin{proof}
	Notion (i) is our definition of Cartan subalgebra. For any $X = X_1 + i X_2 \in \frakg_\C$ and $Y= Y_1+iY_2 \in \frakg_\C$, we have
	$$
	\operatorname{ad}(Y)X  = [Y_1+iY_2 , X_1 + iX_2] = [Y_1,X_1] - [Y_2,X_2] +i ([Y_1, X_2] + [Y_2,X_1]).
	$$  
	From this formula we can deduce that a subalgebra of a real Lie algebra is abelian if and only if its complexification is abelian. It also follows that for any real subalgebra $\frakh \subseteq \frakg$ we have
	$$
	\operatorname{Nor}_{\frakg}(\frakh) = \frakh \iff \operatorname{Nor}_{\frakg_\C}(\frakh_\C) = \frakh_\C. % supset is obvious.
	$$
	It follows that notions (i) and (ii) coincide. For (ii) implies (iii), it suffices to notice that every abelian subalgebra is nilpotent, the converse uses semisimplicity and is given in \cite[Proposition 2.10]{Kna}. Condition (iv) is the definition used in \cite{Kna} and the equivalence of (iii) and (iv) is given by \cite[Proposition 2.7]{Kna}. The equivalence of (iv) and (v) is given by \cite[Corollary 2.13]{Kna}.
\end{proof}

%Let $\theta$ be a Cartan involution on $\frakg$. 
A subset $\frakh\subseteq \frakg$ is called \emph{$\theta$-stable} if there is a Cartan involution $\theta$ such that $\theta(\frakh) = \frakh$. If $\theta$ is given by the context, $\theta$-stable refers to that particular Cartan involution. 
\begin{proposition}\label{prop:Kna_real_Cartan_conj}(\cite[Prop 6.59]{Kna})
	Let $\frakg$ be the Lie algebra of a real semisimple Lie group $G$ and $\operatorname{Ad}(G)^\circ$ the connected component of $\operatorname{Ad}(G) \subseteq \operatorname{GL}(\frakg)$ in the Lie group topology. Any Cartan subalgebra $\frakh$ is conjugate via $\operatorname{Ad}(G)^\circ$ to a $\theta$-stable Cartan subalgebra.
\end{proposition}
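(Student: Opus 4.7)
The plan is to reduce the problem to constructing \emph{any} Cartan involution $\sigma$ of $\frakg$ with $\sigma(\frakh) = \frakh$. Once such a $\sigma$ is available, Lemma \ref{lem:aligntheta} applied to the Cartan involution $\theta$ and the involution $\sigma$ furnishes $\varphi = \exp(\operatorname{ad}(X))\in\operatorname{Ad}(G)^\circ$ (the construction is inner and lies in the identity component) such that $\varphi\theta\varphi^{-1}$ commutes with $\sigma$. Since both are then commuting Cartan involutions, the eigenspace argument in the proof of Theorem \ref{thm:cartan_inv_unique} forces $\varphi\theta\varphi^{-1} = \sigma$, and consequently $\theta(\varphi^{-1}\frakh) = \varphi^{-1}\sigma(\frakh) = \varphi^{-1}\frakh$, showing that $\varphi^{-1}\frakh$ is the desired $\theta$-stable conjugate of $\frakh$.

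To build $\sigma$, I would complexify to $\frakg_\C = \frakg \oplus i\frakg$ and denote by $\tau$ the $\R$-linear conjugation with fixed set $\frakg$, which preserves $\frakh_\C$ by construction. By Lemma \ref{lem:defs_Cartan_subalgebra}(v), $\frakh_\C$ is a Cartan subalgebra of $\frakg_\C$ with root-space decomposition $\frakg_\C = \frakh_\C \oplus \bigoplus_\alpha (\frakg_\C)_\alpha$. Using a Chevalley basis $\{X_\alpha\}$ normalized by $B(X_\alpha, X_{-\alpha}) = 1$, define
$$
\frakg_u := i\frakh_0 \oplus \bigoplus_\alpha \R(X_\alpha - X_{-\alpha}) \oplus \bigoplus_\alpha \R\cdot i(X_\alpha + X_{-\alpha}),
$$
where $\frakh_0 \subseteq \frakh_\C$ is the real span of the coroots. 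By Weyl's classical computation this is a compact real form of $\frakg_\C$, and its $\R$-linear conjugation $\sigma_u$ manifestly preserves $\frakh_\C$. Provided $\sigma_u$ and $\tau$ commute, the composition $\sigma := \sigma_u\circ\tau$ restricts to an involution of $\frakg$ whose $+1$-eigenspace $\frakg\cap\frakg_u$ carries a negative-definite $B$ and whose $-1$-eigenspace $\frakg\cap i\frakg_u$ carries a positive-definite $B$, making $\sigma$ a Cartan involution of $\frakg$ that preserves $\frakh$ because both $\tau$ and $\sigma_u$ preserve $\frakh_\C$.

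The hard part is arranging $\tau$ and $\sigma_u$ to commute without destroying the property $\sigma_u(\frakh_\C) = \frakh_\C$. In general the two conjugations do not commute; the standard remedy, due to Cartan and refined by Mostow and Knapp, is to replace $\frakg_u$ by a perturbed compact form using a positive-operator adjustment. Concretely, $\rho := \tau\sigma_u$ is an $\R$-linear automorphism of $\frakg_\C$ whose square $p := \rho^2$ is positive-definite with respect to the inner product built from $B$ and $\sigma_u$; replacing $\sigma_u$ by $p^{1/4}\sigma_u p^{-1/4}$ produces the conjugation of the new compact real form $p^{1/4}\frakg_u$ which does commute with $\tau$. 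The key technical check is that since $p$ is assembled from operators preserving $\frakh_\C$, the fractional power $p^{1/4}$ (defined via functional calculus on the simultaneous eigenspaces of $p$) also preserves $\frakh_\C$, so the adjusted $\sigma_u$ still stabilizes $\frakh_\C$. This compact-form adjustment is the technical heart of the proof; once it is in place, the Cartan involution $\sigma$ is obtained and the reduction in the first paragraph finishes the argument.
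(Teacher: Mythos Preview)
The paper does not supply its own proof of this proposition; it is quoted from \cite[Prop.~6.59]{Kna} without argument. Your outline is essentially Knapp's proof: construct a Cartan involution $\sigma$ adapted to $\frakh$ via a compact real form of $\frakg_\C$ aligned with $\frakh_\C$, then invoke uniqueness of Cartan involutions up to inner conjugacy (Lemma~\ref{lem:aligntheta} together with the argument of Theorem~\ref{thm:cartan_inv_unique}) to transport $\theta$-stability to a conjugate of $\frakh$. The delicate point you flag --- that the positive-operator correction $p^{1/4}$ must still preserve $\frakh_\C$ --- is exactly the crux, and your justification (both $\tau$ and $\sigma_u$ preserve $\frakh_\C$, hence so does $p=(\tau\sigma_u)^2$, hence so do its real powers by spectral calculus on the $p$-eigenspaces) is correct.

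One point to tighten: Lemma~\ref{lem:aligntheta} as stated in the paper only places $\varphi$ in $\operatorname{Ad}(\bG(\R))$, not in the identity component; your parenthetical claim that $\varphi\in\operatorname{Ad}(G)^\circ$ is true --- in Knapp's construction $\varphi$ is a real power of a positive-definite operator and hence connected to the identity by a one-parameter family --- but this should be made explicit rather than read into the weaker formulation of the lemma.
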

If $\frakg = \frakp \oplus \frakk$ is the Cartan decomposition associated to a Cartan involution $\theta$, and $\frakh$ is a $\theta$-stable Cartan subalgebra, then $\fraka := \frakh \cap \frakp$ and $\frakt := \frakh \cap \frakk$ are $\theta$-stable and $\frakh = \fraka \oplus \frakt$. A Cartan subalgebra is called \emph{maximally noncompact} if $\dim(\fraka)$ is maximal. % among all $\fraka'$ apearing as above in a $\theta'$-stable Cartan subalgebra $\frakh' = \fraka' \oplus \frakk'$. 
While all complex Cartan subalgebras are conjugated \cite[Theorem 2.15]{Kna}, for real Cartan subalgebras, the dimensions of the spaces $\fraka$ and $\frakt$ are preserved.
\begin{proposition}\label{prop:Kna_real_theta_Cartan_conj}(\cite[Prop 6.61]{Kna}) Let $\frakg$ be the Lie algebra of a semisimple real Lie group $G$ and let $K$ be a subgroup with Lie algebra $\frakk$ where $\frakg = \frakp \oplus \frakk$ is the Cartan decomposition associated to a Cartan involution $\theta$. 
	
	All maximally noncompact $\theta$-stable Cartan subalgebras are conjugate under $\operatorname{Ad}(K)^\circ$.
\end{proposition}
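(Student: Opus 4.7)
The plan is to reduce the statement to two classical conjugacy results: one for maximal abelian subspaces of $\mathfrak{p}$, and one for maximal tori in the compact Lie algebra $\mathfrak{z}_{\mathfrak{k}}(\mathfrak{a})$.

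First I would observe that if $\mathfrak{h} = \mathfrak{a} \oplus \mathfrak{t}$ is a $\theta$-stable Cartan subalgebra of $\mathfrak{g}$ (with $\mathfrak{a} = \mathfrak{h} \cap \mathfrak{p}$, $\mathfrak{t} = \mathfrak{h} \cap \mathfrak{k}$), then the maximally-noncompact condition forces $\mathfrak{a}$ to be a maximal abelian subspace of $\mathfrak{p}$. Indeed, if $\mathfrak{a}' \supsetneq \mathfrak{a}$ were a strictly larger abelian subspace of $\mathfrak{p}$, one could extend to a $\theta$-stable Cartan subalgebra by taking $\mathfrak{a}' \oplus \mathfrak{t}''$ with $\mathfrak{t}''$ maximal abelian in $\mathfrak{z}_{\mathfrak{k}}(\mathfrak{a}')$, contradicting maximal noncompactness of $\mathfrak{h}$. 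Dually, $\mathfrak{t}$ must be a maximal abelian subalgebra of the centralizer $\mathfrak{z}_{\mathfrak{k}}(\mathfrak{a})$, since otherwise $\mathfrak{a} \oplus \mathfrak{t}'$ for a larger $\mathfrak{t}'$ would still be an abelian, hence Cartan, subalgebra of the same noncompact dimension.

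Next, given two maximally noncompact $\theta$-stable Cartan subalgebras $\mathfrak{h}_1 = \mathfrak{a}_1 \oplus \mathfrak{t}_1$ and $\mathfrak{h}_2 = \mathfrak{a}_2 \oplus \mathfrak{t}_2$, I would apply the conjugacy of maximal abelian subspaces of $\mathfrak{p}$ (\cite[Theorem 6.51]{Kna}, already used in Section \ref{sec:killing_involutions_decompositions} to define $\operatorname{rank}_\R$) to find $k_1 \in \operatorname{Ad}(K)^\circ$ with $\operatorname{Ad}(k_1)\mathfrak{a}_1 = \mathfrak{a}_2$. After replacing $\mathfrak{h}_1$ by $\operatorname{Ad}(k_1)\mathfrak{h}_1$, I may assume $\mathfrak{a}_1 = \mathfrak{a}_2 =: \mathfrak{a}$; both $\mathfrak{t}_1$ and $\mathfrak{t}_2$ are now maximal abelian subalgebras of the centralizer $\mathfrak{z}_{\mathfrak{k}}(\mathfrak{a})$.

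The remaining step is to conjugate $\mathfrak{t}_1$ to $\mathfrak{t}_2$ inside $\mathfrak{z}_{\mathfrak{k}}(\mathfrak{a})$. Let $M^\circ \subseteq K^\circ$ denote the analytic subgroup with Lie algebra $\mathfrak{z}_{\mathfrak{k}}(\mathfrak{a})$. Since $\mathfrak{k}$ is a compact Lie algebra (the Killing form $B$ is negative definite there), so is $\mathfrak{z}_{\mathfrak{k}}(\mathfrak{a})$, and $M^\circ$ is therefore a compact connected Lie group. In a compact connected Lie group any two maximal tori are conjugate under the group itself, so there exists $m \in M^\circ$ with $\operatorname{Ad}(m)\mathfrak{t}_1 = \mathfrak{t}_2$. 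By construction $\operatorname{Ad}(m)$ fixes $\mathfrak{a}$ pointwise, so $\operatorname{Ad}(mk_1)\mathfrak{h}_1 = \mathfrak{h}_2$, and $mk_1 \in \operatorname{Ad}(K)^\circ$.

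The main obstacle is the first paragraph: one must carefully justify that maximal noncompactness really forces $\mathfrak{a}$ to be maximal abelian in $\mathfrak{p}$, which relies on being able to complete any abelian subspace of $\mathfrak{p}$ to a $\theta$-stable Cartan subalgebra (using that the centralizer of a toral subspace in a reductive Lie algebra admits a Cartan subalgebra, cf.\ Lemma \ref{lem:defs_Cartan_subalgebra}). The rest is routine once one has the two conjugacy tools (Knapp's \cite[Theorem 6.51]{Kna} and the maximal-torus theorem for compact groups) and the observation that conjugations fixing $\mathfrak{a}$ lie inside the connected centralizer $M^\circ$.
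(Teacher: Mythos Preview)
The paper does not supply its own proof of this proposition; it simply quotes \cite[Prop.~6.61]{Kna}. Your outline is essentially Knapp's argument and the strategy is sound, so there is nothing to compare against in the paper itself.

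Two points in your write-up need tightening. First, the phrase ``abelian, hence Cartan'' is not valid reasoning; what you actually use is that a Cartan subalgebra of a semisimple Lie algebra is \emph{maximal abelian} (characterization~(v) in Lemma~\ref{lem:defs_Cartan_subalgebra}), so a strictly larger abelian $\mathfrak{a}\oplus\mathfrak{t}'$ already contradicts $\mathfrak{h}$ being Cartan, without needing $\mathfrak{a}\oplus\mathfrak{t}'$ to be Cartan. Second, you take $M^\circ\subseteq K^\circ$ and infer compactness of $M^\circ$ from compactness of its Lie algebra; that inference fails in general (e.g.\ $\mathbb{R}$, or any noncompact cover of a compact group). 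Since the statement only asks for conjugacy under $\operatorname{Ad}(K)^\circ$, work in the adjoint group throughout: let $M^\circ$ be the centraliser of $\mathfrak{a}$ in $\operatorname{Ad}(K)^\circ$. The group $\operatorname{Ad}(K)^\circ$ is compact (it is the maximal compact subgroup of the centerless linear group $\operatorname{Int}(\mathfrak{g})$), hence $M^\circ$ is compact as a closed subgroup, and the maximal-torus conjugacy applies. This also fixes the type mismatch in your final line, where you multiply $m\in K^\circ$ with $k_1\in\operatorname{Ad}(K)^\circ$.
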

Together, Propositions \ref{prop:Kna_real_Cartan_conj} and \ref{prop:Kna_real_theta_Cartan_conj} allow us to extend our definition of maximally noncompact $\theta$-stable Cartan subalgebras to all Cartan subalgebras, by stipulating that a Cartan subalgebra is called \emph{maximally noncompact} if it is conjugated to a $\theta$-stable maximally noncompact Cartan subalgebra.
We will now show that the set of maximally $\R$-split Cartan subalgebras $\mathcal{C}(\frakg)$ coincides with the set of maximally noncompact Cartan subalgebras.

\begin{proposition}\label{prop:defs_max_noncompact}
	For any maximally $\R$-split subalgebra $\fraka \subseteq \frakg$ of a finite dimensional real semisimple Lie algebra $\frakg$, there is a maximally noncompact Cartan subalgebra $\frakh \supseteq \fraka$. Every maximally noncompact Cartan subalgebra contains a maximally $\R$-split subalgebra $\fraka$ and if $\frakh$ is $\theta$-stable, then $\fraka = \frakh \cap \frakp$ and $\frakh = \fraka \oplus \frakt$ in the decomposition above and $r_\R(\frakg) = \dim (\fraka)$.
\end{proposition}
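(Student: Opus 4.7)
The plan is to pass to $\theta$-stable representatives via Proposition \ref{prop:Kna_real_Cartan_conj} and then pin down, inside a $\theta$-stable Cartan subalgebra $\frakh' = \fraka' \oplus \frakt'$ with $\fraka' = \frakh' \cap \frakp$ and $\frakt' = \frakh' \cap \frakk$, precisely which elements are $\R$-split. This identification forces every maximal $\R$-split abelian subalgebra to coincide, up to an inner automorphism, with the $\fraka$-part of some $\theta$-stable Cartan subalgebra, and conversely; the dimension identity $r_\R(\frakg) = \dim(\fraka)$ then falls out for free.

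The key technical claim I would prove first is the following: an element $H = A + T \in \frakh'$ with $A \in \fraka'$ and $T \in \frakt'$ is $\R$-split if and only if $T = 0$. The lemma preceding Proposition \ref{prop:root_decomp} shows $\operatorname{ad}(A)$ is $B_\theta$-symmetric and hence has real eigenvalues; an analogous calculation using $\theta T = T$ shows that $\operatorname{ad}(T)$ is $B_\theta$-skew-symmetric and hence has purely imaginary eigenvalues. Since $A$ and $T$ commute so do $\operatorname{ad}(A)$ and $\operatorname{ad}(T)$, hence they are simultaneously diagonalizable over $\C$; each eigenvalue of $\operatorname{ad}(H)$ is then a sum $\lambda_A + \lambda_T$ with $\lambda_A \in \R$ and $\lambda_T \in i\R$, and reality of all such sums forces every $\lambda_T = 0$, so $\operatorname{ad}(T) = 0$, and semisimplicity of $\frakg$ then gives $T = 0$.

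For the forward direction, take $\fraka \in V(\frakg)$ and extend it to a Cartan subalgebra $\frakh$ of $\frakg$ (a standard characteristic-zero fact, since $\fraka$ is a commuting family of $\operatorname{ad}$-semisimple elements and embeds into a Cartan subalgebra of the reductive centralizer $\mathfrak{z}_\frakg(\fraka)$). Proposition \ref{prop:Kna_real_Cartan_conj} supplies $\varphi \in \operatorname{Ad}(G)^\circ$ with $\varphi(\frakh) = \frakh'$ $\theta$-stable; write $\frakh' = \fraka' \oplus \frakt'$. The conjugate $\varphi(\fraka)$ still consists of $\R$-split elements, so by the key claim $\varphi(\fraka) \subseteq \fraka'$; but $\fraka' \subseteq \frakp$ is itself abelian and hence $\R$-split, so the maximality of $\dim(\fraka) = r_\R(\frakg)$ forces $\varphi(\fraka) = \fraka'$ and in particular $\dim(\fraka') = r_\R(\frakg)$. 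Then $\frakh = \varphi^{-1}(\frakh')$ is a Cartan subalgebra containing $\fraka$, and it is maximally noncompact because its $\fraka$-component already attains the maximal value $r_\R(\frakg)$ of $\dim(\frakh^{\star} \cap \frakp)$ over $\theta$-stable Cartan subalgebras $\frakh^{\star}$ (the same argument applied to any such $\frakh^{\star}$).

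For the converse and the dimension formula, let $\frakh$ be maximally noncompact; by Proposition \ref{prop:Kna_real_Cartan_conj} conjugate it to a $\theta$-stable $\frakh'$, and decompose $\frakh' = \fraka' \oplus \frakt'$ with $\fraka' = \frakh' \cap \frakp$, $\frakt' = \frakh' \cap \frakk$. By Proposition \ref{prop:Kna_real_theta_Cartan_conj} the dimension $\dim(\fraka')$ equals the one produced in the forward direction, namely $r_\R(\frakg)$, so $\fraka' \in V(\frakg)$, and pulling back along the conjugating element yields a maximal $\R$-split abelian subalgebra $\fraka \subseteq \frakh$; when $\frakh$ is itself $\theta$-stable the $\pm 1$-eigenspace decomposition of $\theta$ gives directly $\fraka = \frakh \cap \frakp$ and $\frakh = \fraka \oplus \frakt$. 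The main obstacle is the technical claim of paragraph two, which is the one place where one has to really use the interaction between $\R$-diagonalizability and the Cartan decomposition; once granted, the rest is clean bookkeeping built on the two conjugacy propositions.
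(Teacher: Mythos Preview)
Your proof is correct and follows essentially the same route as the paper's: extend $\fraka$ to a Cartan subalgebra, conjugate to a $\theta$-stable one via Proposition~\ref{prop:Kna_real_Cartan_conj}, identify the $\R$-split elements there as precisely the $\frakp$-part, and conclude by maximality. Your treatment of the key technical claim (that $H = A + T$ is $\R$-split iff $T = 0$) is more explicit than the paper's, which states it without justification; conversely, the paper spells out the extension step via the toral framework and characterization (v) of Lemma~\ref{lem:defs_Cartan_subalgebra} rather than invoking a standard fact about centralizers.
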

\begin{proof}
	We introduce a few concepts. If $k$ is a field and $\frakg$ is a semisimple $k$-Lie algebra, then an abelian subalgebra $\frakh$ is called \emph{toral} if $\operatorname{ad}(\frakh) \subseteq \mathfrak{gl}(\frakg)$ consists of linear maps that are diagonalizable over the algebraic closure of $k$. If all the elements of $\operatorname{ad}(\frakh)$ are diagonalizable over $k$ itself, then $\frakh$ is called \emph{$k$-split toral}. If moreover $\frakh$ is maximal among all $k$-split toral subalgebras, $\frakh$ is called \emph{maximal $k$-split toral}.
	
	Let now $\frakg$ be a finite dimensional real semisimple Lie algebra. Let $\fraka \in V(\frakg)$, in our terminology $\fraka$ is maximal $\R$-split toral. Let $\frakh$ be maximal toral containing $\fraka$, which exists since $\frakg$ is finite dimensional. We want to show that $\frakh$ is a Cartan subalgebra and use characterization (v) of Lemma \ref{lem:defs_Cartan_subalgebra}. Since $\frakh$ is abelian, so is its complexification $\frakh_\C$. Since $\frakh$ is $\C$-split, all elements of $\operatorname{ad}(\frakh_\C)=\operatorname{ad}(\frakh) \oplus i \operatorname{ad}(\frakh)$ are diagonalizable over $\C$. This means that $\frakh_\C$ is a toral subalgebra of $\frakg_\C$ and by a general linear algebra fact, $\operatorname{ad}(\frakh_\C)$ is simultaneously diagonalizable. The complexification $\frakh_\C$ is also maximal abelian, since for all $X=X_1+iX_2 \in \frakg_\C$, if $[X,\frakh_\C] =0$, then we have in particular for $H \in \frakh$
	$$
	0 = [X,H] = [X_1+iX_2, H] = [X_1, H] + i [X_2, H] ,
	$$
	hence $X_1,X_2 \in \frakh$, so $X \in \frakh_\C$. This shows that $\frakh$ is a Cartan subalgebra by characterization (v).
	
	By Proposition \ref{prop:Kna_real_Cartan_conj}, there is a Cartan involution $\theta$ and a $g\in G_\R$ such that $\operatorname{Ad}(g)(\frakh)$ is a $\theta$-invariant Cartan subalgebra with decomposition $\operatorname{Ad}(g)(\frakh) = \fraka' \oplus \frakt'$ as described before. Since $\fraka' \subseteq \frakp $, it is $\R$-split. In fact, for $X=X_{\fraka'} + X_{\frakt'} \in \fraka' \oplus \frakt'$, $\operatorname{ad}(X)$ is only diagonalizable over $\R$ if $X_{\frakt'}=0$. Since diagonalizability is preserved under conjugation, $\operatorname{Ad}(\fraka) \subseteq \fraka'$ and by maximality of $\fraka$, we have $\operatorname{Ad}(\fraka) = \fraka'$. Thus $\frakh$ is maximally noncompact. 
	
	If we instead start with a maximally noncompact Cartan subalgebra $\frakh \subseteq \frakg$, we similarly obtain (possibly using a conjugation to a $\theta$-stable one) an $\R$-split subalgebra $\fraka$ with $\frakh = \fraka \oplus \frakt$. A priori $\fraka$ is maximal $\R$-split as a subalgebra of $\frakh$, but if $\fraka$ was contained in a larger subalgebra $\fraka'$ maximal $\R$-split in $\frakg$, then the above construction would result in a Cartan subalgebra $\frakh' = \fraka' \oplus \frakt'$ with $\dim(\fraka')>\dim(\fraka)$, which is impossible, since we assumed $\frakh$ to be maximally noncompact, which means $\dim(\fraka)$ is maximal. By definition, $r_\R(\frakg)$ is the maximal dimension of an $\R$-split abelian subalgebra, so $r_\R(\frakg) = \dim(\fraka)$.
\end{proof}

We now turn back to the Lie algebra $\frakg_\R$ of the $\R$-points $\bG(\R)$ of a semisimple algebraic group $\bG$ and collect a few properties of $V(\frakg_\R)$ and $\mathcal{C}(\frakg_\R)$. 
\begin{lemma}\label{lem:basic_facts} Recall that for the algebraic group $\bG$, $ \operatorname{rank}_\R(\bG)$ is the maximal dimension of any abelian subspace of $\frakp \subseteq \frakg_\R$, where $\frakg_\R = \frakp \oplus \frakk$ is the Cartan decomposition associated to some Cartan involution $\theta$. We have
	\begin{itemize}
		\item [(1)] $r_\R(\frakg_\R) = \operatorname{rank}_\R(\bG)$.
		\item [(2)] $\bG(\R)$ acts transitively on $\mathcal{C}(\frakg_\R)$ and $ V(\frakg_\R)$.
		\item [(3)] $\mathcal{C}(\frakg_\R)$ contains a $\theta$-stable element. 
	\end{itemize}
\end{lemma}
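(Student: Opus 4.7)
The plan is to reduce everything to the propositions just proved: Proposition \ref{prop:defs_max_noncompact} identifies our notion of maximally $\R$-split Cartan subalgebra with Knapp's maximally noncompact Cartan subalgebra, and Propositions \ref{prop:Kna_real_Cartan_conj} and \ref{prop:Kna_real_theta_Cartan_conj} then supply the conjugacy statements. Two small linear-algebra observations drive the argument: that $\operatorname{ad}(X)$ is $B_\theta$-symmetric for $X \in \frakp$ (the lemma preceding the restricted root space decomposition), so a commuting subset of $\operatorname{ad}(\frakp)$ is simultaneously $\R$-diagonalizable; and that $\operatorname{ad}(X)$ is $B_\theta$-skew-symmetric for $X \in \frakk$, so it is diagonalizable over $\R$ only when it vanishes.

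For (1), a maximal abelian subspace of $\frakp$ is $\R$-split by the first observation, giving $\operatorname{rank}_\R(\bG) \leq r_\R(\frakg_\R)$. For the reverse inequality, I take any $\fraka \in V(\frakg_\R)$ and use Proposition \ref{prop:defs_max_noncompact} to embed it in a maximally noncompact Cartan subalgebra; after conjugating that Cartan subalgebra to $\theta$-stable form via Proposition \ref{prop:Kna_real_Cartan_conj}, the conjugate of $\fraka$ ends up in $\frakp$ and is abelian, so $r_\R(\frakg_\R) \leq \operatorname{rank}_\R(\bG)$.

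For (2), I handle $\mathcal{C}(\frakg_\R)$ first. Given $\frakh_1, \frakh_2 \in \mathcal{C}(\frakg_\R)$, Proposition \ref{prop:Kna_real_Cartan_conj} produces $g_i \in \bG(\R)$ with $\operatorname{Ad}(g_i)\frakh_i$ $\theta$-stable (and still maximally noncompact, since that property is defined conjugation-invariantly), and Proposition \ref{prop:Kna_real_theta_Cartan_conj} then provides a $k \in \operatorname{Ad}(K)^\circ$ relating the two, whose composition with the $g_i$ gives an element of $\operatorname{Ad}(\bG(\R))$ sending $\frakh_1$ to $\frakh_2$. For $V(\frakg_\R)$, the skew-symmetry observation implies that inside a $\theta$-stable Cartan subalgebra $\fraka \oplus \frakt$ the $\frakp$-part $\fraka$ is the unique maximal $\R$-split subalgebra. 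Extending $\fraka_1, \fraka_2 \in V(\frakg_\R)$ into maximally noncompact Cartan subalgebras, conjugating those to $\theta$-stable form, and then aligning them by the previous paragraph, I note that $\operatorname{Ad}(K)^\circ$ commutes with $\theta$ and therefore preserves $\frakp$; consequently the same conjugation that identifies the ambient Cartan subalgebras automatically identifies their $\frakp$-parts, i.e.\ the (conjugates of the) $\fraka_i$'s.

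Part (3) is immediate from the construction in the proof of Proposition \ref{prop:defs_max_noncompact}: starting from any $\fraka \in V(\frakg_\R)$, extend it to a maximally noncompact Cartan subalgebra and conjugate via Proposition \ref{prop:Kna_real_Cartan_conj} to obtain a $\theta$-stable element of $\mathcal{C}(\frakg_\R)$. The only delicate bookkeeping in the whole lemma is step (2), where one must verify that the conjugation arranging the Cartan subalgebras also arranges the $\R$-split parts; this is exactly what the skew-symmetry of $\operatorname{ad}|_{\frakk}$ together with the $\theta$-equivariance of $\operatorname{Ad}(K)^\circ$ guarantees, so I do not anticipate any substantial obstacle beyond carefully threading the conjugations.
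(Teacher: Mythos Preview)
Your proof is correct and follows essentially the same route as the paper, reducing everything to Propositions~\ref{prop:Kna_real_Cartan_conj}, \ref{prop:Kna_real_theta_Cartan_conj}, and~\ref{prop:defs_max_noncompact}. The only cosmetic difference is order: the paper proves (3) first by the explicit construction $\frakh=\fraka\oplus\frakt$ with $\fraka\subseteq\frakp$ maximal abelian and $\frakt\subseteq\operatorname{Cen}_\frakk(\fraka)$ maximal abelian (citing \cite[Proposition 6.47]{Kna}), and then reads off (1) directly from Proposition~\ref{prop:defs_max_noncompact} applied to this $\frakh$, rather than arguing the two inequalities separately as you do.
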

\begin{proof}
	A maximally $\R$-split Cartan subalgebra, or by Proposition \ref{prop:defs_max_noncompact} a maximally noncompact Cartan subalgebra can be obtained by starting with a Cartan involution $\theta$, taking $\fraka \subseteq \frakp$ a maximal abelian subspace of $\frakp$ and taking $\frakt \subseteq \operatorname{Cen}_{\frakk}(\fraka)$ a maximal abelian subspace of $\operatorname{Cen}_{\frakk}(\fraka)$. Then
	$\frakh = \fraka \oplus \frakt$ is a $\theta$-stable maximally noncompact Cartan subalgebra, see \cite[Proposition 6.47]{Kna}. This shows (3).
	
	By Proposition \ref{prop:defs_max_noncompact}, $r_\R(\frakg_\R) = \dim(\fraka) $ and since $\fraka$ is a maximal abelian subspace of $\frakp$, $r_\R(\frakg_\R) = \operatorname{rank}_\R(G)$, showing (1).
	
	By Propositions \ref{prop:Kna_real_Cartan_conj} and \ref{prop:Kna_real_theta_Cartan_conj}, $\operatorname{Ad}(\bG(\R))$ and thus $\bG(\R)$ act transitively on $\mathcal{C}(\frakg_\R)$. Next, let $\fraka, \fraka'\in V(\frakg_\R)$. By Proposition \ref{prop:defs_max_noncompact}, there are maximally noncompact Cartan subalgebras $\frakh, \frakh'$ such that $\fraka \subseteq \frakh$ and $\fraka' \subseteq \frakh'$. By Proposition \ref{prop:Kna_real_theta_Cartan_conj}, both $\frakh$ and $\frakh'$ are conjugated to a $\theta$-stable maximally noncompact Cartan subalgebra $\frakh''$. By Proposition \ref{prop:defs_max_noncompact}, the corresponding conjugates of $\fraka$ and $\fraka'$ coincide with the intersection $\frakh'' \cap \frakp$, hence with each other. This shows that $\operatorname{Ad}(\bG(\R))$ acts transitively on $V(\frakg_\R)$ and concludes the proof of (2).
\end{proof}

\subsection{Lie Algebras over real closed fields}

Let $\K$ be a subfield of $\R$ and $\F$ be a real closed fields with $\K \subseteq \F$. In this section we additionally assume that $\K$ is real closed. Let $\frakg_\K  \subseteq \K^{n\times n}$ be the $\K$-points of the Lie algebra of a self-adjoint ($g \in \bG$ implies $g\tran \in \bG$) semisimple algebraic group $\bG$ defined over $\K$. Let $\frakg_\R$ and $\frakg_\F$ be the semialgebraic extensions. The definitions of Section \ref{sec:real_cartan_split} apply also to $\frakg_\F$:

An abelian subalgebra $\frakh \subseteq \frakg_\F$ is called \emph{Cartan subalgebra} if $\frakh = \operatorname{Nor}_{\frakg_\F}(\frakh)$. An abelian subalgebra $\fraka \subseteq \frakg_\F$ is called \emph{$\F$-split} if $\operatorname{ad}(X) \colon \frakg_\F \to \frakg_\F$ is diagonalizable over $\F$ for every $X \in \fraka$. Let $r_{\F}(\frakg_\F)$ be the maximal dimension of such an $\F$-split abelian subalgebra and denote by $V(\frakg_\F)$ the set of all $\F$-split abelian subalgebras with $\dim(\fraka) = r_\F (\frakg_\F)$. A \emph{maximally $\F$-split Cartan subalgebra} is a Cartan subalgebra containing an element of $V(\frakg_\F)$ as a subset. We denote by $\mathcal{C}(\frakg_\F)$ the set of all maximally $\F$-split Cartan subalgebras. We will now use the transfer principle to relate $\F$-split algebras to the real subalgebras studied in the previous section. In the following Lemma we fix the Cartan-involution $\theta \colon X \mapsto -X\tran$ that exists since $\bG$ is self-adjoint.

\begin{lemma} \label{lem:3}
	Whenever $\F$ and $\K$ are two real closed fields with $\K \subseteq \F$ and $\K \subseteq \R$, then
	\begin{itemize}
		\item [(1)] $r_\K(\frakg_\K) = r_\R (\frakg_\R) =  r_\F (\frakg_\F)$.
		\item [(2)] $\bG(\K)$ acts transitively on $\mathcal{C}(\frakg_\K)$ and $V(\frakg_\K)$ and those two sets are non-empty.
		\item [(3)] \label{lem:3.3}$\mathcal{C}(\frakg_\K)$ contains a $\theta$-stable element, i.e. there is $\frakh \in \mathcal{C}(\frakg_\K)$ such that $\theta(\frakh) = \frakh$.
		\item [(4)] Let $\fraka$ and $\frakh$ be subalgebras of $\frakg_\K$. Then 
		\begin{align*}
			\fraka \in V(\frakg_\K) & \iff \fraka_\R \in V(\frakg_\R) \iff \fraka_\F \in V(\frakg_\F)
	\\
	\frakh \in \mathcal{C}(\frakg_\K) &\iff \frakh_\R \in \mathcal{C}(\frakg_\R) \iff \frakh_\F \in \mathcal{C}(\frakg_\F).	
	\end{align*}
	\end{itemize}
\end{lemma}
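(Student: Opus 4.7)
The plan is to deduce everything from Lemma \ref{lem:basic_facts} by means of the transfer principle (Theorem \ref{thm:logic}), since the underlying real closed fields $\K, \R, \F$ pairwise intersect containing $\K$. The main task is to verify that each of the relevant properties is expressible as a first-order formula in the language of ordered fields with parameters in $\K$: that $d$ vectors $v_1, \ldots, v_d \in \F^{n \times n}$ are linearly independent and span an abelian subalgebra of $\frakg_\F$; that such a subalgebra is $\F$-split (for every $X$ in the span, quantify existentially over the entries of a matrix $P \in \operatorname{GL}_n(\F)$ and the diagonal entries of $P^{-1}\operatorname{ad}(X)P$); that a subalgebra with a given basis is self-normalizing and thus Cartan; and that it is $\theta$-stable, where $\theta(X) = -X\tran$ is polynomial. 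The set $\frakg_\F$ itself is defined by linear equations with coefficients in $\K$, so quantification over $\frakg_\F$ is permitted.

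For (1), fix an integer $d$ with $0 \le d \le n^2$ and let $\psi_d$ be the sentence expressing that there exist $d$ linearly independent vectors in $\frakg$ spanning an abelian subalgebra on which $\operatorname{ad}$ is diagonalizable over the base field. By Lemma \ref{lem:basic_facts}(1), the largest $d$ for which $\R \models \psi_d$ is $\operatorname{rank}_\R(\bG)$. By Theorem \ref{thm:logic}, the same $d$ is maximal over $\K$ and over $\F$, giving the three equalities simultaneously. For (2), the existence of a maximally $\K$-split Cartan subalgebra, and of a maximally $\K$-split abelian subalgebra of dimension equal to the now-known common integer $r_\K(\frakg_\K)$, are first-order and transfer from $\R$, where Lemma \ref{lem:basic_facts}(2) guarantees existence. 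Transitivity of $\bG(\K)$ on $\mathcal{C}(\frakg_\K)$ translates to the statement: for any two bases parametrizing elements of $\mathcal{C}$, there exists $g \in \bG$ taking the $\operatorname{Ad}$-image of one basis into the span of the other; this is first-order and true over $\R$ by Lemma \ref{lem:basic_facts}(2), hence true over $\K$. The argument for $V(\frakg_\K)$ is identical. Similarly (3) reduces directly to Lemma \ref{lem:basic_facts}(3) since $\theta$-stability is polynomial in the matrix entries.

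For (4), the implications $\fraka \in V(\frakg_\K) \iff \fraka_\F \in V(\frakg_\F)$ follow because being in $V$ amounts to: the subalgebra spanned has dimension $r_\K(\frakg_\K) = r_\F(\frakg_\F)$ (equal by (1)), and each $\operatorname{ad}$-action on $\fraka_\F$ is diagonalizable. With a fixed basis of $\fraka$ in $\K$ as parameters, both conditions are expressed by a first-order formula $\varphi(v_1, \ldots, v_d)$ over the chosen real closed field, and Theorem \ref{thm:logic} gives equivalence between the $\K$-, $\R$-, and $\F$-interpretations. The statement for $\mathcal{C}$ follows the same pattern: a subalgebra is Cartan iff it is abelian and self-normalizing (first-order), and it is maximally $\F$-split iff it contains a subalgebra in $V(\frakg_\F)$, again first-order with $\K$-parameters.

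The main subtlety is not conceptual but notational: one must arrange that the ``maximal dimension'' appearing in the definitions of $V$ and $\mathcal{C}$ is first-order expressible, which is only the case once we know, via (1), that this dimension is a specific integer independent of the base field. For this reason (1) is proved first, and the formulas in (2)-(4) all use this integer as a concrete parameter. Apart from this ordering, each part is a routine transfer from the real statements established in Lemma \ref{lem:basic_facts}.
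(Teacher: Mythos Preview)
Your proposal is correct and follows essentially the same strategy as the paper: express each relevant property (abelian, split, self-normalizing, $\theta$-stable, transitive $\bG$-action) as a first-order formula with parameters in $\K$, then invoke the transfer principle together with Lemma~\ref{lem:basic_facts}. The only cosmetic difference is that the paper encodes $r$-dimensional subspaces via the Grassmannian embedding $\operatorname{Grass}_r(\frakg_k)\hookrightarrow\{A\in k^{n\times n}: A\tran=A,\ A^2=A,\ \operatorname{Tr}(A)=r\}$ using orthogonal projection matrices, whereas you parametrize subspaces by tuples of basis vectors; both encodings are adequate for the transfer argument, and your observation that (1) must be proved first so that the ``maximal dimension'' becomes a concrete integer is exactly the point the paper's ordering reflects.
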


\begin{proof}
	Our first goal is to see $V(\frakg_\K)$ as a semi-algebraic set. Let $k$ be any real closed field containing $\K$. Let $\eL$ be a semisimple $k$-algebra, such that $\eL$ is also a semi-algebraic set defined over $\K$. Let $r \leq n := \operatorname{dim}_k(\eL)$. The set $\operatorname{Grass}_r(\eL)$ can be identified with an algebraic subset of $k^{n\times n}$, namely
	$$
	\varphi \colon \operatorname{Grass}_r(\eL) \to \left\{ A \in k^{n\times n} \colon A\tran = A,\  A^2=A,\ \operatorname{Tr}(A) = r \right\}
	$$
	sends the $k$-dimensional subspace $V$ to the orthogonal projection $\eL \to V$, see \cite[Theorem 3.4.4]{BCR}. Moreover, for any $A \in \varphi(\operatorname{Grass}_r(\eL))$ we have the description
	$$
	\varphi^{-1}(A) = \{ v \in \eL \colon Av = v \}.
	$$
	An abelian subalgebra $\fraka \subseteq \eL$ is $k$-split exactly when $\operatorname{ad}(x)$ is diagonalizable over $k$ for all $x \in \fraka$. It is enough to require that for a basis $\{v_1, \ldots , v_r\}$ of $\fraka$, the maps $\operatorname{ad}(v_i) \colon \eL \to \eL$ are diagonalizable over $k$. A linear map with matrix $M=(M_{ij})$ is diagonalizable over $k$ if and only if its characteristic polynomial decomposes into linear factors, which can be written as a first-order formula:
	$$
	f(M) = \exists x_1, \ldots , x_n \colon \forall X \colon  \det(M_{ij} - X\delta_{ij} ) = \prod_{i=1}^n (X-x_i).
	$$
	We can write the statement ``$\fraka = \varphi^{-1}(A)$ is a $k$-split abelian subalgebra'' as a first-order formula
	\begin{align*}
		& \exists v_1, \ldots , v_n \in \eL \colon \left( \forall a_1 , \ldots ,a_n \colon \sum_{i=1}^n a_i v_i = 0 \to a_1 = 0 \wedge \ldots \wedge a_n = 0\right) \wedge\\
		&\bigwedge_{i=1}^r Av_i = v_i \ \wedge  \bigwedge_{i,j=1}^r	[v_i,v_j] = 0 \ \wedge \\
		&
		\bigwedge_{\ell = 1}^r \left( \exists M_{11}, M_{12}, \ldots , M_{nn} \colon \bigwedge_{i=1}^n [v_\ell , v_i] = \sum_{j=1}^n M_{ij} e_j \, \rightarrow f(M) \right), 
	\end{align*}
	in words: there exists a basis of $\eL$ whose first $r$ vectors form a basis of $\fraka = \varphi^{-1}(A)$, such that $\fraka$ is abelian and for every $\ell \in \{1,\ldots, r\}$ $\operatorname{ad}(v_\ell)$ with matrix $M$ in this basis is diagonalizable over $k$.
	
	By quantifier elimination we get an equivalent first-order statement $g(A)$ without quantifiers and we can write 
	$$
	\varphi(V(\eL)_r) = \{A \in \varphi(\operatorname{Grass}_r(\eL)) \colon g(A) \}
	$$
	as a semi-algebraic set defined by polynomials with coefficients in $k$, where $V(\eL)_r$ denotes the set of all $k$-split abelian subalgebras of dimension $r$.

	Now for $k = \R$ we know that $V(\frakg_\R) = V(\frakg_\R)_{r_\R(\frakg_\R)}$ is non-empty, but $V(\frakg_\R)_{r}$ is empty for any $r > r_\R(\frakg_\R)$. We consider the first-order formula
	$$
	\exists A \in  \varphi( V(\frakg_k)_r ),
	$$
	which is defined over $\K$ since $\frakg_\R$ and hence the brackets are defined over $\K$. Since the formula is satisfied for $k=\R$ and $r=r_\R(\frakg_\R)$, we conclude by the transfer principle that it also holds for $k=\K$ and $k=\F$, i.e. $V(\frakg_\K) = V(\frakg)_{r_\R(\frakg_\R)} \neq \emptyset$ and $V(\frakg_\F) = V(\frakg_\F)_{r_\R(\frakg_\R)} \neq \emptyset$. For any $r > r_\R(\frakg_\R)$, we know that the formula is not satisfied for $k=\R$, therefore by the transfer principle it is also not satisfied for $k=\K$ and $k=\F$, i.e. $V(\frakg_\K)_{r} = \emptyset$ and $V(\frakg_\F)_{r} = \emptyset$ for any  $r > r_\R(\frakg_\R)$. It follows that $r_\F(\frakg_\F) = r_\R(\frakg_\R) = r_\K (\frakg_\K)$, which is statement (1) of the Lemma we are proving.
	
	We describe 
	\begin{align*}
		\mathcal{C}(\frakg_k) = \left\{ \frakh \in \operatorname{Grass}_{r_{k}(\frakg_k)}(\frakg_k) \colon 
		\begin{matrix}
			[\frakh , \frakh] = 0 , \ \operatorname{Nor}_{\frakg_k}(\frakh) = \frakh, \\% \text{ and } \\
			\exists \fraka \in V(\frakg_k) \colon \fraka \subseteq \frakh 
		\end{matrix}
		\right\}
	\end{align*}
	similarly. Let $v_1, \ldots , v_n$ again be a basis of a semisimple $k$-algebra $\eL$ such that $v_1 , \ldots , v_r$ is a basis of a subalgebra $\fraka \subseteq \eL$. We have $\operatorname{Nor}_{\eL}(\fraka) = \fraka$ whenever $[v_i,v_j] \in \fraka \ \forall i \in \{1, \ldots , n\}, j \in \{1, \ldots , r\}$, i.e. given $\fraka = \varphi^{-1}(A)$ we have
	\begin{align*}
		h(A) \colon \quad &
		\bigwedge_{i=1}^n \bigwedge_{j=1}^r A[v_i , v_j] = [v_i,v_j].
	\end{align*}
	We have that $\varphi^{-1}(A) \in \mathcal{C}(\eL)$ if and only if the following first-order formula holds
	\begin{align*}
		& \exists v_1, \ldots , v_n \in \eL \colon \left( \forall a_1 , \ldots ,a_n \colon \sum_{i=1}^n a_i v_i = 0 \to \bigwedge_{i=1}^n  a_i = 0 \right) \wedge\\
		& \bigwedge_{i=1}^r Av_i = v_i \ \ \wedge\ \  %\forall i,j \in \{1, \ldots , r\}\colon [v_i,v_j] = 0 
		\bigwedge_{i,j=1}^r [v_i,v_j] = 0   \ \ \wedge \ \ h(A) \ \ \wedge \\
		&  \exists  B \in V(\eL) \colon \forall v \in \eL \colon Bv=v \to Av = v. 
	\end{align*}
	Again we use quantifier elimination to to get an equivalent statement $s(A)$ without quantifiers. Thus, $\mathcal{C}(\eL)$ can be identified with the semialgebraic set
	$$
	\varphi(\mathcal{C}(\eL)) = \{ A  \in \varphi(\operatorname{Grass}_{r_k(\eL)} (\eL)) \colon s(A)\}.
	$$
	From the theory of real Lie groups we know by Lemma \ref{lem:basic_facts}(3) that $\mathcal{C}(\frakg_{\R})$ is non-empty and since $\exists A \in \varphi(\mathcal{C}(\frakg_k))$ is a first-order statement, defined over $\K$, we can use the transfer principle to conclude that $\mathcal{C}(\frakg_\K)$ and $\mathcal{C}(\frakg_{\F})$ are also non-empty. Statement (4) follows from the semi-algebraic description of the sets $V(\frakg_\K)$ and $\mathcal{C}(\frakg_\K)$ and the transfer principle.
	
	The group $\bG(k)$ acts by conjugation on $\operatorname{Grass}_r(\frakg_k)$. The corresponding action on $\varphi(\operatorname{Grass}_r(\frakg_k))$ is given by $g.A = A \circ \operatorname{Ad}(g^{-1}) $, where $g\in \bG(k)$, ${A \in \varphi(\operatorname{Grass}_r(\frakg_k))}$ and $\operatorname{Ad}(g) \colon \frakg_k \to \frakg_k, X \mapsto gXg^{-1}$.
	
	We know that this action is transitive on $V(\frakg_\R)$ and $\mathcal{C}(\frakg_\R)$ by Lemma \ref{lem:basic_facts}(2). As all involved sets are semi-algebraic, we can formulate transitivity as a first-order formula,
	\begin{align*}
		\forall A,B \in \varphi(V(\frakg_k)) \ \exists g \in \bG \colon \forall v \in \frakg_k \colon 
		A(g^{-1}vg) = Bv. 
	\end{align*}
	and conclude that $\bG(\K)$ acts transitively on $V(\frakg_\K)$ and $\mathcal{C}(\frakg_\K)$, concluding the proof of (2).
	
	Finally, for $\theta \colon \frakg_k \to \frakg_k, X \mapsto -X\tran$ and $A \in \varphi(\mathcal{C}(\frakg_k))$, we note that $v \in \theta \left(\varphi^{-1}( A )\right)$ if and only if $A(\theta(v)) = \theta (v)$, that is $\theta A \theta v = v$. The condition $\theta(\fraka) = \fraka$ therefore corresponds to $A =\theta A\theta$. We know by Lemma \ref{lem:basic_facts}(3) that the first-order formula
	$$
	\exists A \in \varphi(\mathcal{C}(\frakg_k) ) \colon \forall v \in \frakg_k \colon Av = \theta A \theta v 
	$$
	is true for $k=\R$ and conclude that it therefore is also true for $k = \K$, proving (3). 
\end{proof}

\subsection{Split tori of algebraic groups over real closed fields}\label{sec:split_tori}

Let $\K$ and $\F$ be real closed fields with $\K \subseteq \R \cap \F$.
Let $\bG$ be a semi-simple self-adjoint ($g \in \bG$ implies $g\tran \in \bG$) algebraic $\K$-group. Let $\frakg_\K \subseteq \K^{n \times n}$ be the $\K$-points of the Lie algebra. Let $\frakg_\R$ and $\frakg_\F$ be the semialgebraic extensions, then $\frakg_\R$ is also the Lie algebra of $\bG(\R)$.

All subfields $\K$ of $\R$ are dense in $\R$, in the sense that $\overline{\K}=\R$ where $\overline{\K}$ is the closure of $\K$ in the usual topology of $\R$. The following generalization of this fact will be used in the proof of Theorem \ref{thm:split_tori}, the main result of this section.

\begin{lemma}\label{lem:closure_alg}
	Let $A\subseteq \R^n$ be an algebraic set defined over $\K$. Then we have $\overline{A_\K} = A_\R$ in the usual $\R^n$ topology.
\end{lemma}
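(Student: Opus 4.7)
The plan is to prove the two inclusions separately. The inclusion $\overline{A_\K} \subseteq A_\R$ is immediate and requires no work: the set $A_\R \subseteq \R^n$ is closed in the usual topology, being the common zero locus of a finite set of polynomials, and $A_\K \subseteq A_\R$, so the closure stays inside.

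For the harder inclusion $A_\R \subseteq \overline{A_\K}$, I would fix $p \in A_\R$ and $\varepsilon > 0$ and aim to produce $q \in A_\K$ with $|p-q| < \varepsilon$. The key idea is to reduce this to a first-order statement with parameters in $\K$ and then invoke the transfer principle (Theorem \ref{thm:logic}). Since $\Q \subseteq \K$, I can pick rational numbers $p_1', \ldots, p_n' \in \Q$ and a rational $\varepsilon_0 \in \Q_{>0}$ with
$$
|p - p'| < \varepsilon_0 < \tfrac{\varepsilon}{2},
$$
so that $p$ lies in the open ball $B(p', \varepsilon_0)$ and $B(p', \varepsilon_0) \subseteq B(p, \varepsilon)$. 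Writing $A$ as the common zero set of a finite family of polynomials $f_1, \ldots, f_m \in \K[x_1, \ldots, x_n]$, I then consider the sentence
$$
\varphi \colon \quad \exists x_1, \ldots, x_n \colon \bigwedge_{j=1}^m f_j(x_1, \ldots, x_n) = 0 \ \wedge\ \sum_{i=1}^n (x_i - p_i')^2 < \varepsilon_0^2.
$$
All parameters occurring in $\varphi$ lie in $\K$, and $\varphi$ is witnessed in $\R$ by the point $p$ itself. By the transfer principle applied to the real closed fields $\R$ and $\K$, the sentence $\varphi$ also holds in $\K$, producing a $q \in A_\K$ with $|q - p'| < \varepsilon_0$, and hence $|q - p| < 2\varepsilon_0 < \varepsilon$.

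I do not expect any serious obstacle. The only subtlety is to make sure the approximating center $p'$ and the radius $\varepsilon_0$ are genuinely in $\K$ (so that $\varphi$ is a formula with parameters in $\K$ and transfer applies), which is handled by approximating through the rationals $\Q \subseteq \K$ that are dense in $\R$.
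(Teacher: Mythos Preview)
Your proof is correct and follows essentially the same approach as the paper: approximate the real point by a point with coordinates in $\K$, formulate ``there is a point of $A$ within distance $\varepsilon_0$ of this approximant'' as a first-order sentence with parameters in $\K$, and transfer from $\R$ to $\K$. The only cosmetic difference is that you approximate through $\Q \subseteq \K$ while the paper uses general $\K$-points directly (relying on $\overline{\K} = \R$); both work for the same reason.
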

\begin{proof}
	We first note that the algebraic set $A_\R$ is Zariski-closed and therefore also closed in the usual topology of $\R^n$, $\overline{A_\R} = A_\R$. We know that $A_\K \subseteq A_\R$ and therefore $\overline{A_\K} \subseteq \overline{A_\R} = A_\R$.
	
	On the other hand, let $x \in A_\R$. Since $\overline{\K} = \R$, there are $y_k \in \K^n$ and $\varepsilon_k >0$ with $|y_k - x|< \varepsilon_k$ and $\varepsilon_k \to 0$ as $k \to \infty$. Now we have the following first-order formula
	$$
	\varphi(y, \varepsilon ) \colon \quad \exists z\in A \colon \lVert z-y \rVert < \varepsilon.
	$$
	For every $k\in \N$, the formula $\varphi(y_k,\varepsilon_k)$ is true over $\R$, we can just take $z=x$ for every $k$. By the transfer principle it is also true over $\K$, which means that we have
	$
	z_k \in A_\K \colon \lVert z_k -y_k \rVert < \varepsilon_k.
	$
	We conclude
	$$
	\lVert z_k - x \rVert \leq \lVert z_k - y_k\rVert + \lVert y_k - x\rVert < 2 \varepsilon_k \to 0
	$$
	as $k \to \infty$, i.e. $x \in \overline{A_\K}$.
\end{proof}

By Subsection \ref{sec:ex_split}, a torus may be split with respect to some field, while not being split in a subfield. We now prove that this does not happen as long as all involved fields are real closed.

\begin{theorem}\label{thm:split_tori}
	Any maximal $\K$-split torus $\bS < \bG$ is maximal $\F$-split. Moreover, there is a maximal $\F$-split torus $\bS$ so that $g\tran = g$ for all $g \in \bS$.
\end{theorem}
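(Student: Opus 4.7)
The approach splits into two parts bridged by Lemma \ref{lem:3}. First I will show that the dimension of a maximal $\K$-split torus of $\bG$ equals $r_\K(\frakg_\K)$, and symmetrically over $\F$. A $\K$-split torus $\bS$ of dimension $d$ is simultaneously $\K$-diagonalizable; its Lie algebra $\operatorname{Lie}(\bS)$ then consists of commuting $\K$-diagonalizable matrices and lies in $V(\frakg_\K)$ when $\bS$ is maximal, giving $\dim \bS \le r_\K(\frakg_\K)$. Conversely, by the standard correspondence between toral subalgebras and split tori in characteristic zero, any $\fraka \in V(\frakg_\K)$ is the Lie algebra of a unique connected algebraic $\K$-subgroup of $\bG$, which is a $\K$-split torus of dimension $\dim \fraka$ by Theorem \ref{thm:bor_torus}. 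Lemma \ref{lem:3}(1) now gives that maximal $\K$-split and $\F$-split tori share the common dimension $r_\K(\frakg_\K) = r_\F(\frakg_\F)$. Since any $\K$-split torus is automatically $\F$-split (the isomorphism $\bS \cong (\bG_m)^d$ defined over $\K$ extends to $\F$) and has the maximal possible dimension, it is already maximal $\F$-split.

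For the self-adjoint claim, I apply Lemma \ref{lem:3}(3) to select a $\theta$-stable $\frakh \in \mathcal{C}(\frakg_\K)$, where $\theta(X) = -X\tran$ is the Cartan involution arising from self-adjointness of $\bG$. Decomposing $\frakh$ into $\theta$-eigenspaces gives $\frakh = (\frakh \cap \frakp) \oplus (\frakh \cap \frakk)$; transferring Proposition \ref{prop:defs_max_noncompact} from $\R$ to $\K$ via Lemma \ref{lem:3}(4) ensures that $\fraka := \frakh \cap \frakp$ lies in $V(\frakg_\K)$, and by construction $\fraka$ consists of symmetric matrices. Let $\bS$ be the maximal $\K$-split torus with $\operatorname{Lie}(\bS) = \fraka$ supplied by the correspondence of the first part. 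The spectral theorem for commuting symmetric matrices over the real closed field $\K$ yields $P \in O(n, \K)$ simultaneously diagonalizing $\fraka$. Then $\operatorname{Lie}(P^{-1}\bS P) = P^{-1}\fraka P$ is diagonal, so the connected algebraic group $P^{-1}\bS P$ sits inside the standard diagonal torus of $\operatorname{GL}_n$. Every $g \in \bS$ therefore takes the form $g = P D P^{-1} = P D P\tran$ for some diagonal $D$, whence $g\tran = P D\tran P\tran = P D P\tran = g$.

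The main obstacle is installing the correspondence between $\K$-split toral subalgebras and $\K$-split algebraic tori in the semisimple characteristic-zero setting; once that is available, the remainder is linear algebra over real closed fields (orthogonal diagonalization of commuting symmetric matrices, which is itself a transfer consequence) combined with the Lie-algebra transfer encapsulated in Lemma \ref{lem:3}.
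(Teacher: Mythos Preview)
Your overall strategy --- reduce to Lie algebras via Lemma~\ref{lem:3}, then lift back to algebraic tori --- is the same as the paper's. The gap you yourself flag at the end is not a technicality, however: it is precisely the substance of the theorem. You assert that any $\fraka\in V(\frakg_\K)$ is the Lie algebra of a connected algebraic $\K$-subgroup which is moreover a $\K$-\emph{split} torus, but this correspondence is not standard. An arbitrary $\K$-subspace of the diagonal matrices need not be the Lie algebra of any algebraic subtorus (think of a line with irrational slope inside $\operatorname{Lie}(\bG_m^2)$), so even after your orthogonal diagonalization you cannot conclude that a subtorus with Lie algebra exactly $\fraka$ exists, let alone that it is $\K$-split. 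Invoking Theorem~\ref{thm:bor_torus} only tells you that a connected group of semisimple elements is a torus; it says nothing about existence or about the field of splitting.

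The paper fills exactly this gap, and the mechanism is worth noting. Rather than seeking a torus with Lie algebra $\fraka$ directly, it embeds $\fraka$ in a $\theta$-stable maximally $\K$-split Cartan subalgebra $\frakh$ (Lemma~\ref{lem:3}(3)), builds the maximal torus $\bT=\operatorname{Nor}_{\bG(\C)}(\frakh_\C)^\circ$ (defined over $\K$ by \cite[18.5]{Bor}), and takes its symmetric part $\bT^s$. One checks $\operatorname{Lie}(\bT^s(\R))=\frakh_\R\cap\frakp=\fraka_\R$, so $\bT^s$ is maximal $\R$-split. The crucial step is then to upgrade $\R$-split to $\K$-split: each root $\alpha\in\RPhi$ maps $\bT^s(\K)\to\K^\times$ by its very definition via $\operatorname{Ad}$, and the density statement $\overline{\bT^s(\K)}=\bT^s(\R)$ (Lemma~\ref{lem:closure_alg}) forces $\operatorname{graph}(\alpha)_\K$ to be Zariski dense in $\operatorname{graph}(\alpha)_\R$, whence $\alpha$ is defined over $\K$ and $\bT^s$ is $\K$-split. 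This density argument is the genuine content your sketch is missing; once $\operatorname{rank}_\K(\bG)=r_\R(\frakg_\R)$ is established this way, the chain $\operatorname{rank}_\F(\bG)\le r_\F(\frakg_\F)=r_\R(\frakg_\R)=\operatorname{rank}_\K(\bG)\le\operatorname{rank}_\F(\bG)$ closes exactly as you envisioned.
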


\begin{proof}  We first find a maximal $\K$-split torus $\bT^s$ with $g\tran=g$ for all $g \in \bT^s$.
	
    Lemma \ref{lem:3}(3) states that there is a maximally $\K$-split Cartan subalgebra $\frakh \in \mathcal{C}(\frakg_\K)$ such that $\theta(\frakh) = \frakh$. We consider the complexification $\frakh_{\C} = \C \otimes_{\K} \frakh = \frakh_\R \oplus i \frakh_\R  \subseteq \frakg_{\C} = \frakg_\R \oplus i \frakg_\R$, which by Lemma \ref{lem:defs_Cartan_subalgebra}(ii) is also a Cartan subalgebra, in the sense that $\frakh_\C$ is abelian and $\operatorname{Nor}_{\frakg_\C}(\frakh_\C) = \frakh_\C$.
    Then
    $$
    \bT(\C) := \operatorname{Nor}_{\bG(\C)}(\frakh_\C)^{\circ} = \{ g \in \bG(\C) \colon \operatorname{Ad}(g)(\frakh_\C) = \frakh_\C \}^{\circ}.
    $$
    forms a Zariski-connected closed subgroup of $\bG(\C)$ with Lie algebra $\operatorname{Nor}_{\frakg_\C}(\frakh_\C) = \frakh_\C$. By \cite[Lemma 18.5]{Bor}, $\bT(\C)$ is defined over $\K$. Since $\frakh_\C$ is finite dimensional, $\bT(\C)$ can be written as the zero-set of finitely many polynomials and is therefore the $\C$-points of an algebraic group $\bT$.

	Since $\frakg_\C$ is semisimple and $\frakh_\C$ is a Cartan subalgebra, $\operatorname{ad}(H)$ is simultaneously diagonalizable for all $H \in \frakh_\C$ Lemma \ref{lem:defs_Cartan_subalgebra}(v). For $H \in \frakh_\C$, $\exp(H) \in T_\C$ and $\operatorname{Ad}(\exp(H)) = \exp(\operatorname{ad}(H))$ is diagonalizable. Possibly not all elements in $\bT(\C)$ are of the form $\exp(H)$ with $H \in \frakh_\C$, but $\exp(\frakh_\C)$ is an open neighborhood of the identity and thus generates $\bT(\C)$. For any $g\in \bT(\C)$ there are $H_1, \ldots , H_n \in \frakh_\C$ such that
	\begin{align*}
		\operatorname{Ad}(g) &= \operatorname{Ad}\left(\prod_{i=1}^n \exp(H_i) \right)= \operatorname{Ad}\left(\exp \left(\sum_{i=1}^n H_i\right)\right) \\
		&= \exp\left( \operatorname{ad}\left( \sum_{i=1}^n H_i  \right) \right)
		=\exp\left( \sum_{i=1}^n \operatorname{ad}\left(  H_i  \right) \right),
	\end{align*}
	where we used that $\frakh_\C$ is abelian. Since $\operatorname{ad}(\frakh_\C)$ is simultaneously diagonalizable, $\operatorname{Ad}(g)$ is diagonalizable. Similarly we have $\operatorname{Ad}(g)\operatorname{Ad}(h) = \operatorname{Ad}(h) \operatorname{Ad}(g)$ for all $g , h\in \bT(\C)$. Therefore $\operatorname{Ad}(\bT(\C))$ is simultaneously diagonalizable, meaning $\bT(\C)$ is diagonalizable by \cite[Proposition 8.4]{Bor}. Since $\bT(\C)$ is connected, $\bT$ is a torus by \cite[Proposition 8.5]{Bor}.
	
	If we restrict to symmetric matrices
	$$
	\bT^s = \{ g \in \bT \colon \sigma (g) = g^{-1} \}
	$$
	we see that $\operatorname{Lie}(\bT^s(\R)) = \operatorname{Lie}(\bT(\R)) \cap \frakp = \frakh_\R \cap \frakp$, where $\frakg_\R = \frakp \oplus \frakk$ is the Cartan decomposition corresponding to the standard Cartan involution $\theta = \operatorname{d}_e\sigma \colon X \mapsto -X\tran$. 
	
	Let $\fraka \in V(\frakg_\K)$ be a maximal abelian $\K$-split subalgebra of $\frakg_\K$ with $\fraka \subseteq \frakh$. By Lemma \ref{lem:3}(4), we have $\frakh_\R \in \mathcal{C}(\frakg_\R)$ and $\fraka_\R \in V(\fraka_\R)$ with $\fraka_\R \subseteq \frakh_\R$. By Proposition \ref{prop:defs_max_noncompact}, $\frakh_\R$ is a maximal $\R$-split Cartan subalgebra and $\fraka_\R = \frakh_\R \cap \frakp$.

	Thus $\operatorname{Lie}(\bT^s(\R)) = \frakh_\R\cap \frakp$ is maximal $\R$-split (or maximally noncompact in the terminology of Section \ref{sec:real_cartan_split}). We conclude that $\bT^s(\R)$ is a maximal $\R$-split torus. 	
	Note that $\bT^s$ is defined over $\K$.

	We now want to show that $\bT^s$ is in fact $\K$-split. 
	We consider the relative root system $\RPhi := \Phi(\bG,\bT^s) $. %BOREL Chapter 21, or 8.17.
	For any root $\alpha \colon \bT^s \to \bG_m $, $\alpha \in ~\!\! \RPhi$, we know that $\alpha_\R(\bT^s(\K)) \subseteq \K^\times$, since $\alpha$ is defined by the property that $\operatorname{Ad}(g)X =\alpha_\R(g)X$ for $g \in \bT^s(\K), X \in \frakg_\K$. 
	The graph of $\alpha$ is an algebraic set
	$$
	\operatorname{graph}(\alpha) = \{ (x,z) \in  \bT^s \times \bG_m \colon z = \alpha(x) \}  
	$$
	defined over $\R$, our goal is to show that it is actually defined over $\K$. 
	
	Now $(x,z) \in \operatorname{graph}(\alpha)_\R$ if and only if $x \in \bT^s(\R)$ and $z = \alpha_\R(x)$. In view Lemma \ref{lem:closure_alg}, $\bT^s(\R) = \overline{\bT^s(\K)}$, so $x \in \bT^s(\R)$ is equivalent to saying that there is a sequence of $x_n \in \bT^s(\K)$ such that $x_n \to x$ as $n \to \infty$. Since $\alpha_\R$ is continuous,
	we have $\alpha_\R(x_n) \to z$ as $n \to \infty$. We conclude that $(x,z) \in \operatorname{graph}(\alpha)_\R$ if and only if there is a sequence $(x_n, \alpha_\R(x_n)) \in \operatorname{graph}(\alpha)_\K$ with $(x_n, \alpha_\R(x_n)) \to (x,z)$ as $n \to \infty$, i.e. 
	$$
	\overline{\operatorname{graph}(\alpha)_\K} = \operatorname{graph}(\alpha)_\R,
	$$ 
	which means that $\operatorname{graph}(\alpha)_\K$ is dense in $\operatorname{graph}(\alpha)_\R$, in particular Zariski-dense (Zariski-open sets are also open in the usual topology and a set is dense if every open set intersects it). Viewing $\operatorname{graph}(\alpha)$ as an algebraic group, we can use \cite[Proposition 3.1.8]{Zim} to conclude that $\operatorname{graph}(\alpha)$ is defined over $\K$. Hence every $\alpha \in \RPhi$ is defined over $\K$, indeed every multiplicative character is defined over $\K$. By \cite[Corollary 8.2]{Bor}, this means that $\bT^s$ is $\K$-split. We have thus found a maximal $\K$-split torus which satisfies $g\tran = g $ for all $g \in \bT^s$.
	
	Next we will prove that $\operatorname{rank}_\F (\bG) = \operatorname{rank}_\K (\bG)$ as follows.
	\begin{align*}
		\operatorname{rank}_\F (\bG) &\leq r_\F (\frakg_\F) 
		%= r_\K (\frakg) 
		= r_\R (\frakg_\R) = \operatorname{rank}_\K (\bG) \leq \operatorname{rank}_\F (\bG)
	\end{align*}
	We recall that $\operatorname{rank}_\F(\bG)$ is the dimension of a maximal $\F$-split torus and $r_\F(\frakg_\F)$ is the dimension of an element in $V(\frakg_\F)$. Indeed, the Lie algebra of any maximal $\F$-split torus is abelian and $\F$-split, i.e. contained in an element of $V(\frakg_\F)$. The first equality is Lemma \ref{lem:3}(1). The second equality is what we did in this proof: we found the maximal $\K$-split torus $\bT^s$ with $\operatorname{Lie}(\bT^s(\R)) = \frakh_\R\cap\frakp \in V(\frakg_\R)$, with dimension $r_\R(\frakg_\R)$. 
	The last inequality holds because every $\K$-split torus is also $\F$-split.
	
	Let $\bS$ be a maximal $\K$-split torus. Then $\bS$ is also $\F$-split (but possibly not maximal). Let $\bT$ be a maximal $\F$-split torus with $\bS \subseteq \bT$. Then $\dim(\bS) = \operatorname{rank}_\K(\bG) = \operatorname{rank}_\F(\bG) = \dim (\bT)$ and since tori are connected $\bS=\bT$. Thus every maximal $\K$-split torus $\bS$ is also maximal $\F$-split.
\end{proof}

 \begin{corollary}\label{cor:split_tori}
	Let $\bS < \bG$ be a maximal $\K$-split torus. Then the set $\KPhi$ of $\K$-roots of $\bS$ in $\bG$ coincides with $\FPhi$ and hence the set of standard parabolic $\K$-subgroups coincides with the set of standard parabolic $\F$-subgroups. In particular any $\F$-parabolic subgroup is $\bG(\F)$-conjugate to a parabolic $\K$-subgroup.
\end{corollary}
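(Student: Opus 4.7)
The plan is to deduce the corollary essentially for free from Theorem~\ref{thm:split_tori}. That theorem says that any maximal $\K$-split torus $\bS < \bG$ is automatically maximal $\F$-split. The root set $\Phi(\bG,\bS)$ defined in Section~\ref{sec:alg_root_system} depends only on the adjoint action of $\bS$ on $\frakg$, with no reference to a field of scalars, so it is the same set whether we regard $\bS$ as a maximal $\K$-split torus or as a maximal $\F$-split torus. This immediately gives $\KPhi = \Phi(\bG,\bS) = \FPhi$; moreover, since $\bS$ is $\K$-split, the characters in this common root set are defined over $\K$, hence over $\F$.

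Next I would observe that the classification of parabolic subgroups containing a fixed maximal split torus (Borel, \S 21) assigns to each subset $I$ of a basis of the root system a single algebraic subgroup of $\bG$, obtained by adjoining $\operatorname{Cen}_{\bG}(\bS)$ and the root subgroups indexed by the appropriate roots. Because the root subgroups are already defined over $\K$, the subgroup produced is the \emph{same} algebraic subgroup of $\bG$ whether the construction is carried out over $\K$ or over $\F$. Taking the same basis on both sides (legitimate by the previous paragraph) shows in particular that a minimal parabolic $\K$-subgroup $\bP$ containing $\bS$ is also a minimal parabolic $\F$-subgroup, and that the two families of standard parabolic subgroups agree.

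For the final assertion, let $\bP'$ be an arbitrary parabolic $\F$-subgroup. By Borel's structure theory over $\F$, $\bP'$ contains some maximal $\F$-split torus $\bS''$, and all such tori in $\bG$ are $\bG(\F)$-conjugate, so there exists $g \in \bG(\F)$ with $g\bS''g^{-1} = \bS$. Then $g\bP'g^{-1}$ is an $\F$-parabolic containing $\bS$, which by the classification of the previous paragraph is a parabolic $\K$-subgroup of $\bG$. The only point that needs a line of care is making sure that the correspondence between parabolic $\K$-subgroups and parabolic $\F$-subgroups containing $\bS$ is really an equality of subgroups of $\bG$ and not merely a bijection up to some identification; this is so because each standard parabolic is recovered from its root data as an algebraic subgroup defined over $\K$. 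I do not expect a serious obstacle: the substantive work was already carried out in Theorem~\ref{thm:split_tori}, and the present statement is just the reading-off of its consequences for root data and parabolic subgroups.
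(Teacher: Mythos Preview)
Your proposal is correct and follows essentially the same line as the paper: both deduce $\KPhi=\FPhi$ from Theorem~\ref{thm:split_tori}, identify the standard parabolics over $\K$ and over $\F$ via the common bijection with subsets of $\KDelta=\FDelta$ (Borel, Proposition~21.12), and then use $\bG(\F)$-conjugacy of $\F$-parabolics to standard ones for the final clause. The only wrinkle is that in your hands-on version of the last step, $g\bP'g^{-1}$ contains $\bS$ but need not be \emph{standard} (i.e.\ contain the fixed minimal parabolic $\bP$); one further conjugation by a Weyl-group representative in $N_{\bG(\K)}(\bS)$ repairs this, and the paper simply sidesteps the issue by citing Borel's conjugacy result directly.
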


\begin{proof}
	The set of $\K$-roots is defined as $\KPhi := \Phi(\bS,\bG)$, where $\bS$ is a maximal $\K$-split torus of $\bG$ \cite[21.1]{Bor}. Since $\bS$ is also $\F$-split by Theorem \ref{thm:split_tori}, we have $\KPhi := \Phi(\bS,\bG) = \FPhi $. 
	
	Following \cite[21.11]{Bor}, we choose an ordering on $\KPhi$ and fix the minimal parabolic $\K$-subgroup $\bP$ associated to the positive roots in $\KPhi$. Any parabolic $\K$-subgroup containing $\bP$ is called \emph{standard parabolic}. The standard parabolic $\K$-subgroups are in one-to-one correspondence with the subsets $I \subseteq \KDelta$ of the simple roots $\KDelta$ of $\KPhi$ \cite[Proposition 21.12]{Bor}. Since $\FDelta = \KDelta$, the standard parabolic $\K$-groups coincide with the standard parabolic $\F$-groups. By \cite[Proposition 21.12]{Bor}, any parabolic $\F$-group is conjugate to one and only one standard parabolic, by an element in $\bG(\F)$. In particular every $\F$-parabolic subgroup is $\bG(\F)$-conjugate to a $\K$-parabolic subgroup.
\end{proof}

\begin{remark}\label{rem:self-adjoint}
	In Theorem \ref{thm:split_tori} we rely on the fact that $\bG$ is self-adjoint, when we use the standard Cartan-involution $\theta \colon X \mapsto -X\tran$. Showing that a semialgebraic Cartan-involution exists also when $\bG$ is not self-adjoint would help in resolving the following question. 
\end{remark}

\begin{question}
	If $\K$ and $\F$ are real closed fields with $\K \subseteq \R \cap \F$ and $\bG$ is a semisimple algebraic $\K$-group (not necessarily self-adjoint), are all maximal $\K$-split tori maximal $\F$-split? Is there a maximal $\F$-split torus that is invariant under some Cartan-involution $\theta$? 
\end{question}

\section{Semialgebraic groups}\label{sec:decompositions}

If $\K$ is a real closed field, a subgroup of $\operatorname{GL}_n(\K)$ that is at the same time a semialgebraic set (with parameters in $\K$) is called a \emph{linear semialgebraic group defined over $\K$}, or short a \emph{semialgebraic $\K$-group}.
Let $\K$ and $\F$ be real closed fields such that $\K \subseteq \R \cap \F$. Let $\bG$ be a semisimple self-adjoint ($g\in \bG$ implies $g\tran \in \bG$) algebraic $\K$-group. 
The $\K$-points $\bG(\K)$ form a semialgebraic $\K$-group. Moreover, the semialgebraic extension of $\bG(\K)$ to $\F$ coincides with the $\F$-points of $\bG$, $\bG(\F) = \bG(\K)_\F$ and for every semialgebraic $\K$-group $G$, we have $G = G_\K$.

From now on, let $G$ be a group that satisfies 
$$
\bG(\K)^\circ < G < \bG(\K),
$$
where $\bG(\K)^\circ$ is the semialgebraic connected component of the identity of $\bG(\K)$. Since $G$ is a finite union of semialgebraic cosets, $G$ is a semialgebraic $\K$-group. The semialgebraic extensions to $\R$ then satisfy $(\bG(\K)^\circ)_\R = \bG(\R)^\circ < G_\R < \bG(\R) = \bG(\K)_\R$ and $G_\R$ is a Lie group. The algebraic group $\bK:= \bG \cap \operatorname{SO}_n$ similarly defines a semialgebraic subgroup $K:= \bK(\K) \cap G$ and the semialgebraic extension satisfies $K_\R = \bK(\R) \cap G_\R$ and is a Lie group.
\begin{lemma}
	The semialgebraic group $G$ is invariant under trasposition. 
\end{lemma}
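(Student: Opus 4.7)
The plan is to factor transposition through the semialgebraic connected component $\bG(\K)^\circ \subseteq G$.

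Consider the map $\sigma \colon \bG \to \bG$, $g \mapsto (g^{-1})\tran$. Self-adjointness of $\bG$ makes $\sigma$ a well-defined involutive group automorphism. A direct computation gives the identity
\[
\sigma(g) = g \cdot (g\tran g)^{-1},
\]
in which the factor $(g\tran g)^{-1}$ is a symmetric positive-definite element of $\bG(\K)$ (symmetric positive-definite as a matrix, and in $\bG$ since $\bG$ is self-adjoint). The lemma therefore reduces to the claim that the semialgebraic set
\[
P := \{p \in \bG \colon p\tran = p, \ p \text{ positive definite}\}
\]
satisfies $P(\K) \subseteq \bG(\K)^\circ$. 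Granting this, for any $g \in G$ one has $\sigma(g) \in g \cdot \bG(\K)^\circ \subseteq G$, so $\sigma(G) = G$; and then transposition-invariance follows from $g\tran = \sigma(g^{-1})$ together with $g^{-1} \in G$.

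To prove $P(\K) \subseteq \bG(\K)^\circ$, I would use the transfer principle. Over the reals, the classical polar decomposition for self-adjoint algebraic groups identifies $P(\R)$ with $\exp(\frakp)$, where $\frakp = \frakg_\R \cap \{X \colon X\tran = X\}$ is the symmetric part of the Cartan decomposition $\frakg_\R = \frakk \oplus \frakp$. As the continuous image of a connected vector space containing $0$, the set $P(\R)$ is connected and contains the identity, so $P(\R) \subseteq \bG(\R)^\circ$. Both $P$ and $\bG^\circ$ are semialgebraic subsets of $\bG$ defined by formulas with parameters in $\K$, so the inclusion $P \subseteq \bG^\circ$ is a first-order sentence over $\K$, and Theorem \ref{thm:logic} transfers it to give $P(\K) \subseteq \bG(\K)^\circ$.

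The main delicate point is the identification of connected components across field extensions: one needs that $\bG(\K)^\circ$ corresponds under semialgebraic extension to $\bG(\R)^\circ$ and is defined by the same formula, so that the transfer of the inclusion statement is meaningful. This is a standard property of semialgebraic geometry, reflecting that the semialgebraic connected components of a $\K$-semialgebraic set are themselves $\K$-semialgebraic and preserved in number and defining formula under extension to any real closed overfield.
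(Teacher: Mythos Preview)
Your proof is correct and takes a genuinely different route from the paper's. The paper argues entirely over $\R$ first: using that $\bK(\R)$ is maximal compact and hence meets every connected component of $\bG(\R)$, it writes $G_\R = K_\R \cdot \bG(\R)^\circ$, observes that both factors are transposition-invariant ($K_\R$ because $k\tran = k^{-1}$ for orthogonal matrices, $\bG(\R)^\circ$ because it is Euclidean-connected and contains the identity), and then transfers the statement ``$G$ is transposition-invariant'' from $\R$ to $\K$. You instead use the explicit polar identity $\sigma(g) = g\,(g\tran g)^{-1}$ to show directly that $\sigma$ preserves $\bG(\K)^\circ$-cosets, invoking the transfer principle only for the inclusion $P(\K)\subseteq \bG(\K)^\circ$. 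Your argument is a bit more elementary in that it avoids the ``maximal compact meets every component'' fact, replacing it with a one-line matrix identity; on the other hand, the paper's decomposition $G_\R = K_\R \cdot \bG(\R)^\circ$ is reused elsewhere in the setup, so it is not an extra cost there. Both proofs ultimately rest on the same two ingredients you correctly flag as delicate: the Mostow-type fact that the positive-definite part $P(\R)$ of a self-adjoint algebraic group equals $\exp(\frakp)$ (hence is connected), and the preservation of semialgebraic connected components under real closed field extension.
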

\begin{proof}
	Since $\bG$ is invariant under transposition, so is the Lie group $\bG(\R)$ and hence $\bK(\R)$ is maximal compact. In particular, $\bK(\R)$ intersects every connected component (in the Euclidean topology and hence also in the semialgebraic topology), whence $\bG(\R) = \bK(\R) \cdot \bG(\R)^\circ$. Thus
	$$
	G_\R = \bG(\R) \cap G_\R = (\bK(\R) \cap G_\R) \cdot (\bG(\R)^\circ \cap G_\R) = K_\R \cdot \bG(\R)^\circ.
	$$
	By \cite[Theorem 2.4.5]{BCR}, $\bG(\R)^\circ$ is connected (and hence pathconnected) also in the Euclidean topology, hence invariant under transposition. Since also $K_\R$ is invariant under trasposition, $G_\R$ is invariant under transposition. This is a semialgebraic property which is therefore also shared with $G$.
\end{proof}

 We consider the $\K$-points $\frakg_\K \subseteq \K^{n\times n}$ of the Lie algebra $T_eG$.
 Let $\frakg_{\mathbb{R}}$ be the semialgebraic extension of $\frakg_\K$. Then $\frakg_{\R}$ is the Lie algebra of the real Lie group $G_{\mathbb{R}}$. Since $G_{\R}$ is self-adjoint, so is $\frakg_{\R}$. The differential of the Lie group homomorphism $G_{\R} \to G_{\R}, g \mapsto (g^{-1})\tran$ is the standard Cartan-involution $\theta \colon \frakg_{\R} \to \frakg_{\R}, X \mapsto -X^{T}$ and leads to the Cartan decomposition into two eigenspaces $\frakg_{\R} = \frakk \oplus \frakp$ as in Section \ref{sec:killing_involutions_decompositions}. The Lie algebra of $K_{\mathbb{R}}$ is 
$$
\operatorname{Lie}(K_{\R}) = \operatorname{Lie}(G_{\R} \cap \operatorname{SO}_n(\R)) = \operatorname{Lie}(G_{\R}) \cap \operatorname{Lie}(\operatorname{SO}_n(\R)) = \frakg_{\R} \cap \mathfrak{so}_n
 = \frakk.
 $$
Let $\bS$ a maximal $\K$-split torus, which we may assume to be self-adjoint by Theorem \ref{thm:split_tori}. 
Let $A = (S_{\K})^\circ$ be the semialgebraic connected component of $S_{\K}$ containing the identity. Let $A_{\R}$ be the semialgebraic extension of $A$. We denote the Lie algebra of $A_{\R}$ by $\fraka$. 
We note that $\fraka \subseteq \frakp$. By Theorem \ref{thm:split_tori}, $\bS$ is maximal $\K$-split as well as maximal $\R$-split and hence $\fraka$ is maximal abelian in $\frakp$. As in Section \ref{sec:killing_involutions_decompositions}, we can now define a root space decomposition 
$$
\frakg_{\R} = \frakg_0 \oplus \bigoplus_{\alpha \in \Sigma} \frakg_{\alpha}
$$
where $\Sigma = \{\alpha \in \fraka^{\star}\colon \alpha \neq 0, \frakg_{\alpha} \neq 0\}$ is a root system. 
We note that
$$
N_{\R} := \operatorname{Nor}_{K_{\R}}(\fraka) = \left\{ k \in K_{\R} \colon kXk^{-1} \in \fraka \text{ for all } X \in \fraka \right\} 
$$
and
$$
M_{\R} := \operatorname{Cen}_{K_{\R}}(\fraka) = \left\{ k \in K_{\R} \colon kXk^{-1} = X \text{ for all }  X \in \fraka  \right\},
$$
are semialgebraic groups, since it suffices to verify the conditions for $X$ on a basis of $\fraka$.
A choice of ordered basis $\Delta \subseteq \Sigma$ gives a total order on $\Sigma$. We consider the Lie algebra
$$
\frakn = \bigoplus_{\alpha >0} \frakg_{\alpha}
$$
which is nilpotent by Lemma \ref{lem:kan_form}. Thus the exponential map and the logarithm are polynomials and we can define the Lie group
$$
U_{\R} = \{ g \in G_{\R} \colon \log(g) \in \frakn \}
$$
which is the group of $\R$-points of an algebraic $\K$-group $\bU$ defined the same way\footnote{In the theory of Lie groups, this group is often denoted by $N$, while $U$ is more common in the setting of algebraic groups.}. 

\subsection{Examples}

For the algebraic group $\bG = \operatorname{SL}_n$ with maximal $\K$-split torus
$$
\bS = \left\{ \begin{pmatrix}
	\star & & \\ & \ddots & \\ && \star
\end{pmatrix} \in \operatorname{SL}_n \right\}.
$$
we have
\begin{align*}
	K_\F &= \operatorname{SO}_n(\F) \\
	A_{\F} &= \left\{ a = (a_{ij}) \in S_{\F} \colon a_{ii}>0   \right\} \\
	U_{\F} &= \left\{g=(g_{ij}) \in \operatorname{SL}_{n}(\F) \colon  g_{ii}= 1, \, g_{ij} = 0 \text{ for } i>j  \right\} \\
	N_{\F} &= \left\{ \text{ permutation matrices with entries in }\pm 1 \right\} \\
	M_{\F} & = \left\{ a=(a_{ij}) \in S_{\F} \colon a_{ii} \in \{ \pm 1\}    \right\} \\
	B_{\F} &= \left\{ g = (g_{ij}) \in \operatorname{SL}_n(\F) \colon g_{ij} = 0 \text{ for } i > j \right\}.
\end{align*}

\subsection{Compatibility of the root systems}\label{sec:compatibility}

In Chapter \ref{sec:alg_root_system} about algebraic groups, we defined the root system $\KPhi$ relative to a $\K$-split torus $\bS$ consisting of those $\alpha \in \hat{\bS}\setminus \{1\}$ for which 
$$
\frakg_{\alpha}^{(\bS)} = \{X \in \frakg \colon \operatorname{Ad}(s)X = \alpha(s)\cdot X \text{ for all } s \in \bS \}
$$
is non-zero. By Theorem \ref{thm:split_tori}, we have $\KPhi = \RPhi = \FPhi$. In this chapter we  defined a root system $\Sigma$ consisting of those $\alpha \in \fraka^{\star}\setminus \{0\}$ for which
$$
\frakg_\alpha = \{ X \in \frakg_{\R} \colon \operatorname{ad}(H)X = \alpha(H) \cdot X \text{ for all } H \in \fraka \}
$$
is non-empty. In this section we show that all these notions of root systems and the various notions of spherical Weyl groups coincide. Let us first see how to construct an algebraic character in $\KPhi$ from a root in $\Sigma$.

\begin{lemma}\label{lem:char}
	Let $\alpha \in \Sigma$. Then the homomorphism
	\begin{align*}
		\chi_\alpha \colon A_\R & \to \R_{>0} \\
		\exp(H) &\mapsto e^{\alpha(H)}
	\end{align*}
	is the restriction to $A_\R$ of an algebraic character in $\hat{\bS}$. 
\end{lemma}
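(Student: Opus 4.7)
The plan is to use the algebraic root space decomposition of $\frakg_\R$ with respect to the torus $\bS$ to exhibit an algebraic character whose restriction to $A_\R$ coincides with $\chi_\alpha$. The underlying idea is that the restricted root space decomposition (coming from $\operatorname{ad}(\fraka)$) is, after base change to $\R$, a refinement of the $\bS$-weight decomposition (coming from $\operatorname{Ad}(\bS)$).

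First I would record the setup: since $\bS$ is self-adjoint and $\K$-split (hence $\R$-split by Theorem~\ref{thm:split_tori}), the identity component $A_\R = (\bS(\R))^\circ$ has Lie algebra $\fraka$, and $\exp\colon \fraka \to A_\R$ is a diffeomorphism. For any algebraic character $\beta \in \hat{\bS}$, the restriction $\beta|_{A_\R}$ is a continuous homomorphism into the connected component $\R_{>0}$ of $\bG_m(\R)$, and a standard Lie-theoretic computation gives $\beta(\exp H) = \exp(d\beta_e(H))$ for $H \in \fraka$, where $d\beta_e \colon \fraka \to \R$ is the differential at the identity.

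Next, I would pick a nonzero $X \in \frakg_\alpha$ and decompose it along the algebraic $\bS$-root space decomposition
$$
\frakg_\R = \frakg_0^{(\bS)}(\R) \oplus \bigoplus_{\beta \in \KPhi} \frakg_\beta^{(\bS)}(\R),
$$
writing $X = X_0 + \sum_\beta X_\beta$. Computing $\operatorname{Ad}(\exp H) X$ for $H \in \fraka$ in two ways yields
\begin{align*}
e^{\alpha(H)} X \;=\; \exp(\operatorname{ad}(H)) X \;=\; \operatorname{Ad}(\exp H) X \;=\; X_0 + \sum_{\beta \in \KPhi} \beta(\exp H) X_\beta .
\end{align*}
Equating components in the direct sum forces $X_0 = 0$ (since $\alpha \neq 0$) and $\beta(\exp H) = e^{\alpha(H)}$ for every $\beta$ with $X_\beta \neq 0$ and every $H \in \fraka$. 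Since $X \neq 0$, at least one such $\beta_0 \in \KPhi \subseteq \hat{\bS}$ exists, and by construction its restriction to $A_\R$ equals $\chi_\alpha$.

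The argument is essentially a direct comparison of two eigenspace decompositions, so I do not anticipate a serious obstacle. The only subtle points are the identification of $\fraka$ with the Lie algebra of the $\R$-split component $A_\R$ of $\bS(\R)$ (which is precisely where self-adjointness of $\bS$ and Theorem~\ref{thm:split_tori} are used, to ensure $\fraka \subseteq \frakp$ is maximal abelian as in the definition of $\Sigma$) and the standard compatibility $\beta \circ \exp = \exp \circ \, d\beta_e$ between an algebraic character and its differential.
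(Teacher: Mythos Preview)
Your argument is correct and is essentially the same as the paper's: both exhibit the desired algebraic character as an eigenvalue of $\operatorname{Ad}(\bS)$ on (a vector in) $\frakg_\alpha$, via the identity $\operatorname{Ad}(\exp H) = \exp(\operatorname{ad} H)$. The paper packages this by choosing a basis adapted to the restricted root decomposition and reading off the diagonal entry $(\operatorname{Ad}(a))_{\alpha\alpha}$, while you decompose a fixed $X \in \frakg_\alpha$ along the $\bS$-weight decomposition and pick off a nonzero component---two phrasings of the same computation.
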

\begin{proof}
	We choose a basis of $\frakg$ consistent with the root decomposition
	$$
	\frakg = \frakg_0 \oplus \bigoplus_{\alpha \in \Sigma} \frakg_\alpha.
	$$
	In this basis, the matrices in $\operatorname{Ad}(S_\R)$ and $\operatorname{ad}(\fraka)$ are diagonal. In fact any diagonal entry\footnote{Note that $\frakg_\alpha$ may not be one-dimensional, but then all the diagonal entries of $\operatorname{ad}(H)$ corresponding to basis vectors in $\frakg_\alpha$ are equal, so it makes sense to talk about $(\operatorname{ad}(H))_{\alpha \alpha}$.} corresponding to the root $\alpha \in \Sigma$ satisfies
	\begin{align*}
		(	\operatorname{ad}(H))_{\alpha \alpha} & = \alpha(H)
	\end{align*}
	for all $H \in \fraka$. We define a character
	\begin{align*}
		\chi (a) &:= (\operatorname{Ad}(a) )_{\alpha \alpha} \quad \quad \text{for } a \in \bS
	\end{align*}
	which is algebraic by its definition. Let $\chi_\R \colon S_\R \to \R$ be the $\R$-points of $\chi$ and $\chi_\R|_{A_\R}$ its restriction to $A_\R$. We now claim that $\chi_\alpha = \chi_\R|_{A_{\R}}$: when $a = \exp(H) \in A_{\R}$ for $H \in \fraka$, we have 
	\begin{align*}
		\chi_\R (a) &= (\operatorname{Ad}(a))_{\alpha \alpha} 
		= (\operatorname{Ad}(\exp(H)))_{\alpha \alpha} \nonumber \\
		&= (\exp (\operatorname{ad}(H)))_{\alpha \alpha}
		= e^{\operatorname{ad}(H)_{\alpha \alpha}} = e^{\alpha(H)} = \chi_\alpha(a).
	\end{align*} 
\end{proof}
\begin{lemma}\label{lem:oneparam_compatibility}
	Let $\alpha \in \Sigma$. Then there is an $x_\alpha \in \fraka$ such that the homomorphism
	\begin{align*}
		t_\alpha^\R \colon \R_{>0} &\to A_\R \\
		e^s &\mapsto \exp(sx_\alpha)
	\end{align*}
	is the restriction of an algebraic one-parameter group $t_\alpha$ in $X_\star(\bS)$, defined over $\K$. The non-degenerate bilinear form in Proposition \ref{prop:char_cochar} is then given by
	$$
	b( \chi_\alpha, t_\beta ) = \frac{2\langle \alpha, \beta \rangle}{\langle \beta,\beta\rangle} \in \Z.
	$$
\end{lemma}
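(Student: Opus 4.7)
The strategy is to realize $t_\alpha$ as the diagonal torus of an algebraic $\operatorname{SL}_2$-subgroup of $\bG$ arising from the Jacobson--Morozov lemma, and to identify $x_\alpha$ with the coroot $2H_\alpha/\langle\alpha,\alpha\rangle \in \fraka$, where $H_\alpha \in \fraka$ corresponds to $\alpha$ under the isomorphism $\fraka^\star \cong \fraka$ induced by $B_\theta$. Concretely, pick a nonzero $X \in \frakg_\alpha(\K)$, which exists because $\bS$ is $\K$-split so the eigenspace $\frakg_\alpha$ is defined over $\K$. Applying Lemma~\ref{lem:JM_basic} (whose construction of $Y$ and $H$ uses only the coefficients $B_\theta(X,X)$ and $\alpha(H_\alpha)$, which lie in $\K$) produces an $\mathfrak{sl}_2$-triple $(X, Y, H)$ entirely within $\frakg_\K$ with $H \in \fraka$; the explicit computation at the end of that proof gives $H = 2H_\alpha/\langle\alpha,\alpha\rangle$, so
\[
\beta(H) = \frac{2\langle\beta,\alpha\rangle}{\langle\alpha,\alpha\rangle} \in \Z
\]
for every $\beta \in \Sigma$ by the crystallographic axiom (Theorem~\ref{thm:sigma_root}). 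Set $x_\alpha := H$.

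To promote this to an algebraic one-parameter subgroup defined over~$\K$, I apply Proposition~\ref{prop:Jacobson_Morozov_group} to the unipotent element $g := \exp(X) \in \bG(\K)$, obtaining an algebraic subgroup $\operatorname{SL}_g < \bG$ defined over $\K$ whose Lie algebra is an $\mathfrak{sl}_2$-triple having $\log(g) = X$ as its upper-nilpotent element. Combining this with the observation that a toral element $H'$ in an $\mathfrak{sl}_2$-triple $(X, \cdot, H')$ inside $\frakg_\R$ is uniquely determined by the conditions $H' \in \fraka$ and $\operatorname{ad}(H')X = 2X$ (since $\frakg_0 = \fraka \oplus \mathfrak{z}_\frakk(\fraka)$ by Proposition~\ref{prop:root_decomp}, and a $\theta$-invariance argument kills the $\mathfrak{z}_\frakk(\fraka)$-component), the toral element in $\operatorname{Lie}(\operatorname{SL}_g)$ must coincide with our $x_\alpha$. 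The diagonal subtorus $\mathbf{D} < \operatorname{SL}_g$ is therefore a one-dimensional $\K$-split torus with $\operatorname{Lie}(\mathbf{D}) = \langle x_\alpha\rangle \subseteq \operatorname{Lie}(\bS)$. Since in characteristic zero a connected algebraic subgroup of $\bG$ is determined by its Lie algebra, the intersection $(\mathbf{D} \cap \bS)^\circ$ is a connected algebraic subgroup of $\mathbf{D}$ with the same Lie algebra, hence equal to $\mathbf{D}$, and thus $\mathbf{D} \leq \bS$. The standard isomorphism $\bG_m \xrightarrow{\sim} \mathbf{D}$ defined over $\K$ by $t \mapsto \operatorname{diag}(t, t^{-1})$ inside $\operatorname{SL}_g$, composed with the inclusion into $\bS$, yields the desired $t_\alpha \in X_{\star}(\bS)$, and $t_\alpha^\R(e^s) = \exp(sH) = \exp(sx_\alpha)$ by construction.

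For the bilinear form, Lemma~\ref{lem:char} gives $\chi_\alpha(\exp(sx_\beta)) = e^{s\alpha(x_\beta)}$, so
\[
(\chi_\alpha \circ t_\beta)(e^s) = e^{s \cdot 2\langle\alpha,\beta\rangle/\langle\beta,\beta\rangle} = (e^s)^{2\langle\alpha,\beta\rangle/\langle\beta,\beta\rangle}.
\]
Since $t_\beta^\R$ is the restriction of an algebraic morphism and $\R_{>0}$ is Zariski-dense in $\bG_m$, this identity on $\R_{>0}$ determines the integer $m$ in $\chi_\alpha \circ t_\beta(z) = z^m$, so by the definition of $b$ in Proposition~\ref{prop:char_cochar} we obtain $b(\chi_\alpha, t_\beta) = 2\langle\alpha,\beta\rangle/\langle\beta,\beta\rangle$, an integer by Theorem~\ref{thm:sigma_root}.

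The main obstacle I anticipate is the identification step: matching the algebraic $\mathfrak{sl}_2$-triple inside $\operatorname{Lie}(\operatorname{SL}_g)$ produced by Proposition~\ref{prop:Jacobson_Morozov_group} with the Lie-theoretic triple from Lemma~\ref{lem:JM_basic}. A priori the Jacobson--Morozov subgroup only guarantees \emph{some} completion of $X$ to an $\mathfrak{sl}_2$-triple, and one must argue that the normalization $[H', X] = 2X$ together with $H' \in \fraka$ forces $H' = x_\alpha$. This rests on a short representation-theoretic uniqueness argument for $\mathfrak{sl}_2$-completions of a given nilpotent element, combined with the decomposition $\frakg_0 = \fraka \oplus \mathfrak{z}_\frakk(\fraka)$ to rule out the compact direction; the rest of the proof is essentially formal.
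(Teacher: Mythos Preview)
Your approach is correct but takes a genuinely different route from the paper. The paper avoids Jacobson--Morozov entirely: it defines $x_\beta = 2H_\beta/\langle\beta,\beta\rangle$ directly, computes $\chi_\alpha(t_\beta^\R(e^s)) = (e^s)^{2\langle\alpha,\beta\rangle/\langle\beta,\beta\rangle}$, and then invokes the duality $X_\star(\bS) \cong \operatorname{Hom}_{\Z}(\hat{\bS},\Z)$ from Proposition~\ref{prop:char_cochar} abstractly: the assignment $\chi_\delta \mapsto 2\langle\delta,\beta\rangle/\langle\beta,\beta\rangle$ on a $\Z$-basis of $\hat{\bS}$ specifies a unique algebraic cocharacter $t_\beta$, automatically defined over $\K$ because $\bS$ is $\K$-split. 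No $\operatorname{SL}_2$ ever appears. Your construction instead builds $t_\alpha$ geometrically as the diagonal of a Jacobson--Morozov $\operatorname{SL}_2$; this is more hands-on and foreshadows what the paper does later in Lemmas~\ref{lem:Jacobson_Morozov_oneparam} and~\ref{lem:Jacobson_Morozov_m}, at the cost of a forward reference to Proposition~\ref{prop:Jacobson_Morozov_group} (Section~\ref{sec:Jacobson_Morozov}) and several auxiliary verifications.

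On the obstacle you flagged: your proposed uniqueness argument (``$H' \in \fraka$ and $[H',X]=2X$ determine $H'$'') is not quite enough as stated, since for $H' \in \fraka$ the condition $[H',X]=2X$ only says $\alpha(H')=2$, an affine hyperplane in $\fraka$. The clean fix is simpler than what you sketched: the proof of Proposition~\ref{prop:Jacobson_Morozov_group} lets you \emph{choose} the $\mathfrak{sl}_2$-triple extending $\log(g)=X$, so just feed in the explicit triple $(X,Y,H)$ from Lemma~\ref{lem:JM_basic}. Then $\operatorname{Lie}(\operatorname{SL}_g)=\langle X,Y,H\rangle$ on the nose, $\langle H\rangle \subseteq \fraka = \operatorname{Lie}(\bS)$ gives $\mathbf{D}\leq \bS$ as you argued, and $\mathbf{D}$ is $\K$-split as a subtorus of the $\K$-split torus $\bS$. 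One minor wording issue: when $\operatorname{SL}_g\cong\operatorname{PGL}_2$, the map $t\mapsto\operatorname{diag}(t,t^{-1})$ composed with $\operatorname{SL}_2\to\operatorname{SL}_g$ is a cocharacter but not an isomorphism onto $\mathbf{D}$; this does not affect the conclusion, but ``standard isomorphism'' should read ``cocharacter''.
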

\begin{proof}
	Recall that the Killing form gives rise to a scalar product $B_\theta$ on $\fraka = \operatorname{Lie}(A_\R)$, inducing an isomorphism $\fraka^\star \cong \fraka, \gamma \mapsto H_\gamma$ with the defining property $B_\theta(H_\gamma,H) = \gamma(H)$ for all $H \in \fraka$. For $\beta \in \Sigma$, the coroot
	$$
	\beta^\vee = \frac{2}{B_{\theta}(H_\beta,H_\beta)}\beta
	$$
	can be used to define
	$$
	x_\beta := H_{\beta^\vee} = \frac{2}{B_{\theta}(H_\beta,H_\beta)}H_\beta \in \fraka.
	$$
	We now define $t_\beta^\R(e^s) = \exp(sx_\beta)$ for $s \in \R$ and note that for every $\alpha \in \Sigma$
	$$
	\chi_\alpha^\R(t_\beta^\R(e^s)) =  e^{s\alpha(x_\beta)} = (e^s)^{2\frac{\langle \alpha,\beta \rangle}{\langle \beta,\beta \rangle }}
	$$
	and we note that this characterization uniquely determines $t_\beta^\R$.
	
	On the algebraic side, the non-degenerate bilinear pairing $b\colon \hat{\bS} \times X_{\star}(\bS) \to \Z $ from Proposition \ref{prop:char_cochar} induces an isomorphism
	\begin{align*}
			X_{\star}(\bS) &\stackrel{\sim}{\to} \operatorname{Hom}_{\mathbb{Z}}(\hat{\bS}, \mathbb{Z}) \\
		t &\mapsto (\chi \mapsto b(\chi,b)) .	
	\end{align*}
    which we can use to construct an algebraic one-parameter group $t_\beta$ associated to $\beta\in \Sigma$ as follows: choosing a basis $\Delta \subseteq \Sigma$, we obtain a $\mathbb{Z}$-basis $\{ \chi_\alpha \colon \alpha \in \Delta\}$ of the lattice $\hat{\bS}$. We specify an element of $\operatorname{Hom}_{\mathbb{Z}}(\hat{\bS}, \mathbb{Z})$ by requiring that $\chi_\delta$ is sent to $2 \langle \delta,\beta \rangle / \langle \beta, \beta\rangle$. Thus we have an algebraic one-parameter group $t_\beta$ such that for all $\chi_\alpha$ for $\alpha \in \Sigma$ we have
    $$
    \chi_\alpha\left(t_\beta\left( x \right)\right) = x^{2\frac{\langle \alpha,\beta \rangle}{\langle \beta,\beta \rangle }}
    $$ 
    for all $x \in \bG_m$. The restriction $t_\beta|_{\R _{>0}}$ then coincides with $t_\beta^{\R}$. Since $\bS$ is $\K$-split, $t_\beta$ is defined over $\K$. 
\end{proof}

\begin{proposition}\label{prop:rootsystem_compatible}
	The root systems $\KPhi$ and $\Sigma$ are isomorphic. The spherical Weyl groups $\KW$ and $N_\R / M_\R$ are isomorphic.
\end{proposition}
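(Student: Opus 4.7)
The plan is to construct the isomorphism of root systems via the assignment $\alpha \mapsto \chi_\alpha$ from Lemma \ref{lem:char}, and then match the Weyl groups through their natural action on $\fraka$.

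For the root system isomorphism, I would first show the assignment $\alpha \mapsto \chi_\alpha$ gives a bijection $\Sigma \to \KPhi$ that identifies root spaces. Given $\alpha \in \Sigma$ and $X \in \frakg_\alpha$, the identity $\operatorname{Ad}(\exp(H)) = \exp(\operatorname{ad}(H))$ gives $\operatorname{Ad}(\exp(H)) X = e^{\alpha(H)} X = \chi_\alpha(\exp(H))\, X$ for all $H \in \fraka$, so the two sides of $\operatorname{Ad}(s) X = \chi_\alpha(s) X$ agree on the Zariski-dense subset $A_\R \subseteq \bS$ and hence on $\bS$. This shows $\frakg_\alpha \subseteq \frakg_{\chi_\alpha}^{(\bS)}$, so $\chi_\alpha \in \KPhi$. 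Conversely, given $\chi \in \KPhi$, the image $\chi(A_\R)$ is a connected subgroup of $\R^\times$ containing $1$, hence lies in $\R_{>0}$; taking the continuous logarithm defines an $\R$-linear $\alpha \in \fraka^\star$ with $\chi(\exp(H)) = e^{\alpha(H)}$. Decomposing any $X \in \frakg_\chi^{(\bS)} \cap \frakg_\R$ along the simultaneously diagonalizable family $\operatorname{ad}(\fraka)$ as $X = \sum_\beta X_\beta$ and comparing eigenvalues in $e^{\operatorname{ad}(H)} X = e^{\alpha(H)} X$ forces $X_\beta = 0$ for $\beta \neq \alpha$, so $\frakg_\chi^{(\bS)} \cap \frakg_\R = \frakg_\alpha \neq 0$ and $\alpha \in \Sigma$.

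Second, I would extend the bijection $\Sigma \to \KPhi$ linearly to an isomorphism of the ambient vector spaces $\fraka^\star \to \hat{\bS} \otimes_\Z \R$ (both being spanned by the respective root sets), and check that the Cartan integers agree. By Lemma \ref{lem:oneparam_compatibility}, the algebraic pairing satisfies $b(\chi_\alpha, t_\beta) = 2 \langle \alpha, \beta\rangle/\langle \beta, \beta\rangle$, which is exactly the Cartan integer on the Lie-algebraic side. Since a crystallographic root system is determined by its Cartan integers (they determine the reflections $s_\beta(\alpha) = \alpha - c(\alpha,\beta)\beta$ and hence the full Weyl symmetry), this bijection is an isomorphism of root systems.

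For the Weyl groups, I would define $\psi \colon N_\R/M_\R \to \KW = \operatorname{Nor}_\bG(\bS)/\operatorname{Cen}_\bG(\bS)$ by sending $[k] \mapsto [k]$. Conjugation by $k \in N_\R$ normalizes $\fraka$, hence $A_\R = \exp(\fraka)$, and hence $\bS$ by Zariski-density of $A_\R$ in $\bS$; similarly $M_\R \subseteq \operatorname{Cen}_\bG(\bS)$, so $\psi$ is well-defined. Injectivity is immediate: if $k$ centralizes $\bS$, it centralizes $A_\R$, and differentiating shows it centralizes $\fraka$, so $k \in M_\R$. The main obstacle is surjectivity: I would argue that both groups are generated by the reflections associated to the common root system. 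On the algebraic side this is \cite[21.2]{Bor}; on the real side the reflection in a root $\alpha$ is realized by an element of $K_\R$ coming from the compact part of the $\operatorname{SL}_2$-subgroup of $\bL_{\pm\alpha}(\R)$ produced in Theorem \ref{thm:levi_group} (or the classical Jacobson–Morozov construction for the real Lie algebra). Since both Weyl groups coincide with the reflection group of the isomorphic root system, $\psi$ is an isomorphism.
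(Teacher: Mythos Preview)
Your proof is correct and follows essentially the same route as the paper: both build the bijection $\Sigma \leftrightarrow \KPhi$ via $\alpha \mapsto \chi_\alpha$ and its inverse (you take $\log \circ \chi|_{A_\R}$, the paper takes $\mathrm{d}_e\chi_\R$; these coincide), both verify the root-space inclusions $\frakg_\alpha \subseteq (\frakg_{\chi_\alpha}^{(\bS)})_\R$ by the $\operatorname{Ad}(\exp H)=\exp(\operatorname{ad} H)$ identity plus Zariski density, and both finish the Weyl-group statement by identifying each side with the reflection group of the common root system (the paper cites \cite[21.2]{Bor} and \cite[Prop.~7.32]{Kna}).

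Two small remarks. First, your verification that the bijection respects the root-system structure via Lemma~\ref{lem:oneparam_compatibility} (matching Cartan integers through the pairing $b$) is cleaner than the paper's somewhat terse argument that ``reflections preserve scalar products''; this is a genuine improvement in exposition. Second, your appeal to Theorem~\ref{thm:levi_group} to realize reflections by elements of $K_\R$ is a forward reference in the paper's logical order; your parenthetical alternative via the classical real Jacobson--Morozov construction (or simply citing \cite[Prop.~7.32]{Kna} as the paper does) avoids this and is what you should retain. You should also make explicit, as the paper does, that $\alpha := \mathrm{d}_e\chi_\R \neq 0$ because $\chi_\R(S_\R)$ is Zariski dense in $\bG_m$; your argument implicitly needs this to conclude $\alpha \in \Sigma$ rather than $\alpha = 0$.
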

\begin{proof}
   We first find a compatible map $\KPhi \to \Sigma$. Let $\chi \in \KPhi$. Since $\bS$ is $\K$-split, $\chi$ is defined over $\K$ \cite[Corollary 8.2]{Bor}. Hence we can consider the Lie group homomorphism $\chi_{\R} \colon S_{\R} \to \R^{\times}$ and its derivative $\operatorname{d}_e\!\chi_{\R} \colon \fraka \to  \R$, which we claim to be an element in $\Sigma$. We first claim that $\operatorname{d}_e\!\chi_{\R}$ is nonzero, since otherwise $\chi_{\R}$ would be locally constant, hence only take finitely many values. But since $\chi_{R}(S_{\R})$ is Zariski dense in $\chi(\bS) = \bG_m$ \cite[Corollary 18.3]{Bor}, this cannot be the case. Next, we claim that 
   $$
   \frakg_{\chi} := \left(\frakg_{\chi}^{(\bS)}\right)_{\R} \subseteq \frakg_{\operatorname{d}_e\!\chi_{\R}},
   $$
   which shows that $\KPhi \to \Sigma$ is well defined. Let $H \in \fraka$, such that $a=\exp(H) \in A_{\R} \subseteq S_{\R}$. Since $\bS$ is $\K$-split, we have 
   $$
   \operatorname{Ad}(\exp(H)) = \operatorname{Id} \cdot \alpha_{\R}(\exp(H))
   $$ 
   as maps $\frakg_{\chi} \to \frakg_{\chi}$. In fact, $\operatorname{Ad}\circ \exp = \operatorname{Id} \cdot (\chi_{\R}\circ\exp)$ as maps $\fraka \to \operatorname{GL}(\frakg_{\chi})$. Taking the derivative at $0 \in \fraka$, we get
   \begin{align*}
\operatorname{ad} &= \operatorname{ad} \circ \operatorname{Id} =\operatorname{d}_e\!\operatorname{Ad} \circ \operatorname{d}_0 \exp =  \operatorname{d}_0 ( \operatorname{Ad} \circ \exp ) \\
&= \operatorname{d}_0 (\operatorname{Id} \cdot (\chi_{\R}\circ\exp)) = \operatorname{Id}\cdot \operatorname{d}_e\!\chi_{\R} \circ \operatorname{d}_0\exp = \operatorname{Id}\cdot  \operatorname{d}_e\!\chi_{\R}
   \end{align*}
   as maps $\fraka \to \mathfrak{gl}(\frakg_{\chi})$. This means that for all $X \in \frakg_{\chi}$ and all $H \in \fraka$, we have $\operatorname{ad}(H)X = \operatorname{d}_e\! \chi_{\R}(H)\cdot X$ and hence $X \in \frakg_{\operatorname{d}_e\!\chi_{\R}}$.
   
   Now, we will show that the map $\alpha \mapsto \chi_\alpha$ in Lemma \ref{lem:char}, $\chi_\alpha$ now viewed as an algebraic character in $\hat{\bS}$, actually sends $\Sigma \to \KPhi$. Since
   $$
   (\chi_\alpha)_\R(\exp(H)) = e^{\alpha(H)},
   $$
   we can easily see that $\chi_\alpha \neq 1$, when $\alpha \neq 0$. Next we will show that
   $$
   \frakg_\alpha \subseteq \left(\frakg_{\chi_\alpha}^{(\bS)} \right)_{\R}.
   $$ 
   For $a=\exp(H) \in A_{\R}$, we see from the above formula that
   \begin{align*}
   	\operatorname{Ad}(a) & = \exp(\operatorname{ad}(H)) = \exp( \operatorname{Id} \cdot \alpha(H) ) 
   	= \operatorname{Id} \cdot e^{\alpha(H)} = \operatorname{Id} \cdot \chi_\alpha(a) 
   \end{align*}
   as functions on $\frakg_\alpha$. Since $A_{\R}$ is Zariski dense in $\bS$, $\operatorname{Ad}(s) X = \chi_\alpha (a) \cdot X$ for all $s \in \bS$ and $X \in \frakg_{\alpha}$.
   
   We note that the two maps between $\KPhi$ and $\Sigma$ are inverses of each other, which follows from the fact that
   $$
   \left(\frakg_{\alpha}^{(\bS)}\right)_{\R} \subseteq \frakg_{\operatorname{d}_e\!\alpha_{\R}} \subseteq 
   \left(\frakg_{\chi_{\operatorname{d}_e\!\alpha_\R}}^{(\bS)}\right)_{\R} \quad \text{ for } \alpha \in \KPhi
   $$
   and 
   $$
   \frakg_\alpha \subseteq \left(\frakg_{\chi_\alpha}^{(\bS)}\right)_{\R} \subseteq \frakg_{\operatorname{d}_e\!(\chi_{\alpha})_{\R}} \quad \text{ for } \alpha \in \Sigma. 
   $$
   We note that since $[\frakg_{\alpha},\frakg_\beta] \subseteq \frakg_{\alpha + \beta}$ the map $\KPhi \to \Sigma$ extends to an isomorphism on their vector spaces. For both $\KPhi$ and $\Sigma$, the Weyl groups $\KW$ and $W_s=N_{\R}/M_{\R}$ are generated by reflections along hyperplanes perpendicular to the roots \cite[Theorem 21.2]{Bor} and \cite[Proposition 7.32]{Kna}. This implies that the scalar products of two roots are preserved under the map $\KPhi \to \Sigma$ and hence the two root systems are isomorphic.
\end{proof}

We can also conclude that the various Weyl groups that can be defined coincide.

\begin{proposition} \label{prop:weylgroups}
	The following definitions of Weyl groups are isomorphic.
	\begin{itemize}
		\item [(i)] The Weyl group $W$ generated by reflections in roots of the root system $\KPhi$.
		\item [(ii)] The Weyl group $\RW = \operatorname{Nor}_{\bG(\F)}(\bS(\F))/\operatorname{Cen}_{\bG(\F)}(\bS(\F))$ from the theory of algebraic groups.
		\item [(iii)] The Weyl group $W_s$ generated by reflections in roots of the root system $\Sigma$.
		\item [(iv)] The Weyl group in the Lie groups setting $W(G_{\R},A_{\R}) = N_{\R}/M_{\R} $.
		\item [(v)] The Weyl group $N_{\F}/M_{\F}$ of the semialgebraic extensions.
	\end{itemize}
\end{proposition}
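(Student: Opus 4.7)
My plan is to build a chain of isomorphisms with Proposition \ref{prop:rootsystem_compatible} as the $\R$-side backbone, and use the transfer principle together with Theorem \ref{thm:split_tori} and Corollary \ref{cor:split_tori} to cross over to the $\F$-versions (ii) and (v).

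The equivalence (i)$\cong$(iii) is immediate from the root system isomorphism $\KPhi \cong \Sigma$ established in Proposition \ref{prop:rootsystem_compatible}, since any isomorphism of root systems carries reflection hyperplanes to reflection hyperplanes and hence induces an isomorphism of the reflection-generated groups. The equivalence (iii)$\cong$(iv) is the classical statement cited in the proof of Proposition \ref{prop:rootsystem_compatible} that $N_\R/M_\R$ is generated by the reflections in the restricted roots $\Sigma$ (see \cite[Proposition 7.32]{Kna}); this is also exactly the content of the second sentence of Proposition \ref{prop:rootsystem_compatible}.

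For (i)$\cong$(ii), I invoke \cite[Theorem 21.2]{Bor}, which asserts that for a maximal split torus the algebraic spherical Weyl group $\operatorname{Nor}_{\bG(\F)}(\bS(\F))/\operatorname{Cen}_{\bG(\F)}(\bS(\F))$ is generated by the reflections in the roots $\Phi(\bG,\bS)$. By Theorem \ref{thm:split_tori} our torus $\bS$ is indeed maximal $\F$-split, and by Corollary \ref{cor:split_tori} we have $\FPhi = \KPhi$. Consequently the group in (ii) is generated by the same reflections as $W$ in (i), whence (i)$\cong$(ii).

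Finally, (iv)$\cong$(v) is a transfer-principle argument. The subgroups $N$ and $M$ of $K$ are semialgebraic and defined over $\K$. Standard Lie theory gives that $N_\R$ is compact with $M_\R$ open in $N_\R$, so $N_\R/M_\R$ is a finite group, say of order $r$. I choose representatives $n_1,\ldots,n_r \in N_\R$; using Lemma \ref{lem:closure_alg} (density of $N_\K$ in $N_\R$) and the fact that the $r$ cosets are separated open subsets of $N_\R$, I may perturb each representative slightly to obtain representatives $n_1,\ldots,n_r \in N_\K$ lying in the same cosets. The sentence with parameters in $\K$ asserting that $N = \bigsqcup_{i=1}^r n_i M$ and that the products $n_i n_j$ lie in the prescribed cosets $n_{k(i,j)} M$ (for the fixed multiplication table of $N_\R/M_\R$) is first-order, true in $\R$, and therefore by the transfer principle also true in $\F$. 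This produces the isomorphism $N_\F/M_\F \cong N_\R/M_\R$.

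The main subtlety is in (iv)$\cong$(v), where one must combine density of $\K$ in $\R$ with the compactness of $N_\R$ to obtain $\K$-rational coset representatives before the transfer principle can be applied; the remaining equivalences reduce to direct citations of Proposition \ref{prop:rootsystem_compatible}, \cite[Theorem 21.2]{Bor}, and \cite[Proposition 7.32]{Kna}.
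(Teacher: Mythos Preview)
Your chain of equivalences (i)$\cong$(iii), (iii)$\cong$(iv), and (i)$\cong$(ii) is identical to the paper's, citing Proposition~\ref{prop:rootsystem_compatible}, \cite[Proposition~7.32]{Kna}, and \cite[Theorem~21.2]{Bor} respectively; your added remark that Theorem~\ref{thm:split_tori} and Corollary~\ref{cor:split_tori} are needed to apply \cite[Theorem~21.2]{Bor} over $\F$ is a welcome clarification the paper leaves implicit.

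The only substantive difference is in (iv)$\cong$(v). The paper bypasses the search for $\K$-rational coset representatives entirely: it writes a single first-order sentence $\varphi$ in which the representatives $n_1,\ldots,n_{|W_s|}$ are \emph{existentially quantified}, asserting that there exist $|W_s|$ elements of $N$ lying in pairwise distinct $M$-cosets, exhausting $N/M$, and multiplying according to the fixed multiplication table of $W_s$. Since $\varphi$ holds over $\R$ by (iv), it holds over $\F$ by transfer, and one is done. Your route---first producing representatives in $N_\K$ via density, then transferring a sentence with those representatives as parameters---is valid in spirit but introduces an avoidable detour, and as written it has a small gap: Lemma~\ref{lem:closure_alg} is stated only for \emph{algebraic} sets, whereas $N$ is merely semialgebraic (it is cut out inside the semialgebraic group $K$). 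The density of $N_\K$ in $N_\R$ does hold, by the same transfer argument used to prove Lemma~\ref{lem:closure_alg}, but you should either prove that extension or, more simply, adopt the paper's existential-quantifier formulation, which makes the density step unnecessary.
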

\begin{proof}
	By Proposition \ref{prop:rootsystem_compatible}, $\KPhi \cong \Sigma$, so (i) and (iii) coincide. 
	By \cite[Theorem 21.2]{Bor}, $W=\RW $, so the notions (i) and (ii) agree. By \cite[Proposition 7.32]{Kna}, $W_s = W(G_{\R},A_{\R})$, so the notions (iii) and (iv) agree. Let $W_s = \{w_1, \ldots , w_{|W_s|}\}$ be a list of the finitely many elements in $W_s$ considered as an abstract group. The first-order formula
	\begin{align*}
		\varphi \colon \quad & \exists n_1 , \ldots , n_{|W_s|} \in N \colon 
		\left( \bigwedge_{i,j=0 \atop i\neq j}^{|W_s|} \neg \ n_j^{-1}n_i \in  M \right)  \wedge \\
		&\left(\forall n \in N  \colon \bigvee_{i=1}^{|W_s|}  n^{-1}n_i \in M  \right) \wedge  
		\left( \bigwedge_{i,j,\ell=0 \atop w_iw_j=w_{\ell}}^{|W_s|} n_\ell^{-1}n_in_j \in M \right)
	\end{align*}
	states that there are $|W_s|$ many elements in $N$ with distinct representatives in $N/M$ and there are only $|W_s|$ many and they satisfy the same group multiplication table as $W_s$. In short it says that $N/M$ is isomorphic to $W_s$. Now, by (iv) the formula $\varphi$ holds over $\R$ and we can apply the transfer principle to get the statement over $\F$, showing that the notion in (v) gives the same Weyl group.
\end{proof}

\subsection{Iwasawa decomposition $G=KAU$}\label{sec:KAU}

We recall the classical Iwasawa-decomposition, which applies to $G_{\R}$.
\begin{theorem}\label{thm:KAU_R}(\cite[Theorem 6.46]{Kna})
	Let $G_\R$ be a semisimple Lie group. Let $\frakg_\R = \frakk \oplus \fraka \oplus \frakn$ be an Iwasawa decomposition of its Lie algebra. Let $A_\R$ and $U_\R$ be the analytic subgroups with Lie algebras $\fraka$ and $\frakn$. Then the multiplication map $K_\R \times A_\R \times U_\R \to G_\R$ is a diffeomorphism. This decomposition is unique.
\end{theorem}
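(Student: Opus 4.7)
The plan is to promote the vector-space decomposition $\frakg_\R = \frakk \oplus \fraka \oplus \frakn$ stated just above to a diffeomorphism of manifolds $\mu\colon K_\R \times A_\R \times U_\R \to G_\R$, $(k,a,u) \mapsto kau$. The argument separates into three parts which I tackle in order: $\mu$ is a local diffeomorphism everywhere, $\mu$ is globally injective, and $\mu$ is surjective.

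For the local-diffeomorphism step, first observe that $d\mu_{(e,e,e)}(X,Y,Z) = X+Y+Z$ is precisely the Lie-algebra Iwasawa isomorphism $\frakk \oplus \fraka \oplus \frakn \to \frakg_\R$. At a general point $(k_0,a_0,u_0)$ one can left-translate in the target by $(k_0 a_0 u_0)^{-1}$ and pre-compose with the appropriate translations in each factor, using that $A_\R$ normalizes $U_\R$ and that $K_\R, A_\R, U_\R$ are Lie subgroups with Lie algebras $\frakk, \fraka, \frakn$. The resulting differential is a composition of vector-space isomorphisms with the addition map, hence again an isomorphism.

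For injectivity, the identity $k_1 a_1 u_1 = k_2 a_2 u_2$ implies $k_2^{-1} k_1 = a_2 u_2 u_1^{-1} a_1^{-1} \in K_\R \cap A_\R U_\R$, so it suffices to show this intersection is trivial. Invoking Lemma \ref{lem:kan_form}, pick a basis of $\frakg_\R$ in which $\operatorname{ad}(\frakk)$ acts by skew-symmetric matrices, $\operatorname{ad}(\fraka)$ by real diagonal matrices, and $\operatorname{ad}(\frakn)$ by strictly upper triangular matrices. Exponentiating, $\operatorname{Ad}(K_\R)$ lies in the orthogonal group and $\operatorname{Ad}(A_\R U_\R)$ in the upper triangular matrices with strictly positive diagonal. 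An orthogonal upper triangular matrix with positive diagonal is the identity, forcing the intersection into $\ker(\operatorname{Ad})$, i.e.\ the center of $G_\R$; a short connectedness/centrality argument (an element that is both central and of the form $au$ with $a\in A_\R$, $u\in U_\R$ must have $u=e$ and then $a$ central in $A_\R$ forces $a=e$) eliminates the remaining possibility.

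The surjectivity step is the main obstacle. The image $K_\R A_\R U_\R$ is open in $G_\R$ by the first step, so it suffices to show it is also closed, after which connectedness of $G_\R^\circ$ finishes the argument (with the separate observation that $K_\R$ meets every connected component). To show closedness, take a convergent sequence $k_n a_n u_n \to g$, pass to a quotient by the center so that the image of $K_\R$ becomes compact, extract a convergent subsequence $k_n \to k$, and conclude that $a_n u_n \to k^{-1} g$; closedness of the closed Lie subgroup $A_\R U_\R$ (a semidirect product, with $U_\R$ simply connected nilpotent and $A_\R$ abelian split) then gives $k^{-1} g \in A_\R U_\R$, so $g \in K_\R A_\R U_\R$. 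The delicate points are making the compactness argument precise when $G_\R$ may have infinite center and verifying closedness of $A_\R U_\R$ as an embedded submanifold; these are exactly the ingredients that \cite[Theorem 6.46]{Kna} handles, and for our purposes we simply quote the theorem.
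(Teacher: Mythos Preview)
The paper does not prove this theorem at all: it is stated with the attribution \cite[Theorem 6.46]{Kna} and immediately used as input to the transfer-principle argument for Theorem~\ref{thm:KAU}. Your sketch is therefore strictly more than what the paper offers; it is essentially the standard argument found in Knapp (local diffeomorphism via the Lie-algebra decomposition, injectivity via Lemma~\ref{lem:kan_form} and the orthogonal-versus-positive-upper-triangular trick, surjectivity via an open-and-closed argument), and you correctly flag and defer the delicate closedness/compactness points to the same reference the paper cites.
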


We note that all the groups $G_\R, K_\R, A_\R$ and $U_\R$ are semialgebraic defined over $\K$, hence we can consider $G_\F, K_\F, A_\F$ and $U_\F \subseteq \F^{n\times n}$ for any real closed field $\F$ containing $\K$. We use the transfer principle to deduce the following semialgebraic version of the Iwasawa decomposition. We remark that by taking inverses, we also have a decomposition $G=UAK$ in addition to $G=KAU$, both of which will be called the \emph{Iwasawa decomposition}.
\begin{theorem}[$G=KAU$]\label{thm:KAU}
	For every $g \in G_{\F}$, there are $k\in K_{\F}, a\in A_{\F}, u \in U_{\F}$ such that $g=kau$. This decomposition is unique. 
\end{theorem}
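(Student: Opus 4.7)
The plan is to apply the transfer principle (Theorem~\ref{thm:logic}) directly to the classical Iwasawa decomposition over $\R$ (Theorem~\ref{thm:KAU_R}). The groundwork is already in place: each of the groups $G, K, A, U$ is a semialgebraic subset of $\operatorname{GL}_n$ defined over $\K$, hence each admits defining first-order formulas $\varphi_G, \varphi_K, \varphi_A, \varphi_U$ with parameters in $\K$, and by construction the semialgebraic extensions $G_\F, K_\F, A_\F, U_\F$ are precisely the sets of points in $\F^{n\times n}$ satisfying these formulas. The matrix product $g = kau$ is polynomial, so the decomposition relation is also first-order.

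First, I would phrase existence as the sentence
$$
\forall g \colon \varphi_G(g) \to \exists k, a, u \colon \varphi_K(k) \wedge \varphi_A(a) \wedge \varphi_U(u) \wedge g = k \cdot a \cdot u,
$$
and uniqueness as the sentence
$$
\forall k_1, k_2, a_1, a_2, u_1, u_2 \colon \left(\bigwedge_{i=1,2} \varphi_K(k_i) \wedge \varphi_A(a_i) \wedge \varphi_U(u_i) \right) \wedge k_1 a_1 u_1 = k_2 a_2 u_2 \to k_1 = k_2 \wedge a_1 = a_2 \wedge u_1 = u_2.
$$
Both sentences are first-order in the language of ordered fields with parameters in $\K \subseteq \F \cap \R$.

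Next, by Theorem~\ref{thm:KAU_R} applied to the real Lie group $G_\R$ (using that multiplication $K_\R \times A_\R \times U_\R \to G_\R$ is a diffeomorphism), both sentences hold over $\R$. Applying Theorem~\ref{thm:logic}, they also hold over $\F$, which gives exactly the existence and uniqueness assertions of Theorem~\ref{thm:KAU}.

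The main point to be careful about is making sure that the groups $K_\F, A_\F, U_\F$ appearing in the statement genuinely coincide with the extensions of the defining semialgebraic descriptions used to form the sentences above, so that the logical equivalence translates to the group-theoretic statement; this is why the earlier setup of the section fixes $K, A, U$ as semialgebraic $\K$-groups and defines their $\F$-extensions as the corresponding semialgebraic extensions. Given that, no further work is required beyond writing out the formulas — the real content has been absorbed into Theorem~\ref{thm:KAU_R} and the transfer principle.
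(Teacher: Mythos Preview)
Your proposal is correct and follows essentially the same approach as the paper: write existence and uniqueness of the Iwasawa decomposition as first-order sentences with parameters in $\K$, observe they hold over $\R$ by Theorem~\ref{thm:KAU_R}, and apply the transfer principle. The paper's proof is slightly terser (it combines existence and uniqueness into a single conjunction and abbreviates the membership formulas), but the argument is identical.
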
 
\begin{proof}
	We write the decomposition as a first-order formula
	\begin{align*}
	\varphi \colon \quad & \left(\forall g \in G \ \exists k \in K , a \in A, u\in U \colon g= kau \right)\ \wedge \\
	&\left(\forall k,k' \in K, a,a'\in A, u,u'\in U \colon kau = k'a'u' \to \left(k=k' \wedge a=a' \wedge u = u'\right) \right)	
	\end{align*}
	which holds over $\R$ by the classical Iwasawa decomposition of Theorem \ref{thm:KAU_R}.
	By the transfer principle, Theorem \ref{thm:logic}, $\varphi$ holds over all real closed fields $\F$. Note that to apply the classical Iwasawa-theorem we use that $K_\R$ is maximal compact, $\operatorname{Lie}(A_\R) = \fraka$ and $\operatorname{Lie}(U_\R)= \frakn$.
\end{proof}

\subsection{Cartan decomposition $G=KAK$}\label{sec:KAK}

We use the Cartan decomposition for real Lie groups to find an analogue statement over $\F$.

\begin{theorem}(\cite[Theorem 7.39]{Kna})\label{thm:KAK_R}
	Every element $g\in G_{\R}$ has a decomposition $g = k_1 a k_2$ with $k_1,k_2 \in K_{\R}$ and $a \in A_{\R}$. In this decomposition, $a$ is uniquely determined up to a conjugation by a member of $W(G_{\R},A_{\R})$.
\end{theorem}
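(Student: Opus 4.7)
The plan is to derive $G_\R = K_\R A_\R K_\R$ from two classical inputs: the global polar decomposition $G_\R = K_\R\cdot\exp(\frakp)$ and the $\operatorname{Ad}(K_\R)$-conjugacy of every element of $\frakp$ into $\fraka$. Combined, these give surjectivity of the multiplication map $K_\R\times A_\R\times K_\R\to G_\R$, and a Cartan-involution argument then yields uniqueness of $a$ up to $W(G_\R,A_\R)$.

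For existence, I would first establish the polar decomposition of $G_\R$: since $\bG$ is self-adjoint, the map $K_\R\times\frakp\to G_\R$, $(k,X)\mapsto k\exp(X)$, is a diffeomorphism. This follows from the classical polar decomposition of $\operatorname{GL}_n(\R)$ together with stability of $G_\R$ under $\sigma\colon g\mapsto (g^{-1})\tran$ and the infinitesimal Cartan decomposition $\frakg_\R=\frakk\oplus\frakp$ from Section \ref{sec:killing_involutions_decompositions}. Next I would show that every $X\in\frakp$ can be written as $X=\operatorname{Ad}(k)H$ for some $k\in K_\R$ and $H\in\fraka$. Since $\operatorname{ad}(X)$ is symmetric with respect to $B_\theta$ for $X\in\frakp$ (by the unlabelled lemma preceding Lemma \ref{lem:levi_algebra}), it is $\R$-diagonalizable. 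Fixing a regular $H_0\in\fraka$ and maximizing the continuous function $k\mapsto B_\theta(\operatorname{Ad}(k)X,H_0)$ over the compact group $K_\R$, the first-order condition at a maximizer $k_*$ gives $B_\theta([Z,\operatorname{Ad}(k_*)X],H_0)=0$ for all $Z\in\frakk$; combined with the invariance properties of $B_\theta$, Proposition \ref{prop:root_decomp}(iii), and the regularity of $H_0$, this forces $\operatorname{Ad}(k_*)X\in\fraka$. Combining the two steps, any $g=k\exp(X)\in G_\R$ becomes $g=k\,k'\exp(H)(k')^{-1}=k_1 a k_2$ with $a=\exp(H)\in A_\R$.

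For uniqueness, suppose $g=k_1ak_2=k_1'a'k_2'$. Using $\sigma(k)=k$ for $k\in K_\R$ (as $k\tran=k^{-1}$) and $\sigma(a)=a^{-1}$ for $a\in A_\R$ (as $a\tran=a$), a direct computation gives
\[
g\,\sigma(g)^{-1}=k_1 a^2 k_1^{-1}=k_1'(a')^2(k_1')^{-1},
\]
so $a^2$ and $(a')^2$ are $K_\R$-conjugate. Since $\exp\colon\fraka\to A_\R$ is a bijection, $\log a$ and $\log a'$ are $\operatorname{Ad}(K_\R)$-conjugate elements of $\fraka$. The standard slice fact that two $\operatorname{Ad}(K_\R)$-conjugate elements of $\fraka$ are $N_\R/M_\R$-conjugate (because any $\operatorname{Ad}(k)$ sending an element of $\fraka$ into $\fraka$ differs from an element of $N_\R$ by an element centralizing the point) then produces an element of $W(G_\R,A_\R)$ sending $a$ to $a'$, completing the proof.

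The main obstacle will be the second step of existence---showing $\frakp\subseteq\operatorname{Ad}(K_\R)\fraka$. The optimization argument above is clean in principle but relies on a careful use of the regularity of $H_0$ and the root-space decomposition to rule out any $\frakg_\alpha$-component of $\operatorname{Ad}(k_*)X$. An alternative route is to quote the stronger theorem that all maximal abelian subspaces of $\frakp$ are $\operatorname{Ad}(K_\R)$-conjugate and then move $X$ into some such subspace containing it, but this shifts rather than removes the difficulty.
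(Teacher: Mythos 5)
Your argument is correct in substance, but it takes a genuinely different route from the paper: the paper's proof of this theorem is essentially a citation --- it uses Proposition \ref{prop:weylgroups} only to identify the Lie-theoretic Weyl group $W(G_{\R},A_{\R})=N_{\R}/M_{\R}$ with the algebraic one, and then invokes \cite[Theorem 7.39]{Kna} verbatim --- whereas you reconstruct the classical proof from lower-level inputs (polar decomposition of self-adjoint groups, the maximization argument conjugating $\frakp$ into $\fraka$, and the $g\,\sigma(g)^{-1}=k_1a^2k_1^{-1}$ trick for uniqueness). What your approach buys is self-containedness and transparency about exactly which structural facts are used; what the paper's approach buys is brevity and consistency with its overall strategy, which is to prove only what is not in the Lie-theoretic literature and then transfer it to $\F$ (the genuinely new content is Theorem \ref{thm:KAK}, not this statement). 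Two small points if you flesh yours out: in the uniqueness step, from $k a^2 k^{-1}=(a')^2$ you should conclude $\operatorname{Ad}(k)\log a=\log a'$ using injectivity of $\exp$ on $\frakp$ (or on symmetric matrices), not merely bijectivity of $\exp\colon\fraka\to A_{\R}$, since $\operatorname{Ad}(k)\log a$ is not a priori in $\fraka$; and both the polar decomposition $G_{\R}=K_{\R}\exp(\frakp)$ for a self-adjoint algebraic group and the ``slice fact'' (an element of $K_{\R}$ mapping a point of $\fraka$ into $\fraka$ acts on it like an element of $N_{\R}$) are themselves nontrivial classical results, so your proof does not eliminate reliance on the literature so much as redistribute it.
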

\begin{proof}
	By Proposition \ref{prop:weylgroups},
	$\RW = \operatorname{Nor}_G(S)/\operatorname{Cen}_G(S)$ is isomorphic to $W(G_{\R},A_{\R}) = \operatorname{Nor}_{K_{\R}}(\fraka)/\operatorname{Cen}_{K_{\R}}(\fraka)$. Then this is the statement of \cite[Theorem 7.39]{Kna}. 
\end{proof}

\begin{theorem}[$G=KAK$]\label{thm:KAK}
	Every element $g\in G_\F$ has a decomposition $g = k_1 a k_2$ with $k_1,k_2 \in K_{\F}$ and $a \in A_{\F}$. In this decomposition, $a$ is uniquely determined up to a conjugation by a member of $N_{\F}/M_{\F}$.
\end{theorem}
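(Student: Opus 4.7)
The plan mirrors the transfer-principle argument used for the Iwasawa decomposition in Theorem \ref{thm:KAU}. All of $G$, $K$, $A$, $N$, $M$ are semialgebraic groups defined over $\K$, so Tarski--Seidenberg gives well-defined extensions to any real closed field $\F \supseteq \K$, and Theorem \ref{thm:KAK_R} provides the classical Cartan decomposition over $\R$. It therefore suffices to package existence and uniqueness into a single first-order sentence with parameters in $\K$ and to invoke the transfer principle (Theorem \ref{thm:logic}).

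Concretely, I would write
\begin{align*}
\varphi \colon \quad & \left( \forall g \in G \ \exists k_1, k_2 \in K, \, a \in A \colon g = k_1 a k_2 \right) \ \wedge \\
& \left( \forall k_1, k_2, k_1', k_2' \in K, \, \forall a, a' \in A \colon k_1 a k_2 = k_1' a' k_2' \to \exists n \in N \colon a' = n^{-1} a n \right).
\end{align*}
Both conjuncts are first-order in the language of ordered fields with parameters in $\K$, and $\varphi$ is true over $\R$ by Theorem \ref{thm:KAK_R}. The transfer principle then asserts $\F \models \varphi$, yielding the existence of the decomposition $g = k_1 a k_2$ together with the uniqueness of $a$ up to conjugation by an element of $N_\F$, hence by an element of $N_\F / M_\F$.

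The one subtle point is that the uniqueness clause of Theorem \ref{thm:KAK_R} is phrased in terms of the real Weyl group $W(G_\R, A_\R) = N_\R / M_\R$, whereas Theorem \ref{thm:KAK} claims uniqueness up to $N_\F / M_\F$. This is where Proposition \ref{prop:weylgroups} is essential: it identifies $N_\R/M_\R$ and $N_\F/M_\F$ with a single finite abstract group $W_s$, so the quantifier $\exists n \in N$ in the second conjunct of $\varphi$ describes precisely the same Weyl group action on $A$ over $\R$ and over $\F$. I do not expect any serious obstacle; once Proposition \ref{prop:weylgroups} is in hand, the proof reduces to the straightforward verification that $\varphi$ is first-order and the application of Theorem \ref{thm:logic}.
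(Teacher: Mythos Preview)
Your proposal is correct and follows essentially the same transfer-principle argument as the paper: encode existence and uniqueness as first-order sentences, verify them over $\R$ via Theorem \ref{thm:KAK_R}, and apply Theorem \ref{thm:logic}. The paper splits the statement into two sentences $\varphi$ (existence) and $\psi$ (uniqueness), with $\psi$ additionally asserting that the conjugating element $n$ is itself unique up to $M$; your appeal to Proposition \ref{prop:weylgroups} is harmless but not strictly needed, since the quantifier $\exists n \in N$ transfers directly and $M_\F$ centralizes $A_\F$ by definition.
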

\begin{proof}
	The existence part of the statement follows from the first-order formula
	$$
	\varphi \colon \quad \forall g \in G\  \exists k_1 ,k_2 \in K\ \exists a \in A \colon g =k_1ak_2,
	$$
	which holds over $\R$ by Theorem \ref{thm:KAK_R} and hence over $\F$ by the transfer principle. For uniqueness of $a$, we consider the first-order logic formula
	\begin{align*}
		\psi \colon \quad & \forall a,a' \in A, k_1,k_1',k_2,k_2' \in K \colon k_1ak_2 = k_1'a'k_2' \  \to \\
		&\left( \exists n \in N \colon a = na'n^{-1} \ \wedge \ \left(\forall n' \in N \colon a=n'a'(n')^{-1} \to n^{-1}n' \in M \right) \right)
	\end{align*}
	which states that $a$ is determined up to a conjugation by a member of $N$ and that this member is unique up to an element of $M$. Over $\R$, $\psi$ holds by Theorem \ref{thm:KAK_R}, hence $\psi$ also holds over $\F$ by the transfer principle, concluding the proof.
\end{proof}

\subsection{Bruhat decomposition $G=BWB$}\label{sec:BWB}

By \cite[page 398]{Kna}, $B_{\R} := M_{\R}A_{\R}U_{\R}$ is a closed subgroup of $G_{\R}$ and we have the following Bruhat decomposition.

% Remark. In Knapp, the Bruhat decomposition works for reductive Lie groups.
\begin{theorem}(\cite[Theorem 7.40]{Kna})\label{thm:BWB_R} Every element $g \in G_{\R}$ can be written as $g=b_1nb_2$ with $b_1,b_2 \in B_{\R}$ and $n \in N_{\R}$. In this decomposition, $n$ is unique up to multiplying by an element in $M_{\R}$. Since the spherical Weyl group is $W_s=N_{\R}/M_{\R}$, we have a disjoint union of double cosets
	$$
	G_{\R} = \bigsqcup_{[n] \in W_s} B_{\R}nB_{\R}.
	$$ 
\end{theorem}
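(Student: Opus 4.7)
The plan is to reduce Theorem \ref{thm:BWB_R} directly to Knapp's Theorem 7.40, which already gives a Bruhat decomposition for a connected semisimple real Lie group equipped with a choice of Iwasawa data. My proof will consist of matching the objects $G_\R, K_\R, A_\R, U_\R, M_\R, N_\R, B_\R$ to Knapp's notation and then invoking his statement. The groundwork is already laid: the standard Cartan involution $\theta(X) = -X\tran$ is available because $\bG$ is self-adjoint; $\fraka = \operatorname{Lie}(A_\R)$ is maximal abelian in $\frakp$ thanks to Theorem \ref{thm:split_tori} together with the choice of a self-adjoint maximal split torus; the positive nilpotent $\frakn = \bigoplus_{\alpha > 0}\frakg_\alpha$ comes from an ordered basis of $\Sigma$; and $M_\R, N_\R$ are defined as the centralizer and normalizer of $\fraka$ in $K_\R$. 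That $B_\R = M_\R A_\R U_\R$ is a closed subgroup has been cited from \cite[p.~398]{Kna} immediately before the theorem statement.

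The second identification I need is on the Weyl group side. Knapp indexes the Bruhat cells by $W(G_\R, A_\R) = N_\R / M_\R$, and Proposition \ref{prop:weylgroups} identifies this group with the $W_s$ appearing in the statement. Consequently, the three conclusions of the theorem, namely the existence of a decomposition $g = b_1 n b_2$, the uniqueness of $n$ up to multiplication by an element of $M_\R$, and the disjoint double coset decomposition $G_\R = \bigsqcup_{[n] \in W_s} B_\R n B_\R$, all follow from Knapp's theorem once the correspondence is in place.

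The one subtlety I expect to address is that $G_\R$ need not coincide with $\bG(\R)^\circ$; it sits between the identity component and the full group of real points, whereas Knapp's statement is typically phrased for connected semisimple Lie groups. To bridge this, I would invoke the decomposition $G_\R = K_\R \cdot \bG(\R)^\circ$ established in the lemma about transposition invariance: since $K_\R \subseteq B_\R$, every element of $G_\R$ can be expressed using Bruhat data coming from $\bG(\R)^\circ$, giving existence, and disjointness reduces to the connected case by checking how $K_\R$ intersects the double cosets $B_\R n B_\R$. This bookkeeping with the component group is the only place where I anticipate any real work; the rest is a direct translation to Knapp's framework.
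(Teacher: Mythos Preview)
Your overall plan matches the paper exactly: the paper gives no separate proof of Theorem~\ref{thm:BWB_R} at all, treating it as a direct citation of \cite[Theorem 7.40]{Kna}, and your identification of the objects (including the use of Proposition~\ref{prop:weylgroups} to match Weyl groups) is precisely what is needed to justify that citation.

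There is, however, a genuine error in your proposed handling of disconnectedness. You write ``since $K_\R \subseteq B_\R$'', but this is false: $B_\R = M_\R A_\R U_\R$ is a minimal parabolic, and $K_\R \cap B_\R = M_\R$, not all of $K_\R$. So your argument for reducing from $G_\R$ to $\bG(\R)^\circ$ does not work as stated. Fortunately the entire detour is unnecessary: Knapp's standing hypotheses in Chapter~VII are for his class of reductive Lie groups, which explicitly allows finitely many connected components, and a group $G_\R$ with $\bG(\R)^\circ \leq G_\R \leq \bG(\R)$ for semisimple self-adjoint $\bG$ lies in that class. Hence \cite[Theorem 7.40]{Kna} applies to $G_\R$ directly, and no component bookkeeping is required.
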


The group $B_{\R}$ is semialgebraic and the Bruhat decomposition can be extended to $G_{\F}$.

\begin{theorem}[$G=BWB$]\label{thm:BWB}
	 Every element $g \in G_{\F}$ can be written as $g=b_1nb_2$ with $b_1,b_2 \in B_{\F}$ and $n \in N_{\F}$. In this decomposition $n$ is unique up to multiplying by an element in $M_{\F}$. We have a disjoint union of double cosets
	$$
	G_{\F} = \bigsqcup_{[n] \in W_s} B_{\F}nB_{\F}.
	$$ 
\end{theorem}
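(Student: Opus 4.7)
The proof follows the same transfer-principle template used for Theorems \ref{thm:KAU} and \ref{thm:KAK}: identify the semialgebraic data defined over $\K$, recast the statement of Theorem \ref{thm:BWB_R} as a first-order sentence over the language of ordered fields with parameters in $\K$, and transfer from $\R$ to $\F$. The plan is to verify that every ingredient on the right-hand side ($B_\F$, $N_\F$, $M_\F$, and the indexing set $W_s$) is already visible at the semialgebraic level, so that nothing beyond the transfer of a sentence is required.

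First I would note that $B_\F = M_\F A_\F U_\F$ is the semialgebraic extension of $B_\R$: indeed, $M$, $A$, $U$ are semialgebraic $\K$-groups by construction, and the product-set equality $B_\R = M_\R A_\R U_\R$ (together with the fact that this product is a subgroup, \cite[page 398]{Kna}) can be written as a first-order sentence true over $\R$ and thus true over $\F$. In particular $B_\F$ is a semialgebraic subgroup of $G_\F$. Next, I would invoke Proposition \ref{prop:weylgroups} to enumerate $W_s = \{w_1,\ldots,w_{|W_s|}\}$ as a finite abstract group, together with the fact that $N_\F/M_\F \cong W_s$; this identification lets the index set in the disjoint-union statement appear literally as a finite conjunction/disjunction in a first-order formula.

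The core step is then to write the Bruhat decomposition as a single sentence $\psi$ with parameters in $\K$ (and with the finite data of $W_s$ baked in), of the form
\begin{align*}
\psi \colon \quad & \exists n_1,\ldots,n_{|W_s|}\in N \colon \Bigl(\bigwedge_{i\neq j} \lnot\,(n_j^{-1}n_i \in M)\Bigr) \wedge \Bigl(\forall n \in N \colon \bigvee_i n^{-1}n_i \in M\Bigr) \\
& \wedge\ \Bigl(\forall g \in G\ \exists b_1,b_2 \in B\ \exists i\ \exists m \in M \colon g = b_1\,(n_i m)\,b_2\Bigr) \\
& \wedge\ \Bigl(\forall b_1,b_2,b_1',b_2' \in B\ \forall n,n' \in N \colon \\
& \qquad b_1 n b_2 = b_1' n' b_2' \to n^{-1}n' \in M\Bigr),
\end{align*}
which simultaneously encodes the existence, the indexing by $W_s$, the uniqueness of $n$ up to $M_\F$, and (via the last clause) the disjointness of the double cosets $B_\F n B_\F$ as $[n]$ ranges over $W_s$. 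By Theorem \ref{thm:BWB_R} and Proposition \ref{prop:weylgroups}, $\psi$ holds over $\R$; by the transfer principle (Theorem \ref{thm:logic}) it holds over $\F$, which is exactly the statement of Theorem \ref{thm:BWB}.

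The only point requiring care, and hence the main obstacle, is ensuring that the sentence $\psi$ really is first-order with parameters in $\K$; concretely, one must check that $B$, $N$, $M$, $G$, and the group operations are all given by semialgebraic data defined over $\K$ so that the quantifiers above are legitimate in the ordered-field language. This has already been arranged in the setup of Section \ref{sec:decompositions} (each of $A$, $K$, $M$, $N$, $U$ is constructed semialgebraically from $\bG(\K)$ and from the standard Cartan involution $X\mapsto -X\tran$), and the finiteness of $W_s$ means the indexing does not introduce any existential quantification over integers. With that verification in hand, the transfer principle yields the theorem essentially for free.
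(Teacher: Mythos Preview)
Your proof is correct and follows essentially the same approach as the paper: encode existence and the uniqueness clause $b_1nb_2=b_1'n'b_2' \to n^{-1}n'\in M$ as first-order sentences, transfer from $\R$ to $\F$, and invoke Proposition \ref{prop:weylgroups} so that the finite index set $W_s=N_\F/M_\F$ is already known. The paper's version is slightly leaner (it treats existence and uniqueness by two short formulas rather than bundling the Weyl-group enumeration into one sentence $\psi$), but the content is the same.
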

\begin{proof}
	The existence of the decomposition follows directly from the transfer principle and Theorem \ref{thm:BWB_R}. For uniqueness we utilize the first-order formula
	\begin{align*}
		\varphi \colon \quad &  \forall b_1,b_2,b_1',b_2' \in B, n,n' \in N \\
		& b_1nb_2 = b_1'n'b_2' \to n^{-1}n' \in M  ,
	\end{align*}
	which holds over $\R$ by Theorem \ref{thm:BWB_R}. Since $W_s= N_\F/M_\F =N_\R/M_\R$ by Proposition \ref{prop:weylgroups}, $G_\F$ is a disjoint union as described. 
\end{proof}

Let 
$$
\ApF = \left\{ a \in A_\F \colon \chi_\alpha(a) \geq 1 \text{ for all } \alpha \in \Delta \right\},
$$
where $\chi_\alpha$ is the algebraic character associated to $\alpha \in \Delta$. Then we may choose $a \in \ApF$ in the Cartan decomposition $G_\F = K_\F \ApF K_\F$ which is then unique, since the Weyl group acts simply transitively on the set of Weyl chambers.

\subsection{Baker-Campbell-Hausdorff formula}\label{sec:BCH}

Given $X,Y \in \frakg_\R$, the Baker-Campbell-Hausdorff formula gives a formal power series description of $Z = \log(\exp(X)\exp(Y))$ in terms of $X,Y$ and iterated commutators of $X$ and $Y$, see for instance \cite[Proposition V.1]{Jac79} or \cite{Mag54}. The formal power series converges in a neighborhood of the identity \cite[Theorem 3.1 in X.3]{Hoch65}, % Hochschild Theorem 3.1 in X.3
but may not converge everywhere in general. If $X,Y \in \frakn_\R$, then the power series is given by a polynomial and thus converges everywhere \cite{Wei63}. Since only finitely many terms are involved, one can see directly or invoke the transfer principle to obtain the Baker-Campbell-Hausdorff formula for elements in $\frakn_\F := \bigoplus_{\alpha >0} (\frakg_\alpha)_\F$.
\begin{proposition}\label{prop:BCH}
	For every $X,Y \in \frakn_\F$, there is $Z \in \frakn_\F$ such that $\exp(X)\exp(Y) = \exp(Z)$. The element $Z$ is given by a finite sum of iterated commutators, the first terms of which are given by
	$$
	Z = X + Y + \frac{1}{2}[X,Y] + \frac{1}{12}\left(\left[X,\left[X,Y\right]\right]- \left[Y, \left[Y,X\right]\right]\right) - \frac{1}{24} \left[Y,\left[X,\left[X,Y\right]\right]\right] + \ldots
	$$ 
\end{proposition}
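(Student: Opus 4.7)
The plan is to reduce to the classical (real) Baker--Campbell--Hausdorff formula via the transfer principle, using nilpotency of $\frakn$ to convert the a priori infinite series into a finite polynomial identity.

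First I would invoke Lemma \ref{lem:kan_form}(3), which produces a basis of $\frakg_\R$ in which $\operatorname{ad}(X)$ is strictly upper triangular for every $X \in \frakn$. Hence there is an integer $N$, bounded by $\dim \frakg$, such that any product of $N$ elements of $\operatorname{ad}(\frakn_\R)$ vanishes; being a universal polynomial condition on the matrix entries, this same nilpotency bound transfers to $\frakn_\F$ by Theorem \ref{thm:logic}. Consequently, the truncations $\exp(X) = \sum_{k=0}^{N-1} X^k/k!$ and $\log(g) = \sum_{k=1}^{N-1} \frac{(-1)^{k-1}}{k}(g-\operatorname{Id})^k$ are well-defined polynomial maps with rational (hence $\K$-valued) coefficients on $\frakn_\F$ and on $U_\F$ respectively, so the equation $\exp(X)\exp(Y)=\exp(Z)$ makes literal sense in the semialgebraic setting.

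Next I would appeal to the Dynkin expression of the BCH series, truncated to iterated Lie brackets of length at most $N-1$. Call this truncation $Z_N(X,Y)$: it is a fixed Lie polynomial with rational coefficients whose initial terms are exactly those displayed in the statement. By the classical BCH theorem \cite[Proposition V.1]{Jac79} together with the polynomial-termination phenomenon in the nilpotent case \cite{Wei63}, the identity $\exp(X)\exp(Y)=\exp(Z_N(X,Y))$ holds on all of $\frakn_\R \times \frakn_\R$, since every commutator of length $\geq N$ vanishes. Written in matrix coordinates, this becomes a finite system of polynomial identities with $\K$-coefficients, i.e.\ a single first-order sentence with parameters in $\K$ universally quantified over $\frakn \times \frakn$.

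Finally, the transfer principle (Theorem \ref{thm:logic}) pushes this sentence from $\R$ to $\F$, yielding $\exp(X)\exp(Y)=\exp(Z_N(X,Y))$ for all $X,Y \in \frakn_\F$, with $Z_N(X,Y)$ the desired finite sum of iterated commutators. The main obstacle is organizational rather than substantive: one must package the infinite BCH series into a single polynomial identity of bounded degree depending only on $\dim\frakg$, so that the sentence to be transferred is genuinely first-order. Once the uniform nilpotency degree $N$ is pinned down and the truncation $Z_N$ is verified as a polynomial identity over $\R$, the transfer to $\F$ is immediate.
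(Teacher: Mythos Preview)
Your proposal is correct and follows essentially the same route as the paper: invoke nilpotency of $\frakn$ (via Lemma~\ref{lem:kan_form}) so that the BCH series is a fixed polynomial identity over $\R$ \cite{Jac79,Wei63}, then push that identity to $\F$ by the transfer principle. The paper's argument is in fact just the paragraph preceding the proposition, and your write-up merely expands its two sentences into explicit steps.
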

There are various variations in the literature. We will make use of the Zassenhaus formula
$$
\exp(X+Y) = \exp(X) \exp(Y) \exp\left(-\frac{1}{2}\left[X,Y\right]\right)\exp \left( \frac{1}{3} [Y,[X,Y]] + \frac{1}{6}[X,[X,Y]] \right) \cdots 
$$
for $X,Y \in \frakn_\F$, which can be obtained from the above by calculating $\exp(-X)\exp(X+Y) = \exp(Z) $ iteratively \cite{Mag54}.

\subsection{The unipotent group $U$}\label{sec:U}

The root spaces $\frakg_\alpha$ in the root decomposition
$$
\frakg_\R = \frakg_0 \oplus \bigoplus_{\alpha \in \Sigma} \frakg_\alpha.
$$
corresponding to positive roots, consist of simultaneously nilpotent elements by Lemma \ref{lem:kan_form}. Therefore the group 
$$
U_\R = \exp(\frakn) = \exp\left( \bigoplus_{\alpha \in \Sigma_{>0}} \frakg_\alpha \right)
$$
is unipotent. In fact $U_\R$ is the $\R$-points of an algebraic group $\bU$ and the semialgebraic extension of the semialgebraic $\K$-group $U = U_\K = \bU(\K)$.

Let $\alpha \in \Sigma$. When $2\alpha \notin \Sigma$, the root space $\frakg_\alpha$ is an ideal. In any case, $\frakg_\alpha \oplus \frakg_{2\alpha}$ is an ideal, where $\frakg_{2\alpha} = 0$ if $2\alpha \notin \Sigma$. Thus, for every $\alpha \in \Sigma$ there is a unipotent subgroup $U_\alpha = \exp(\frakg_\alpha \oplus \frakg_{2\alpha}) < U$, called the \emph{root group} which is also a semialgebraic $\K$-group, since the exponential function is a polynomial on nilpotent elements. Note that $U_{2\alpha} < U_\alpha$ if both exist. 

The following Lemma shows that $A_\F$ normalizes the root groups $(U_\alpha)_\F$.

\begin{lemma} \label{lem:aexpXa}
	Let $\alpha \in \Sigma, X\in \mathfrak{g}_\alpha, X' \in \mathfrak{g}_{2\alpha}$ and $a \in A_\F $. Then 
	$$a\exp\left(X + X'\right)a^{-1} = \exp\left(\chi_\alpha (a) X + \chi_\alpha(a)^2 X'\right),$$
	where $\chi_\alpha \colon A_\F \to \mathbb{F}_{>0}$ is the algebraic character from Lemma \ref{lem:char}.
\end{lemma}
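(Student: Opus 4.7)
The plan is to reduce the identity to two classical facts: that $\operatorname{Ad}(a)$ acts on the root spaces by the corresponding characters, and that matrix conjugation commutes with the exponential series. First, by Lemma \ref{lem:char}, the character $\chi_\alpha$ is defined precisely so that $\operatorname{Ad}(a)|_{\frakg_\alpha} = \chi_\alpha(a) \cdot \operatorname{Id}$ for all $a \in A_\R$. Applied to the root $2\alpha$ (interpreted as an element of $\fraka^\star$, regardless of whether $2\alpha \in \Sigma$), the same construction yields $\operatorname{Ad}(a)|_{\frakg_{2\alpha}} = \chi_{2\alpha}(a) \cdot \operatorname{Id}$. On $a = \exp(H)$ the explicit formula from Lemma \ref{lem:char} gives $\chi_{2\alpha}(a) = e^{2\alpha(H)} = (e^{\alpha(H)})^2 = \chi_\alpha(a)^2$, so the algebraic characters $\chi_{2\alpha}$ and $\chi_\alpha^2$ agree on the Zariski dense subset $\exp(\fraka) \subseteq A_\R \subseteq \bS$ and hence coincide as algebraic characters on all of $\bS$.

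Next, I transfer these identities from $\R$ to $\F$. The equality
$$
\operatorname{Ad}(a)(X+X') \;=\; \chi_\alpha(a) \cdot X + \chi_\alpha(a)^2 \cdot X'
$$
can be expressed as a polynomial identity in the matrix entries of $a \in \bS(\K)$ and in coordinates on the $\K$-vector spaces $(\frakg_\alpha)_\K$ and $(\frakg_{2\alpha})_\K$, using that all these objects are algebraic over $\K$ (in particular $\chi_\alpha$ is defined over $\K$ since $\bS$ is $\K$-split). The resulting first-order formula in the language of ordered fields with parameters in $\K$ holds over $\R$ by the previous paragraph, hence by the transfer principle (Theorem \ref{thm:logic}) it holds for all $a \in A_\F$, $X \in (\frakg_\alpha)_\F$, $X' \in (\frakg_{2\alpha})_\F$.

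Finally, since $X + X' \in \frakn_\F$ is nilpotent by Lemma \ref{lem:kan_form}, the exponential is a finite polynomial in $X+X'$, and matrix conjugation distributes term by term:
$$
a \exp(X + X') a^{-1} \;=\; \sum_{k\geq 0} \frac{(a(X+X')a^{-1})^k}{k!} \;=\; \exp\!\bigl(\operatorname{Ad}(a)(X+X')\bigr).
$$
Substituting the expression for $\operatorname{Ad}(a)(X+X')$ from the previous step yields the claimed identity. The only subtle point is the passage from $\R$ to $\F$, but this is routine via the transfer principle once the statement has been rewritten as a polynomial identity; there is no real obstacle beyond bookkeeping.
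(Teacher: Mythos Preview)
Your proof is correct and follows essentially the same route as the paper's: both arguments reduce to the identity $\operatorname{Ad}(a)(X+X') = \chi_\alpha(a)X + \chi_\alpha(a)^2 X'$ and then push conjugation through the exponential series term by term. You are simply more explicit on two points the paper leaves terse: the justification of $\chi_{2\alpha} = \chi_\alpha^2$ (you argue via Zariski density, the paper just writes the equality), and the passage from $\R$ to $\F$ (you invoke the transfer principle, the paper silently relies on the fact that the identity is polynomial and $\chi_\alpha$ is algebraic over $\K$).
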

\begin{proof}
	We know	that $\operatorname{Ad}_a(X) = \chi_\alpha(a) X$ and $\operatorname{Ad}_a(X') = \chi_{2\alpha}(a) X' = \chi_\alpha(a)^2 X'$.
	We then use distributivity of matrix multiplication to obtain
	\begin{align*}
		a\exp(X+X')a^{-1} &= a\sum_{n=0}^\infty \frac{(X+X')^n}{n!}a^{-1} 
		=\sum_{n=0}^\infty \frac{1}{n!} a(X+X')^n a^{-1} \\
		& =\sum_{n=0}^\infty \frac{1}{n!} \left(a(X+X')a^{-1}\right)^n 
		= \exp \left(aXa^{-1} + aX'a^{-1}\right) \\
		&= \exp \left(\operatorname{Ad}_a(X) + \operatorname{Ad}_a(X')\right)
		= \exp \left(\chi_\alpha(a)X + \chi_\alpha(a)^2 X'\right)
	\end{align*}
\end{proof}
We consider $\Theta \subseteq \Sigma_{>0}$ \emph{closed under addition}, meaning that for any $\alpha, \beta \in \Theta$, if the sum $\alpha + \beta \in \Sigma$, then $\alpha + \beta \in \Theta$. For any $\Theta \subseteq \Sigma_{>0}$ closed under addition,
$$
\frakg_\Theta := \bigoplus_{\alpha \in \Theta} \frakg_\alpha 
$$
is an ideal, since $\left[ \frakg_\alpha , \frakg_\beta \right] \subseteq \frakg_{\alpha + \beta}$ for any $\alpha, \beta \in \Sigma$, see Proposition \ref{prop:root_decomp}. Hence 
$$
(U_\Theta)_\R :=\exp(\frakg_\Theta)
$$
is the $\R$-extension of a semialgebraic group $U_{\Theta}$, in fact for $\Theta = \Sigma_{>0}$ we recover $U = U_{\Sigma_{>0}}$. As a consequence of the Baker-Campbell-Hausdorff-formula we obtain the following description of $(U_\Theta)_\F$.

\begin{lemma}\label{lem:BCH_consequence}
	Let $\Theta = \left\{ \alpha_1, \ldots , \alpha_k \right\} \subseteq \Sigma_{>0}$ be a subset closed under addition with $\alpha_1 > \ldots > \alpha_k$. Then
	$$	
	(U_{\Theta})_\F := \exp\left( \bigoplus_{\alpha \in \Theta} (\frakg_\alpha)_\F\right)
	= \prod_{i = 1}^k \exp\left(\left(\frakg_{\alpha_i}\right)_\F\right) = \langle u \in U_{\alpha} \colon \alpha \in \Theta \rangle.
	$$
\end{lemma}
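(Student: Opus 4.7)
The plan is to derive both equalities from the Baker-Campbell-Hausdorff formula (Proposition \ref{prop:BCH}) together with the containment $[\frakg_\alpha,\frakg_\beta]\subseteq\frakg_{\alpha+\beta}$ and the closure of $\Theta$ under addition. Throughout I write $\frakg_\Theta := \bigoplus_{\alpha\in\Theta}(\frakg_\alpha)_\F$ and note that this is a Lie subalgebra, since $\alpha+\beta\in\Theta$ whenever $\alpha,\beta\in\Theta$ and $\alpha+\beta\in\Sigma$.

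First I would verify the inclusion $\prod_{i=1}^k \exp((\frakg_{\alpha_i})_\F)\subseteq \exp(\frakg_\Theta)$. By Proposition \ref{prop:BCH}, a product of two exponentials of elements of $\frakg_\Theta$ lies in $\exp(\frakg_\Theta)$, since the BCH series is a finite sum of iterated brackets of elements of the Lie subalgebra $\frakg_\Theta$. Induction on $k$ then gives the inclusion.

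The reverse inclusion is the heart of the argument. I would study the polynomial map
\[
\Phi\colon \bigoplus_{i=1}^k(\frakg_{\alpha_i})_\F \longrightarrow \frakg_\Theta, \qquad (v_1,\ldots,v_k)\longmapsto \log\bigl(\exp(v_1)\cdots\exp(v_k)\bigr),
\]
which by Proposition \ref{prop:BCH} is a finite Lie polynomial in $v_1,\ldots,v_k$. Writing $\Phi=\sum_j\Phi_j$ with $\Phi_j$ taking values in $(\frakg_{\alpha_j})_\F$, the key claim is the triangular form
\[
\Phi_j(v_1,\ldots,v_k)=v_j+P_j(v_{j+1},\ldots,v_k)
\]
for some polynomial $P_j$. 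Indeed, every nonlinear term in the BCH expansion is an iterated bracket of some $v_{i_1},\ldots,v_{i_n}$ with $n\geq 2$, and such a bracket lies in $\frakg_{\alpha_{i_1}+\cdots+\alpha_{i_n}}$. If this root equals $\alpha_j$ then each individual $\alpha_{i_l}$ is strictly smaller than $\alpha_j$ (the remaining summands form a nonempty sum of positive roots), so $i_l>j$ under our indexing $\alpha_1>\cdots>\alpha_k$. The triangular system can then be solved bottom-up: $v_k=X_k$, $v_{k-1}=X_{k-1}-P_{k-1}(v_k)$, and so on, showing that $\Phi$ is a bijection onto $\frakg_\Theta$ with polynomial inverse. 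This proves $\exp(\frakg_\Theta)=\prod_{i=1}^k\exp((\frakg_{\alpha_i})_\F)$.

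For the third equality, $\exp((\frakg_{\alpha_i})_\F)\subseteq U_{\alpha_i}$ by definition, so $\prod_i\exp((\frakg_{\alpha_i})_\F)\subseteq\langle u\in U_\alpha:\alpha\in\Theta\rangle$; conversely each $U_\alpha=\exp((\frakg_\alpha\oplus\frakg_{2\alpha})_\F)\subseteq\exp(\frakg_\Theta)$, since $2\alpha=\alpha+\alpha\in\Theta$ whenever $2\alpha\in\Sigma$ by closure of $\Theta$. The main technical obstacle is the triangularity step; once it is in hand, the inversion of $\Phi$ and the resulting equality of semialgebraic sets follow mechanically.
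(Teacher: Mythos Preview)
Your proof is correct and reaches the same conclusion as the paper, but the key step---showing that $\exp(\frakg_\Theta)$ is contained in the ordered product $\prod_i\exp((\frakg_{\alpha_i})_\F)$---is organized differently. The paper argues by induction on $|\Theta|$: it applies the Zassenhaus formula to split $\exp\bigl(\sum_i X_i\bigr)$ as $\exp(X_1)$ times further exponentials, then uses BCH to recombine the remaining factors into a single $\exp\bigl(\sum_{i\geq 2}\tilde X_i\bigr)$, and invokes the induction hypothesis on $\Theta\setminus\{\alpha_1\}$. Your approach instead analyzes the polynomial map $\Phi(v_1,\ldots,v_k)=\log(\exp(v_1)\cdots\exp(v_k))$ globally, exploiting the observation that any iterated bracket contributing to the $\alpha_j$-component must involve only $v_i$ with $\alpha_i<\alpha_j$, hence $i>j$; this yields an upper-triangular system that inverts at once. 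Both arguments rest on the same underlying fact (positive roots are strictly smaller than their sums), but your triangularity formulation avoids the back-and-forth between Zassenhaus and BCH and gives the polynomial inverse of $\Phi$ explicitly, while the paper's version is more in line with the recursive structure used again in Lemma~\ref{lem:BCH_normalizer}. For the third equality, note that you implicitly use that $\exp(\frakg_\Theta)$ is a subgroup (closed under products by BCH and under inverses via $\exp(-X)$); this is clear but worth stating, since otherwise $U_\alpha\subseteq\exp(\frakg_\Theta)$ for each $\alpha$ would not immediately give the containment of the generated group.
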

\begin{proof}
	We start by proving
	$$
	\exp\left( \bigoplus_{\alpha \in \Theta} (\frakg_\alpha)_\F\right) \subseteq  \prod_{i = 1}^k \exp\left(\left(\frakg_{\alpha_i}\right)_\F\right) 
	$$
    using induction over $|\Theta|$. If $|\Theta| =1$, then the statement is immediate. Now assume that $\Theta = \{\alpha_1\} \cup \Theta'$ with $\alpha_1 >\beta$ for all $\beta \in \Theta'$ and such that
	$$	
	\exp\left( \bigoplus_{\alpha \in \Theta'} (\frakg_\alpha)_\F\right)
	\subseteq \prod_{i = 2}^k \exp\left(\left(\frakg_{\alpha_i}\right)_\F\right) .
	$$
	Let $X_i \in (\frakg_{\alpha_i})_\F$. 
Recall that $[\frakg_\alpha, \frakg_{\beta}] \subseteq \frakg_{\alpha + \beta}$ for all $\alpha, \beta \in \Sigma$, see Proposition \ref{prop:root_decomp}. Thus we can use the Zassenhaus-formula in Section \ref{sec:BCH} about the Baker-Campbell-Hausdorff formula to obtain the finite product
	\begin{align*}
		\exp\left( \sum_{i=1}^k X_i \right) &= \exp(X_1)\exp\left(\sum_{i=2}^k X_i\right) \exp\left( -\frac{1}{2}\left[ X_1, \sum_{i=2}^k X_i  \right] \right)  \ldots 
	\end{align*}
	and then repeatedly apply the original version in Proposition \ref{prop:BCH} to simplify the expression back to
	\begin{align*}
		\exp(X_1)\exp\left(\sum_{i=2}^{k} \tilde{X}_i\right) 
	\end{align*}
	for some new $\tilde{X}_i \in (\frakg_{\alpha_i})_\F$.  Applying the induction hypothesis, we conclude
	\begin{align*}
		\exp\left( \sum_{i=1}^k X_i \right) &\in  \exp(X_1) \prod_{i=2}^k \exp\left(\left(\frakg_{\alpha_i}\right)_\F\right) \subseteq \prod_{i = 1}^k \exp\left(\left(\frakg_{\alpha_i}\right)_\F\right).
	\end{align*}
	Next, we notice that $\prod_{i=1}^k \exp((\frakg_{\alpha_i})_\F) \subseteq \langle u \in (U_{\alpha})_\F \colon \alpha \in \Theta\rangle$. Finally, we prove the inclusion
	$$
 \langle u \in U_{\alpha} \colon \alpha \in \Theta \rangle \subseteq \exp\left( \bigoplus_{\alpha \in \Theta} (\frakg_\alpha)_\F\right)
	$$
	using induction over the word length of an element in $\langle u \in U_{\alpha} \colon \alpha \in \Theta \rangle $. If the word length is $1$, then the statement holds. Now assume that 
	$$
	v =\exp\left( \sum_{i=1}^k X_i\right)
	$$
	for some $X_i \in (\frakg_{\alpha_i})_\F$ and consider $u=\exp(X)$ for some $X\in (\frakg_{\alpha})_\F$ and some $\alpha \in \Theta$. We apply Proposition \ref{prop:BCH} to obtain
	\begin{align*}
		uv &= \exp(X) \exp\left(\sum_{i=1}^kX_i\right) \\
		&= \exp\left( X + \sum_{i=1}^k X_i + \frac{1}{2}\left[X, \sum_{i=1}^k X_i\right] + \ldots \right) \in \exp\left( \bigoplus_{\alpha \in \Theta} (\frakg_\alpha)_\F\right)
	\end{align*}
	concluding the proof.
\end{proof}
We point out that the order of the product expression in Lemma \ref{lem:BCH_consequence} starts with $\exp ((\frakg_{\alpha})_\F)$ corresponding to the largest $\alpha = \alpha_1$ followed in decreasing order. Writing elements of $U_{\Theta}$ as the inverses of elements in $U_\Theta$, also gives an expression starting with the smallest root, followed by an increasing order. The following technical Lemma is useful in applications.

\begin{lemma}\label{lem:BCH_normalizer}
	Let $\Theta \subseteq \Sigma_{>0}$ be a subset closed under addition and $\alpha \in \Sigma_{>0}$ such that $\alpha > \beta$ for all $\beta \in \Theta$. Then $uU_{\Theta}u^{-1}= U_{\Theta}$ for all $u \in \exp((\frakg_{\alpha})_\F)$.
	
	For every subset $\Psi \subseteq \Sigma_{>0}$, elements $u \in U_{\Theta}$ can be expressed as $u = u' u''$ with
	$$
	u' \in \prod_{\alpha \in \Theta \cap \Psi} \exp((\frakg_{\alpha})_\F) \quad \text{ and } \quad u'' \in \prod_{\alpha \in \Theta \setminus \Psi} \exp((\frakg_{\alpha})_\F) .
	$$ 
\end{lemma}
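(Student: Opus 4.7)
The lemma splits naturally into a normalization statement and a factorization statement, and I would prove each by combining the Baker--Campbell--Hausdorff formula (Proposition~\ref{prop:BCH}) with the product description of $U_\Theta$ in Lemma~\ref{lem:BCH_consequence}. The plan is to handle the normalization first and then use it as a tool for the factorization via induction on $|\Theta|$.

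For the first assertion, fix $u = \exp(X) \in \exp((\frakg_\alpha)_\F)$ and a general $v = \exp(Y) \in U_\Theta$ with $Y = \sum_{\beta \in \Theta} Y_\beta$, $Y_\beta \in (\frakg_\beta)_\F$. I would start from
$$
u v u^{-1} \;=\; \exp\bigl(\operatorname{Ad}(u) Y\bigr) \;=\; \exp\!\left(\sum_{k \geq 0} \tfrac{1}{k!} \operatorname{ad}(X)^k Y\right),
$$
which reduces the claim to showing $\operatorname{Ad}(u) Y \in \frakg_\Theta$. The sum is finite because Lemma~\ref{lem:kan_form} makes $\operatorname{ad}(X)$ nilpotent, and Proposition~\ref{prop:root_decomp}(i) places the summand $\tfrac{1}{k!}\operatorname{ad}(X)^k Y_\beta$ inside $(\frakg_{k\alpha+\beta})_\F$. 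The combinatorial crux is to use the hypotheses ``$\Theta$ closed under addition'' and ``$\alpha > \beta$ for every $\beta \in \Theta$'' to conclude that, whenever $k\alpha+\beta$ is actually a root, it belongs to $\Theta$ (so that the corresponding summand lies in $\frakg_\Theta$). Once this is in place, $\operatorname{Ad}(u) Y \in \frakg_\Theta$, hence $uvu^{-1} \in U_\Theta$; replacing $u$ by $u^{-1}$ yields the reverse inclusion and thus $u U_\Theta u^{-1} = U_\Theta$.

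For the factorization, I would induct on $|\Theta|$, the case $|\Theta|=0$ being trivial. For the step let $\alpha_1 := \max \Theta$ in the chosen total order and set $\Theta' := \Theta \setminus \{\alpha_1\}$. Because any sum of positive roots exceeds both summands, no sum of elements of $\Theta'$ can equal $\alpha_1$, so $\Theta'$ is again closed under addition. Lemma~\ref{lem:BCH_consequence} applied to $\Theta$ allows me to write any $u \in U_\Theta$ as $u = \exp(X_1)\, v$ with $X_1 \in (\frakg_{\alpha_1})_\F$ and $v \in U_{\Theta'}$, and the inductive hypothesis applied to $v$ with the same $\Psi$ gives $v = v'v''$ of the desired shape. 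If $\alpha_1 \in \Psi$ I set $u' := \exp(X_1) v'$ and $u'' := v''$. If $\alpha_1 \notin \Psi$ I invoke the normalization part of the lemma with $\alpha = \alpha_1$ and $\Theta'$ in place of $\Theta$ (legitimate since $\alpha_1$ strictly exceeds every element of $\Theta'$) to rewrite $\exp(X_1) v = v^\ast \exp(X_1)$ with $v^\ast := \exp(X_1) v \exp(-X_1) \in U_{\Theta'}$; then apply the inductive hypothesis to $v^\ast = v^{\ast\prime} v^{\ast\prime\prime}$ and take $u' := v^{\ast\prime}$, $u'' := v^{\ast\prime\prime} \exp(X_1)$. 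In both cases $u = u'u''$ has the claimed factorization since $\alpha_1$ is assigned to the correct side.

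The main obstacle is the root-combinatorial step in the normalization argument: tracking which of the roots $k\alpha + \beta$ appearing in the $\operatorname{ad}(X)$-expansion really lie in $\Theta$. Here the order hypothesis $\alpha > \beta$ together with closure of $\Theta$ under addition is exactly what must be unpacked, via a short induction on $k$ that tracks successive applications of $\operatorname{ad}(X)$. Once this bracket-tracking is settled, everything else is bookkeeping: BCH furnishes the conjugation formula, Lemma~\ref{lem:BCH_consequence} gives the required factorization of a single element of $U_\Theta$, and the normalization itself feeds back into the inductive step of the factorization in a bubble-sort style.
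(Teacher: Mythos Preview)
Your approach matches the paper's: for the normalization the paper applies BCH twice to reach $\exp(X)\exp(Y)\exp(-X)=\exp(Y+\tfrac12[X,Y]+\cdots)$, which is your $\exp(\operatorname{Ad}(u)Y)$ in disguise, and for the factorization both run the same induction on $|\Theta|$, peeling off the maximal root $\alpha_1$ and using the first part to move it past the remaining factor when $\alpha_1\notin\Psi$.

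There is, however, a genuine gap in the combinatorics, and your attempted justifications do not close it. Your argument that $\Theta':=\Theta\setminus\{\alpha_1\}$ is again closed under addition is wrong: the observation that a sum of positive roots exceeds each summand does not prevent $\beta+\gamma=\alpha_1$; in type $A_2$ with $\Theta=\Sigma_{>0}$ one gets $\Theta'=\{\delta_1,\delta_2\}$, which is not closed. (The paper asserts the same closure without proof.) Likewise, the ``main obstacle'' you flag for the first part---showing $k\alpha+\beta\in\Theta$ whenever it is a root---cannot be resolved from the stated hypotheses, because $\alpha$ need not lie in $\Theta$: with $\Theta=\{\delta_2\}$ and $\alpha=\delta_1$ in $A_2$ one has $[\frakg_{\delta_1},\frakg_{\delta_2}]=\frakg_{\delta_1+\delta_2}\not\subseteq\frakg_\Theta$, so $\exp(\frakg_{\delta_1})$ does not normalize $U_{\{\delta_2\}}$. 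What rescues the argument in the only instance actually needed (namely $\alpha=\alpha_1=\max\Theta$ acting on $\Theta'$) is that for $\beta\in\Theta$ the sum $\alpha_1+\beta$ is never a root---otherwise closure of $\Theta$ would force $\alpha_1+\beta\in\Theta$, contradicting maximality of $\alpha_1$---so all the relevant brackets vanish and $\exp((\frakg_{\alpha_1})_\F)$ in fact \emph{commutes} with every $\exp((\frakg_\beta)_\F)$ for $\beta\in\Theta'$. This makes the swap trivial and lets the induction run directly on the ordered product $\prod_{i\ge 2}\exp((\frakg_{\alpha_i})_\F)$, bypassing the need for $\Theta'$ to be closed.
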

\begin{proof}
	Let $X \in (\frakg_{\alpha})_\F$ and $\exp(Y) \in U_{\Theta}$. Then we can apply the Baker-Campbell-Hausdorff formula, Proposition \ref{prop:BCH}, twice to obtain
	\begin{align*}
		\exp(X) \exp(Y) \exp(X)^{-1} &= \exp\left(X + Y + \frac{1}{2}[X,Y] + \ldots\right) \exp(-X)  \\
		%&= \exp(X + Y + \frac{1}{2}[X,Y] + \ldots  - X + \frac{1}{2}\left([X,-X] + [Y,-X] + \frac{1}{2} [[X,Y],-X] + \ldots \right) ) \\
		&= \exp\left( Y + \frac{1}{2} [X,Y] + \ldots\right) \in U_\Theta.
	\end{align*}
	We show the second statement using induction over the size of $\Theta$. If $|\Theta| = 1$, the statement is clear. Now consider the subset closed under addition $\Theta' := \Theta \setminus \{\alpha\} $ where $\alpha$ is the largest element of $\Theta$. For $u \in U_\Theta$, we use Lemma \ref{lem:BCH_consequence} to obtain $u = u_\alpha \bar{u}$ with $\bar{u} \in U_{\Theta'}$. If $\alpha \notin \Psi$, then the first part of this Lemma can be used to instead write $u = \bar{u} u_\alpha$. Either way, the induction assumption gives 
	$$
		\bar{u}' \in \prod_{\beta \in \Theta' \cap \Psi} \exp((\frakg_{\beta})_\F) \quad \text{ and } \quad \bar{u}'' \in \prod_{\beta \in \Theta' \setminus \Psi} \exp((\frakg_{\beta})_\F)
	$$
	such that $\bar{u} = \bar{u}' \bar{u}''$. Then $u = u_{\alpha} \bar{u}' \bar{u}''$ or $u =  \bar{u}' \bar{u}'' u_\alpha $ as required.
\end{proof}

Lemmas \ref{lem:aexpXa} and \ref{lem:BCH_consequence} can be used to prove that $A_\F$ normalizes all of $U_\F$. 

\begin{proposition} \label{prop:anainN}
	Let $\Theta \subseteq \Sigma_{>0}$ closed under addition. Then $A_\F$ normalizes $(U_\Theta)_\F$: for all $a \in A_\F, u \in (U_\Theta)_\F \colon aua^{-1} \in (U_\Theta)_\F$.
	In particular $aU_\F a^{-1} = U_\F$ for all $a \in A_\F$.
\end{proposition}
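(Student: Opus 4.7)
The plan is to combine the product decomposition of $(U_\Theta)_\F$ provided by Lemma \ref{lem:BCH_consequence} with the root-space conjugation formula of Lemma \ref{lem:aexpXa}, handled one factor at a time.

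First, enumerate $\Theta = \{\alpha_1, \ldots, \alpha_k\}$ in decreasing order. By Lemma \ref{lem:BCH_consequence}, an arbitrary $u \in (U_\Theta)_\F$ can be written as
$$
u = \exp(X_1) \exp(X_2) \cdots \exp(X_k), \qquad X_i \in (\frakg_{\alpha_i})_\F.
$$
For a given $a \in A_\F$, the relation $aua^{-1} = \prod_{i=1}^k \bigl( a \exp(X_i) a^{-1} \bigr)$ reduces the problem to controlling each factor separately.

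Next, apply Lemma \ref{lem:aexpXa} to each factor, taking the auxiliary element in $\frakg_{2\alpha_i}$ to be zero. This yields
$$
a \exp(X_i) a^{-1} = \exp\bigl( \chi_{\alpha_i}(a) X_i \bigr) \in \exp\bigl((\frakg_{\alpha_i})_\F\bigr),
$$
so the conjugated element $aua^{-1}$ is again a product of exponentials of elements of the root spaces indexed by $\Theta$. Invoking Lemma \ref{lem:BCH_consequence} once more (using that $\Theta$ is closed under addition) identifies this product with an element of $(U_\Theta)_\F$.

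For the final statement, specialize to $\Theta = \Sigma_{>0}$ to obtain $a U_\F a^{-1} \subseteq U_\F$; applying the same argument to $a^{-1} \in A_\F$ gives the reverse inclusion and hence equality. There is no real obstacle here: the argument is a straightforward assembly of the two preceding lemmas. The only mild subtlety is that Lemma \ref{lem:aexpXa} is stated for $\frakg_\alpha \oplus \frakg_{2\alpha}$ jointly, but since Lemma \ref{lem:BCH_consequence} already separates factors by individual root spaces, it suffices to apply the conjugation formula with the $\frakg_{2\alpha}$-component set to zero.
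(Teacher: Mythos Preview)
Your proposal is correct and follows essentially the same approach as the paper: decompose $u$ via Lemma~\ref{lem:BCH_consequence}, conjugate each factor using Lemma~\ref{lem:aexpXa}, and reassemble with Lemma~\ref{lem:BCH_consequence}. Your write-up is in fact slightly more explicit than the paper's in two places---you spell out that Lemma~\ref{lem:aexpXa} is applied with the $\frakg_{2\alpha_i}$-component set to zero, and you justify the equality $aU_\F a^{-1} = U_\F$ by applying the inclusion to $a^{-1}$---but the argument is the same.
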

\begin{proof}
	Let $a \in A_\F$. By Lemma \ref{lem:BCH_consequence}, any $u\in (U_\Theta)_\F$ can be written as $u = u_{\alpha_1} \cdot \ldots \cdot u_{\alpha_k}$ with $u_{\alpha_i} \in (U_{\alpha_i})_\F$ where $\Theta = \{\alpha_1, \ldots , \alpha_k\}$. By Lemma \ref{lem:aexpXa}, $au_{\alpha_i}a^{-1} \in (U_{\alpha_i})_\F$, so
	$$
	aua^{-1} = au_{\alpha_1}a^{1} \cdot \ldots \cdot au_{\alpha_k}a^{-1} \in \prod_{i=1}^k (U_{\alpha_i})_\F = (U_\Theta)_\F,
	$$
	where we used Lemma \ref{lem:BCH_consequence} again.
\end{proof}

\subsection{Jacobson-Morozov Lemma for algebraic groups}\label{sec:Jacobson_Morozov}

On the level of algebraic groups, the Jacobson-Morozov Lemma seems to be folklore. Since we could not find a detailed treatment in the literature we will give its statement and proof here. The classical Jacobson-Morozov Lemma applies to semisimple Lie algebras over fields of characteristic $0$. The following formulation is more general than what we proved in Lemma \ref{lem:JM_basic}.
\begin{proposition}\label{prop:Jacobson_Morozov_algebra}
	\cite[VIII \S 11.2 Prop. 2]{Bou08_789} Let $\frakg$ be a semisimple Lie algebra and $x\in \frakg$ a non-zero nilpotent element. Then there exist $h,y \in \frakg$ such that $(x,y,h)$ is an $\mathfrak{sl}_2$-triplet, meaning that 
	$$
	[h,x] = 2x, \quad [h,y] = -2y, \quad [x,y] = h
	$$
	and hence the Lie algebra generated by $x,h,y$ is isomorphic to $\mathfrak{sl}_2$.  
\end{proposition}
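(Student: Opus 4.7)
My plan is to follow the classical two-step structure of the Jacobson--Morozov argument, leveraging the nondegeneracy of the Killing form on the semisimple algebra $\frakg$ together with the nilpotence of $\operatorname{ad}(x)$.

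\textbf{Step 1: produce $h$ with $[h,x]=2x$.} The strategy is to show $x\in \operatorname{Im}(\operatorname{ad}(x))$; then any $u$ with $[x,u]=x$ yields $h:=-2u$. By Cartan's criterion, semisimplicity of $\frakg$ makes the Killing form $B$ nondegenerate, and invariance of $B$ gives $B([x,a],b)=-B(a,[x,b])$, so $\operatorname{ad}(x)$ is skew-adjoint with respect to $B$. Hence $\operatorname{Im}(\operatorname{ad}(x))^{\perp}=\operatorname{Ker}(\operatorname{ad}(x))$, and $x\in\operatorname{Im}(\operatorname{ad}(x))$ reduces to checking $B(x,z)=0$ for every $z$ centralizing $x$. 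For such $z$, the operators $\operatorname{ad}(x)$ and $\operatorname{ad}(z)$ commute, and since $x$ is nilpotent so is $\operatorname{ad}(x)$, hence $\operatorname{ad}(x)\operatorname{ad}(z)$ is nilpotent, giving $B(x,z)=\operatorname{tr}(\operatorname{ad}(x)\operatorname{ad}(z))=0$.

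\textbf{Step 2: produce $y$ with $[x,y]=h$ and $[h,y]=-2y$.} From $[h,x]=2x$ we get $[\operatorname{ad}(h),\operatorname{ad}(x)]=2\operatorname{ad}(x)$, so $\operatorname{ad}(x)$ raises generalized $\operatorname{ad}(h)$-weights by $2$ in the decomposition $\frakg=\bigoplus_\lambda \frakg_\lambda$ of $\frakg$ into generalized eigenspaces of $\operatorname{ad}(h)$. In particular $h\in\frakg_0$ and $\operatorname{ad}(x)$ restricts to $\frakg_{-2}\to\frakg_0$. The goal is to find $y\in\frakg_{-2}$ with $[x,y]=h$ and then upgrade $y$ from a generalized to a genuine $(-2)$-eigenvector by a polynomial correction in $\operatorname{ad}(h)+2$, which is nilpotent on $\frakg_{-2}$ and does not destroy the bracket relation. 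The existence of such a preimage of $h$ is the classical content of Morozov's contribution: one argues iteratively, starting from an auxiliary element in the correct weight space and using nilpotence of $\operatorname{ad}(x)$ together with the shift property to cancel successive commutator defects; a detailed execution appears in \cite[VIII \S 11.2]{Bou08_789}. Once $(x,h,y)$ is in hand, the three relations force the subalgebra they generate to be a nonzero homomorphic image of $\mathfrak{sl}_2$, hence isomorphic to $\mathfrak{sl}_2$ by simplicity.

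The main obstacle is clearly step 2. Step 1 is a one-line Killing-form duality calculation and is essentially the only place where semisimplicity is used directly. Step 2 is subtler for two reasons: $\operatorname{ad}(h)$ is not a priori semisimple, so one cannot simply decompose into honest eigenspaces and must instead work with generalized weight spaces; and unlike the setting of Lemma~\ref{lem:JM_basic}, where $\fraka$ supplies a canonical semisimple $H$ and the Cartan involution $\theta$ immediately furnishes $Y=\theta(X)$ up to a scalar, here the absence of an ambient $\theta$-stable structure means that the $(-2)$-eigenvector must be produced abstractly via the weight shift and the nilpotence of $\operatorname{ad}(x)$.
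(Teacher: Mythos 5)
The paper offers no proof of this proposition at all: it is imported verbatim from Bourbaki \cite[VIII \S 11.2 Prop.~2]{Bou08_789} and used as a black box in the proof of Proposition~\ref{prop:Jacobson_Morozov_group}, so there is no internal argument to measure yours against. On its own terms, your Step~1 is correct and complete: invariance of the Killing form gives $\operatorname{Im}(\operatorname{ad}x)=\ker(\operatorname{ad}x)^{\perp}$, the trace argument gives $B(x,\ker(\operatorname{ad}x))=0$, and hence some $h$ with $[h,x]=2x$ exists.

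Step~2, however, has a genuine gap, located exactly at the hard point of the theorem. First, to find any $y$ with $[x,y]=h$ you need $h\in\operatorname{Im}(\operatorname{ad}x)$, and Step~1 does not provide this: it shows $x\in\operatorname{Im}(\operatorname{ad}x)$, which produces some $h$, but gives no control placing that particular $h$ back in $\operatorname{Im}(\operatorname{ad}x)$ (equivalently, one would need $x\in\operatorname{Im}\bigl((\operatorname{ad}x)^{2}\bigr)$, a strictly stronger statement requiring its own argument or an induction). Second, the repair mechanism you describe is backwards: you invoke the nilpotence of $\operatorname{ad}(h)+2$ on the generalized weight space $\frakg_{-2}$, but a nilpotent operator is not surjective, so it cannot absorb the defect. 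Concretely, if $[x,y']=h$ then the defect $z:=[h,y']+2y'$ lies in the $\operatorname{ad}(h)$-stable centralizer $\ker(\operatorname{ad}x)$, and one must solve $(\operatorname{ad}(h)+2)w=z$ with $w\in\ker(\operatorname{ad}x)$; the classical Morozov lemma does this by proving that $\operatorname{ad}(h)+2$ acts invertibly on $\ker(\operatorname{ad}x)$ (no eigenvalue $-2$ there), an eigenvalue analysis that is the real content of the result and that your proposal outsources with ``a detailed execution appears in Bourbaki.'' Deferring to Bourbaki is legitimate --- it is exactly what the paper does --- but then the proposal is a citation plus a sketch whose stated mechanism would fail, not a proof; a self-contained argument must supply both the step guaranteeing $h\in\operatorname{Im}(\operatorname{ad}x)$ and Morozov's invertibility lemma.
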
 %sic from Bourbaki
 
We will use \cite[Chapter 7]{Bor} and \cite{PR93} to show the following version of the Jacobson-Morozov Lemma for algebraic groups over an algebraically closed field $\D$ of characteristic $0$. Note that any semisimple linear algebraic group with Lie algebra $\mathfrak{sl}_2$ is isomorphic to either $\operatorname{SL}_2$ or $\operatorname{PGL}_2$ \cite[Corollary 32.2]{Hum1}. 

\begin{proposition}\label{prop:Jacobson_Morozov_group}
	Let $g\in \bG$ be any unipotent element in a semisimple linear algebraic group $\bG$ over an algebraically closed field $\D$ of characteristic $0$. Then there is an algebraic subgroup $\operatorname{SL}_g < \bG$ with Lie algebra $\operatorname{Lie}(\operatorname{SL}_g) \cong \mathfrak{sl}_2$ and $g \in \operatorname{SL}_g$. The element $\log(g) \in \operatorname{Lie}(\bG)$ corresponds to % a multiple of
	$$
	\begin{pmatrix}
		0 & 1 \\ 0 & 0
	\end{pmatrix}\in \mathfrak{sl}_2.
	$$
	Moreover, if $g\in \bG(\F)$ for a field $\F \subseteq \D$, then $\operatorname{SL}_g$ can be assumed to be defined over $\F$. 
\end{proposition}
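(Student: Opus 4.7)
The plan is to reduce the statement to the Lie-algebra version (Proposition \ref{prop:Jacobson_Morozov_algebra}) and then integrate the resulting $\mathfrak{sl}_2$-triplet to an algebraic subgroup, exploiting that $\operatorname{SL}_2$ is simply connected as an algebraic group in characteristic~$0$. Concretely, since $g\in\bG$ is unipotent and $\operatorname{char}(\D)=0$, the element $x:=\log(g)\in\frakg:=\operatorname{Lie}(\bG)$ is well defined and nilpotent, and moreover $\exp(x)=g$. If $g\in\bG(\F)$, then $x\in\frakg(\F)$ because $\log$ is a polynomial map on unipotent matrices with rational coefficients.

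First, I would apply Jacobson-Morozov for Lie algebras to $x$ to obtain $h,y\in\frakg$ forming an $\mathfrak{sl}_2$-triplet $(x,y,h)$. The standard proof of Proposition \ref{prop:Jacobson_Morozov_algebra} goes through over any field of characteristic $0$: one first solves the linear equation $[h,x]=2x$ with $h$ in the image of $\operatorname{ad}(x)$ (possible because $\operatorname{ad}(x)$ is nilpotent and semisimplicity forces $x\in[\frakg,\frakg]$), and then constructs $y$ by an inductive/linear-algebra argument in the $\operatorname{ad}(h)$-weight decomposition. Since each step only involves solving $\F$-linear equations with $\F$-defined input, the triplet can be chosen with $h,y\in\frakg(\F)$. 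This gives an injective Lie algebra homomorphism
\[
\varphi\colon \mathfrak{sl}_2\longrightarrow \frakg, \qquad
\begin{pmatrix}0&1\\0&0\end{pmatrix}\mapsto x,\quad
\begin{pmatrix}0&0\\1&0\end{pmatrix}\mapsto y,\quad
\begin{pmatrix}1&0\\0&-1\end{pmatrix}\mapsto h,
\]
which is defined over $\F$.

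The main step, and the principal obstacle, is to promote $\varphi$ to an algebraic group morphism $\Phi\colon\operatorname{SL}_2\to\bG$. In the analytic setting this is immediate from simple-connectedness of $\operatorname{SL}_2(\C)$, but we need an algebraic statement valid over $\D$ and with the rationality over $\F$ preserved. The relevant fact, which can be extracted from \cite[\S 7]{Bor} (or alternatively from the discussion of isogenies in \cite{PR93}), is that any Lie algebra homomorphism $\mathfrak{sl}_2\to\frakg$ lifts uniquely to an algebraic homomorphism $\operatorname{SL}_2\to\bG$, because $\operatorname{SL}_2$ is algebraically simply connected in characteristic~$0$ and $\mathfrak{sl}_2$ is an algebraic Lie subalgebra of $\frakg$ (semisimple Lie subalgebras of algebraic Lie algebras are automatically algebraic in characteristic~$0$). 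The uniqueness ensures that if $\varphi$ is defined over $\F$, then so is $\Phi$, since applying Galois conjugation to $\Phi$ produces a lift of the same $\varphi$.

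Having obtained $\Phi$, I define $\operatorname{SL}_g:=\Phi(\operatorname{SL}_2)$, which is a closed algebraic subgroup of $\bG$ defined over $\F$, with $\operatorname{Lie}(\operatorname{SL}_g)=\varphi(\mathfrak{sl}_2)\cong\mathfrak{sl}_2$ by injectivity of $\varphi$. Finally, to place $g$ inside $\operatorname{SL}_g$, one uses that $\Phi$ commutes with $\exp$ on unipotent elements, so
\[
\Phi\!\left(\exp\!\begin{pmatrix}0&1\\0&0\end{pmatrix}\right)=\exp(\varphi(e))=\exp(x)=g,
\]
giving $g\in\operatorname{SL}_g$. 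The element $\log(g)=x$ corresponds to $e=\bigl(\begin{smallmatrix}0&1\\0&0\end{smallmatrix}\bigr)$ by construction, completing the proof.
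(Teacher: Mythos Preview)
Your proof is correct, but the integration step differs from the paper's. Both arguments begin identically: set $x=\log(g)$, apply the Lie-algebra Jacobson--Morozov to obtain an $\mathfrak{sl}_2$-triplet $(x,y,h)$ in $\frakg(\F)$, and let $\mathfrak{sl}_g\cong\mathfrak{sl}_2$ be the subalgebra they span. The divergence is in how one passes from $\mathfrak{sl}_g$ to a subgroup. The paper does not invoke simple-connectedness of $\operatorname{SL}_2$; instead it uses Borel's construction $\mathcal{A}(\mathfrak{sl}_g)$ (the smallest algebraic subgroup whose Lie algebra contains $\mathfrak{sl}_g$) and sets $\operatorname{SL}_g:=[\mathcal{A}(\mathfrak{sl}_g),\mathcal{A}(\mathfrak{sl}_g)]$. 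The identities $\operatorname{Lie}(\operatorname{SL}_g)=[\mathfrak{sl}_g,\mathfrak{sl}_g]=\mathfrak{sl}_g$ then follow from \cite[7.8, 7.9]{Bor}, containment of $g$ from \cite[7.1(2)]{Bor} applied to the one-parameter group $t\mapsto\exp(tx)$, and $\F$-rationality from \cite[2.1(b), 2.3]{Bor}. Your route instead lifts the Lie-algebra map $\varphi$ to a group morphism $\Phi\colon\operatorname{SL}_2\to\bG$ and takes its image. This is arguably more conceptual and has the advantage of producing the morphism $\Phi$ directly---something the paper only obtains later, in Proposition~\ref{prop:Jacobson_Morozov_real_closed}. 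The paper's route, on the other hand, is more self-contained given the pinpoint citations to Borel and avoids appealing to the lifting theorem for simply connected groups, which you correctly flag as the step needing the most outside input. One minor remark: your Galois-descent justification for $\F$-rationality of $\Phi$ is a slight detour; the lifting theorem, properly stated over the ground field $\F$, already gives $\Phi$ as an $\F$-morphism without passing through $\D$.
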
 

\begin{proof}
	Since $g \in \bG$ is unipotent, the nilpotent element $x=\log(g)\in \frakg $ exists. 
	By the Jacobson-Morozov Lemma, Proposition \ref{prop:Jacobson_Morozov_algebra}, there are $y,h \in \frakg$ such that $(x,y,h)$ is an $\mathfrak{sl}_2$-triplet. Let $\mathfrak{sl}_g$ denote the subalgebra of $\frakg$ generated by $x,y$ and $h$. Thus, $x \in \operatorname{sl}_g$ corresponds to
	$$
	\begin{pmatrix}
		0 & 1 \\ 0 & 0
	\end{pmatrix}\in \mathfrak{sl}_2
	$$
	under the isomorphism $\mathfrak{sl}_g \cong \mathfrak{sl}_2$.
	We follow \cite[7.1]{Bor} and define
	$$
	\mathcal{A}(\mathfrak{sl}_g) = \bigcap \left\{ \bH \colon \bH \text{ algebraic subgroup of $\bG$ with } \mathfrak{sl}_g \subseteq \operatorname{Lie}(H) \right\},
	$$
	which is a %Zariski
	closed connected algebraic subgroup. Let 
	$$
	\operatorname{SL}_g = [	\mathcal{A}(\mathfrak{sl}_g) , 	\mathcal{A}(\mathfrak{sl}_g) ]
	$$ 
	be its commutator group, which is an algebraic group by \cite[2.3]{Bor} since $\mathcal{A}(\mathfrak{sl}_g)$ is connected. We then use \cite{Bor} to obtain
	\begin{align*}
		\operatorname{Lie}(\operatorname{SL}_g ) &\stackrel{7.8}{=} [ \operatorname{Lie} (\mathcal{A}(\mathfrak{sl}_g)) ,\operatorname{Lie}( \mathcal{A}(\mathfrak{sl}_g) )] \stackrel{7.9}{=} [\mathfrak{sl}_g , \mathfrak{sl}_g] = \mathfrak{sl}_g
	\end{align*}
	where the last equality follows from $\mathfrak{sl}_g \cong \mathfrak{sl}_2$. The map
	\begin{align*}
		\alpha \colon \bG_a & \to \bG \\
		t & \mapsto \exp(tx) = \sum_{n= 0}^\infty \frac{(tx)^n}{n!}
	\end{align*}
	is a polynomial and hence a morphism of algebraic groups. We have $\operatorname{Lie}(\alpha(\bG_a)) = \langle x \rangle \subseteq \mathfrak{sl}_g$ and hence 
	$
	g \in \alpha(\bG_a) \subseteq \operatorname{SL}_g
	$ 
	by \cite[7.1(2)]{Bor}.
	
	If $g \in \bG(\F)$, $\log(g) \in \frakg_\F$, and we may assume $y,h \in \frakg_\F$ as well. Thus $\mathfrak{sl}_g \subseteq \frakg_\F$. Now $\mathcal{A}(\mathfrak{sl}_g)$ is defined over $\F$ as in \cite[2.1(b)]{Bor}. %% M can be taken to be the union of exp(X), exp(X^T) and the diagonal subgroup...
	Then by \cite[2.3]{Bor}, $\operatorname{SL}_g$ is defined over $\F$. 
\end{proof}

Let us return to the case of real closed fields $\K \subseteq \F \cap \R$. Recall from Section \ref{sec:U}, that $U_\F$ has subgroups $(U_\alpha)_\F$ consisting of unipotent elements for $\alpha \in \Sigma_{>0}$ defined as the semialgebraic extensions of $U_\alpha = \exp((\frakg_\alpha)_\K \oplus (\frakg_{2\alpha })_\K)$. The following is another variation of the Jacobson-Morozov Lemma that only works when $\Sigma$ is reduced. 
\begin{proposition}\label{prop:Jacobson_Morozov_real_closed}
	Let $\alpha \in \Sigma$ and assume $\frakg_{2\alpha} = 0$. Let $u \in (U_{\alpha})_\F$. Then there are $ X \in (\frakg_\alpha)_\F$ and $t \in \F$ such that $u=\exp(tX)$. Let $(X,Y,H)$ be the $\mathfrak{sl}_2$-triplet of Lemma \ref{lem:JM_basic}. Then there is a group homomorphism $\varphi_\F \colon \operatorname{SL}(2,\F) \to G_\F$ that is a restriction of a morphism of algebraic groups
	$
	\varphi \colon \operatorname{SL}(2,\D)  \to \bG 
	$
	defined over $\F$ such that $\varphi$ has finite kernel and
	\begin{align*}
		\varphi_\F \begin{pmatrix}
			1 & t \\ 0& 1
		\end{pmatrix} =  u = \exp(tX)  \quad \text{and} \quad  \varphi_\F \begin{pmatrix}
		1 & 0 \\ t & 1
		\end{pmatrix} = \exp(tY).
	\end{align*}
	If $\varphi$ is not injective, then $\ker (\varphi) \cong \Z/2\Z$ and $\varphi$ factors through the isomorphism
	$$
	\operatorname{PGL}(2,\D) := \operatorname{SL(2,\D)}/\ker(\varphi)  \xrightarrow{\sim} \varphi(\operatorname{SL}(2,\D))
	$$
	which is also defined over $\F$. Moreover $\varphi(g\tran) = \varphi(g)\tran$, for any $g \in \operatorname{SL}(2,\F)$.
\end{proposition}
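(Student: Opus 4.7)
The plan is to first normalize $X$ so that the $\mathfrak{sl}_2$-triplet from Lemma \ref{lem:JM_basic} satisfies $Y = X\tran$, and then invoke Proposition \ref{prop:Jacobson_Morozov_group} and integrate the triplet to a morphism of algebraic groups.

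\textbf{Step 1 (Normalized choice of $X$, $t$ and the triplet).} Since $u \in (U_\alpha)_\F = \exp((\frakg_\alpha)_\F)$, write $u = \exp(Z)$ for some $Z \in (\frakg_\alpha)_\F$. Pick $X_0 \in (\frakg_\alpha)_\F \setminus \{0\}$ with $Z \in \F \cdot X_0$ (take $X_0 = Z$ if $Z \neq 0$, otherwise any nonzero element). Positive-definiteness of $B_\theta$ is a first-order statement which holds over $\R$, hence over $\F$ by the transfer principle; thus $B_\theta(X_0, X_0) > 0$. Similarly $\alpha(H_\alpha) = B_\theta(H_\alpha, H_\alpha) > 0$ since $H_\alpha \in \fraka$. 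Because $\F$ is real closed, $c := \sqrt{2/(B_\theta(X_0, X_0)\,\alpha(H_\alpha))}$ lies in $\F$; set $X := c X_0$ and choose $t \in \F$ with $Z = tX$. With this normalization $B_\theta(X, X)\,\alpha(H_\alpha) = 2$, so the explicit formula in the proof of Lemma \ref{lem:JM_basic} yields $Y = -\theta(X) = X\tran$ and $H = [X, Y] \in \fraka$.

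\textbf{Step 2 (Construction of $\varphi$).} Apply Proposition \ref{prop:Jacobson_Morozov_group} to the unipotent element $u \in \bG(\F)$ to obtain an algebraic subgroup $\operatorname{SL}_g \subseteq \bG$ defined over $\F$ with $\operatorname{Lie}(\operatorname{SL}_g) = \mathfrak{sl}_g := \langle X, Y, H \rangle \cong \mathfrak{sl}_2$. By \cite[Corollary 32.2]{Hum1}, $\operatorname{SL}_g$ is $\D$-isomorphic to $\operatorname{SL}_2(\D)$ or $\operatorname{PGL}_2(\D)$. The $\F$-linear map
$$
\mathfrak{sl}_2 \longrightarrow \mathfrak{sl}_g, \quad e \mapsto X,\ f \mapsto Y,\ h \mapsto H
$$
preserves brackets by Lemma \ref{lem:JM_basic}, and since $\operatorname{SL}_2$ is algebraically simply connected, it integrates uniquely to a morphism of algebraic groups $\varphi \colon \operatorname{SL}_2(\D) \to \operatorname{SL}_g \subseteq \bG$. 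The morphism $\varphi$ is defined over $\F$ because its generating Lie algebra morphism is. On the upper and lower unipotent subgroups of $\operatorname{SL}_2$, $\varphi$ is given by the polynomial exponential formulas $\exp(sE) \mapsto \exp(sX)$ and $\exp(sF) \mapsto \exp(sY)$, where $E, F$ denote the standard nilpotents; these specialize over $\F$ to the stated identities.

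\textbf{Step 3 (Kernel and transposition compatibility).} The differential $d\varphi$ at the identity is the Lie algebra isomorphism, hence injective, so $\ker(\varphi)$ is finite and contained in $Z(\operatorname{SL}_2(\D)) = \{\pm I\}$. Thus $\ker(\varphi)$ is either trivial or equals $\{\pm I\}$; in the nontrivial case $\varphi$ factors through $\operatorname{SL}_2(\D)/\{\pm I\} \cong \operatorname{PGL}_2(\D)$ to an isomorphism onto $\operatorname{SL}_g$, defined over $\F$ because $\varphi$ and the quotient map are. For the transposition identity, both $g \mapsto \varphi(g)\tran$ and $g \mapsto \varphi(g\tran)$ are anti-homomorphisms $\operatorname{SL}_2(\F) \to G_\F$, so equality on the generating upper and lower unipotents suffices:
$$
\varphi(\exp(sE)\tran) = \varphi(\exp(sF)) = \exp(sY) = \exp(sX\tran) = \exp(sX)\tran = \varphi(\exp(sE))\tran,
$$
using $Y = X\tran$ from Step 1, and analogously for $\exp(sF)\tran$.

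The main obstacle is justifying that the Lie algebra morphism integrates to a morphism of algebraic groups defined over $\F$, which rests on the algebraic simple-connectedness of $\operatorname{SL}_2$ together with descent. A secondary technical point is the normalization of $X$ forcing $Y = X\tran$, which exploits that $\F$ is real closed (to extract square roots of positive elements).
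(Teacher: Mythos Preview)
Your argument is correct and follows the same overall strategy as the paper (produce an $\mathfrak{sl}_2$-subalgebra over $\F$, lift to a morphism from $\operatorname{SL}_2$, analyze the kernel, verify transposition), but two technical choices differ and are worth flagging. First, your normalization of $X$ so that $Y = X\tran$ is a genuine improvement: the paper simply asserts ``transposition is $\operatorname{d}\!\varphi$-equivariant'', which only holds once $Y = X\tran$, and your Step~1 is exactly what makes that line rigorous. Second, you integrate the Lie algebra map directly via algebraic simple-connectedness of $\operatorname{SL}_2$, whereas the paper first identifies $\operatorname{SL}_{\exp(X)}$ as $\operatorname{SL}_2$ or $\operatorname{PGL}_2$ to get a covering $\varphi$, and then conjugates inside $\operatorname{SL}_2(\D)$ (using that $\mathfrak{sl}_2$ has no outer automorphisms) to align $\operatorname{d}\!\varphi$ with the given triplet; your route is shorter but leans more heavily on the integration/descent fact you acknowledge as the main obstacle. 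One small slip: you apply Proposition~\ref{prop:Jacobson_Morozov_group} to $u$, but $u$ may be the identity (when $t=0$); apply it to $\exp(X)$ instead, as the paper does. In fact your Step~2 does not really need Proposition~\ref{prop:Jacobson_Morozov_group} at all---once you have the $\F$-rational Lie algebra embedding $\mathfrak{sl}_2 \hookrightarrow \frakg$, the integration to $\varphi\colon \operatorname{SL}_2(\D) \to \bG$ already lands in $\bG$, and there is no need to know in advance that its image is some $\operatorname{SL}_g$ with the specified Lie algebra.
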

\begin{proof}
	Let $u,X,t$ as in the statement. We apply Proposition \ref{prop:Jacobson_Morozov_group} to $\exp(X) \in (U_\alpha)_\F$ to obtain an algebraic group $\operatorname{SL}_{\exp(X)}(\D) < \bG$ defined over $\F$ with Lie algebra $\operatorname{sl}_2(\D)$. By \cite[Corollary 32.3]{Hum1}, the only algebraic groups with Lie algebra $\mathfrak{sl}_2(\D)$ are $\operatorname{SL}(2,\D)$ and $\operatorname{PGL}(2,\D) := \operatorname{SL}(2,\D)/Z(\operatorname{SL(2,\D)})$. In both cases we obtain an algebraic homomorphism $\varphi \colon \operatorname{SL}_2(\D) \to \operatorname{SL}_{\exp(X)}$ with finite kernel.
	
	We note that since $X\in (\frakg_\alpha)_\F$, the $\mathfrak{sl}_2$-triplet is described by $(X,Y,H)$ in Lemma \ref{lem:JM_basic} and we have a Lie algebra isomorphism $\mathfrak{sl}_2(\D) \cong \operatorname{Lie}(\operatorname{SL}_{\exp(X)}(\D))$. We note that the two Lie algebra isomorphisms
	 \begin{align*}
	 \varphi \colon \operatorname{SL}_2(\mathbb{D}) % \text{ or } \operatorname{PGL}_2(\mathbb{D}) 
	 	\xrightarrow{\sim} & \  \operatorname{SL}_{\exp(X)}(\mathbb{D}) \\
	 	\mathfrak{sl}_2(\mathbb{D}) \cong & \  \mathfrak{sl}_{\exp(X)}(\mathbb{D}) &&\hspace{-14ex} \cong \mathfrak{sl}_2(\mathbb{D}) \\
	 	& \ X &&\hspace{-14ex} \mapsfrom
	 	\begin{pmatrix}
	 		0 & 1 \\ 0 & 0
	 	\end{pmatrix}         
	 \end{align*}
	 may not coincide. Since $\operatorname{sl}_2(\D)$ does not have any outer automorphisms (\cite[Theorem 14.1]{Hum1} and \cite[Proposition D.40]{FuHa}), the two isomorphisms only differ by $\operatorname{Ad}(g)$ for some $g\in \operatorname{SL}(2,\D)$. Up to conjugation we may therefore assume that $\operatorname{d}\!\varphi \colon \mathfrak{sl}_2(\D) \to \operatorname{Lie}(\operatorname{SL}_{\exp(X)}(\D))$ maps 
	 \begin{align*}
	                  \begin{pmatrix}
	                  	0 & 1 \\ 0 & 0
	                  \end{pmatrix}        \mapsto X ,            \quad
	                  	 	\begin{pmatrix}
	                   		1 & 0 \\ 0 & -1
	                  	 	\end{pmatrix}   \mapsto   H   \quad \text{and} \quad
	                  	 	\begin{pmatrix}
	                  	 		0 & 0 \\ 1 & 0
	                  	 	\end{pmatrix}      \mapsto  Y    .
	 \end{align*}
	 Even though the conjugation may be by an element in $\operatorname{SL}_2(\D)$, the explicit description of $\operatorname{d}\!\varphi$ shows that $\varphi$ is still defined over $\F$. The description of $\exp(tX)$ and $\exp(tY)$ in the statement of the Proposition also follows. Finally, note that transposition is $\operatorname{d}\!\varphi$-equivariant and hence also $\varphi$-equivariant.
 \end{proof} 

Since in the $\mathfrak{sl}_2$-triplet, $H \in \fraka_\F$ we obtain multiplicative one-parameter groups
\begin{align*}
	\F_{>0} & \to A_\F \\
\lambda &\mapsto \varphi_\F\begin{pmatrix}
	\lambda & 0 \\ 0 & \lambda^{-1}
\end{pmatrix} 	
\end{align*}
which satisfy the following property that will be useful later.
\begin{lemma}\label{lem:Jacobson_Morozov_oneparam}
	Let $\alpha \in \Sigma$ such that $(\frakg_{2\alpha})_\F = 0$. Let $u\in (U_\alpha)_\F$ and $\varphi \colon \operatorname{SL}(2,\D) \to \bG$ as in Proposition \ref{prop:Jacobson_Morozov_real_closed}. For every $\lambda \in \F_{>0}$
	$$
	\chi_\alpha \left(\varphi_\F \begin{pmatrix}
		\lambda & 0 \\ 0 & \lambda^{-1}
	\end{pmatrix} \right) = \lambda^2.
	$$
\end{lemma}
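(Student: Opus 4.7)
The plan is to reduce the identity to the elementary matrix calculation
\[
\begin{pmatrix} \lambda & 0 \\ 0 & \lambda^{-1} \end{pmatrix} \begin{pmatrix} 1 & t \\ 0 & 1 \end{pmatrix} \begin{pmatrix} \lambda^{-1} & 0 \\ 0 & \lambda \end{pmatrix} = \begin{pmatrix} 1 & \lambda^2 t \\ 0 & 1 \end{pmatrix}
\]
inside $\operatorname{SL}(2,\F)$ and then transport it across $\varphi_\F$ using Lemma \ref{lem:aexpXa}, which governs how $A_\F$ conjugates the root subgroups.

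First I would set $a := \varphi_\F\bigl(\operatorname{diag}(\lambda,\lambda^{-1})\bigr)$ and apply $\varphi_\F$ to the matrix identity above. By Proposition \ref{prop:Jacobson_Morozov_real_closed}, $\varphi_\F\bigl(\begin{smallmatrix}1 & s \\ 0 & 1\end{smallmatrix}\bigr) = \exp(sX)$ for all $s \in \F$, so this yields
\[
a\,\exp(tX)\,a^{-1} \;=\; \exp\!\bigl(\lambda^2 t\,X\bigr)
\]
for every $t \in \F$.

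Next I would verify that $a$ lies in $A_\F$, so that Lemma \ref{lem:aexpXa} applies. The image of the diagonal subgroup of $\operatorname{SL}(2,\D)$ under $\varphi$ is a $1$-dimensional torus in $\bG$ whose Lie algebra is spanned by $H = [X,Y] \in \fraka$ (from the triplet in Lemma \ref{lem:JM_basic}); hence it is contained in the maximal $\F$-split torus $\bS(\F)$, and the curve $\lambda \mapsto a$ is semialgebraically connected through the identity (take $\lambda = 1$), so it lands in the identity component $A_\F$. Since $(\frakg_{2\alpha})_\F = 0$ by assumption, Lemma \ref{lem:aexpXa} specializes to
\[
a\,\exp(tX)\,a^{-1} \;=\; \exp\!\bigl(\chi_\alpha(a)\,t\,X\bigr) .
\]

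Comparing the two expressions, the exponential map $\exp \colon (\frakg_\alpha)_\F \to (U_\alpha)_\F$ is a polynomial bijection on this nilpotent root space, hence injective, which forces $\chi_\alpha(a)\,tX = \lambda^2 tX$ in $(\frakg_\alpha)_\F$. Picking any $t \neq 0$ and using $X \neq 0$ yields $\chi_\alpha(a) = \lambda^2$, as claimed. The only mildly subtle point is the placement of $a$ in $A_\F$ rather than merely in $\bS(\F)$; everything else is a direct application of the already-established results.
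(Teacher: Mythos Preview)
Your proof is correct and follows essentially the same idea as the paper's: transport the conjugation computation from $\operatorname{SL}(2,\F)$ through $\varphi_\F$ and read off $\chi_\alpha(a)$. The only cosmetic difference is that the paper works directly at the Lie-algebra level, computing $\operatorname{Ad}(a)X = \operatorname{d}\!\varphi_\F\bigl(\operatorname{Ad}(\operatorname{diag}(\lambda,\lambda^{-1}))\begin{psmallmatrix}0&1\\0&0\end{psmallmatrix}\bigr) = \lambda^2 X$ via the naturality of $\operatorname{Ad}$, whereas you exponentiate and invoke Lemma~\ref{lem:aexpXa}; both arrive at $\chi_\alpha(a)=\lambda^2$ the same way.
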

\begin{proof}
	We note that for all $g \in \operatorname{SL}(2,\F)$, the diagrams 
	$$
	\begin{tikzcd}
		{\mathfrak{sl}(2,\mathbb{F})} \arrow[r, "\operatorname{Ad}(g)"] \arrow[d, "\operatorname{d}\!\varphi_\F"] & {\mathfrak{sl}(2,\mathbb{F})} \arrow[d, "\operatorname{d}\!\varphi_\F"] &   {\operatorname{SL}(2,\mathbb{F})} \arrow[d, "\varphi_\F"] \arrow[r, "c_g"] & {\operatorname{SL}(2,\mathbb{F})} \arrow[d, "\varphi_\F"] \\
		\mathfrak{g}_{\mathbb{F}} \arrow[r, "\operatorname{Ad}(\varphi_\F(g))"]                                   & \mathfrak{g}_{\mathbb{F}}                                              & G_{\mathbb{F}} \arrow[r, "c_{\varphi_\F(g)}"]                              & G_{\mathbb{F}}                                        
	\end{tikzcd}
	$$
	commute. For
	$$
	a = \varphi_\F \begin{pmatrix}
		\lambda & 0 \\ 0 & \lambda^{-1}
	\end{pmatrix} \quad \text{ and } \quad X = \operatorname{d}\! \varphi_\F \begin{pmatrix}
	0 & 1 \\ 0 &0 
	\end{pmatrix} \in (\frakg_{\alpha})_\F
	$$
	we then have
	\begin{align*}
		\operatorname{Ad}\left(a \right) X &= \operatorname{d}\!\varphi_\F \left(\operatorname{Ad}\begin{pmatrix}
		\lambda & 0 \\ 0 & \lambda^{-1}
		\end{pmatrix} \begin{pmatrix}
		0 & 1 \\ 0 & 0
		\end{pmatrix}
		\right) = \operatorname{d}\!\varphi_\F \begin{pmatrix}
			0 & \lambda^{2} \\ 0 & 0
		\end{pmatrix} = \lambda^2 X
	\end{align*}
	and since $\chi_\alpha(a)$ is defined by $\operatorname{Ad}(a)X = \chi_{\alpha}(a)X$ we have $\chi_\alpha(a) = \lambda^{2}$.
\end{proof}

\begin{lemma}\label{lem:Jacobson_Morozov_m}
	Let $\alpha \in \Sigma$ such that $(\mathfrak{g}_{2\alpha})_\F = 0$. Let $u \in (U_\alpha)_\F $ and $X\in (\frakg_\alpha)_\F$, $t\in \F$ and $\varphi \colon \operatorname{SL}(2,\D) \to \bG$ as in Proposition \ref{prop:Jacobson_Morozov_real_closed}, so that $u = \exp(tX)$. The element
	$$
	m(u) := \varphi_\F \begin{pmatrix}
		0 & t \\ -1/t & 0
	\end{pmatrix} \in G_\F
	$$ 
	is contained in $\operatorname{Nor}_{G_\F}(A_\F)$ and 
	$$
	\chi_\alpha(m(u)\cdot a\cdot m(u)^{-1}) = \chi_\alpha(a)^{-1}
	$$ 
	for any $a \in A_\F$.
	
	When $t=1$, even $m(u) \in N_\F = \operatorname{Nor}_{K_\F}(A_\F)$ and 
	$$
	m(u) \cdot (U_\alpha)_\F \cdot  m(u)^{-1} = (U_{-\alpha})_\F.
	$$
\end{lemma}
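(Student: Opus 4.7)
The strategy is to push every computation back to $\operatorname{SL}(2,\F)$ via the homomorphism $\varphi_\F$ of Proposition \ref{prop:Jacobson_Morozov_real_closed}. Writing $w_t := \begin{pmatrix} 0 & t \\ -1/t & 0 \end{pmatrix}$, so that $m(u) = \varphi_\F(w_t)$, a direct matrix calculation gives $w_t\,\operatorname{diag}(\lambda,\lambda^{-1})\,w_t^{-1} = \operatorname{diag}(\lambda^{-1},\lambda)$, and the explicit factorization $w_t = \exp(tE)\exp(-F/t)\exp(tE)$, with $E = \begin{pmatrix} 0 & 1 \\ 0 & 0 \end{pmatrix}$ and $F = \begin{pmatrix} 0 & 0 \\ 1 & 0 \end{pmatrix}$, yields $m(u) = \exp(tX)\exp(-Y/t)\exp(tX)$.

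Next, I would determine $\operatorname{Ad}(m(u))|_{\fraka_\F}$ using the decomposition $\fraka_\F = \F H \oplus \ker(\alpha)$. For $H' \in \ker(\alpha)$, the identities $[H',X] = \alpha(H')X = 0$, $[H',Y] = -\alpha(H')Y = 0$ and $[H',H]=0$ (since $\fraka_\F$ is abelian) show that $\operatorname{ad}(H')$ vanishes on all of $\mathfrak{sl}_g$; exponentiating gives $\operatorname{Ad}(\exp(tX))H' = \operatorname{Ad}(\exp(-Y/t))H' = H'$, and substituting the factorization of $m(u)$ above yields $\operatorname{Ad}(m(u))H' = H'$. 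For $H$ itself, the $\varphi_\F$-equivariance of $\operatorname{Ad}$ on $\mathfrak{sl}_g$ combined with the $\operatorname{SL}_2$-computation gives $\operatorname{Ad}(m(u))H = -H$. Thus $\operatorname{Ad}(m(u))$ is a linear involution of $\fraka_\F$ satisfying $\alpha \circ \operatorname{Ad}(m(u))|_{\fraka_\F} = -\alpha$. Since $A_\F = \exp(\fraka_\F)$ (the identity component of an $\F$-split torus), $m(u)A_\F m(u)^{-1} = A_\F$, proving $m(u) \in \operatorname{Nor}_{G_\F}(A_\F)$; and for $a = \exp(H') \in A_\F$, Lemma \ref{lem:char} then gives $\chi_\alpha(m(u)am(u)^{-1}) = e^{\alpha(\operatorname{Ad}(m(u))H')} = e^{-\alpha(H')} = \chi_\alpha(a)^{-1}$.

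When $t=1$, the matrix $w_1 \in \operatorname{SO}(2,\F)$ satisfies $w_1^{\top}w_1 = I$, and the transposition-compatibility $\varphi_\F(g^{\top}) = \varphi_\F(g)^{\top}$ of Proposition \ref{prop:Jacobson_Morozov_real_closed} yields $m(u)^{\top}m(u) = I$. Since $\bG$ is semisimple, $\bG \subseteq \operatorname{SL}_n$, so $m(u) \in G_\F \subseteq \operatorname{SL}_n(\F)$ and hence $m(u) \in \operatorname{SO}_n(\F) \cap G_\F = K_\F$; combined with the preceding paragraph, $m(u) \in N_\F$. For the final claim, applying $\operatorname{ad}(H')\circ\operatorname{Ad}(m(u)) = \operatorname{Ad}(m(u))\circ\operatorname{ad}(\operatorname{Ad}(m(u))^{-1}H')$, together with the fact that $\operatorname{Ad}(m(u))|_{\fraka_\F}$ is its own inverse and satisfies $\alpha\circ\operatorname{Ad}(m(u))|_{\fraka_\F} = -\alpha$, shows that $\operatorname{Ad}(m(u))$ maps $(\frakg_\alpha)_\F$ into $(\frakg_{-\alpha})_\F$, and by symmetry it is a bijection between these subspaces. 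The hypothesis $(\frakg_{2\alpha})_\F = 0$ together with $\theta(\frakg_\beta) = \frakg_{-\beta}$ from Proposition \ref{prop:root_decomp} forces $(\frakg_{-2\alpha})_\F = 0$, so $(U_{\pm\alpha})_\F = \exp((\frakg_{\pm\alpha})_\F)$, and exponentiating delivers the identity $m(u)(U_\alpha)_\F m(u)^{-1} = (U_{-\alpha})_\F$.

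The main obstacle is to show that $m(u)$ normalizes \emph{all} of $A_\F$, not merely the one-parameter subgroup $\varphi_\F(\operatorname{diag}(\F^{\times},\F^{\times}))$ lifted from the $\operatorname{SL}_2$-image. This is resolved cleanly by the decomposition $\fraka_\F = \F H \oplus \ker(\alpha)$ together with the observation that $\ker(\alpha)$ centralizes $\mathfrak{sl}_g$, so that $\operatorname{Ad}(m(u))$ acts trivially on $\ker(\alpha)$ for essentially free, while its behavior on $\F H$ is read off directly from the explicit $\operatorname{SL}_2$-calculation.
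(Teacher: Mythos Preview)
Your approach differs from the paper's: both use a factorization of $w_t$ into unipotent factors, but you compute $\operatorname{Ad}(m(u))$ on the Lie algebra via the decomposition $\fraka_\F = \F H \oplus \ker\alpha$, whereas the paper works entirely at the group level, writing any $a \in A_\F$ as $a = a_0\, a_\perp$ with $a_\perp = \varphi_\F\bigl(\operatorname{diag}(\sqrt{\chi_\alpha(a)}, \sqrt{\chi_\alpha(a)}^{-1})\bigr)$ and $\chi_\alpha(a_0) = 1$, then using Lemma~\ref{lem:aexpXa} to show that $a_0$ commutes with $m(u)$ while $m(u)\, a_\perp\, m(u)^{-1} = a_\perp^{-1}$ by the $\operatorname{SL}_2$-computation. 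Your Lie-algebra computation is correct and conceptually clean; the group-level decomposition in the paper is the multiplicative analogue of your additive one.

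There is, however, a gap in your passage from the Lie algebra to the group. You write ``$A_\F = \exp(\fraka_\F)$'' and invoke Lemma~\ref{lem:char} in the form $\chi_\alpha(\exp(H')) = e^{\alpha(H')}$ over $\F$; but over a general real closed field there is no exponential map $\fraka_\F \to A_\F$, and Lemma~\ref{lem:char} is proved only over $\R$ (producing an \emph{algebraic} character, whose $\F$-extension is what you then call $\chi_\alpha$). This is precisely why the paper stays at the group level. What you have rigorously established is that $\operatorname{Ad}(m(u))$ preserves $\fraka_\F$ and acts there as the reflection $s_\alpha$ (your exponentials of nilpotent $X,Y$ are fine over $\F$). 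To close the gap you can argue either algebraically --- in characteristic $0$ a connected algebraic subgroup is determined by its Lie algebra, so $m(u)$ normalizes $\bS$ and hence $A_\F$, and an algebraic character of a torus is determined by its differential, giving $\chi_\alpha \circ c_{m(u)} = \chi_{-\alpha} = \chi_\alpha^{-1}$ --- or by transfer: the sentence ``every $g\in G$ with $\operatorname{Ad}(g)|_{\fraka} = s_\alpha$ normalizes $A$ and satisfies $\chi_\alpha(gag^{-1}) = \chi_\alpha(a)^{-1}$ for all $a\in A$'' is first-order with parameters in $\K$, holds over $\R$ by your exponential argument, and therefore holds over $\F$.
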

\begin{proof}
	Let $a\in A_\F$ and
	$$
	a_\perp := \varphi_\F \begin{pmatrix}
		\sqrt{\chi_\alpha(a)} & 0 \\ 0 & \sqrt{\chi_\alpha(a)}^{-1}
	\end{pmatrix},
	$$ 
	then $a_0:= aa_\perp^{-1}$ satisfies
	$$
	\chi_\alpha (a_0) = \chi_\alpha(a) \chi_\alpha(a_\perp)^{-1} = \chi_\alpha(a) \frac{1}{\chi_\alpha(a)} = 1 
	$$
	by Lemma \ref{lem:Jacobson_Morozov_oneparam}. For any $u' \in \exp((\frakg_{\alpha})_\F)$ or $u'\in \exp((\frakg_{-\alpha})_\F)$, $a_0u'=u'a_0$ by Lemma \ref{lem:aexpXa}. We note that
	\begin{align*}
			m(u) &= \varphi_\F \begin{pmatrix}
			0 & t \\ -1/t & 0
		\end{pmatrix} =  \varphi_\F \left( \begin{pmatrix}
			1 & 0 \\ -1/t & 1
		\end{pmatrix}  \begin{pmatrix}
			1 & t \\ 0 & 1
		\end{pmatrix} \begin{pmatrix}
			1 & 0 \\ -1/t & 1
		\end{pmatrix}\right) \\
		& \in \exp((\frakg_{-\alpha})_\F) \cdot u \cdot \exp((\frakg_{-\alpha})_\F)
	\end{align*}
	which implies $a_0m(u) = m(u)a_0$. Now
	\begin{align*}
		m(u) \cdot a \cdot m(u)^{-1} &= m(u) \cdot a_0 \cdot a_\perp \cdot m(u)^{-1} = a_0 \cdot m(u) \cdot  a_\perp \cdot m(u)^{-1} \\
		&= a_0 \cdot \varphi_\F \left(\begin{pmatrix}
			0 & t \\ -1/t & 0
		\end{pmatrix}\begin{pmatrix}
		\sqrt{\chi_\alpha(a)} & 0 \\ 0 & \sqrt{\chi_\alpha(a)}^{-1}
		\end{pmatrix}\begin{pmatrix}
		0 & -t \\ 1/t & 0
		\end{pmatrix}\right) \\
		&= a_0\cdot \varphi_\F \begin{pmatrix}
		\sqrt{\chi_\alpha(a)}^{-1} & 0 \\ 0 & \sqrt{\chi_\alpha(a)}
		\end{pmatrix} = a_0 a_\perp^{-1} \in A_\F
	\end{align*}
	and thus $m(u) = \operatorname{Nor}_{G_\F}(A_\F)$. We see directly that 
	$$
	\chi_\alpha (m(u) \cdot a\cdot m(u)^{-1}) = \chi_\alpha(a_0 \cdot a_\perp^{-1}) = \chi_\alpha(a_0) \cdot \chi_\alpha(a_\perp)^{-1} = \chi_\alpha(a)^{-1}.
	$$
	Now if $t=1$ we can use that $\varphi$ preserves transposition by Proposition \ref{prop:Jacobson_Morozov_real_closed}, to show
	\begin{align*}
		m(u) \cdot m(u)\tran &= \varphi_\F \begin{pmatrix}
			0 & 1 \\ -1 & 0
		\end{pmatrix} \left( \varphi_\F \begin{pmatrix}
			0 & 1 \\ -1 & 0
		\end{pmatrix}\right)\tran \\
		& = \varphi_\F \left( \begin{pmatrix}
			0 & 1 \\ -1 & 0
		\end{pmatrix}   \begin{pmatrix}
			0 & -1 \\ 1 & 0
		\end{pmatrix} \right)= \varphi_\F\begin{pmatrix}
			1 & 0 \\ 0 & 1
		\end{pmatrix} = \operatorname{Id}
	\end{align*}
	and hence $m(u)\in K_\F$, so $m(u) \in N_{\F} = \operatorname{Nor}_{K_\F}(A_\F)$. Thus $m(u)$ is a representative of an element $w=[m(u)]$ of the spherical Weyl group $W_s = N_\F/M_\F$. While $u\in (U_{\alpha})_\F$,
	\begin{align*}
		m(u)\cdot u\cdot m(u)^{-1} &= \varphi_\F \left( \begin{pmatrix}
			0 & 1 \\ -1 & 0
		\end{pmatrix}\begin{pmatrix}
			1 & t \\ 0 & 1
		\end{pmatrix} \begin{pmatrix}
			0 & -1 \\ 1 & 0
		\end{pmatrix} \right) \\
		&= \varphi_\F \begin{pmatrix}
			1 & 0 \\ -t & 1
		\end{pmatrix} \in (U_{-\alpha})_\F,
	\end{align*}
	and thus we have $w(\alpha) = -\alpha$. Then 
$$
m(u) \cdot (U_\alpha)_\F \cdot  m(u)^{-1} = (U_{w(\alpha)})_\F =  (U_{-\alpha})_\F.
$$
\end{proof}

\subsection{Rank 1 subgroups}\label{sec:rank1}
Let $\alpha \in \Sigma$. In this section we investigate the group generated by the algebraic groups $\bU_\alpha$ and $\bU_{-\alpha}$. When $\dim(\bU_\alpha) = 1$, then this group is given by the image of the Jacobson-Morozov-morphism $\varphi \colon \operatorname{SL}(2,\D) \to \bG$ from Proposition \ref{prop:Jacobson_Morozov_real_closed}. In general, when $\dim(\bU_\alpha)$ is not $1$, the group generated is larger than the image of $\varphi$, but is still rank $1$. 

\begin{theorem}\label{thm:levi_group}
	Let $\alpha \in \Sigma$. Then there is a semisimple self-adjoint linear algebraic group $\bL_{\pm \alpha}$ defined over $\K$ such that
	\begin{enumerate}
		\item [(i)] $\operatorname{Lie}(\bL_{\pm \alpha}) = ( \frakg_\alpha \oplus \frakg_{2\alpha} ) \oplus (\frakg_{-\alpha} \oplus \frakg_{-2\alpha} ) \oplus ( [\frakg_\alpha , \frakg_{-\alpha}] + [\frakg_{2\alpha}, \frakg_{-2\alpha}] )$.
		\item [(ii)] $\operatorname{Rank}_\R(\bL_{\pm \alpha}) = \operatorname{Rank}_\F(\bL_{\pm \alpha}) = 1$.
	\end{enumerate}
\end{theorem}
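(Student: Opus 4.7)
The plan is to realize $\bL_{\pm \alpha}$ as the algebraic hull of the Lie subalgebra $\frakl$ studied in Lemma \ref{lem:levi_algebra}, following the same Borel-style construction used in the proof of Proposition \ref{prop:Jacobson_Morozov_group}, and then to deduce the rank statement from Lemma \ref{lem:levi_algebra} together with Theorem \ref{thm:split_tori}.

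First I would check that
$$
\frakl = (\frakg_\alpha \oplus \frakg_{2\alpha}) \oplus (\frakg_{-\alpha} \oplus \frakg_{-2\alpha}) \oplus \bigl([\frakg_\alpha, \frakg_{-\alpha}] + [\frakg_{2\alpha}, \frakg_{-2\alpha}]\bigr)
$$
is a Lie subalgebra of $\frakg$ defined over $\K$: the relations $[\frakg_\beta, \frakg_\gamma] \subseteq \frakg_{\beta + \gamma}$ from Proposition \ref{prop:root_decomp}(i), together with $[\frakg_0, \frakg_{\pm\alpha}] \subseteq \frakg_{\pm\alpha}$ for the toral part, give closure under bracket; and since $\bS$ is $\K$-split by Theorem \ref{thm:split_tori}, each root space is $\K$-defined as an $\operatorname{Ad}(\bS)$-eigenspace for characters defined over $\K$. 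I would then set
$$
\bL_{\pm \alpha} := \mathcal{A}(\frakl) = \bigcap \left\{ \bH \leq \bG \colon \bH \text{ closed algebraic with } \frakl \subseteq \operatorname{Lie}(\bH) \right\},
$$
a closed connected algebraic subgroup of $\bG$ by \cite[7.1]{Bor}, defined over $\K$ because $\frakl$ is. By Lemma \ref{lem:levi_algebra} the subalgebra $\frakl$ is semisimple, hence algebraic in the sense of Chevalley, so (exactly as in the proof of Proposition \ref{prop:Jacobson_Morozov_group}) one obtains $\operatorname{Lie}(\bL_{\pm \alpha}) = [\operatorname{Lie}(\mathcal{A}(\frakl)), \operatorname{Lie}(\mathcal{A}(\frakl))] = [\frakl, \frakl] = \frakl$, proving (i). Semisimplicity of $\bL_{\pm\alpha}$ is then automatic from semisimplicity of its Lie algebra.

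For self-adjointness, I would observe that transposition $\sigma \colon g \mapsto g\tran$ stabilizes $\bG$ (as $\bG$ is self-adjoint) and has differential $-\theta$ on $\frakg$. Because $\theta(\frakg_\beta) = \frakg_{-\beta}$ by Proposition \ref{prop:root_decomp}(ii) and $\theta$ is a Lie algebra automorphism, $\frakl$ is $\theta$-stable and therefore $\sigma$-stable. Hence $\sigma(\bL_{\pm \alpha})$ is a connected closed algebraic subgroup of $\bG$ with the same Lie algebra $\frakl$ as $\bL_{\pm\alpha}$, forcing $\sigma(\bL_{\pm \alpha}) = \bL_{\pm \alpha}$.

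For (ii), Lemma \ref{lem:levi_algebra} identifies $\fraka \cap \frakl = \langle [X, \theta(X)] \rangle$ as a maximal abelian subspace of the symmetric part of the Cartan decomposition of $\frakl_\R$, so $r_\R(\frakl_\R) = 1$ and hence $\operatorname{Rank}_\R(\bL_{\pm \alpha}) = 1$ by Lemma \ref{lem:basic_facts}(1). Applying Theorem \ref{thm:split_tori} to the semisimple self-adjoint $\K$-group $\bL_{\pm \alpha}$, first with the pair $(\K, \R)$ and then with $(\K, \F)$ of real closed fields inside $\R \cap \F$, yields $\operatorname{Rank}_\R(\bL_{\pm \alpha}) = \operatorname{Rank}_\K(\bL_{\pm \alpha}) = \operatorname{Rank}_\F(\bL_{\pm \alpha}) = 1$. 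The delicate step is the identification $\operatorname{Lie}(\mathcal{A}(\frakl)) = \frakl$, which relies on semisimple Lie subalgebras being algebraic (Chevalley); this is the same black box tacitly invoked in the proof of Proposition \ref{prop:Jacobson_Morozov_group} and is the principal point that needs care.
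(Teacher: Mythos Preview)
Your approach is essentially the same as the paper's, with one small discrepancy worth flagging. The paper defines $\bL_{\pm\alpha} := [\mathcal{A}(\frakl),\mathcal{A}(\frakl)]$ (the commutator subgroup), and then the chain
\[
\operatorname{Lie}(\bL_{\pm\alpha}) \stackrel{7.8}{=} [\operatorname{Lie}(\mathcal{A}(\frakl)),\operatorname{Lie}(\mathcal{A}(\frakl))] \stackrel{7.9}{=} [\frakl,\frakl] = \frakl
\]
follows from \cite[7.8, 7.9]{Bor} without ever needing to know that $\frakl$ itself is algebraic. You instead set $\bL_{\pm\alpha} := \mathcal{A}(\frakl)$ directly, which is fine too---since $\frakl$ is semisimple it is perfect, hence algebraic by Chevalley, so $\operatorname{Lie}(\mathcal{A}(\frakl)) = \frakl$ and the two definitions agree. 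But your displayed equality $\operatorname{Lie}(\bL_{\pm\alpha}) = [\operatorname{Lie}(\mathcal{A}(\frakl)),\operatorname{Lie}(\mathcal{A}(\frakl))]$ is not the right first step for \emph{your} definition; you should simply write $\operatorname{Lie}(\mathcal{A}(\frakl)) = \frakl$ and cite Chevalley. Also, the proof of Proposition~\ref{prop:Jacobson_Morozov_group} does \emph{not} tacitly invoke Chevalley as you suggest: it, too, passes to the commutator and uses Borel 7.8--7.9. The self-adjointness and rank arguments match the paper's exactly.
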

\begin{proof}
    We consider the semisimple $\D$-Lie algebra 
    $$
    \frakl :=  ( \frakg_\alpha \oplus \frakg_{2\alpha} ) \oplus (\frakg_{-\alpha} \oplus \frakg_{-2\alpha} ) \oplus ( [\frakg_\alpha , \frakg_{-\alpha}] + [\frakg_{2\alpha}, \frakg_{-2\alpha}] ).
    $$
    Similar to the proof of Proposition \ref{prop:Jacobson_Morozov_group} we follow Borel \cite[7.1]{Bor} by defining the connected normal algebraic subgroup
    $$
    \mathcal{A}(\frakl) = \bigcap \left\{ \bH \colon \bH<\bG \text{ is an algebraic subgroup } \frakl \subseteq \operatorname{Lie}(\bH) \right\}
    $$
    of $\bG$. We set $\bL_{\pm \alpha} := [\mathcal{A}(\frakl), \mathcal{A}(\frakl)]$. Then (i) follows by
    $$
    \operatorname{Lie}(\bL_{\pm \alpha}) = [\mathcal{A}(\frakl) , \mathcal{A}(\frakl)] = [\frakl , \frakl] = \frakl ,
    $$
    where we used that $\frakl$ is semisimple in the step $[\frakl, \frakl] = \frakl$. The algebraic group $\mathcal{A}(\frakl)$ is defined over $\K$ by \cite[2.1(b)]{Bor}. Thus $\bL_{\pm \alpha}$ is defined over $\K$ and connected by \cite[2.3]{Bor}. Since $\frakl$ is semisimple, so is $\bL_{\pm \alpha}$. Since $\theta(X) = -X\tran$ and $\theta(\frakg_\alpha) = \frakg_{-\alpha}$ and $\theta(\frakg_{2\alpha}) = \frakg_{-2\alpha}$ by Proposition \ref{prop:root_decomp}, $\theta([\frakg_\alpha, \frakg_{-\alpha}]) = [\frakg_\alpha, \frakg_{-\alpha}]$ and $\theta([\frakg_{2\alpha}, \frakg_{-2\alpha}]) = [\frakg_{2\alpha}, \frakg_{-2\alpha}]$, and hence $\frakl$ and $\bL_{\pm \alpha}$ are self-adjoint. 
    
    For (ii), Lemma \ref{lem:levi_algebra} tells us that the real rank of $\frakl_\R$ is $1$ and hence $\operatorname{Rank}_\R(\bL_{\pm \alpha}) = 1$. By the theorem on split tori, Theorem \ref{thm:split_tori}, we then have $\operatorname{Rank}_\R(\bL_{\pm\alpha}) = \operatorname{Rank}_\F(\bL_{\pm \alpha}) = 1$. 
\end{proof}
We now consider the semialgebraic subgroup $L_{\pm \alpha} := \bL_{\pm \alpha} \cap G$ of $G$.
\begin{lemma}\label{lem:levi_fixes_A}
  $(L_{\pm \alpha})_\F \subseteq \operatorname{Cen}_{G_\F}(\left\{ a \in A_\F \colon \chi_\alpha (a) = 1 \right\})$.
\end{lemma}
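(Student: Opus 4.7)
The plan is to use transfer to reduce to $\R$ and then apply a standard algebraic-group argument. Let $H_\F := \{a \in A_\F \colon \chi_\alpha(a) = 1\}$, so the claim reads $(L_{\pm\alpha})_\F \subseteq \operatorname{Cen}_{G_\F}(H_\F)$. Since $L_{\pm\alpha}$, $A$, and the character $\chi_\alpha$ are all defined over $\K$, the statement
$$
\forall g \in L_{\pm\alpha}, \ \forall a \in A \colon \chi_\alpha(a) = 1 \to ga = ag
$$
is a first-order sentence with parameters in $\K$, so by Theorem \ref{thm:logic} it suffices to verify it for $\F = \R$.

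Over $\R$, I fix $a \in A_\R$ with $\chi_\alpha(a) = 1$. The first step is to show that $\operatorname{Ad}(a)$ acts as the identity on $\frakl_\R$. Using the description in Theorem \ref{thm:levi_group}(i) and the fact that $\chi_{k\alpha}(a) = \chi_\alpha(a)^k$, the operator $\operatorname{Ad}(a)$ acts on $\frakg_{\pm\alpha}$ and $\frakg_{\pm 2\alpha}$ as multiplication by $\chi_\alpha(a)^{\pm 1}$ and $\chi_\alpha(a)^{\pm 2}$, all equal to $1$; on the remaining summand $[\frakg_\alpha, \frakg_{-\alpha}] + [\frakg_{2\alpha}, \frakg_{-2\alpha}] \subseteq \frakg_0$ it acts trivially because every element of $\bS$ fixes $\frakg_0$ pointwise.

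The second step is to lift this from the Lie algebra to the group. The fixed-point subgroup $\bL_{\pm\alpha}^a := \{g \in \bL_{\pm\alpha} \colon aga^{-1} = g\}$ is a closed algebraic subgroup whose Lie algebra is the $1$-eigenspace of $\operatorname{Ad}(a)$ on $\frakl$, which by the previous step is all of $\frakl$. Since $\bL_{\pm\alpha}$ is connected by Theorem \ref{thm:levi_group} and we are in characteristic $0$, a closed subgroup with full Lie algebra must equal the ambient group, so $\bL_{\pm\alpha}^a = \bL_{\pm\alpha}$. Passing to $\R$-points yields $(L_{\pm\alpha})_\R \subseteq \bL_{\pm\alpha}(\R) \subseteq \operatorname{Cen}_{\bG(\R)}(a)$, which is the desired containment over $\R$; transfer then concludes the proof.

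I do not anticipate a genuine obstacle here, since all the ingredients are already in place. The one point worth spelling out is the standard fact that in characteristic $0$ a closed subgroup of a connected algebraic group is determined by its Lie algebra; as an alternative, one can argue directly that the conjugation morphism $\operatorname{Int}(a) \colon \bL_{\pm\alpha} \to \bL_{\pm\alpha}$ is a morphism of a connected algebraic group in characteristic $0$ whose differential at $e$ is the identity, and must therefore coincide with the identity morphism.
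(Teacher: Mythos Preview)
Your proof is correct and follows essentially the same approach as the paper: reduce to $\R$ by transfer, show that $\operatorname{Ad}(a)$ is the identity on $\frakl_\R$ using the character values on root spaces, and then use connectedness of $\bL_{\pm\alpha}$ in characteristic $0$ to lift this to the group. The paper computes $a\exp(X)a^{-1}=\exp(X)$ directly via Lemma~\ref{lem:aexpXa} rather than arguing with eigenvalues of $\operatorname{Ad}(a)$, and phrases the lift as ``$c_a^{-1}(\operatorname{Id})$ is a closed algebraic set meeting every connected component'' rather than via the fixed-point subgroup, but these are cosmetic differences.
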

\begin{proof}
This is a semialgebraic statement, hence it suffices to prove it over the real numbers. Let $a \in A_\R$ with $\chi_\alpha(a) = 1$. We have 
$$
\operatorname{Lie}(L_{\pm \alpha}) = ( \frakg_\alpha \oplus \frakg_{2\alpha} ) \oplus (\frakg_{-\alpha} \oplus \frakg_{-2\alpha} ) \oplus ( [\frakg_\alpha , \frakg_{-\alpha}] \oplus [\frakg_{2\alpha}, \frakg_{-2\alpha}] ).
$$ 
For $X \in (\frakg_\alpha)_\R$ and $X' \in (\frakg_{2\alpha})_\R$, we have by Lemma \ref{lem:aexpXa}
\begin{align*}
	a \exp(X) a^{-1} & = \exp(\chi_\alpha(a)X ) = \exp(X) \\
	a \exp(X') a^{-1}  & = \exp(\chi_\alpha(a)^2 X') = \exp(X')
\end{align*}
and the same argument shows $a \exp(Y) a^{-1} = \exp(Y) $ for $Y \in (\frakg_{-\alpha})_\R \oplus (\frakg_{-2\alpha})_\R$. Since $\frakg_0 \cap \frakl = \mathfrak{z}_{\frakk\cap \frakl} (\fraka \cap \frakl) \oplus (\fraka \cap \frakl)$, also $a \exp(H)a^{-1} =\exp(H)$ for $H \in \frakg_0 \cap \frakl$ is clear. We have shown that 
$$
\operatorname{Ad}(a) \colon \operatorname{Lie}((L_{\pm \alpha})_\R) \to  \operatorname{Lie}((L_{\pm \alpha})_\R) 
$$
is the identity, and conjugation by $a$ is constant on connected components of $(L_{\pm})_\R$. Since $L_{\pm \alpha}$ is connected as an algebraic group and $c_a^{-1}(\operatorname{Id})$ is a closed algebraic set, % L_{\pm \alpha} = c_a^{-1}(\operatorname{Id}) and
conjugation by $a$ is the identity.
\end{proof}

We can treat $L_{\pm \alpha}$ as an example of the theory we have developed so far, in particular we can apply group decompositions as outlined in the beginning of Section \ref{sec:decompositions}. Since $\bS <\bG$ is a self-adjoint maximal split torus, whose Lie algebra contains $[\frakg_\alpha, \frakg_{-\alpha}] \oplus [\frakg_{2\alpha}, \frakg_{-2\alpha}]$, $\bS_{\pm \alpha}:= \bS \cap \bL_{\pm \alpha}$ is a self-adjoint maximal split torus of $\bL_{\pm \alpha}$. Then $K_{\pm \alpha} := L_{\pm \alpha} \cap \operatorname{SO}_n = L_{\pm \alpha} \cap K$. Considering the $\F$-points, the semialgebraic connected component $(A_{\pm \alpha})_\K$ of $(S_{\pm \alpha})_\K$ containing the identity can be semialgebraically extended to $(A_{\pm \alpha})_\F$. The following is a version on Lemma \ref{lem:Jacobson_Morozov_oneparam}, but now for the rank 1 group $(L_{\pm \alpha})_\F$.
\begin{lemma}
	For every $t \in \F_{>0}$, there is an $a \in (A_{\pm \alpha})_\F$ such that $\chi_\alpha(a) = t$.
\end{lemma}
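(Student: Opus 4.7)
The plan is to reduce the claim to its real counterpart and then invoke the transfer principle. First, I would observe that by Theorem~\ref{thm:levi_group}(ii) the torus $\bS_{\pm \alpha}$ is $1$-dimensional and $\R$-split, so $(A_{\pm \alpha})_\R$ is semialgebraically isomorphic to $\R_{>0}$ and its Lie algebra $\fraka \cap \frakl$ is one-dimensional. By Lemma~\ref{lem:char}, $\chi_\alpha(\exp(H)) = e^{\alpha(H)}$ for $H \in \fraka$, so surjectivity of $\chi_\alpha|_{(A_{\pm \alpha})_\R}$ onto $\R_{>0}$ is equivalent to $\alpha$ being nonzero on $\fraka \cap \frakl$.

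Next, by Lemma~\ref{lem:levi_algebra}, $\fraka \cap \frakl$ is the line spanned by $[X,\theta(X)]$ for any $0 \neq X \in \frakg_\alpha$, and the proof of that Lemma shows this is a nonzero multiple of the element $H_\alpha \in \fraka$ defined by $\alpha(H) = B_\theta(H_\alpha, H)$ for all $H \in \fraka$. Since $B_\theta$ is positive definite on $\fraka \subseteq \frakp$, we obtain $\alpha(H_\alpha) = B_\theta(H_\alpha, H_\alpha) > 0$, so $\alpha$ does not vanish on $\fraka \cap \frakl$. Consequently $\chi_\alpha \colon (A_{\pm \alpha})_\R \to \R_{>0}$ is a nontrivial continuous homomorphism between copies of $\R_{>0}$ and therefore surjective.

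Finally, I would apply the transfer principle to the first-order formula
$$
\varphi \colon \quad \forall t \in \F \colon t > 0 \, \rightarrow \, \exists a \in A_{\pm \alpha} \colon \chi_\alpha(a) = t,
$$
whose parameters lie in $\K$: indeed, $\bS_{\pm\alpha}$ is $\K$-split so $\chi_\alpha$ is defined over $\K$ by \cite[Corollary 8.2]{Bor}, and $A_{\pm \alpha}$ is by construction semialgebraic over $\K$. Since $\varphi$ holds over $\R$ by the previous step, Theorem~\ref{thm:logic} gives $\varphi$ over $\F$, which is precisely the claim.

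The main substantive step is the identification of $\fraka \cap \frakl$ and the verification that $\alpha$ does not vanish on it, both of which rest on the Lie-algebraic computations of Section~\ref{sec:killing_involutions_decompositions}; once that is in hand, the result is immediate from the transfer principle. No additional difficulty is expected because $n$-th roots of positive elements always exist in a real closed field, so even a more explicit route via $\bS_{\pm\alpha} \cong \bG_m$ and $\chi_\alpha(t) = t^n$ for some nonzero $n \in \Z$ would work equally well.
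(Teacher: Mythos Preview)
Your proposal is correct and follows essentially the same approach as the paper: reduce to $\F=\R$ via the transfer principle, use Lemma~\ref{lem:levi_algebra} to identify $\fraka\cap\frakl$ with the line through $[X,\theta(X)]$, and then use Lemma~\ref{lem:char} together with $\alpha([X,\theta(X)])\neq 0$ to see that $\chi_\alpha\colon (A_{\pm\alpha})_\R\to\R_{>0}$ is onto. The only cosmetic difference is that the paper writes down the preimage explicitly as $a=\exp\bigl(\log(t)\,[X,\theta(X)]/\alpha([X,\theta(X)])\bigr)$, whereas you phrase surjectivity abstractly; your argument even makes the nonvanishing $\alpha(H_\alpha)=B_\theta(H_\alpha,H_\alpha)>0$ explicit, which the paper uses implicitly when dividing.
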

\begin{proof}
	This is clearly a first-order statement and it therefore suffices to show it for $\R = \F$. Let $X \in (\frakg_\alpha)_\R \setminus 0$. From Lemma \ref{lem:levi_algebra}, we know that $\exp([X, \theta(X)]) \in (A_{\pm \alpha})_\R$. Given $t \in \R_{>0}$, let
	$$
	a := \exp \left( \frac{ \log(t) }{\alpha([X,\theta(X)])}  \cdot [X,\theta(X)] \right).
	$$
	Then 
	$
	\chi_\alpha(a) = e^{\alpha(\log(a))} = e^{\log(t)} = t
	$
	by Lemma \ref{lem:char}.
\end{proof}
The root space decomposition then gives $U_{\pm \alpha}=\exp(\frakg_\alpha \oplus \frakg_{2\alpha}) = U_\alpha$, and the spherical Weyl group $W_{\pm \alpha} = (N_{\pm \alpha})_\F / (M_{\pm \alpha})_\F$ can be defined from \begin{align*}
	(N_{\pm\alpha})_\F &:= \operatorname{Nor}_{(K_{\pm \alpha})_\F}((A_{\pm \alpha})_\F)\\
	(M_{\pm\alpha})_\F &:= \operatorname{Cen}_{(K_{\pm \alpha})_\F}((A_{\pm \alpha})_\F).
\end{align*}
We note that as a consequence of Lemma \ref{lem:levi_fixes_A}, $N_{\pm \alpha} \subseteq N_\F$ and also $\operatorname{Nor}_{L_{\pm \alpha}}(A_{\pm \alpha}) \subseteq \operatorname{Nor}_{K_\F}(A_\F)$.

In the case that $\Sigma$ is reduced, there is an interpretation in terms of the Jacobson-Morozov Lemma. Given $u \in (U_{\pm \alpha})_\F$, we note that the morphism $\varphi_\F \colon \operatorname{SL}(2,\F) \to G_\F$ from Proposition \ref{prop:Jacobson_Morozov_real_closed} takes values in $(L_{\pm \alpha})_\F$. In fact, 
$$
(A_{\pm \alpha})_\F = \varphi_\F\left( \left\{ \begin{pmatrix}
	\lambda & 0 \\ 0 & \lambda^{-1}
\end{pmatrix}   \colon \lambda>0 \right\} \right)
$$
since over $\R$, both of these groups are connected and one-dimensional.
By Lemma \ref{lem:Jacobson_Morozov_m}, the element
$$
m = \varphi_\F\begin{pmatrix}
	0 & 1 \\ -1 & 0
\end{pmatrix} \in (N_{\pm \alpha})_\F
$$
is a representative of the only non-trivial element in the spherical Weyl group $W_{\pm \alpha} = \{ [\operatorname{Id}] , [m] \}$. We can now apply the Bruhat decomposition Theorem \ref{thm:BWB} to $(L_{\pm \alpha})_\F$.

\begin{corollary}\label{cor:levi_Bruhat}
	Let $(B_{\alpha})_\F := (M_{\pm \alpha})_\F (A_{\pm \alpha})_\F (U_\alpha)_\F$. Then 
	$$
	(L_{\pm \alpha})_\F = (B_{\alpha})_\F (N_{\pm \alpha})_\F (B_{\alpha})_\F ,
	$$
	and the element $m \in (N_{\pm \alpha})_\F$ is a representative of a unique element in $W_{\pm \alpha}$, so
	$$
	(L_{\pm \alpha})_\F = (B_{\alpha})_\F \ \sqcup \   (B_{\alpha})_\F \cdot m \cdot (B_{ \alpha})_\F.
	$$
\end{corollary}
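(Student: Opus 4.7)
The plan is to apply Theorem \ref{thm:BWB} directly to the rank one semialgebraic group $(L_{\pm \alpha})_\F$. First, I would verify that $\bL_{\pm \alpha}$ fits the framework of Section \ref{sec:decompositions}: by Theorem \ref{thm:levi_group} it is a semisimple, self-adjoint, connected linear algebraic group defined over $\K$, and $L_{\pm \alpha} = \bL_{\pm \alpha} \cap G$ sits between $\bL_{\pm \alpha}(\K)^\circ$ (which lies in $\bG(\K)^\circ \subseteq G$) and $\bL_{\pm \alpha}(\K)$, so the decomposition theorems developed for $G$ apply verbatim to $L_{\pm \alpha}$.

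Next, I would match the intrinsic data of $(L_{\pm \alpha})_\F$ to the data $(K, A, U, N, M, B)$ appearing in Theorem \ref{thm:BWB}. The torus $\bS_{\pm \alpha} = \bS \cap \bL_{\pm \alpha}$ is self-adjoint and maximal $\F$-split in $\bL_{\pm \alpha}$ (as discussed in the text just before the corollary), with identity component $(A_{\pm \alpha})_\F$. Using $\operatorname{Lie}(\bL_{\pm \alpha})$ from Theorem \ref{thm:levi_group}(i), the restricted root system of $\bL_{\pm \alpha}$ relative to $\bS_{\pm \alpha}$ is $\{\pm \alpha, \pm 2\alpha\} \cap \Sigma$. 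Choosing the positive roots to be those with positive $\alpha$-component, the associated nilpotent subalgebra is exactly $\frakg_\alpha \oplus \frakg_{2\alpha}$, whose exponential is $(U_\alpha)_\F$. Therefore $(B_\alpha)_\F = (M_{\pm \alpha})_\F (A_{\pm \alpha})_\F (U_\alpha)_\F$ coincides with the $B$-analog produced by the general setup for $(L_{\pm \alpha})_\F$.

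Once these identifications are in place, Theorem \ref{thm:BWB} applied to $(L_{\pm \alpha})_\F$ immediately gives
$$
(L_{\pm \alpha})_\F = \bigsqcup_{[n] \in W_{\pm \alpha}} (B_\alpha)_\F \cdot n \cdot (B_\alpha)_\F.
$$
It then remains to show that $W_{\pm \alpha}$ has exactly two elements, with non-trivial class represented by $m$. Since $\operatorname{Rank}_\F(\bL_{\pm \alpha}) = 1$ by Theorem \ref{thm:levi_group}(ii), the restricted root system of $\bL_{\pm \alpha}$ is of type $A_1$ or $BC_1$, and its abstract Weyl group has order two; by Proposition \ref{prop:weylgroups} applied to $\bL_{\pm \alpha}$ this is isomorphic to $W_{\pm \alpha}$. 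Finally, Lemma \ref{lem:Jacobson_Morozov_m} shows that $m \in (N_{\pm \alpha})_\F$ acts on $(A_{\pm \alpha})_\F$ by the non-trivial reflection $\chi_\alpha \mapsto \chi_\alpha^{-1}$, so $[m] \neq [\operatorname{Id}]$ and hence $W_{\pm \alpha} = \{[\operatorname{Id}], [m]\}$, yielding the stated two-piece disjoint union.

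The main obstacle I expect is the bookkeeping required to confirm that the subgroups defined intrinsically for $L_{\pm \alpha}$ (namely $(A_{\pm \alpha})_\F$, $(U_\alpha)_\F$, $(N_{\pm \alpha})_\F$, $(M_{\pm \alpha})_\F$) are genuinely the ones produced by running the machinery of Section \ref{sec:decompositions} on $\bL_{\pm \alpha}$ — particularly that $\bS_{\pm \alpha}$ is indeed a maximal self-adjoint $\F$-split torus of $\bL_{\pm \alpha}$ and that the choice of positive roots recovers exactly $(U_\alpha)_\F$. After this alignment, the corollary is a direct transcription of Theorem \ref{thm:BWB} combined with the rank one calculation of the Weyl group.
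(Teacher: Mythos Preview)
Your proposal is correct and matches the paper's approach exactly: the paper's ``proof'' is the single sentence ``We can now apply the Bruhat decomposition Theorem~\ref{thm:BWB} to $(L_{\pm \alpha})_\F$'', with all of the identifications you spell out (the torus $\bS_{\pm\alpha}$, $U_{\pm\alpha}=U_\alpha$, $W_{\pm\alpha}=\{[\operatorname{Id}],[m]\}$ via Lemma~\ref{lem:Jacobson_Morozov_m}) already established in the text preceding the corollary. Your only addition is making explicit the verification that $L_{\pm\alpha}$ sits between $\bL_{\pm\alpha}(\K)^\circ$ and $\bL_{\pm\alpha}(\K)$, which the paper simply asserts by saying ``We can treat $L_{\pm\alpha}$ as an example of the theory we have developed so far''.
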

The following variation is useful in applications.
\begin{corollary}\label{cor:levi_Bruhat_alternative}
	For
	\begin{align*}
		(B_{\alpha})_\F &:= (M_{\pm \alpha})_\F (A_{\pm \alpha})_\F (U_\alpha)_\F, \\
		(B_{-\alpha})_\F &:= (M_{\pm \alpha})_\F (A_{\pm \alpha})_\F (U_{-\alpha})_\F,
	\end{align*} 
	we obtain the decompositions
	\begin{align*}
		(L_{\pm \alpha})_\F &=(B_{\alpha})_\F \cdot (B_{-\alpha})_\F  \ \sqcup \   m \cdot (B_{ -\alpha})_\F,  \\
		(L_{\pm \alpha})_\F &=(B_{\alpha})_\F \cdot (B_{-\alpha})_\F  \ \sqcup \   (B_{\alpha})_\F \cdot m \cdot (B_{ -\alpha})_\F,  \\
		(L_{\pm \alpha})_\F &= (B_{ \alpha})_\F (N_{\pm \alpha})_\F (B_{- \alpha})_\F 
	\end{align*}
where in the last one, the element in $(N_{\pm \alpha})_\F$ is a representative of a unique element in $W_{\pm \alpha} = \left\{ \operatorname{Id}, [m] \right\}$.
\end{corollary}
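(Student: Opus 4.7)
The plan is to deduce all three decompositions from the standard Bruhat form $(L_{\pm\alpha})_\F = (B_\alpha)_\F \sqcup (B_\alpha)_\F \cdot m \cdot (B_\alpha)_\F$ of Corollary \ref{cor:levi_Bruhat}, by exploiting the $m$-conjugacy between $(B_\alpha)_\F$ and $(B_{-\alpha})_\F$.

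First I would record two structural identities. Lemma \ref{lem:Jacobson_Morozov_m} gives $m \cdot (U_\alpha)_\F \cdot m^{-1} = (U_{-\alpha})_\F$, and since $m \in (N_{\pm\alpha})_\F$ normalizes both $(A_{\pm\alpha})_\F$ and its centralizer $(M_{\pm\alpha})_\F$, this upgrades to $m \cdot (B_\alpha)_\F \cdot m^{-1} = (B_{-\alpha})_\F$, equivalently
\[
m \cdot (B_\alpha)_\F = (B_{-\alpha})_\F \cdot m \quad \text{and} \quad (B_\alpha)_\F \cdot m = m \cdot (B_{-\alpha})_\F.
\]
Next, $m$ represents the nontrivial class of $W_{\pm\alpha}$, so $m^2$ acts trivially on $(A_{\pm\alpha})_\F$ and lies in $K \cap L_{\pm\alpha}$; hence $m^2 \in (M_{\pm\alpha})_\F \subseteq (B_\alpha)_\F \cap (B_{-\alpha})_\F$. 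In particular $(B_\alpha)_\F \cdot m^{-1} = (B_\alpha)_\F \cdot m^{-2} \cdot m = (B_\alpha)_\F \cdot m$.

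For the first decomposition I would rewrite the big cell in Corollary \ref{cor:levi_Bruhat} using $m (B_\alpha)_\F = (B_{-\alpha})_\F m$ as $(B_\alpha)_\F \cdot m \cdot (B_\alpha)_\F = (B_\alpha)_\F (B_{-\alpha})_\F \cdot m$, then right-multiply the full identity by $m^{-1}$ (which permutes $(L_{\pm\alpha})_\F$) and apply $(B_\alpha)_\F m^{-1} = (B_\alpha)_\F m = m (B_{-\alpha})_\F$ to the small cell. This yields $(L_{\pm\alpha})_\F = (B_\alpha)_\F (B_{-\alpha})_\F \sqcup m (B_{-\alpha})_\F$. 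For the second, I would observe
\[
(B_\alpha)_\F \cdot m \cdot (B_{-\alpha})_\F = (B_\alpha)_\F \cdot \bigl((B_\alpha)_\F \cdot m\bigr) = (B_\alpha)_\F \cdot m = m \cdot (B_{-\alpha})_\F,
\]
so the second decomposition coincides with the first on the small cell. For the third, I would split $(N_{\pm\alpha})_\F = (M_{\pm\alpha})_\F \sqcup (M_{\pm\alpha})_\F \cdot m$ and absorb $(M_{\pm\alpha})_\F$ into the adjacent $B$-factors; this recovers the second decomposition as the union of two Weyl strata, and the disjointness established there yields the claimed uniqueness of $[n] \in W_{\pm\alpha}$.

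The main piece of bookkeeping is tracking whether $m$ or $m^{-1}$ appears on a given side, but this is resolved cleanly by $m^2 \in (M_{\pm\alpha})_\F$, which makes $m$ and $m^{-1}$ interchangeable modulo $(B_\alpha)_\F$ and justifies each of the rewrites above.
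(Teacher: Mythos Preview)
Your argument is correct and follows essentially the same route as the paper: both proofs start from Corollary~\ref{cor:levi_Bruhat}, use the conjugation identity $m(B_\alpha)_\F m^{-1}=(B_{-\alpha})_\F$ from Lemma~\ref{lem:Jacobson_Morozov_m}, and right-translate the Bruhat decomposition by $m^{\pm 1}$. The only cosmetic difference is that the paper avoids your $m^2\in (M_{\pm\alpha})_\F$ step by instead invoking Corollary~\ref{cor:levi_Bruhat} with the representative $m^{-1}$ (legitimate since $[m^{-1}]=[m]$), rewriting both cells, and then multiplying on the right by $m$; your version makes the $m$ versus $m^{-1}$ bookkeeping explicit, which is arguably cleaner.
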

\begin{proof}
	We use $m^{-1} \cdot (U_\alpha)_\F\cdot  m = (U_{-\alpha})_\F$ from Lemma \ref{lem:Jacobson_Morozov_m}. By Corollary \ref{cor:levi_Bruhat}, we choose $m^{-1}$ as the representative of $[m^{-1}]=[m] \in W_{\pm \alpha} $ and obtain
	$$
	(L_{\pm \alpha})_\F = (B_{\alpha})_\F \ \sqcup \   (B_{\alpha})_\F \cdot m^{-1} \cdot (B_{ \alpha})_\F,
	$$
	so 
	$$
	(L_{\pm \alpha})_\F = m \cdot (B_{-\alpha})_\F \cdot m^{-1} \ \sqcup \   (B_{\alpha})_\F  \cdot (B_{ -\alpha})_\F \cdot m^{-1}.
	$$
	Multiplying this expression on the right by $m\in (L_{\pm \alpha})_\F$ results in
	$$
	(L_{\pm \alpha})_\F =(B_{\alpha})_\F \cdot (B_{-\alpha})_\F  \ \sqcup \   m \cdot (B_{ -\alpha})_\F, 
	$$ 
	and further multiplying by $(B_\alpha)_\F$ on the left results in the remaining two decompositions.
\end{proof}

\subsection{Kostant convexity}\label{sec:kostant}
Using the Iwasawa decomposition $G_\F=U_\F A_\F K_\F$, Theorem \ref{thm:KAU_R}, we associate to every $g=uak\in G_\R$ its $A$-component $a_\R(g) = a \in A_\R$. The following is Kostant's convexity theorem, which we will generalize to $G_\F$ in this chapter.
\begin{theorem}\cite[Theorem 4.1]{Kos}\label{thm:kostant_R}
	For every $b \in A_\R$, 
	$$
	\left\{ a_\R(kb) \in A_\R \colon k \in K_\R \right\} = \exp \left( \operatorname{conv}(W_s \log(b)) \right),
	$$
	where $\log \colon A_\R \to \fraka$ is the inverse of $\exp$, $W_s$ is the spherical Weyl group acting on $\fraka$ and $\operatorname{conv}(W_s \log(b))$ is the convex hull of the Weyl group orbit of $\log(b)$.
\end{theorem}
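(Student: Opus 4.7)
The plan is to prove the two inclusions $\subseteq$ and $\supseteq$ separately, the first by representation theory and the second by a rank-$1$ reduction combined with a connectedness/convexity argument.

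For the inclusion $\subseteq$, I would use finite-dimensional representation theory of $G_\R$. For each dominant $\lambda \in \fraka^*$ which is a highest weight for some spherical representation $(\pi_\lambda, V_\lambda)$ of $G_\R$, choose a $K_\R$-invariant inner product on $V_\lambda$ and let $v_\lambda$ be a unit highest weight vector. Since $\pi_\lambda(u)v_\lambda = v_\lambda$ for $u \in U_\R$ (by the definition of highest weight with respect to the order used to build $U_\R$), one obtains from $g = uak$ the identity $\|\pi_\lambda(g^{-1})v_\lambda\| = e^{-\lambda(\log a_\R(g))}$. Applied to $g = kb$ and expanding $\pi_\lambda(k^{-1})v_\lambda = \sum_\mu c_\mu v_\mu$ in an orthonormal weight basis, one gets
$$
e^{-2\lambda(\log a_\R(kb))} = \sum_\mu |c_\mu|^2 e^{-2\mu(\log b)}, \qquad \sum_\mu |c_\mu|^2 = 1.
$$
Jensen's inequality for the concave function $\log$ then yields $\lambda(\log a_\R(kb)) \leq \sum_\mu |c_\mu|^2\, \mu(\log b)$, and since every weight $\mu$ of $\pi_\lambda$ lies in $\operatorname{conv}(W_s \lambda)$, the right-hand side is bounded above by $\max_{w\in W_s}\lambda(w\log b)$. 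Since dominant $\lambda$ span a dense set of linear functionals on $\fraka$, the support-function characterization of convex hulls gives $\log a_\R(kb) \in \operatorname{conv}(W_s \log b)$.

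For the inclusion $\supseteq$, set $\Omega := \{\log a_\R(kb) : k \in K_\R\}$. Since $K_\R$ is compact and connected in the Lie-group topology and $k \mapsto \log a_\R(kb)$ is continuous, $\Omega$ is compact and connected. Each vertex $w\log b$ of $\operatorname{conv}(W_s\log b)$ lies in $\Omega$: choose a representative $n_w \in N_\R \subseteq K_\R$ of $w \in W_s = N_\R/M_\R$; then $n_w b n_w^{-1} = \exp(w \log b) \in A_\R$ already, so $a_\R(n_w b) = \exp(w\log b)$. It remains to show that $\Omega$ contains every convex combination of these vertices. The key step is a rank-$1$ reduction: for every simple root $\alpha \in \Delta$, the subgroup $(L_{\pm \alpha})_\R$ of Theorem \ref{thm:levi_group} has $\R$-rank $1$, and the $K_\R$-orbit of a suitable one-parameter family of $(K_{\pm\alpha})_\R$ acting on $b$ produces the entire segment from $H$ to $s_\alpha\!\cdot\! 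H$ inside $\Omega$, for any $H \in \Omega$. This reduces to an explicit computation in $\operatorname{SL}_2(\R)$ (or $\operatorname{PGL}_2(\R)$) via the Jacobson–Morozov homomorphism of Proposition \ref{prop:Jacobson_Morozov_real_closed} applied to a root vector in $\frakg_\alpha$: conjugating $b$ by $\varphi(\operatorname{SO}_2)$ gives a $1$-parameter family of Iwasawa $A$-components whose logarithms trace out exactly the segment from $H$ to $s_\alpha\!\cdot\! H$ (this is the classical $\operatorname{SL}_2(\R)$ convexity statement).

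Putting things together, starting from any vertex $w\log b \in \Omega$, applying iterated reflections along simple roots, and using that each reflection can be realized as a segment inside $\Omega$, one fills out the full convex hull $\operatorname{conv}(W_s \log b)$. The main obstacle is the convexity/segment-filling step: one needs both the $\operatorname{SL}_2$-computation that shows the reflection-segment lies in $\Omega$ for a single simple root, and a combinatorial argument to propagate this from the finite set $W_s \log b$ to the full convex hull. The cleanest way to handle the propagation is via the Atiyah–Guillemin–Sternberg convexity theorem by identifying $k \mapsto \log a_\R(kb)$ with the moment map of a Hamiltonian torus action on a coadjoint orbit of $G_\R$; alternatively, one can induct on the $\R$-rank using the rank-$1$ subgroups $(L_{\pm\alpha})_\R$ together with the Iwasawa decomposition Theorem \ref{thm:KAU_R} to decompose $\log a_\R(kb)$ into rank-$1$ contributions.
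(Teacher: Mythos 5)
The paper does not actually prove this statement: Theorem \ref{thm:kostant_R} is quoted from \cite[Theorem 4.1]{Kos} and only serves as the real input for the transfer argument in Theorem \ref{thm:kostant_F}, so your attempt must be judged against Kostant's classical proof. Your inclusion $\subseteq$ has a concrete gap at the final step. The highest-weight-vector computation yields $\lambda(\log a_\R(kb)) \leq \max_{w \in W_s}\lambda(w\log b)$ only for $\lambda$ in the dominant cone (those realized as highest restricted weights), and support-function inequalities in dominant directions alone do not place a point in $\operatorname{conv}(W_s\log b)$: already for $\operatorname{SL}_2(\R)$ they give only $\log a_\R(kb) \leq \log b$ and miss the lower bound $\geq -\log b$, so the resulting region is an unbounded cone strictly containing the hull. ``Dominant $\lambda$ span a dense set of functionals'' does not help — spanning is not enough; you need the inequality in \emph{every} direction. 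The standard repair is to run the argument on all extreme restricted weight vectors $v_{w\lambda}$: bounding $\lVert \pi_\lambda(b^{-1}k^{-1})v_{w\lambda}\rVert$ above by the operator norm $\max_{w'} e^{-(w'\lambda)(\log b)}$ and below by its $w\lambda$-weight component $e^{-(w\lambda)(\log a_\R(kb))}$ gives the inequality for all $w\lambda$, hence all directions. (You also need the inner product adapted to the Cartan involution, so that $K_\R$ acts unitarily \emph{and} restricted weight spaces are orthogonal; $K$-invariance alone does not justify your orthogonal expansion.)

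The inclusion $\supseteq$ is only a program: the segment-filling step that you yourself flag as ``the main obstacle'' is precisely the hard content of Kostant's theorem, and neither suggested shortcut works off the shelf. Atiyah--Guillemin--Sternberg applies to Hamiltonian torus actions and yields the \emph{linear} convexity theorem (the orthogonal projection to $\fraka$ of an $\operatorname{Ad}(K_\R)$-orbit in $\frakp$); the group-level Iwasawa projection $k \mapsto \log a_\R(kb)$ is not the moment map of an ordinary Hamiltonian torus action on a coadjoint orbit — its symplectic interpretation requires Poisson--Lie/dressing actions (Lu--Ratiu), a genuinely different and later result. The rank-one induction is the right classical route, and the paper's tools (Theorem \ref{thm:levi_group}, Proposition \ref{prop:Jacobson_Morozov_real_closed}, Lemmas \ref{lem:aexpXa} and \ref{lem:BCH_normalizer}) would indeed let you factor $u$ and push an element of $(K_{\pm\alpha})_\R$ across $U_\R$; but the two essential points — that the resulting one-parameter family of Iwasawa projections sweeps out exactly the asserted segment toward the $s_\alpha$-reflected point, and the combinatorial lemma that iterated partial-reflection moves starting from the orbit $W_s\log b$ exhaust the full convex hull — are exactly what is missing and are not supplied by the $\operatorname{SL}_2(\R)$ remark alone. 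As it stands the proposal establishes the vertices lie in $\Omega$ and gives a correct but incomplete upper-bound half; it does not yet prove either inclusion.
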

The left hand side of the equation in Theorem \ref{thm:kostant_R} is already a semialgebraic set and we will reformulate the right hand side as a semialgebraic set as well. For this, we first analyze the root system $\Sigma \subseteq \fraka^\star$.

Recall from Section \ref{sec:killing_involutions_decompositions}, that we have a scalar product $B_\theta$ on $\fraka$, which can be used to set up an isomorphism $\fraka^\star \to \fraka, \alpha \mapsto H_\alpha$ that satisfies the defining property $\lambda(H) = B_\theta(H_\lambda, H)$ for all $H \in \fraka$. In this section we will denote $B_\theta$ as well as the corresponding scalar product on $\fraka^\star$ with brackets $\langle \cdot , \cdot \rangle$. For $\lambda \in \fraka^\star$ define 
$$
x_\lambda = \frac{2}{\langle H_\lambda, H_\lambda \rangle } H_\lambda  =  \frac{2}{\langle \lambda, \lambda \rangle } H_\lambda
$$
which satisfies the property that for all $\alpha, \beta \in \Sigma$,
$$
\langle H_\alpha, x_\beta \rangle = \frac{2 \langle H_\alpha, H_\beta \rangle }{\langle H_\beta, H_\beta\rangle} \in \Z
$$
since $\Sigma$ is a crystallographic root system, Theorem \ref{thm:sigma_root}. Let $\Delta = \left\{ \delta_1, \ldots , \delta_r \right\}$ be a basis of $\Sigma$ and abbreviate $x_i := x_{\delta_i}$ and $H_i := H_{\delta_i}$. Let 
$$
\frakap = \left\{ H \in \fraka \colon \delta(H) \geq 0 \text{ for all } \delta \in \Delta \right\}. 
$$
On the way to prove Theorem \ref{thm:kostant_R}, Kostant describes $\operatorname{conv}(W_s x)$ using the closed convex cone
$$
\fraka_p := \left\{ x \in \fraka \colon x = \sum_{i=1}^r \R_{\geq 0}x_i \right\}
$$
illustrated in Figure \ref{fig:kostant}.

\begin{figure}[h]
	\centering
	\includegraphics[width=0.6\linewidth]{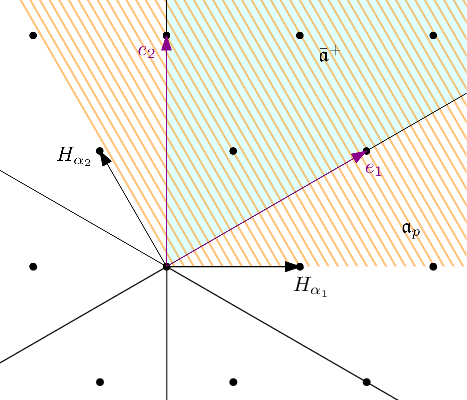}
	\caption{Root system of type $A_2$ associated to $\operatorname{SL}_3$. The convex cone $\fraka_p$ (orange stripes) can be viewed as spanned by the $H_{\alpha_i}$ or as the intersection of the half-spaces defined by the primitive vectors $e_i$.  }
	\label{fig:kostant}
\end{figure}

\begin{lemma}\cite[Lemma 3.3.(2)]{Kos}\label{lem:kostant}
	Let $x,y\in \frakap$. Then 
	\begin{align*}
		y \in  \operatorname{conv}(W_s x) \quad \iff \quad  x-y \in \fraka_p.
	\end{align*}
\end{lemma}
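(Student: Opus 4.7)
The plan centers on the observation that $\fraka_p$ and $\frakap$ are mutually polar convex cones in $\fraka$ under the inner product $\langle\cdot,\cdot\rangle$. Since $\delta_i(H) = \langle H_i, H\rangle$ and $x_i$ is a positive scalar multiple of $H_i$, the defining inequalities of $\frakap$ read $\langle x_i, H\rangle \geq 0$, so $\frakap$ is the dual cone of $\fraka_p$, and by bipolarity the converse also holds. The entire lemma rests on this duality together with the following sub-lemma, which I would prove first: for every $x \in \frakap$ and every $w \in W_s$, one has $x - wx \in \fraka_p$.

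To prove the sub-lemma, I would fix a reduced decomposition $w = s_{i_1}\cdots s_{i_k}$, set $w_j := s_{i_1}\cdots s_{i_j}$, and telescope using the reflection formula $s_i v = v - \delta_i(v)\cdot x_i$:
$$
x - wx = \sum_{j=1}^{k} (w_{j-1} x - w_j x) = \sum_{j=1}^{k} \delta_{i_j}(x)\cdot w_{j-1}(x_{i_j}).
$$
The coefficients $\delta_{i_j}(x)$ are non-negative because $x$ is dominant; each vector $w_{j-1}(x_{i_j})$ is the coroot of $\beta_j := w_{j-1}(\delta_{i_j})$. The classical inversion-set lemma for reduced expressions identifies the $\beta_j$ as precisely the positive roots sent to negative roots by $w^{-1}$; in particular every $\beta_j$ is positive, so $w_{j-1}(x_{i_j})$ is a positive coroot, hence a non-negative combination of the simple coroots $x_i$, hence lies in $\fraka_p$. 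Since $\fraka_p$ is a convex cone, the whole sum lies in $\fraka_p$.

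The forward direction is then immediate: if $y = \sum_w t_w\cdot wx$ is a convex combination of Weyl translates of $x$, then $x - y = \sum_w t_w(x - wx)$ lies in $\fraka_p$ by the sub-lemma and convexity of the cone. For the reverse direction I would argue by contradiction. Assuming $x - y \in \fraka_p$ but $y \notin \operatorname{conv}(W_s x)$, the Hahn-Banach separation theorem applied to the compact convex set $\operatorname{conv}(W_s x)$ produces $\lambda \in \fraka^\star$ with $\lambda(y) > \max_w \lambda(wx)$. I would then choose $w_0 \in W_s$ such that $\lambda' := w_0\lambda$ is dominant. The bijection $w \mapsto w_0 w$ of $W_s$ preserves the maximum, so $\max_w \lambda(wx) = \max_w \lambda'(wx) = \lambda'(x)$ by the sub-lemma applied to the dominant pair $(\lambda', x)$; symmetrically $\lambda(y) = \lambda'(w_0 y) \leq \lambda'(y)$ since both $\lambda'$ and $y$ are dominant. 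Hence $\lambda'(x - y) = \lambda'(x) - \lambda'(y) < 0$, which contradicts $x - y \in \fraka_p$ together with $\lambda' \in \frakap$.

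The main obstacle I anticipate is the sub-lemma, specifically the invocation of the inversion-set lemma ensuring each $w_{j-1}(x_{i_j})$ lies in $\fraka_p$. This is standard combinatorics of crystallographic root systems in the reduced case, but it requires some care in the non-reduced setting (flagged earlier by the paper's $\operatorname{SU}(2,1)$ footnote), where positive coroots still decompose over the simple coroots with non-negative, though possibly non-integer, coefficients.
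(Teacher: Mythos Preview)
The paper does not give its own proof of this lemma; it simply quotes it from \cite[Lemma~3.3.(2)]{Kos} and moves on, so there is nothing to compare against. Your argument is correct and is essentially the classical one: the sub-lemma $x - wx \in \fraka_p$ for dominant $x$ via a reduced expression and the inversion-set description of $\{\beta_j = w_{j-1}\delta_{i_j}\}$, the forward implication by convexity of $\fraka_p$, and the reverse implication by Hahn--Banach separation and moving the separating functional into the dominant chamber. The only place to be a little more careful is the telescoping step: with $w_j = w_{j-1}s_{i_j}$ one has $w_{j-1}x - w_j x = w_{j-1}(x - s_{i_j}x) = \delta_{i_j}(x)\,w_{j-1}(x_{i_j})$, so the scalar that appears is $\delta_{i_j}(x)$ (the value of a simple root on the \emph{fixed} dominant $x$), exactly as you wrote; just make sure in the write-up that this does not accidentally get replaced by $\delta_{i_j}(w_{j-1}x)$. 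Your remark about the non-reduced case is apt: positive coroots in type $BC_n$ are non-negative \emph{rational} (not necessarily integral) combinations of the simple coroots, which is all you need since $\fraka_p$ is an $\R_{\geq 0}$-cone.
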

Now we want to describe $\fraka_p$ by inequalities. The cone $\fraka_p$ is an intersection of open half-spaces defined by the half-planes
$$
E_j = \sum_{k \neq j} \R x_k .
$$  
Any vector $x\in \fraka$ orthogonal to $E_j$ satisfies $\langle x,x_i\rangle = 0$ for all $i\neq j$. Writing $x = \sum_k \lambda_{jk}H_k$, we have for all $i \neq j$
$$
\langle x, x_i \rangle = \sum_k \lambda_{jk} \langle H_k, x_i \rangle = 0.
$$
For every $i\neq j$, this is a homogeneous linear equation with variables $\lambda_{j1}, \ldots ,\lambda_{jr}$ and coefficients $\langle H_k,x_i \rangle \in \Z$. Therefore there is a rational (and hence integer) solution for the $\lambda_{jk}$.
This shows that $x\in E_j^\perp \setminus \{0\}$ may be chosen to lie in the lattice $\Gamma = \sum_{l = 1}^r \Z H_k$. There are two primitive vectors in $\Gamma \cap E_j^\perp$. Let $e_j$ be the unique one that is on the same side of $\partial E_j$ as $\fraka_p$. Thus
$$
\fraka_p = \left\{ x \in \fraka \colon \langle e_j, x \rangle \geq 0 \text{ for all } j\right\}.
$$

Under the isomorphism $\fraka^\star \cong \fraka$, the lattice $\Gamma$ corresponds to $L = \sum_{\alpha\in \Delta} \Z \alpha$ and we define $\gamma_j \in L$ to be the element corresponding to $e_j \in \Gamma$. By Lemma \ref{lem:char}, $\gamma_j$ defines an algebraic character $\chi_j  \colon A_\R \to \R $ satisfying
$$
\chi_j (\exp(H)) = e^{\gamma_j(H)}
$$
for all $H \in \fraka$. The multiplicative closed Weyl chamber is
$$
\Ap_\R : = \left\{ a \in A_{\R} \colon \chi_\delta(a) \geq 1 \text{ for all } \delta \in \Delta \right\} = \exp \frakap
$$
which is a semialgebraic set and thus has an $\F$-extension $\Ap_\F$. Using the the Iwasawa decomposition $G_\F=U_\F A_\F K_\F$, Theorem \ref{thm:KAU}, we associate to every $g=uak\in G_\F$ its $A$-component $a_\F(g) = a \in A_\F$ as before. We can now conclude the following version of Kostant's convexity theorem for $G_\F$, illustrated in Figure \ref{fig:kostant7}.

\begin{figure}[h]
	\centering
	\includegraphics[width=0.6\linewidth]{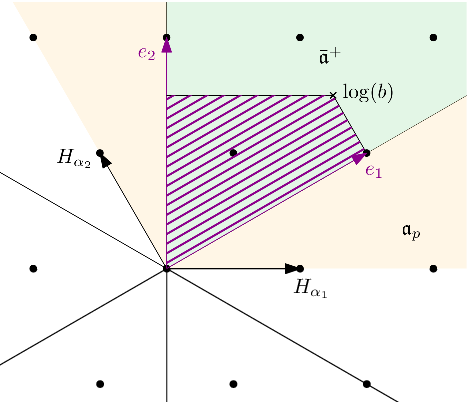}
	\caption{Root system of type $A_2$ associated to $\operatorname{SL}_3$. The convex set in Kostant's convexity Theorem \ref{thm:kostant_F} defined by inequalities is illustrated in purple. }
	\label{fig:kostant7}
\end{figure}

\begin{theorem}\label{thm:kostant_F}
	For all $b \in \ApF$, we have
	$$
	\left\{ a \in \ApF \colon \exists k \in K_\F , a_\F(kb) = a \right\}
	= \left\{ a\in \ApF \colon \chi_i (a) \leq \chi_i (b) \text{ for all } i \right\}.
	$$
\end{theorem}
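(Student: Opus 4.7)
The strategy is to first verify the statement over $\mathbb{R}$ by rewriting Kostant's classical convexity theorem (Theorem \ref{thm:kostant_R}) in terms of the multiplicative characters $\chi_i$, and then transfer to $\mathbb{F}$ via the transfer principle.

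\textbf{Step 1: Translation over $\mathbb{R}$.} Fix $b \in \Ap_\R$. By Theorem \ref{thm:kostant_R}, the set of $A$-components of $K_\R b$ is $\exp(\operatorname{conv}(W_s\log(b)))$. Since this set is $W_s$-invariant and each $W_s$-orbit meets $\frakap$ in exactly one point, intersecting with $\Ap_\R$ yields
\[
\{a \in \Ap_\R \colon \exists k \in K_\R,\ a_\R(kb) = a\} = \exp\bigl(\operatorname{conv}(W_s \log(b)) \cap \frakap\bigr).
\]
Applying Lemma \ref{lem:kostant} with $x = \log(b)$ and $y = \log(a)$, this equals the set of $a \in \Ap_\R$ with $\log(b) - \log(a) \in \fraka_p$. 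Using the description $\fraka_p = \{x \in \fraka \colon \langle e_j, x\rangle \geq 0 \text{ for all } j\}$ and the isomorphism $\fraka^\star \cong \fraka$ which sends $\gamma_j \mapsto e_j$ (so that $\langle e_j, H\rangle = \gamma_j(H)$), this condition becomes $\gamma_j(\log(a)) \leq \gamma_j(\log(b))$ for all $j$. Since each $\gamma_j$ lies in the $\mathbb{Z}$-lattice $L$ generated by the simple roots $\Delta$, the product $\chi_j := \prod_i \chi_{\delta_i}^{n_{ji}}$ (where $\gamma_j = \sum_i n_{ji}\delta_i$) is an algebraic character of $\bS$ defined over $\K$ by Lemma \ref{lem:char}, and satisfies $\chi_j(\exp(H)) = e^{\gamma_j(H)}$. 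The inequality $\gamma_j(\log(a)) \leq \gamma_j(\log(b))$ is therefore equivalent to $\chi_j(a) \leq \chi_j(b)$. This establishes the theorem over $\mathbb{R}$.

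\textbf{Step 2: Transfer to $\mathbb{F}$.} The goal is now to phrase the equality in the theorem as a first-order sentence with parameters in $\K$. The $A$-component map $a_\F$ is definable: by uniqueness of the Iwasawa decomposition (Theorem \ref{thm:KAU}), the statement ``$a_\bullet(kb) = a$'' is equivalent to ``$\exists k_1 \in K,\ u \in U \colon kb = k_1 a u$.'' All ingredients --- the semialgebraic sets $G$, $K$, $A$, $\Ap$, $U$, and the polynomial characters $\chi_1,\ldots,\chi_N$ --- are defined over $\K$. Hence the sentence
\begin{align*}
\varphi \colon \quad &\forall b, a \in \Ap \colon \\
& \Bigl[\bigl(\exists k,k_1 \in K,\ u \in U \colon kb = k_1 a u\bigr) \leftrightarrow \bigwedge_{j=1}^N \chi_j(a) \leq \chi_j(b)\Bigr]
\end{align*}
is a first-order formula in the language of ordered fields with parameters in $\K$. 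By Step 1, $\R \models \varphi$, so by the transfer principle (Theorem \ref{thm:logic}), $\F \models \varphi$, which is precisely the statement of Theorem \ref{thm:kostant_F}.

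\textbf{Main obstacle.} The classical content (Theorem \ref{thm:kostant_R}) and the convex-geometric reformulation (Lemma \ref{lem:kostant}) are already in place, so the real work is in verifying that every ingredient used in the $\mathbb{R}$-statement is semialgebraic and defined over $\K$. The subtle point is identifying the ``dual'' primitive vectors $e_j$ with genuine algebraic characters $\chi_j$ of $\bS$: one needs $\gamma_j \in L$ (an integrality statement), which follows from the crystallographic nature of $\Sigma$ (Theorem \ref{thm:sigma_root}), and then one must invoke Lemma \ref{lem:char} together with $\K$-splittingness of $\bS$ (Theorem \ref{thm:split_tori}) to conclude that $\chi_j$ is defined over $\K$, so that the transfer principle applies.
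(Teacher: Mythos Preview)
Your proposal is correct and follows essentially the same approach as the paper: verify the statement over $\R$ by combining Kostant's classical theorem (Theorem \ref{thm:kostant_R}) with the half-space description of $\fraka_p$ via Lemma \ref{lem:kostant}, rewrite the inequalities as $\chi_i(a)\leq\chi_i(b)$, and then apply the transfer principle. Your version is slightly more explicit about the first-order formula and the $\K$-definability of the $\chi_j$, but the structure and key ingredients are identical.
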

\begin{proof}
	We first verify that the theorem holds for $\F=\R$. An element $a \in \ApR$ satisfies $a_\R(kb) = a$, for some $k \in K_\R$, if and only if $a \in \ApR \cap \exp(\operatorname{conv}(W_s \log(b)))$ by Theorem \ref{thm:kostant_R}. This means $a = \exp(H)$ for some $H \in \frakap$ and $H\in \operatorname{conv}(W_s\log(b))$. By Lemma \ref{lem:kostant}, this is equivalent to 
	$$
	\log(b) - H \in \fraka_p = \left\{ x \in \fraka \colon \langle e_j, x \rangle \geq 0 \text{ for all } j\right\},
	$$
	and taking exponents this is equivalent to
	$$
	\chi_i(ba^{-1}) \geq 1 
	$$
	for all $i$, so
	$$
	\chi_i(a) \leq \chi_i(b),
	$$
	as in the Theorem. This concludes the case $\F=\R$. Now we note that being part of either set in the statement can be formulated as a first-order formula. Since over $\R$ the two formulae imply each other, they also imply each other over $\F$ by the transfer principle. This concludes the proof. 
\end{proof}

The following Lemma is useful in applications.
\begin{lemma}\label{lem:kostant_gammaalpha}
	For all $\eta \in L = \sum_{\delta \in \Delta} \Z \delta $, we have
	$$
	\eta = \sum_{\ell=1}^r \frac{\langle \eta, \delta_\ell\rangle}{\langle \gamma_\ell,\delta_\ell\rangle} \gamma_\ell
	$$
	and for $\eta^+ := \sum_{\alpha \in \Sigma_{>0}} \alpha$, $\eta^+$ is a positive linear combination of the $\gamma_\ell$,
	$$
	\eta^+ = \sum_{\alpha \in \Sigma_{>0}} \alpha \in \sum_{\ell=1}^r  \Q_{>0} \gamma_\ell.
	$$
\end{lemma}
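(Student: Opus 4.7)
My plan is to prove the first identity by pure linear algebra and then derive the second by a sign-and-rationality analysis of each coefficient. By construction the vectors $\gamma_1, \ldots, \gamma_r$ correspond via the isomorphism $\fraka^\star \cong \fraka$ to the vectors $e_1, \ldots, e_r$, where $e_j \in E_j^\perp$ and $E_j = \sum_{k\neq j}\R x_k$. Since $\delta_i$ corresponds under $\fraka^\star \cong \fraka$ to $H_i$, which is a positive scalar multiple of $x_i$, the relation $\langle e_j, x_i\rangle = 0$ for $i\neq j$ translates to $\langle \gamma_j, \delta_i\rangle = 0$ for $i\neq j$. Non-vanishing of $\langle \gamma_\ell, \delta_\ell\rangle$ follows since otherwise $\gamma_\ell$ would be orthogonal to the whole basis $\{\delta_1,\ldots,\delta_r\}$ and hence zero, contradicting $e_\ell \neq 0$. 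Therefore $\{\gamma_1,\ldots,\gamma_r\}$ is itself a basis of $\fraka^\star$; writing $\eta = \sum_\ell c_\ell \gamma_\ell$ and pairing with $\delta_k$ gives $\langle \eta, \delta_k\rangle = c_k \langle \gamma_k, \delta_k\rangle$, which is the displayed formula.

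For the second statement, I apply the formula to $\eta = \eta^+$ and verify that each coefficient is a positive rational. Rationality is automatic: since every $\gamma_\ell$ lies in the integer lattice $L = \bigoplus \Z\delta_i$, the transition matrix between $\{\delta_i\}$ and $\{\gamma_\ell\}$ has integer entries and is invertible over $\Q$, so any element of $L$ — in particular $\eta^+$ — has rational coordinates in the $\gamma_\ell$-basis. For the sign of $c_\ell$, I treat numerator and denominator separately.

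The denominator $\langle \gamma_\ell, \delta_\ell\rangle$ is positive because $e_\ell$ was chosen on the same side of $\partial E_\ell$ as $\fraka_p$; the generator $x_\ell \in \fraka_p \setminus E_\ell$ then satisfies $\langle e_\ell, x_\ell\rangle > 0$, and since $x_\ell$ is a positive multiple of $H_\ell$, this translates to $\langle \gamma_\ell, \delta_\ell\rangle > 0$. For the numerator $\langle \eta^+, \delta_\ell\rangle$, I use the simple reflection $r_{\delta_\ell}$. Expanding any positive root in the basis $\Delta$ shows that $r_{\delta_\ell}$ only changes the $\delta_\ell$-coefficient, so it permutes the positive roots not proportional to $\delta_\ell$. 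Since $\delta_\ell$ is simple, $\tfrac{1}{2}\delta_\ell \notin \Sigma$, leaving the positive roots proportional to $\delta_\ell$ equal to $\{\delta_\ell\}$ or $\{\delta_\ell, 2\delta_\ell\}$. Setting $\epsilon\in\{0,1\}$ according as $2\delta_\ell\notin\Sigma$ or $2\delta_\ell\in\Sigma$, I get
$$r_{\delta_\ell}(\eta^+) = \eta^+ - (2+4\epsilon)\delta_\ell,$$
which when matched against $r_{\delta_\ell}(\eta^+) = \eta^+ - \tfrac{2\langle \eta^+, \delta_\ell\rangle}{\langle \delta_\ell, \delta_\ell\rangle}\delta_\ell$ yields $\langle \eta^+, \delta_\ell\rangle = (1+2\epsilon)\langle \delta_\ell, \delta_\ell\rangle > 0$. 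Combining, $c_\ell \in \Q_{>0}$ for every $\ell$, so $\eta^+ \in \sum_\ell \Q_{>0}\gamma_\ell$.

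The main bookkeeping obstacle is the non-reduced case, where one must be careful to include $2\delta_\ell$ in the reflection computation and to verify that $\tfrac{1}{2}\delta_\ell$ is ruled out by simplicity of $\delta_\ell$; otherwise the proof is straightforward from the definitions of the $\gamma_\ell$ and a short root-system calculation.
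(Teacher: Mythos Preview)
Your proof is correct and follows essentially the same route as the paper's: you exploit the biorthogonality $\langle \gamma_j,\delta_i\rangle=0$ for $i\neq j$ to extract the coefficients, establish $\langle \gamma_\ell,\delta_\ell\rangle>0$ from the choice of side for $e_\ell$, and compute $\langle \eta^+,\delta_\ell\rangle$ via the simple reflection $r_{\delta_\ell}$, splitting into the reduced and non-reduced cases. The only cosmetic differences are that the paper first expresses each $\delta_i$ in the $\gamma_\ell$-basis before passing to a general $\eta$, and carries out the reflection step as an explicit identity $\sigma_\delta(\eta^+-\delta)=\eta^+-\delta$ (respectively $\sigma_\delta(\eta^+-3\delta)=\eta^+-3\delta$) rather than matching against the reflection formula; the resulting values $\langle \eta^+,\delta_\ell\rangle=\langle\delta_\ell,\delta_\ell\rangle$ and $3\langle\delta_\ell,\delta_\ell\rangle$ agree with your $(1+2\epsilon)\langle\delta_\ell,\delta_\ell\rangle$.
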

\begin{proof}
	By definition $\langle x_j, e_\ell \rangle = 0 $ for all $\ell \neq j$, but $\langle x_\ell, e_\ell \rangle \neq 0$, since otherwise $e_\ell = 0$. This implies that $\left\{e_1, \ldots, e_r\right\}$ and hence $\left\{ \gamma_1, \ldots, \gamma_r \right\}$ are linearly independent: for if $x=\sum_\ell \lambda_\ell e_\ell = 0$, then $\langle x_j, x\rangle = \lambda_j \langle x_j,e_j\rangle =0$, so $\lambda_j =0$ for all $j$.
	
	Therefore we can find $n_{i\ell} \in \Q$ such that
	$$
	\delta_i = \sum_{\ell=1 }^r n_{i\ell}\gamma_\ell.
	$$
	Since $\langle \gamma_\ell,\delta_k\rangle = 0 $ when $\ell \neq k$, we have $\langle \delta_i,\delta_k \rangle = n_{ik}\langle \gamma_k, \delta_k \rangle$. Since $x_\ell \in \fraka_p$, $\langle e_\ell,x_\ell\rangle \geq 0$ and since $e_j\neq 0$ and $\langle e_\ell, x_j \rangle = 0$ when $\ell\neq j$, actually $\langle e_\ell, x_\ell\rangle >0$. Thus also $\langle \gamma_\ell,\delta_\ell \rangle >0$, so we can divide $\langle \delta_i , \delta_\ell \rangle = n_{i\ell} \langle \gamma_\ell , \delta_\ell \rangle$ by $\langle \gamma_\ell , \delta_\ell \rangle $ to get
	$$
	n_{i\ell} = \frac{\langle \delta_i ,\delta_\ell \rangle}{\langle \gamma_\ell, \delta_\ell \rangle }.
	$$
	%This shows that $\delta \in \Delta$ is a nonnegative linear combination of the $\gamma_\ell$. 
	For $\eta \in L = \sum_{\delta \in \Delta} \delta$ we have
	\begin{align*}
		\eta &= \sum_{i=1}^r \lambda_i \delta_i =  \sum_{i=1}^r \sum_{\ell=1}^r \lambda_i \frac{\langle \delta_i ,\delta_\ell \rangle}{\langle \gamma_\ell, \delta_\ell \rangle }  \gamma_\ell \\
		&= \sum_{\ell=1}^r \frac{\langle \sum_{i=1}^r \lambda_i \delta_i, \delta_\ell\rangle}{\langle \gamma_\ell, \delta_\ell \rangle} \gamma_\ell = 
		\sum_{\ell=1}^r \frac{\langle \eta, \delta_\ell \rangle }{\langle \gamma_\ell, \delta_\ell \rangle} \gamma_\ell.
	\end{align*}
	
	For $\delta \in \Delta$, recall that the reflection $\sigma_\delta$ permutes the elements of $\Sigma_{>0} \setminus \{\delta, 2\delta\}$, \cite[VI.1.6 Cor. 1]{Bou08}. Let
	$$
	\eta^+ = \sum_{\alpha \in \Sigma^{+}}\alpha.
	$$
	If $\sigma_\delta (\eta - \delta) = \eta - \delta$, for instance when $\Sigma$ is reduced, then we can use that the reflection $\sigma_\delta$ preserves the scalar product to obtain
	\begin{align*}
		\langle \eta , \delta \rangle &= \langle \sigma_\delta(\eta) , -\delta \rangle = \langle \sigma_\delta\left( \eta - \delta \right) + \sigma_\delta(\delta), -\delta \rangle \\
		&=  \langle (\eta - \delta) - \delta , -\delta \rangle 
		= -\langle \eta, \delta \rangle + 2 \langle \delta , \delta\rangle. 
	\end{align*}
	so $\langle \eta, \delta \rangle = \langle \delta, \delta \rangle >0$. If $\Sigma$ is not reduced and $\delta,2\delta \in \Sigma$, then $\sigma (\eta - 3 \delta) = \eta - 3 \delta$ and 
	\begin{align*}
		\langle \eta , \delta \rangle &= \langle \sigma_\delta(\eta) , -\delta \rangle = \langle \sigma_\delta\left( \eta - 3\delta \right) + \sigma_\delta(3\delta), -\delta \rangle \\
		&=  \langle (\eta - 3\delta) - 3\delta , -\delta \rangle
		= - \langle \eta, \delta \rangle + 6 \langle \delta , \delta \rangle , 
	\end{align*}
	so $\langle \eta, \delta \rangle = 3\langle \delta ,\delta \rangle >0$. This shows that $\eta$ is a positive linear combination of elements $\gamma_\ell$.
\end{proof}

Note that for the slightly simpler $\eta = \sum_{\delta \in \Delta} \delta$, $\eta$ may not be a positive linear combination of elements in $\sum \Z \gamma_\ell$, see Figure \ref{fig:G2_sum} for an example of type $\operatorname{G}_2$.
\\

\begin{figure}[h]
	\centering
	\includegraphics[width=0.95\linewidth]{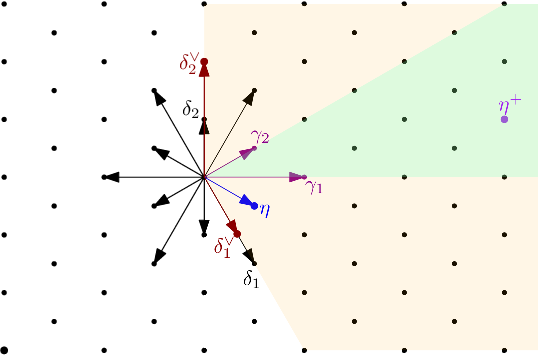}
	\caption{ The root system of type $\operatorname{G}_2$ with basis $\delta_1,\delta_2$, the coroots $\delta_1^\vee, \delta_2^\vee $ corresponding to the coroots $x_1,x_2$ and the elements $\gamma_1, \gamma_2$ spanning the Weyl chamber $\overline{(\mathfrak{a}^\star)}^+$. The element $\eta^+ := \sum_{\alpha >0} \alpha$ from Lemma \ref{lem:kostant_gammaalpha} lies in $\overline{(\mathfrak{a}^\star)}^+$, while the other candidate $\eta := \delta_1 + \delta_2$ does not.}
	\label{fig:G2_sum}
\end{figure}
% \langle \delta_1, \delta_1 \rangle = 3 and \langle \delta_2, \delta_2 \rangle = 1, \langle \delta_1, \delta_2 \rangle = -1

    \bibliographystyle{alpha_noand} % plain
    \bibliography{refs} %refs
 
	%\bibliographystyle{abbrv}
	%\bibliography{simple}
	
\end{document}